\numberwithin{equation}{section}
\tikzset{wiggly/.style={decorate, decoration=snake}}
\theoremstyle{plain}
\newtheorem{lemma}{Lemma}[section]
\newtheorem{corollary}[lemma]{Corollary}
\newtheorem{theorem}[lemma]{Theorem}
\newtheorem{proposition}[lemma]{Proposition}
\theoremstyle{definition}
\newtheorem{define}[lemma]{Definition}
\newtheorem{remark}[lemma]{Remark}
\newtheorem{conjecture}[lemma]{Conjecture}
\newcommand{\discgamma}{{\bar{\gamma}}}
\newcommand{\discmu}{{\bar{\mu}}}
\newcommand{\discsigma}{\mathcal{F}}
\renewcommand{\t}{\boldsymbol{t}}
\newcommand{\Pcal}{\mathcal{P}}
\newcommand{\overcirc}{\accentset{\circ}}
\newcommand{\extgamma}{{\tilde{\gamma}}}
\newcommand{\extmu}{{\tilde{\mu}}}
\newcommand{\extsigma}{\mathcal{A}}
\newcommand{\gradsigma}{\mathcal{E}}
\newcommand{\Int}{\mathrm{int}}
\newcommand{\Ext}{\mathrm{ext}}
\renewcommand{\d}{\mathrm{d}}
\newcommand{\sd}{\mathrm{sd}}
\renewcommand{\k}{\kappa}
\newcommand{\R}{{\mathbb{R}}}
\renewcommand{\P}{{\mathbb{P}}}
\newcommand{\E}{{\mathbb{E}}}
\newcommand{\Q}{{\mathbb{Q}}}
\newcommand{\myF}{{\mathcal{F}}}
\newcommand{\Z}{{\mathbb{Z}}}
\newcommand{\mc}{\mathcal}
\newcommand{\bs}{\boldsymbol}
\newcommand{\p}{\varphi}
\newcommand{\crm}{\mathrm{c}}
\newcommand{\ST}{\mathrm{ST}}
\newcommand{\Edge}{\mathbf{E}}
\newcommand{\Plaq}{\mathbf{P}}
\newcommand{\Vertex}{\mathbf{V}}
\newcommand{\vertiii}[1]{{\left\vert\kern-0.25ex\left\vert\kern-0.25ex\left\vert
#1 
    \right\vert\kern-0.25ex\right\vert\kern-0.25ex\right\vert}}
   \def\MR#1{}
\def \thick {0.1cm}
\begin{document}

%\title{Almost always uniqueness of ergodic gradient Gibbs measures for a class of non-convex potentials}
\title{Phase transitions for a class of gradient fields}
\author{Simon Buchholz\footnote{Institute for Applied Mathematics, University of Bonn,
Endenicher Allee 60, 53115 Bonn\newline E-mail: buchholz@iam.uni-bonn.de}}
\maketitle
\begin{abstract}
We consider gradient fields on $\Z^d$ for potentials $V$ that can be expressed as
\begin{align*}
e^{-V(x)}=pe^{-\frac{qx^2}{2}}+(1-p)e^{-\frac{x^2}{2}}.
\end{align*}
%It has been shown that multiple Gibbs states with zero tilt may exist for this class of potentials. 
This representation allows us
to associate a random conductance type model
to the gradient fields with zero tilt.
 We investigate this random conductance model and prove correlation inequalities, duality properties, and uniqueness of the Gibbs measure in certain regimes. 
 Moreover, we show that there is a close relation between Gibbs measures of the random conductance model and gradient Gibbs measures with zero tilt for the potential $V$. 
%and we conjecture  close similarities to the random cluster model.
Based on these results we can give a new proof for the non-uniqueness of gradient Gibbs measures without using reflection positivity. We also show uniqueness of ergodic zero tilt gradient Gibbs measures
for almost all values of $p$ and $q$ and, in dimension $d\geq 4$, for $q$ close to one or for $p(1-p)$ sufficiently small.

{\footnotesize{\bf 2010 Mathematics Subject Classification.} 82B05, 82B26, 82B20\newline
{\bf Key words and phrases.} Gradient Gibbs measures, phase transitions, random conductance model}
\end{abstract}

\section{Introduction}\label{sec:introduction}
Gradient fields are a statistical mechanics model that can be used to model phase separation 
or, in the case of vector valued fields, solid materials. 
Formally they can be defined as a random field $(\p_x)_{x\in \Z^d}\in \R^{\Z^d}$ with distribution
\begin{align}\label{eq:formal_grad_int_def}
\frac{\exp\left(-\sum_{x\sim y} V(\p(x)-\p(y))\right)}{Z}\prod_{x\in\Z^d} \d\p(x).
\end{align}
Here $\d\p(x)$ denotes the Lebesgue measure, $V:\R\to\R$ a measurable symmetric potential, and $\sim$ indicates the neighbourhood relation for $\Z^d$.
We can give a meaning to the formal expression \eqref{eq:formal_grad_int_def}
using the DLR-formalism. The DLR-formalism defines
equilibrium distributions usually called Gibbs measure for this type of models
as measures $\mu$ on $\R^{\Z^d}$ such that
the conditional probability of the restriction to any finite set is as above.
In the setting of gradient interface models no Gibbs measure exists in dimension $d\leq 2$.
Therefore one often considers gradient Gibbs measures \cite{MR1463032,MR0094682}.
This means that attention is restricted to the $\sigma$-algebra generated by the gradient fields
\begin{align}
\eta_{xy}=\p(y)-\p(x) \quad \text{for $x\sim y$.}
\end{align} 
Then infinite volume measures exist if $V(s)$ grows sufficiently fast (linearly is sufficient) as $s\to\pm\infty$. 
Gradient Gibbs measures are also useful to model tilted surfaces. 
For a translation invariant gradient Gibbs measure $\mu$ the tilt vector $u\in \R^d$ is defined by
\begin{align}
\E_\mu(\nabla \p(x))=u
\end{align}
 where $\nabla\p(x)\in\R^d$ denotes the discrete derivative, i.e., the vector with entries $\nabla_i\p(x)=\p(x+e_i)-\p(x)$ with $e_i$ denoting the $i$-th standard unit vector. 
 If the gradient Gibbs measure is ergodic the tilt corresponds to the asymptotic average inclination of almost every realisation of the gradient field.
  
 Gradient interface models have been studied frequently in the past years.
 In particular the discrete Gaussian free field with $V(s)=s^2$ where the fields are Gaussian caught considerable attention.
 Many of the results obtained in this case were generalized to the class of strictly convex potentials satisfying $c_1\leq V''(s)\leq c_2$ for some $0<c_1<c_2$ and all $s\in \R$.
 Let us only mention two results for convex potentials and refer to the literature in particular the reviews 
 \cite{MR2228384,MR2251117} for all further results and references. Funaki and Spohn showed in \cite{MR1463032} that
 for every tilt vector $u$ there exists a unique translation invariant gradient Gibbs measure. 
 Moreover, the scaling limit of the model is a massless Gaussian field
  as shown by Naddaf and Spencer \cite{MR1461951} for zero tilt and generalised to arbitrary tilt by Giacomin, Olla, and Spohn \cite{MR1872740}.
In contrast for non-convex potentials far less is known because all the techniques seem to rely on convexity in an essential way. For potentials 
of the form $V=U+g$ where $U$ is strictly convex and $g''\in L^q$ for some $q\geq 1$ 
with sufficiently small norm the problem can be led back to the convex theory
by integrating out some degrees of freedom. This way many results from the convex case can be proved in particular uniqueness and existence of the Gibbs measure for every tilt and that
the scaling limit is Gaussian \cite{MR2470934, MR2976565, MR3913274}. This corresponds to a high temperature result. 
For low temperatures which correspond to non-convexities far away from the minimium of $V$ it was shown that the surface tension is strictly convex and the scaling limit is Gaussian \cite{adams2016strict, hilger2016scaling}.

For intermediate temperatures that correspond to very non-convex potentials no robust techniques are known. 
All results to date are restricted to the special
class of potentials introduced by Biskup and Kotecky in \cite{MR2322690} that can be represented as
\begin{align}\label{eq:pot_general}
e^{-V(x)}=\int_{\R_+} e^{-\frac{\kappa x^2}{2}}\,\rho(\d \kappa)
\end{align}
where $\rho$ is a non-negative Borel measure on the positive real line.
Biskup and Kotecky mostly considered the simplest nontrivial case, denoting the Dirac measure at
$x\in \R$ by $\delta_x$,
\begin{align}\label{eq:def_rho}
\rho=p\delta_q+(1-p)\delta_1
\end{align}
where $p\in [0,1]$ and $q\geq1$.
They show that in dimension $d=2$ and for $q>1$ sufficiently large there exist two ergodic zero-tilt gradient Gibbs measures. Later,
Biskup and Spohn showed in \cite{MR2778801} that nevertheless the scaling limit of every zero-tilt gradient Gibbs measure is Gaussian  if the measure $\rho$ is compactly supported in $(0,\infty)$. 
In \cite{MR3982951} their result was recently extended by Ye to potentials 
of the form $V(s)=(1+s^2)^\alpha$ with $0<\alpha<\frac12$. Those potentials  can be expressed as in 
\eqref{eq:pot_general} but $\rho$ has unbounded support so that the results from \cite{MR2778801} do not directly apply.

The main reason to study this class of potentials is that such potentials are much more tractable  because the variable  $\kappa$ can be considered as an additional degree of freedom  using the representation \eqref{eq:pot_general}.
This leads to extended gradient Gibbs measures
which are given by the joint law of $(\eta_{e},\kappa_{e})_{e\in\Edge(\Z^d)}$.  
These extended gradient Gibbs measures can be  represented as a mixture of non-homogeneous 
Gaussian fields with bond potential $\kappa_{e}\eta^2/2$ for every edge $e\in \Edge(\Z^d)$
and $\kappa_{e}\in \R_+$. This implies that for a given $\kappa$ the distribution of the random field is Gaussian with covariance given by the inverse of the operator
$\Delta_\kappa$ where
\begin{align}
\Delta_\kappa f(x)=\sum_{y\sim x} \kappa_{\{x,y\}}(f(x)-f(y)).
\end{align} 
%where $x\sim y$ denotes the neighbourhood relation in a graph.
In all the works mentioned before this structure is frequently used, e.g. 
in \cite{MR2778801} it is proved that the resulting $\kappa$-marginal of the extended gradient Gibbs measure is ergodic so that well known homogenization results for random walks in ergodic environments can be applied.

The main purpose of this
 note is to investigate the properties of the $\kappa$-marginal of extended gradient Gibbs measures in a bit more detail.  
The starting point is the observation  that the $\kappa$-marginal of an extended gradient Gibbs measures with zero tilt  is itself a Gibbs measure for a certain specification. This specification arises as the infinite volume of an infinite range  random conductance model defined on finite graphs. 
On the other hand, we show that starting from a Gibbs measure for the random conductance model we can construct a zero tilt gradient Gibbs measure thus showing a one to one relation between the two notions of  Gibbs measures. In particular,
 we can lift results about the random conductance model to results about gradient Gibbs measures.
Note that one major drawback is the restriction to zero tilt that applies here and to all earlier results for this model. 
Let us mention that massive
$\R$-valued random fields  have been earlier connected to discrete percolation models to analyse the existence of phase transitions  \cite{MR1766352}. For gradient models the setting is slightly different because we consider a random conductance model on the bonds with long ranged correlations while for massive models one typically considers some type of site percolation with quickly decaying correlations.

  The main motivation for our analysis is that it provides a first step to the completion of the phase diagram for this potential and zero tilt and a better understanding of the two coexisting Gibbs states. Moreover, the random conductance model appears to be interesting in its own right.
%Evaluating the inhomogenous integrals over Gaussian fields one obtains 
%that the weight of a configuration $\kappa$ for the random conductance model should be proportional 
%to the normalisation $\sqrt{\det \Delta_\kappa}^{-\frac{1}{2}}$ 
We could define the random conductance model and prove several of the results for arbitrary $\rho$ but we mostly restrict our analysis to the simplest case where $\rho$ is as in \eqref{eq:def_rho} and the potential is of the form
\begin{align}\label{eq:pot_special}
e^{-V_{p,q}(x)}=pe^{-\frac{qx^2}{2}}+(1-p)e^{-\frac{x^2}{2}}.
\end{align} 
 We prove several results about the random conductance model in particular correlation inequalities
 (that extend to arbitrary $\rho$). One helpful observation is that the random conductance model is closely related to determinantal processes  because its definition involves a determinant weight. This simplifies several of the proofs because all correlation inequalities can be immediately led back to similar results for the weighted spanning tree. %Moreover we use techniques from electrical networks.
Using the correlation inequalities it is possible to show uniqueness of its Gibbs measure in certain regimes.

It was already observed in \cite{MR2322690} that the gradient interface model with potential $V_{p,q}$ 
exhibits a duality property when defined on the torus. 
Moreover, there is a self dual point $p_{\sd}=p_{\sd}(q)\in (0,1)$ where the model agrees with its own dual.
The self dual point satisfies the equation
\begin{align}\label{eq:self_dual_equation}
\left(\frac{p_{\sd}}{1-p_{\sd}}\right)^4=q.
\end{align}
In \cite{MR2322690} it is shown that the location of the phase transition in $d=2$ must be the self dual point.

 We extend the duality to the random conductance model and arbitrary  planar graphs.
 Using the fact that $\Z^2$ as a graph is self-dual we can use the duality to prove non-uniqueness of 
 the Gibbs measure therefore reproving the result from \cite{MR2322690} without the use of reflection positivity.
 Many of our techniques and results for the random conductance model originated in the study of the random cluster model and we conjecture further similarities. 

This 
paper
 is structured as follows. In Section \ref{sec:model} we give a precise definition of gradient Gibbs measures and state our main results. Then, in Section \ref{sec:model_intro} we introduce and motivate
the random conductance model and its relation to extended gradient Gibbs measures. We prove properties of the random conductance model in Sections \ref{sec:percolation} and \ref{sec:further_prop}.
Finally, in Section \ref{sec:phase_transition} we use the duality of the model to reprove the phase transition result. 
Two technical proofs and some results about regularity properties of discrete elliptic equations are delegated to appendices.

\section{Model and main results}\label{sec:model}

\paragraph{Specifications.} Let us briefly recall the definition of a specification because the concept will be needed in full generality for the random conductance model (see Section \ref{sec:percolation}). We consider a countable set $S$ (mostly $\Z^d$ or the edges of $\Z^d$)
and a measurable state space $(F,\myF)$ (mostly either $|F|=2$ or $(F,\myF)=
(\R,\mathcal{B}(\R))$). Random fields are probability measures on $(F^S,\myF^S)$
where $\myF^S$ denotes the product $\sigma$-algebra. The set of probability measures on a measurable space $(X,\mathcal{X})$ will be denoted by $\mathcal{P}(X,\mathcal{X})$.
For any $\Lambda\subset S$ we denote by $\pi_\Lambda:F^S\to F^\Lambda$ the canonical projection. We often consider the $\sigma$-algebra $\myF_\Lambda=\pi_\Lambda^{-1}(\myF^\Lambda)$ of events depending on the set $\Lambda$.
%Let $(X,\mathcal{X})$ and $(Y,\mathcal{Y})$ be two measure spaces.
%A probablity kernel from $(Y,\mathcal{Y})$ to $(X,\mathcal{X})$ is a function $\gamma:\mathcal{X}\times Y%\to \R^+_0$
%such that 
%\begin{enumerate}
%\item $\gamma(\cdot |y)$ is a measure on $(X,\mathcal{X})$ for $y\in Y$,
%\item $\gamma(A|\cdot)$ is $\mathcal{Y}$ measurable for $A\in \mathcal{X}$,
%\item $\gamma(X|\cdot)=1$.
%\end{enumerate}
Recall that a probability kernel $\gamma$ from $(X,\mathcal{B})$ to $(X,\mathcal{X})$, where
$\mathcal{B}\subset \mathcal{X}$ is a sub-$\sigma$-algebra,
is called proper if $\gamma(B,\cdot)=\mathbb{1}_B$ for $B\in \mathcal{B}$.
 
 \begin{define}\label{def:spec}
 	A specification is a family of proper probability kernels
 	$\gamma_\Lambda$ from $\myF_{\Lambda^\crm}$ to $\myF_{S}$ indexed by finite subsets $\Lambda\subset S$ such that $\gamma_{\Lambda_1}\gamma_{\Lambda_2} =\gamma_{\Lambda_1}$ if $\Lambda_2\subset \Lambda_1$. We define the set of random fields admitted to $\gamma$ by
 	\begin{align}\label{eq:def_admitted}
 	\mathcal{G}(\gamma)=\{\mu\in \mathcal{P}(F^S,\myF_S): \mu(A|\myF_{\Lambda^c})(\cdot)=\gamma_{\Lambda}(A|\cdot) \text{ $\mu$-a.s. for all
 	$A\in \myF_S$ and $\Lambda\subset S$ finite}\}.
 	\end{align}
 \end{define}
\begin{remark}\label{remark:Gibbs_specification}
%\begin{enumerate}\item
%We use the convention that we call $\mu\in \mathcal{G}(\gamma)$ a Gibbs measure for any specification, not only for Gibbsian specifications (see \cite{MR2807681} for a definition of Gibbsian specifications).
%\item \label{remark_Gibbs_specification_2}
There is a well known equivalent definition of Gibbs measures.
A cofinal set $I$ is a subset of subsets of $S$ with the property that for 
any finite set $\Lambda_0\subset S$ there is $\Lambda\in I$ such that $\Lambda_0\subset \Lambda$.
Then $\mu\in \mathcal{G}(\gamma)$ if and only if  $\mu\gamma_{\Lambda}=\mu$ for $\Lambda\in I$ where $I$ is a cofinal subset of subsets of $S$. 
See Remark 1.24 in \cite{MR2807681} for a proof.
%\end{enumerate}
\end{remark}

\paragraph{Gradient Gibbs measures.}
We introduce the relevant notation and the definition of Gibbs and gradient Gibbs measures to state our results. For a broader discussion see \cite{MR2807681, MR2251117}.
In this paragraph we consider real valued random fields indexed by a lattice $\Lambda \subset \Z^d$. We will denote the set of nearest neighbour bonds of $\Z^d$ by $\Edge(\Z^d)$. More generally, we will write 
$\Edge(G)$ and $\Vertex(G)$ for the edges and vertices of a graph $G$.
To consider gradient fields it is useful to choose on orientation of the edges.
We orient the edges $e=\{x,y\}\in \Edge(\Z^d)$
from $x$ to $y$ iff $x\leq y$ (coordinate-wise), i.e., we can view the graph $(\Z^d,\Edge(\Z^d))$ as a directed graph but
mostly we work with the undirected graph.

% Moreover we consider directed bonds $E_\pm(\Z^d)$ where  each edge in $\Edge(\Z^d)$ is replaced by two directed edges and  the
%set $E_+(\Z^d)$ of edges $(x,y)$ such that $x<y$ (coordinate-wise). As discussed above translation invariant Gibbs measure do not exsist in dimension $d\leq 2$ even for zero tilt. Therefore we will mostly work with gradient Gibbs measures. 
To any random field $\p:\Z^d\to \R$ we associate the 
gradient field $\eta=\nabla \p\in \R^{\Edge(\Z^d)}$ given by $\eta_{e}=\p_y-\p_x$ if $\{x,y\}\in \Edge(\Z^d)$
are nearest neighbours and $x\leq y$. We formally write $\eta_{x,y}=\eta_e=\p_y-\p_x$
and $\eta_{y,x}=-\eta_e=\p_x-\p_y$.
%Clearly any gradient field satisfies the antisymmetry
%$\eta_{x,y}=-\eta_{y,x}$ so we can define $\eta$ as an element of $\R^{\Edge(\Z^d)}$
%with $\eta_e=\eta_{x,y}$ if $(x,y)\in \Edge(\Z^d)$ is oriented in the orientation chosen above.
%We formally write $\eta_{y,x}=\p_x-\p_y=-\eta_{x,y}$ if $(y,x)$ is an edge in reversed orientation.
%For nearest neighbours $y<x$ and $e=(x,y)$ we then have $\eta_{(y,x)}=-\eta_{e}$.
The gradient field $\eta$ 
%satisfies  $\eta_{x,y}=-\eta_{y,x}$
 satisfies the plaquette condition
\begin{align}\label{eq:plaquette}
\eta_{x_1,x_2}+\eta_{x_2,x_3}+\eta_{x_3,x_4}+\eta_{x_4,x_1}=0
\end{align}
for every plaquette, i.e., nearest neighbours $x_1,x_2,x_3,x_4,x_1$.
Vice versa, given a field $\eta\in\R^{\Edge(\Z^d)}$ that satisfies the plaquette condition 
%and antisymmetry in the bonds 
there is a up to constant shifts a unique field $\p$ such that $\eta=\nabla \p$ (the antisymmetry
of the gradient field is contained in our definition).
We will refer to those fields as gradient fields and denote them by
$\R^{\Edge(\Z^d)}_g$.
To simplify the notation we write $\p_\Lambda$ for $\Lambda\subset \Z^d$ and $\eta_{E}$
for $E\subset \Edge(\Z^d)$
for the the restriction of fields and gradient fields.
We usually identify a subset   $\Lambda\subset \Z^d$ with the graph generated by it and
as before we write  $\Edge(\Lambda)$ for the bonds with both endpoints in
$\Lambda$. 

For a subgraph $H\subset G$ we write $\partial H$ for the (inner) boundary of $H$ consisting of all points
$x\in \Vertex(H)$ such that there is an edge $e=\{x,y\}\in \Edge(G)\setminus \Edge(H)$.
%with $y\in \Lambda^{\crm}$.
In the case of a graph generated by $\Lambda\subset G$ we have $x\in \partial\Lambda$ if
there is $y\in \Lambda^{\crm}$ such that $\{x,y\}\in \Edge(G)$.
%For the outer boundary we write $\partial_o\Lambda= \partial\, \Lambda^{\crm}$.
We define $\accentset{\circ} \Lambda=\Lambda\setminus \partial\Lambda$. 
For a finite subset $\Lambda\subset \Z^d$ we denote by $\d\p_\Lambda$ the Lebesgue measure on $\R^\Lambda$. 
%For a configuration $\p_{\Lambda^{\crm}}$
 %we consider the measure $\prod_{x\in \Lambda^{\crm}} \delta_{\p(x)}(\cdot)\d\p_{\Lambda}$ and we consider the pushforward of this measure along $\nabla$.
% Since the Lebesgue measure is shift invariant this measure is invariant 
% under the constant shift $\p_x\to \p_x+c$ for all $x$. 
% Therefore we can define 
% the measure $\nu_{\Lambda}^{(\eta_{\Edge(\Lambda)^{\crm}})}$ for any $\eta$ by considering a $\p$ such that $\nabla\p=\eta$ 
We define for $\omega\in \R^{\Edge(\Z^d)}_{g} $  and $\Lambda$ finite and simply connected (i.e., $\Lambda^\crm$ connected) the following a priori measure on gradient configurations
\begin{align}\label{eq:apriori_measure}
\nu_{\Lambda}^{\omega_{\Edge(\Lambda)^{\crm}}}(\d \eta)
=\nabla_\ast\left(\prod_{x\in \overcirc\Lambda^{\crm}} \delta_{\tilde\p(x)}(\cdot)\;\d\p_{\overcirc\Lambda}\right)
\end{align} 
where $\tilde\p$ is a configuration such that $\nabla\tilde\p=\omega$ and $\nabla_\ast$ the push-forward of this measure along the gradient map $\nabla:\R^{\Z^d}\to \R^{\Edge(\Z^d)}_g$.
The shift invariance of the Lebesgue measure implies that this definition is independent of the choice of $\tilde\p$ and it only depends on the restriction $\omega_{\Edge(\Lambda)^{\crm}}$ since 
$\Lambda^\crm$ is connected.
For a potential $V:\R\to\R$ satisfying some growth condition we define the specification $\gamma_\Lambda$
\begin{align}\label{eq:def_Gibbs_specification}
\gamma_\Lambda(\d \eta,\omega_{\Edge(\Lambda)^{\crm}})=
\frac{\exp\left(-\sum_{e\in \Edge(\Lambda)}V(\eta_e)\right)}{Z_\Lambda(\omega_{\Edge(\Lambda)^{\crm}})}\nu_\Lambda^{\omega_{\Edge(\Lambda)^{\crm}}}(\d \eta)
\end{align}
where the constant $Z_\Lambda(\omega_{\Edge(\Lambda)^{\crm}})$ ensures the normalization of the measure.
We introduce the notation $\gradsigma_{E}=\pi_E^{-1}(\mc{B}(\R)^E)$ for $E\subset \Edge(\Z^d)$ for the $\sigma$-algebra 
of events depending only on $E$.
Measures that are admitted to the specification $\gamma$, i.e., measures $\mu$ that satisfy
for simply connected $\Lambda\subset\Z^d$
\begin{align}
\mu(A\mid \gradsigma_{\Edge(\Lambda)^{\crm}})(\cdot)=\gamma_\Lambda(A,\cdot)\qquad \text{$\mu$ a.s.}
\end{align}
 will be called gradient Gibbs measures for the potential $V$.

For $a\in \Z^d$ we consider the shift $\tau_a:\R^{\Edge(\Z^d)}\to \R^{\Edge(\Z^d)}$ that is defined by 
\begin{align}\label{eq:shift}
(\tau_a\eta)_{x,y}=\eta_{x+a,y+a}.
\end{align} 
  A measure is translation invariant if $\mu (\tau_a^{-1}(A))=\mu(A)$ for all $a$ and $A\in \mathcal{B}(\R)^{\Edge(\Z^d)}$.
An event is translation invariant if $\tau_a(A)=A$ for all $a\in \Z^d$.
A gradient measure is ergodic if $\mu(A)\in \{0,1\}$ for all translation invariant $A$.
  
\paragraph{Main results.}
Our first main result is the following almost always uniqueness result for the gradient Gibbs measures for potentials as in \eqref{eq:pot_special}.
\begin{theorem}\label{th:main_unique}
For every $q$ and $d\geq 2$ 
there is an at most countable set $N(q,d)\subset [0,1]$ such that
for any $p\in [0,1]\setminus N(q,d)$  there is a unique shift invariant ergodic gradient Gibbs measure
$\mu$ with zero tilt 
%and $\mathbb{E}_\mu(\sum_{0\in b}|\eta(b)|^2)<\infty$
 for the potential $V_{p,q}$. 
\end{theorem}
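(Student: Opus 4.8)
The plan is to transfer the statement, via the one-to-one correspondence between zero-tilt gradient Gibbs measures and Gibbs measures of the random conductance model set up in Section~\ref{sec:model_intro}, into a uniqueness statement for the random conductance model, and then to run the differentiability-of-the-pressure argument familiar from the random cluster model. Since in the case \eqref{eq:def_rho} each conductance variable $\kappa_e$ takes only the two values $1$ and $q$, the conductance marginal is effectively a spin system on $\Edge(\Z^d)$, and the correlation inequalities proved in Sections~\ref{sec:percolation}--\ref{sec:further_prop} (stochastic monotonicity with respect to the order $q\succ 1$, inherited from the determinantal/weighted-spanning-tree structure) provide a stochastic ordering of finite-volume measures under monotone boundary conditions. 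First I would use this to construct a maximal Gibbs measure $\nu^+_p$ and a minimal Gibbs measure $\nu^-_p$ for the random conductance model, both shift invariant and ergodic, with $\nu^-_p\preceq\mu\preceq\nu^+_p$ for every shift-invariant Gibbs measure $\mu$. Existence of the ergodic zero-tilt gradient Gibbs measure then follows from $\nu^\pm_p$, and uniqueness becomes equivalent to the identity $\nu^-_p=\nu^+_p$.

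The natural monotone parameter is the external field $h=\log\frac{p}{1-p}$, conjugate to the number $N(\omega)=\#\{e:\kappa_e=q\}$ of stiff edges, since the $p$-dependence of the a priori weight factorises as $\prod_e p^{\mathbb{1}_{\kappa_e=q}}(1-p)^{\mathbb{1}_{\kappa_e=1}}$ times the ($p$-independent) determinant weight. I would show that the finite-volume pressure (normalised logarithmic partition function) is convex in $h$, being a cumulant generating function of $N$, and that it converges as $\Lambda\uparrow\Z^d$ to an infinite-volume pressure $f(h)$ which is again convex, hence differentiable for all but at most countably many $h$; this countable exceptional set, pulled back through $h\mapsto p$, is the set $N(q,d)$. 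The key identity is that the $h$-derivative of the finite-volume pressure equals the expected density of stiff edges, so that, at points of differentiability of $f$, the right and left derivatives coincide and agree respectively with the densities $\theta^+(p)=\nu^+_p(\kappa_e=q)$ and $\theta^-(p)=\nu^-_p(\kappa_e=q)$, the one-sided derivatives selecting the measures obtained as monotone limits from above and below.

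At every such $p$ we therefore obtain $\theta^-(p)=\theta^+(p)$. Since $\nu^-_p\preceq\nu^+_p$, Strassen's theorem furnishes a coupling in which the two configurations are ordered edge by edge; as each edge carries a single binary variable, equality of the marginals $\nu^-_p(\kappa_e=q)=\nu^+_p(\kappa_e=q)$ forces the coupled configurations to agree almost surely on every edge, whence $\nu^-_p=\nu^+_p$. This gives a unique random conductance Gibbs measure for all $p\in[0,1]\setminus N(q,d)$, and translating back through the correspondence of Section~\ref{sec:model_intro} yields a unique shift-invariant ergodic zero-tilt gradient Gibbs measure for the potential $V_{p,q}$; ergodicity is automatic because a unique shift-invariant Gibbs measure is extremal, hence tail trivial.

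I expect the main obstacle to be the identification of the one-sided derivatives of the infinite-volume pressure with the extremal densities $\theta^\pm(p)$. This requires interchanging the thermodynamic limit with differentiation in $h$, controlling the boundary contributions carefully enough that the finite-volume derivatives converge to the densities under the \emph{correct} extremal measures, and verifying that the infinite-range, long-correlated nature of the conductance interaction does not spoil either the convexity or the monotone-convergence and domination arguments that are classical in the finite-range random cluster setting.
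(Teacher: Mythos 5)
Your proposal is correct in outline and shares the paper's overall architecture: pass to the random conductance model, show the extremal measures $\discmu_p^0$ and $\discmu_p^1$ coincide off a countable set of $p$, conclude via Strassen's theorem from equality of the single-edge marginals, and transfer back using the fact that an ergodic zero-tilt gradient Gibbs measure is determined by its $\kappa$-marginal (Propositions \ref{prop:discmu_Gibbs} and \ref{prop:phi_given_kappa}). Where you genuinely diverge is in the key lemma producing the countable exceptional set. The paper does not differentiate the pressure; instead it adapts the tilting argument of Duminil-Copin (Lemma \ref{le:monotonicity_in_p}) to prove directly that $\discmu_{p'}^0(\kappa_e=q)\geq \discmu_{p}^1(\kappa_e=q)$ for $p<p'$, so that $p\mapsto\discmu_p^0(\kappa_e=q)$ is increasing and its (countably many) continuity points are exactly the good $p$. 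Your route — convexity of the cumulant generating function of the number of $q$-edges, a.e.\ differentiability of the limiting pressure, and identification of the one-sided derivatives with the densities $\theta^\pm(p)$ — is the classical Grimmett-style argument and does work here, but note that both routes hinge on the same model-specific input, namely the surface-order comparison between the free and wired determinants (the estimates \eqref{eq:compare_Det}--\eqref{eq:compare_Det2}, which say that $\det\Delta^0_\kappa$ and $\det\Delta^1_\kappa$ differ by at most $e^{O(|\partial\Lambda_n|)}$); without it the free and wired pressures need not converge to the same limit and the identification of $f'(h^\pm)$ with $\theta^\pm$ fails, so you are right to single this out as the main obstacle. Be aware also of Remark \ref{rem:conv_pressure}: the paper cautions that the convexity-of-the-pressure argument as deployed in the earlier literature (mere concentration of the number of $q$-bonds) does not suffice; your version avoids this pitfall precisely because you identify the one-sided derivatives with the extremal densities $\theta^\pm$ rather than only proving concentration. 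What the paper's route buys is a self-contained quantitative inequality that never requires exchanging limits and derivatives; what yours buys is a shorter conceptual path once the standard convex-analysis facts are in place.
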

This theorem is proved in Section \ref{sec:further_prop} below the proof of Theorem \ref{th:countable_non_uniqueness}.
Moreover, we reprove the non-uniqueness result originally shown in \cite{MR2322690} for this type of potential.
\begin{theorem}\label{th:main_non_unique}
There is $q_0\geq 1$ such that for $d=2$, $q\geq q_0$, and $p=p_{\sd}(q)$ the solution 
of \eqref{eq:self_dual_equation}, there are at least two shift invariant gradient Gibbs measures with 0 tilt.
\end{theorem}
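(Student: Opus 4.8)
The plan is to prove non-uniqueness via the duality structure of the random conductance model, mirroring the classical argument for the random cluster model at its self-dual point. The key conceptual input is the correspondence established earlier in the excerpt: zero-tilt gradient Gibbs measures for $V_{p,q}$ are in one-to-one correspondence with Gibbs measures of the associated infinite-range random conductance model, and this conductance model carries a duality on planar graphs. Since $\Z^2$ is self-dual as a graph, at the self-dual value $p=p_{\sd}(q)$ solving \eqref{eq:self_dual_equation} the model coincides with its own dual. The strategy is therefore to exhibit two distinct conductance Gibbs measures at this point by building them as infinite-volume limits under two different boundary conditions, and then transport this non-uniqueness back to the gradient side.

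\medskip

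\emph{First I would} set up the two candidate measures on the conductance model. On finite boxes $\Lambda_n=[-n,n]^2\cap\Z^2$ I would take finite-volume Gibbs measures with two extremal boundary conditions --- corresponding to all boundary conductances set to the ``hard'' value $q$ versus all set to the ``soft'' value $1$ (equivalently, the maximal and minimal conductance fields). Using the correlation inequalities proved earlier (the FKG-type inequalities coming from the determinantal/spanning-tree structure), these finite-volume measures are monotone in the boundary condition and monotone in $n$, so the limits $\mu^{\max}$ and $\mu^{\min}$ exist and are the extremal (largest and smallest) Gibbs measures in stochastic order. To separate them I would track a monotone observable that reflects the dual relationship --- typically the event that a given edge is ``hard'' versus the corresponding dual event --- and show that $\E_{\mu^{\max}}$ and $\E_{\mu^{\min}}$ disagree on it. \emph{Here the planar duality does the work}: at the self-dual point the two boundary conditions are exchanged by the duality transformation, so if there were a unique Gibbs measure it would have to be invariant under duality and assign a ``critical'' value to the order parameter; the argument is to show that this forced critical value is incompatible with a single translation-invariant ergodic state, exactly as self-duality forces discontinuity at criticality in the planar random cluster model.

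\medskip

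\emph{The main obstacle} I anticipate is making the duality argument genuinely force non-uniqueness rather than merely being consistent with it. Self-duality alone does not prove a phase transition --- one also needs the transition to be of first order at the self-dual point, which is why the theorem requires $q\geq q_0$ large. For large $q$ the contrast between the two conductance values is extreme, and I would use this to run a Peierls-type / energy-entropy estimate (the analogue of the classical bound for large $q$ in the Potts model) showing that long dual ``contours'' separating hard and soft regions are exponentially suppressed, so that the maximal measure is dominated by hard edges while the minimal measure is dominated by soft edges. The quantitative input for the threshold $q_0$ is precisely this contour estimate, which should go through because the determinantal weight factorizes nicely and the correlation inequalities control the relevant marginals. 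Combining a strictly positive gap in the order parameter with self-duality then yields two distinct conductance Gibbs measures.

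\medskip

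\emph{Finally I would} transport the result to gradient measures. By the correspondence between conductance Gibbs measures and zero-tilt gradient Gibbs measures, the two distinct conductance states $\mu^{\max}\neq\mu^{\min}$ lift to two distinct zero-tilt gradient Gibbs measures for $V_{p_{\sd},q}$. Translation invariance of the limits is inherited from the symmetric choice of boxes and boundary data, so both lifted gradient measures are shift invariant, giving the desired at least two shift-invariant zero-tilt gradient Gibbs measures and completing the proof.
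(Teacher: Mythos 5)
Your plan follows the paper's proof essentially step for step: the paper establishes non-uniqueness for the random conductance model at $p_{\sd}$ (Theorem \ref{th:randomconductance_non_unique}) by combining planar duality --- under which the free and wired infinite-volume states $\discmu^0_{p_{\sd}}$ and $\discmu^1_{p_{\sd}}$ are exchanged, so that uniqueness would force $\discmu(\kappa_e=q)=1/2$ --- with a large-$q$ Peierls estimate on $q$-contours (Theorem \ref{th:contour}, Corollary \ref{co:contours}) giving $\discmu^0_{p_{\sd}}(\kappa_e=q)\leq 1/4$, and then lifts the two distinct conductance states to two distinct shift-invariant zero-tilt gradient Gibbs measures via Proposition \ref{prop:discmu_to_extmu}. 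The only points to tidy are that the exact dual pair of boundary conditions is free versus wired (graph contraction), not all-$1$ versus all-$q$ boundary conductances, and that the deferred contour estimate (carried out in the paper via maps on configurations and spanning trees) is the technical heart of the argument rather than a routine step.
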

The proof of this theorem is given at the end of Section \ref{sec:phase_transition}.
Moreover we prove uniqueness for 'high temperatures' and dimension $d\geq 4$. This corresponds to the regime where the Dobrushin condition holds. 
\begin{theorem}\label{th:Dobrushin}
Let $d\geq 4$. For any $q\geq 1$ there
exists $p_0=p_0(q,d)>0$ such that for  all
$p\in [0,p_0)\cup (1-p_0,1]$ there is a unique shift invariant ergodic gradient Gibbs measure with zero tilt for the potential $V_{p,q}$.
Moreover, there exists  
$q_0=q_0(d)>1$ such that for any $q\in [1,q_0]$ and any $p\in [0,1]$  there is a unique shift invariant ergodic gradient Gibbs measure with zero tilt for the potential $V_{p,q}$. 
\end{theorem}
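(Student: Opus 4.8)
The plan is to prove both statements by verifying Dobrushin's uniqueness criterion for the random conductance model and then transporting uniqueness to the gradient field via the correspondence between conductance Gibbs measures and zero tilt gradient Gibbs measures. Because $\rho=p\delta_q+(1-p)\delta_1$ the conductances take only the two values $1$ and $q$, so the $\kappa$-marginal is a $\{1,q\}$-valued spin system on $\Edge(\Z^d)$. After integrating out the Gaussian field its finite volume weight is proportional to $\prod_e\rho(\kappa_e)$ times the Gaussian normalisation $(\det{}'\Delta_\kappa)^{-1/2}$, and by the matrix--tree theorem this normalisation is a power of the weighted spanning tree polynomial $\Xi(\kappa)=\sum_T\prod_{e\in T}\kappa_e$. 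First I would exploit that $\Xi$ is affine in each $\kappa_e$: writing $\Xi(\kappa)=\kappa_eA_e+B_e$, the single bond conditional law of $\kappa_e$ given all other conductances depends on them only through the ratio $\theta_e=A_e/(A_e+B_e)$, which is the effective resistance $R_{\mathrm{eff}}(e)$ of $e$ in the surrounding network; concretely the conditional odds of $\{\kappa_e=q\}$ are an explicit smooth function of $(q-1)\theta_e$.

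The core estimate is on the Dobrushin interdependence matrix $C_{e,e'}$, the total variation distance between the two conditional laws of $\kappa_e$ obtained by freezing $\kappa_{e'}\in\{1,q\}$ and leaving all other conductances fixed. Differentiating the explicit conditional probability and using the identity $\partial R_{\mathrm{eff}}(e)/\partial\kappa_{e'}=-Y(e,e')^2/\kappa_{e'}^2$, where $Y(e,e')$ is the transfer current between $e$ and $e'$, I expect a bound of the form $C_{e,e'}\le c\,p(1-p)(q-1)^2\,Y(e,e')^2$, the prefactor being small both when $q\to 1$ and when $p(1-p)\to0$ (using the symmetry that exchanges the two conductance values to treat $p\to1$). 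It then remains to bound $\sup_e\sum_{e'}C_{e,e'}$ uniformly in the volume and in the frozen environment. The clean way to do this is Thomson's principle: for the unit current across $e$ one has $\sum_{e'}Y(e,e')^2/\kappa_{e'}=R_{\mathrm{eff}}(e)\le 1$, and since $\kappa_{e'}\ge1$ this controls $\sum_{e'}Y(e,e')^2$ uniformly. Hence $\sup_e\sum_{e'}C_{e,e'}\le c\,p(1-p)(q-1)^2$, which falls below $1$ once the relevant small parameter is chosen small, producing $p_0(q,d)$ for the first statement and $q_0(d)$ for the second.

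Once Dobrushin's condition holds, Dobrushin's theorem gives a unique Gibbs measure for the random conductance specification; in particular it is automatically shift invariant and ergodic. The final step is to transfer uniqueness: by the correspondence established earlier, any shift invariant ergodic zero tilt gradient Gibbs measure for $V_{p,q}$ induces a conductance Gibbs measure, so all such gradient measures share the same (unique) environment law, and the Gaussian reconstruction over this ergodic environment yields a single zero tilt gradient Gibbs measure. Here I would verify that the reconstruction is genuinely unique, i.e.\ that for $\nu$-almost every uniformly elliptic environment $\kappa$ the zero tilt Gaussian gradient field with covariance $\Delta_\kappa^{-1}$ is the unique zero tilt gradient Gibbs measure in that environment; this is where transience of the walk in the conductance environment and summability of the associated Green's function quantities are used, and where I expect the restriction $d\ge4$ to be genuinely needed together with the discrete elliptic regularity estimates of the appendix.

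The main obstacle is precisely this last transfer step together with making all constants uniform over rough environments. The Dobrushin bound on the conductance model is robust (Thomson's principle makes the sum $\sum_{e'}Y(e,e')^2$ dimension independent), so the substantive difficulty is controlling the Gaussian reconstruction in a stationary ergodic, merely uniformly elliptic environment: one cannot invoke the sharp homogeneous Green's function asymptotics and must instead rely on the weaker De Giorgi--Nash--Moser type bounds of the appendix, whose summation against the number of edges at a given distance is what forces $d\ge4$. A secondary technical point is checking that the conductance specification is quasilocal enough for Dobrushin's theorem to apply, since integrating out the field produces an infinite range interaction through $\Xi(\kappa)$.
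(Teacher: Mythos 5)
Your overall architecture --- verify Dobrushin's criterion for the $\kappa$-marginal, then transfer uniqueness to the gradient field via the correspondence of Propositions \ref{prop:discmu_Gibbs} and \ref{prop:phi_given_kappa} --- is the same as the paper's, and your core estimate $C_{fg}\lesssim p(1-p)(q-1)^2\,I^\kappa_f(g)I^\kappa_g(f)$ is exactly the one derived in the proof of Theorem \ref{th:Dobrushin_discrete}. The gap is in the summation step. Dobrushin's condition requires $\sup_f\sum_g C_{fg}<1$ where each entry $C_{fg}$ \emph{already contains} a supremum over the frozen environment $\lambda\in\{1,q\}^{\Edge(\Z^d)}$; one therefore has to bound $\sum_g\sup_\lambda Y_\lambda(f,g)^2$, not $\sup_\lambda\sum_g Y_\lambda(f,g)^2$. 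Thomson's principle gives $\sum_g Y_\lambda(f,g)^2/\kappa_g=R^\lambda_{\mathrm{eff}}(f)\le 1$ only for a \emph{fixed} $\lambda$; since the maximising environment may be chosen separately for each $g$, the energy identity does not control the sum of the suprema --- the pointwise bound it yields, $\sup_\lambda Y_\lambda(f,g)^2\le q^2$, is not summable. This is precisely why the paper falls back on the deterministic (uniform in $\kappa$) Nash--Moser bound $|\nabla_x\nabla_y G_\kappa(x,y)|\le C|x-y|^{-(d-2+2\alpha)}$ of Lemma \ref{le:GreensBound}, whose square is summable over $\Z^d$ only when $2(d-2+2\alpha)>d$, i.e.\ for $d\ge4$ since the H\"older exponent $\alpha>0$ is not quantitative. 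Had your energy argument been valid it would prove the theorem in every dimension $d\ge 2$, which the paper explicitly explains is out of reach of this method.

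A related misattribution: you place the restriction $d\ge4$ in the Gaussian reconstruction step. In the paper that step requires no transience or Green's function summability and works in all $d\ge2$: once the conductance Gibbs measure is unique, Proposition \ref{prop:phi_given_kappa} says the conditional law of the gradient field given $\kappa$ is the centred Gaussian gradient field with covariance $\Delta_\kappa^{-1}$ for almost every $\kappa$, which determines the ergodic zero-tilt gradient Gibbs measure outright; there is nothing further to verify about uniqueness ``in a fixed environment''. The dimensional restriction lives entirely in the Dobrushin sum. The remaining ingredients of your outline --- the affine dependence of the spanning-tree polynomial on each $\kappa_e$, the single-bond conditional law expressed through $\Q_{\kappa^-}(f\in\t)$, the $p(1-p)$ prefactor covering both $p\to0$ and $p\to1$, and the quasilocality of the specification (Corollary \ref{co:quasilocal}) --- do match the paper.
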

The proof of this Theorem is given in Section \ref{sec:further_prop} below the proof of Theorem \ref{th:Dobrushin_discrete}.

The main tool in the proofs of these theorems is the fact that the structure  of the potentials 
$V$ in \eqref{eq:pot_general} allows us to consider $\kappa$ as a further degree of freedom and we consider the joint distribution of the gradient field $\eta$ and $\kappa$.
We show that the law of the $\kappa$-marginal can be related to a random conductance model.
The analysis of this model then translates back into the theorems stated before. We will make those statements precise in the next section.
Let us end this section with some remarks.
\begin{remark}
\begin{enumerate}
\item For spin systems with finite state space and bounded interactions there are general results that show that phase transitions, i.e., non-uniqueness of the Gibbs measure are rare, see, e.g., \cite{MR2807681}. 
Theorem \ref{th:main_unique} establishes a similar result for a specific class of potentials
for a unbounded spin space. 
As discussed in more detail at the end of Section \ref{sec:further_prop} we expect that for every $q\geq 1$ the Gibbs measure is unique for all $p\in [0,1]$ except possibly for $p=p_c$ for some critical value $p_c=p_c(q)$.  Hence, Theorem \ref{th:main_unique} is far from optimal 
but we hope that the results provided in this paper prove useful to establish stronger results.
%\item Our proof of the coexistence result does not rely on reflection positivity. Instead we %rely
%on the (self-)duality properties of the model (already noticed in \cite{MR2322690}) and then %use a Peierls type argument that is similar to the argument for the Ising model. 
\item Let us compare the results to earlier results in the literature.
For $p/(1-p)<1/q$ the potential $V_{p,q}$ is strictly convex so that uniqueness of the Gibbs measure is well known and holds for every tilt. 
The two step integration used by Cotar and Deuschel extends the uniqueness
result to the regime $p/(1-p)<C/\sqrt{q}$ (see Section 3.2 in \cite{MR2976565}).
In particular the case $p\in [0,p_0)$ in Theorem \ref{th:Dobrushin} is included in earlier results.
However, the potential becomes very non-convex (has a very negative second derivative at some points) for $p$ close to $1$ and the  uniqueness result for $p\in (1-p_0,1]$ and $d\geq 4$ appears to be new. 
% convex for p/(1-p) <1/q unique by cotar JD <1/q^(2/3) stronger argument cotar jd uniqueness <1/\sqrt{1} non unique 1/q^{1/4}
In this regime the only known result seems to be convexity of the surface tension 
as a function of the tilt
which was shown in  \cite{adams2016strict} (see in particular Proposition 2.4 there). Their results
apply to  $p$ very close to one, $q-1$ very small, and $d\leq 3$.
%TODO for paper auf ABKM verweisen.
%The results from in Chapter \ref{ch:cauchy_born} extend this result to any dimension $d\geq 2$ 
%and to arbitrary $q$ for $p$ sufficiently close to 1 depending on $q$.  
\item  The restriction to dimension $d\geq 4$ arises from the fact that the Green's function
for inhomogeneous elliptic operators in divergence form decays slower than in the homogeneous case. 
\end{enumerate}
\end{remark}
\section{Extended gradient Gibbs measures and random conductance model}\label{sec:model_intro}
\paragraph{Extended gradient Gibbs measure.}
In this work we restrict to potentials of the form introduced in \eqref{eq:pot_general}. 
  As already discussed in more detail in \cite{MR2322690} and \cite{MR2778801} it is possible to use the special structure of 
  $V$ to raise  $\kappa$ to a degree of freedom. 
  Let $\mu$ be a gradient Gibbs measure for $V$.
  For a finite set $E\subset \Edge(\Z^d)$ and Borel sets $\mathbf{A}\subset \R^E$ and $\mathbf{B}\subset \R_+^E$
  we define the extended gradient Gibbs measure
  \begin{align}\label{eq:def_extended_Gibbs}
  \tilde{\mu}((\eta_b,\kappa_b)_{b\in E}\in \mathbf{A}\times \mathbf{B})
  =\int_{\mathbf{B}} \rho_{E}(\d\kappa) \mathbb{E}_\mu\left(\mathbb{1}_{\mathbf{A}}\prod_{e\in E} e^{-\frac12 \kappa_e\eta_e^2+V(\eta_e)}\right).
  \end{align}
  It can be checked that this is a consistent family of measures and thus we can extend $\extmu$ to a measure on  $(\R\times \R_+)^{\Edge(\Z^d)}$.
 It was  explained in \cite{MR2322690} that  $\tilde{\mu}$ is itself a Gibbs measure for the specification
  $\tilde \gamma_\Lambda$ defined by
  \begin{align}\label{eq:specification_gamma_tilde}
  \tilde{\gamma}_\Lambda((\d \bar\eta,\d \bar\kappa), (\eta,\kappa))
  %= \tilde{\gamma}_\Lambda((\d \eta,\d \kappa)| (\eta_{\Edge(\Lambda)^{\crm}},%\kappa_{\Edge(\Lambda)^{\crm}})
  =\frac{\exp\left(-\frac12 \sum_{e\in \Edge(\Lambda)}\bar\kappa_e\bar\eta_e^2\right)}{Z_\Lambda(\eta_{\Edge(\Lambda)^{\crm}})} \nu_\Lambda^{\eta_{\Edge(\Lambda)^{\crm}}}(\d \bar\eta)\prod_{e\in \Edge(\Lambda)}\rho(\d\bar\kappa_e)
  \prod_{e\in \Edge(\Lambda)^{\crm}}\delta_{\kappa_e}(\d\bar\kappa_e).
  \end{align}
Note that the distribution $(\d\bar\eta,\d\bar\kappa)_{\Edge(\Lambda)}$  actually only depends on $\eta_{\Edge(\Lambda)^{\crm}}$ and is independent of $\kappa$.
%More details about extended gradient Gibbs measure can be found in the aforementioned work.
Let us add one remark concerning the notation. In this work we essentially consider three strongly related viewpoints of one model. The first viewpoint are gradient Gibbs measures that are measures on 
$\R^{\Edge(\Z^d)}_g$. Thy will be denoted by $\mu$ and the corresponding specification is denoted by $\gamma$. Then there are extended gradient Gibbs measures for a specification $\extgamma$. They are measures on 
$\R_g^{\Edge(\Z^d)}\times \R_+^{\Edge(\Z^d)}$ and will be denoted by $\extmu$.
The $\eta$-marginal of $\extmu$ is a gradient Gibbs measure $\mu$.
Finally there is also the $\kappa$-marginal  of $\extmu$ which is a measure on $\R_+^{\Edge(\Z^d)}$
and will be denoted by $\discmu$. An important result here is that
$\discmu$ is a Gibbs measure for a specification $\discgamma$ if $\rho$ is a measure as in 
\eqref{eq:def_rho}. In this case $\discmu$ is a measure on the discrete space $\{1,q\}^{\Edge(\Z^d)}$.
We expect that this result can be extended to far more general measures $\rho$ but we do not pursue this matter here.
%In the following we  restrict our attention to measures $\rho$ as in \eqref{eq:def_rho} and therefore to potentials as in \eqref{eq:pot_special}, although several results of Section \ref{sec:percolation} could be extended to more general measures $\rho$.
%Then the $\kappa$-marginal $\discmu$ is in fact a measure on the discrete set 
%A major goal in the following is to show that $\discmu$ is a Gibbs measure for a certain specification $\extgamma$.
 To keep the notation consistent we denote objects with single spin space $\R$, e.g., gradient Gibbs measures without symbol modifier, objects with single spin space $\{1,q\}$, e.g., the $\kappa$-marginal with a bar, 
and objects with single spin space $\{1,q\}\times \R$, e.g., extended Gibbs with a tilde.
Let us also fix a notation for the corresponding relevant $\sigma$-algebras. 
We write as before $\gradsigma_E$ for the $\sigma$-algebra on $\R^{\Edge(\Z^d)}$ generated by 
$(\eta_e)_{e\in E}$ and we define $\gradsigma=\gradsigma_{\Edge(\Z^d)}$. For the
$\kappa$-marginal we similarly consider the $\sigma$-algebra
$\discsigma_E$ on $\{1,q\}^{\Edge(\Z^d)}$ generated by $(\kappa_e)_{e\in E}$ and we write again $\discsigma=\discsigma_{\Edge(\Z^d)}$.
For the extended space $\R^{\Edge(\Z^d)}\times \{1,q\}^{\Edge(\Z^d)}$ we use the product
$\sigma$-algebra $\extsigma_E=\pi_1^{-1}(\gradsigma_E)\otimes \pi_2^{-1}(\discsigma_E)$.

It was already remarked in \cite{MR2322690} that this setting resembles the situation for the Potts model that can be coupled to the random cluster model via the Edwards-Sokal coupling measure.

\paragraph{The random conductance model.}
As explained before our strategy is to analyse the $\kappa$-marginal of extended gradient Gibbs measures
and  then use the results to deduce properties of the gradient Gibbs measures for $V_{p,q}$. 
The key observation is that the $\kappa$-marginal of extended gradient Gibbs measures is given by the infinite volume limit of a strongly coupled random conductance model.
%To simplify the analysis
%we  restrict our attention to measures $\rho$ as in \eqref{} and therefore to potentials as in \eqref{eq:pot_special} although several results of Section \ref{sec:percolation} could be extended to more general measures $\rho$.
To motivate the definition of the random conductance model we consider the $\kappa$-marginal of the extended specification $\tilde{\gamma}$ defined in \eqref{eq:specification_gamma_tilde}.
For zero boundary value $\bar{0}\in \R_g^{\Edge(\Z^d)}$ with $\bar{0}_e=0$ and $\lambda\in \{1,q\}^{\Edge(\Z^d)}$ we obtain 
\begin{align}
\tilde\gamma_{\Lambda}\big(\kappa_{\Edge(\Lambda)}=\lambda_{\Edge(\Lambda)} \, ,\, \bar{0}\big)
=\frac{1}{Z} \int \prod_{e\in \Edge(\Lambda)} p^{\mathbb{1}_{\lambda_e=q}}(1-p)^{\mathbb{1}_{\lambda_e=1}}
e^{-\frac12\lambda_e^2\omega_e^2} \,\nu^{\bar{0}_{\Edge(\Lambda)^{\crm}}}_{\Lambda}(\d\omega).
\end{align}
We write $\Lambda^w=\bar{\Lambda}/\partial \Lambda$ for the graph where the entire boundary is collapsed
to a single point (this is called wired boundary conditions and we will discuss this below in more detail).
We denote the lattice Laplacian with conductances $\lambda$ and zero boundary condition outside of $\overcirc\Lambda$ by $\tilde{\Delta}_{\lambda}^{{\Lambda}^w}$, i.e., $\tilde{\Delta}_{\lambda}^{{\Lambda}^w}$ acts on functions $f:\overcirc{\Lambda}\to \R$ by  $\tilde{\Delta}_{\lambda}^{{\Lambda}^w}f(x)
=\sum_{y\sim x} \lambda_{\{x,y\}}(f(x)-f(y))$ where we set  $f(y)=0$ for $y\notin \overcirc\Lambda$. The definition \eqref{eq:specification_gamma_tilde} and an integration by parts 
followed by Gaussian calculus imply then
\begin{align}
\begin{split}\label{eq:motivation_rc_model}
\tilde\gamma_{\Lambda}\big(\kappa_{\Edge(\Lambda)}=\lambda_{\Edge(\Lambda)}\, ,\, \bar{0}\big)
&=\frac{1}{Z} p^{|\{e\in \Edge(\Lambda)\,:\, \lambda_e=q\}|}(1-p)^{|\{e\in \Edge(\Lambda)\,:\, \lambda_e=1\}|}\int e^{-\frac{1}{2}(\p,\tilde{\Delta}_{\lambda}^{\Lambda^w}\p)}
\,\d\p_{\overcirc{\Lambda}}
\\
&=\frac{1}{Z}\frac{p^{|\{e\in \Edge(\Lambda)\,:\, \lambda_e=q\}|}(1-p)^{|\{e\in \Edge(\Lambda)\,:\, \lambda_e=1\}|}}{\sqrt{\det (2\pi)^{-1} \tilde{\Delta}_{\lambda}^{\Lambda^w}}}.
\end{split}
\end{align}
It simplifies the presentation to introduce the random conductance model of interest in a slightly
more general setting.
We consider a finite and connected graph $G=(V,E)$. The combinatorial graph Laplacian $\Delta_c$ associated to set of conductances $c:E\to \R_+$ is defined by
\begin{align}\label{eq:def_weight_laplace}
 \Delta_c f(x)=\sum_{y\sim x} c_{\{x,y\}} (f(x)-f(y)) 
\end{align}
for any function $f:V\to \R$.
Note that we defined the graph Laplacian as a non-negative operator which is convenient
for our purposes and common in the context of graph theory.
In the following we view  the Laplacian $\Delta_c$ as a linear map on the space $H_0=\{f:V\to \R: \sum_{x\in V} f(x)=0\}$ of functions with vanishing average.
We define  $\det \Delta_c$ as the determinant of this linear map.
By the maximum principle the Laplacian is injective on $H_0$, hence $\det \Delta_c>0$.
Sometimes we clarify the underlying graph by writing $\Delta_c^G$.
\begin{remark}
In the general setting it is more natural to let the Laplacian act on $H_0$ instead of fixing a point to 0 as in the definition of $\tilde{\Delta}_{\lambda}^{{\Lambda}^w}$ above where this corresponds to Dirichlet boundary conditions. 
It would also be possible to fix a point $x\in \Vertex(G)$ and consider $\tilde{\Delta}_c^G$
acting on functions $f:\Vertex(G)\setminus \{x_0\}\to \R$ 
defined by $(\tilde\Delta_c^Gf)(x)=\sum_{y\sim x} c_{\{x,y\}} f(x)-f(y)$ for $x\in \Vertex(G)\setminus \{x_0\}$ where we set $f(x_0)=0$.
It is easy to see using, e.g., Gaussian calculus and a change of measure that the determinant of
$\tilde\Delta_c^G$ is independent of $x_0$ and 
\begin{align}\label{eq:relation_Laplacians}
|G|\det \tilde{\Delta}_c^G=\det \Delta_c^G.
\end{align}
% those two operators differs only by the number of vertices in the graph.   
%In fact the determinant of $\Delta_c$ is the product of all nonzero eigenvalues of the Laplacian 
%while the determinant of $\tilde{\Delta}_{\lambda}^{{\Lambda}^w}$ is a minor of the Laplacian which can be shown to be all equal. 
\end{remark}
%We will explain the relation between $\tilde \Delta$ and $\Delta$ below in Remark \ref{rem:multiedges}.
Motivated by \eqref{eq:motivation_rc_model} we fix a real number $q\geq 1$  and consider the following probability
measure on $\{1,q\}^E$  
\begin{align}\label{eq:def_perc_model}
\P^{G,p}(\kappa)=\frac{1}{Z}\frac{p^{|\{e\in E: \kappa_e=q\}|}(1-p)^{|\{e\in E:\kappa_e=1\}|}}{\sqrt{\det \Delta_\kappa}}
\end{align}
where $Z=Z^{G,p}$ denotes a normalisation constant such that $\P^{G,p}$ is a probability measure.
In the following we will often drop $G$ and $p$ from the notation  and we will always suppress $q$.
 We restrict our attention to $q\geq 1$ because by scaling the model with conductances $\{1,q\}$ has the same distribution as a model with conductances $\{\alpha,\alpha q\}$ for $\alpha>0$  so that we can set the smaller conductance to 1.
Let us state a remark concerning the relation to the random cluster model.
\begin{remark}\label{rem:sim_random-cluster}
\begin{enumerate}
\item 
We chose the notation such that the similarity to the random cluster model is apparent. Both models have an a priori distribution given by independent Bernoulli distribution with parameter  $p$  on the bonds that is then correlated by a complicated infinite range interaction depending on $q$. They reduce to Bernoulli percolation
for $q=1$.
At the end of Section \ref{sec:further_prop} we state a couple of conjectures about the behaviour of this model that show that we expect similarities with the random cluster model in many more aspects.
%\begin{remark}\label{rem:diff_rc_model}
\item While there are several close  similarities to the random cluster model there is also one important difference  that seems to 
pose additional difficulties in the analysis of this model.
%As discussed before the conditional distribution in a finite set depends on the entire configuration of the conductances and it is not sufficient to know a partition of the boundary. 
The conditional distribution in a finite set depends on the entire configuration of the conductances outside the finite set (not just a partition of the boundary as in the random cluster model).
In particular the often used argument that the conditional distribution of a random 
cluster model in a set given that all boundary edges are closed is the free boundary random cluster distribution has no analogue in our setting.
%\end{remark}
\item We refer to the model as a random conductance model since we will (not very surprisingly) use tools from the theory of electrical networks. Note that in the definition of the potential $V$ the parameters correspond to  different (random) stiffness of the bonds.
\end{enumerate}
\end{remark}

\section{Basic properties of the random conductance model}\label{sec:percolation}
\paragraph{Preliminaries.}
%In this section we introduce and investigate a certain percolation type model
%that shares several features with the random cluster model. 
%Our motivation to study the model is its close connection to the $\kappa$-marginal of extended gradient Gibbs measures. In the following section we will use this connection to prove our main result on Gibbs measures for the potential $V$.
%investigate the distribution and properties for the $\kappa$-marginal of an extended gradient Gibbs measure. The relation to the gradient Gibbs measures for $V$ will be analysed in the next section.
As before we consider a connected graph $G=(V,E)$.
To simplify the notation we introduce for $E'\subset E$ and $\kappa \in \{1,q\}^E$ the notation
\begin{align}\label{eq:def_h}
h(\kappa,E')&= {|\{e\in E': \kappa_e=q\}|}\\ \label{eq:def_s}
s(\kappa,E')&= {|\{e\in E': \kappa_e=1\}|}
\end{align}
 for the number of hard and soft edges respectively and we define
 $h(\kappa)=h(\kappa,E)$ and $s(\kappa)=s(\kappa,E)$.
 Let us introduce the weight of  a
subset of edges $\t\subset E$ by defining
\begin{align}\label{eq:def_weight}
w(\kappa,\t)=\prod_{e\in \t} \kappa_e.
\end{align} 
We will denote the set of all spanning trees of a graph by $\ST(G)$. We  identify spanning trees with their edge sets.
 In the following, we will frequently use the Kirchhoff formula 
 \begin{align}\label{eq:Kirchhoff}
	\det \Delta_c= |G| \sum_{\t \in \ST(G)} w(c,\t). 
 \end{align} 
 for 
 the determinant of a weighted graph Laplacian (cf. \cite{MR1813436} for a proof). Let us remark that the Kirchhoff formula is frequently
 used in statistical mechanics and has also been used in the context of gradient interface models for some potentials as in \eqref{eq:pot_general} in 
 \cite{MR2905798}.
\begin{remark}\label{rem:multiedges}
Note that equation \eqref{eq:Kirchhoff} remains true for graphs with multi-edges and loops. Indeed, loops
have no contribution on both sides and multi-edges can be replaced by a single edge with the sum of the conductances as conductance.
\end{remark}

\paragraph{Correlation inequalities}
 We will now show correlation inequalities for the measures $\P=\P^{G,p}$. 
We start by recalling several of the well known correlation inequalities.
%Our key tool will be the following theorems that state that the Holley condition for the FKG inequality and a criterion showing that it is sufficient to understand the behaviour for all but two edges  fixed to prove the Holley criterion.
To state our results we introduce some notation.
Let $E$ be a finite or countable infinite set. Let $\Omega=\{1,q\}^E$
and $\mathcal{F}$ the $\sigma$-algebra generated by cylinder events. 
We consider the usual partial order on $\Omega$ given by $\omega^1\leq \omega^2$ iff $\omega^1_e\leq \omega^2_e$ for all $e\in E$. A function $X:\Omega\to \R$ is increasing if
$X(\omega_1)\leq X(\omega_2)$  for $\omega_1\leq \omega_2$ and decreasing if $-X$ is increasing.
An event $A\subset \Omega$ is increasing if its indicator function is increasing.
We write $\discmu_1\succsim \bar \mu_2$ if $\discmu_1$ stochastically dominates $\discmu_2$ which is by Strassen's Theorem equivalent to the existence of a coupling $(\omega_1,\omega_2)$ such that $\omega^1\sim \discmu_1$ and $\omega^2\sim\bar \mu_2$ and $\omega^1\geq \omega^2$ (see \cite{MR177430}). We introduce the minimum $\omega^1 \wedge \omega^2$ and the maximum $\omega^1\vee \omega^2$ of two configurations given by
$(\omega^1 \wedge \omega^2)_e=\min(\omega^1_e, \omega^2_e)$
and 
$(\omega^1 \vee \omega^2)_e=\max(\omega^1_e, \omega^2_e)$ for any $e\in E$.
We call a measure $\discmu$ on $\Omega$ strictly positive if $\discmu(\omega)>0$ for all $\omega\in \Omega$. 
Finally we introduce for $f,g\in E$ and $\omega\in\Omega$ the notation $\omega_{fg}^{\pm \pm}\in \Omega$ for the configuration given by
$(\omega_{fg}^{\pm\pm})_e=\omega_e$ for $e\notin \{f,g\}$ 
and $(\omega_{fg}^{\pm \ast})_f= 1+(q-1)_\pm$, $(\omega_{fg}^{\ast \pm})_g= 1+(q-1)_\pm$. We define $\omega_f^{\pm}$ similarly. We sometimes drop the edges $f$, $g$ from the notation.
We write $\discmu(\omega)=\discmu(\{\omega\})$ for $\omega\in\Omega$ and $\discmu(X)=\int_{\Omega}X\,\d\discmu$ for $X:\Omega\to \R$.
\begin{theorem}[Holley inequality]\label{th:stoch_dom}
Let $\Omega=\{1,q\}^E$ be finite and $\discmu_1$, $\discmu_2$ strictly positive measures on $\Omega$  that  satisfy the Holley inequality
\begin{align}\label{eq:HolleyTheorem}
\discmu_2(\omega_1\vee \omega_2)\discmu_1(\omega_1\wedge \omega_2)\geq \discmu_1(\omega_1)\discmu_2(\omega_2) \quad\text{for $\omega_1,\omega_2\in \Omega$}.
\end{align}
Then $\discmu_1\precsim \discmu_2$.
\end{theorem}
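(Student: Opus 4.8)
The plan is to deduce $\discmu_1 \precsim \discmu_2$ from the existence of a monotone coupling: by Strassen's theorem \cite{MR177430}, already quoted above, it suffices to produce a probability measure on $\{(\sigma,\tau)\in\Omega\times\Omega : \sigma\le \tau\}$ with marginals $\discmu_1$ and $\discmu_2$. I would construct such a coupling as the stationary distribution of a continuous-time single-site Glauber dynamics run simultaneously on both coordinates. For a configuration $\sigma$ and an edge $e$, write $\sigma_e^+$ (resp.\ $\sigma_e^-$) for $\sigma$ with its $e$-coordinate set to $q$ (resp.\ to $1$), matching the notation $\omega_e^\pm$ fixed above.

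Concretely I would let $(\sigma_t,\tau_t)$ be the Markov chain on ordered pairs whose only transitions, for each edge $e$, are
\begin{align*}
(\sigma,\tau) &\to (\sigma_e^-,\tau_e^-) &&\text{at rate } 1,\\
(\sigma,\tau) &\to (\sigma_e^+,\tau_e^+) &&\text{at rate } \frac{\discmu_1(\sigma_e^+)}{\discmu_1(\sigma_e^-)},\\
(\sigma,\tau) &\to (\sigma,\tau_e^+) &&\text{at rate } \frac{\discmu_2(\tau_e^+)}{\discmu_2(\tau_e^-)}-\frac{\discmu_1(\sigma_e^+)}{\discmu_1(\sigma_e^-)},
\end{align*}
all ratios being finite and positive by strict positivity of $\discmu_1,\discmu_2$. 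Inspecting the coordinates separately, $\sigma_t$ performs single-site dynamics with downward rate $1$ and upward rate $\discmu_1(\sigma_e^+)/\discmu_1(\sigma_e^-)$, which satisfies detailed balance for $\discmu_1$; the upward rates of $\tau_t$ add up to $\discmu_2(\tau_e^+)/\discmu_2(\tau_e^-)$ against a downward rate $1$, hence are reversible for $\discmu_2$. Strict positivity makes each marginal chain irreducible, so $\discmu_1$ and $\discmu_2$ are their unique stationary laws.

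The crux, and the step I expect to be the main obstacle, is that the chain never leaves the set of ordered pairs. Since the three transition types only lower both coordinates, raise both coordinates, or raise the upper coordinate alone, order is preserved as soon as every rate is nonnegative; the only rate in question is the third. Its nonnegativity is exactly the hypothesis \eqref{eq:HolleyTheorem}: applying the Holley inequality with $\omega_1=\sigma_e^+$ and $\omega_2=\tau_e^-$ gives $\omega_1\vee\omega_2=\tau_e^+$ and $\omega_1\wedge\omega_2=\sigma_e^-$ (using $\sigma\le\tau$ and $q\ge 1$), whence
\begin{align*}
\discmu_2(\tau_e^+)\,\discmu_1(\sigma_e^-)\ge \discmu_1(\sigma_e^+)\,\discmu_2(\tau_e^-),
\end{align*}
i.e.\ the third rate is $\ge 0$. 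Thus the dynamics is a well-defined irreducible chain on the finite set of ordered pairs, and any stationary distribution is supported there with marginals $\discmu_1$ and $\discmu_2$ by the previous paragraph. This is the desired monotone coupling, and Strassen's theorem then yields $\discmu_1\precsim\discmu_2$.
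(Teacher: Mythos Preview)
Your argument is the standard monotone-coupling proof via coupled single-site Glauber dynamics, which is exactly the proof the paper points to (the reference \cite[Theorem~2.1]{MR2243761} is Grimmett's presentation of this argument). The one small wrinkle is that irreducibility of the \emph{coupled} chain is not quite automatic when the third rate vanishes, but you do not actually need it: on a finite state space a stationary distribution exists, the ordered pairs form an invariant set, and the marginals of any stationary law supported there must equal $\discmu_1,\discmu_2$ because the marginal chains are irreducible.
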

\begin{proof}
The original proof appeared in \cite{MR0341552}, a simpler proof can be found , e.g., in \cite[Theorem 2.1]{MR2243761}. 
\end{proof}
A strictly positive measure is called \emph{strongly positively associated}
if it satisfies the FKG lattice condition
\begin{align}\label{eq:lattice_FKG}
\discmu(\omega_1\vee\omega_2) \discmu(\omega_1\wedge \omega_2)\geq \discmu(\omega_1)\discmu(\omega_2) \quad\text{ for $\omega_1,\omega_2\in \Omega$}.
\end{align}
\begin{theorem}\label{th:FKG}
A strongly positively associated measure $\discmu$ satisfies the FKG inequality, i.e., for increasing functions $X,Y:\Omega\to \R$ 
\begin{align}\label{eq:FKG}
\discmu(XY)\geq \discmu(X)\discmu(Y).
\end{align}
\end{theorem}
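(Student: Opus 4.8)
The plan is to derive the FKG inequality \eqref{eq:FKG} from the Holley inequality (Theorem \ref{th:stoch_dom}) by a standard change-of-measure argument. First I would reduce to the case where the increasing function $Y$ is strictly positive: since $\Omega$ is finite, $Y$ is bounded, and replacing $Y$ by $Y+c$ for a large constant $c>0$ adds exactly $c\,\discmu(X)$ to both sides of \eqref{eq:FKG}. Hence it suffices to establish the inequality for strictly positive increasing $Y$, and the general case follows.

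With $Y>0$ fixed, I would introduce the tilted measure $\discmu_2(\omega)=Y(\omega)\discmu(\omega)/\discmu(Y)$ and set $\discmu_1=\discmu$. Both are strictly positive probability measures on $\Omega$, because $\discmu$ is strictly positive by hypothesis and $Y>0$. The crucial step is to verify that the pair $(\discmu_1,\discmu_2)$ satisfies the Holley condition \eqref{eq:HolleyTheorem}. After cancelling the common normalisation $\discmu(Y)$, this reduces to the claim
\begin{align*}
Y(\omega_1\vee\omega_2)\,\discmu(\omega_1\vee\omega_2)\,\discmu(\omega_1\wedge\omega_2)\geq Y(\omega_2)\,\discmu(\omega_1)\,\discmu(\omega_2).
\end{align*}
This I would obtain by multiplying two separate bounds: the FKG lattice condition \eqref{eq:lattice_FKG}, which gives $\discmu(\omega_1\vee\omega_2)\discmu(\omega_1\wedge\omega_2)\geq\discmu(\omega_1)\discmu(\omega_2)$, and the monotonicity of $Y$, which gives $Y(\omega_1\vee\omega_2)\geq Y(\omega_2)$ since $\omega_1\vee\omega_2\geq\omega_2$. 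Here the strict positivity of $Y$ and the nonnegativity of all the $\discmu$-weights are precisely what allow the two inequalities to be multiplied together while preserving the direction.

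Once the Holley condition is checked, Theorem \ref{th:stoch_dom} yields the stochastic domination $\discmu_1\precsim\discmu_2$. Evaluating this domination on the increasing function $X$ gives $\discmu(X)=\discmu_1(X)\leq\discmu_2(X)=\discmu(XY)/\discmu(Y)$, and multiplying through by $\discmu(Y)>0$ reproduces exactly \eqref{eq:FKG}. Note that $X$ need not be positive, but this causes no difficulty since stochastic domination applies to all increasing functions regardless of sign. The only genuinely substantive point is the verification of \eqref{eq:HolleyTheorem} for the tilted pair, which combines the lattice condition with the monotonicity of $Y$; I expect this to be the main obstacle, although it is quite short, while every other step is routine bookkeeping.
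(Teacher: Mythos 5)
Your proof is correct and is essentially the standard argument: the paper itself only cites \cite[Theorem 2.16]{MR2243761} for this statement, and the proof given there is exactly your change-of-measure derivation from the Holley inequality (reduce to $Y>0$, tilt by $Y$, verify \eqref{eq:HolleyTheorem} from the lattice condition together with monotonicity of $Y$, and evaluate the resulting stochastic domination on $X$). All the details you flag — strict positivity of the tilted measure, applicability of domination to signed increasing $X$ — check out.
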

\begin{proof}
A proof can be found in  \cite[Theorem 2.16]{MR2243761}.
\end{proof}
The next theorem provides a simple way to verify the assumptions
of Theorem \ref{th:stoch_dom} and Theorem \ref{th:FKG}.
Basically it states that it is sufficient to check the conditions when varying at most two edges.
\begin{theorem}\label{th:stoch_dom_crit}
Let $\Omega=\{1,q\}^E$ be finite and $\discmu_1$, $\discmu_2$ strictly positive measures on $\Omega$.
Then $\discmu_1$ and $\discmu_2$ satisfy \eqref{eq:HolleyTheorem}
iff the following two inequalities hold
\begin{align}
\discmu_2(\omega_f^+)\discmu_1(\omega_f^-)&\geq 
\discmu_1(\omega_f^+)\discmu_2(\omega_f^-),\quad \text{for $\omega\in \Omega$, $f\in E$,} \label{eq:stoch_dom_crit1}\\
\discmu_2(\omega_{fg}^{++})\bar{\mu}_1(\omega_{fg}^{--})
&\geq \discmu_1(\omega_{fg}^{+-})\discmu_2(\omega_{fg}^{-+}),\quad \text{for $\omega\in \Omega$, $f,g\in E$}. \label{eq:stoch_dom_crit2}.
\end{align}
In particular, \eqref{eq:stoch_dom_crit1} and \eqref{eq:stoch_dom_crit2}
together imply $\discmu_1\precsim\discmu_2$.
\end{theorem}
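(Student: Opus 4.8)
The plan is to establish the two implications of the stated equivalence and then to read off the final ``In particular'' clause by combining \eqref{eq:HolleyTheorem} with the Holley inequality of Theorem \ref{th:stoch_dom}. Necessity is immediate: if $\discmu_1,\discmu_2$ satisfy \eqref{eq:HolleyTheorem} for all pairs, then specialising to configurations $\omega_1,\omega_2$ that differ in a single coordinate $f$, so that $\{\omega_1,\omega_2\}=\{\omega_f^+,\omega_f^-\}$ with $\omega_1\vee\omega_2=\omega_f^+$ and $\omega_1\wedge\omega_2=\omega_f^-$, reproduces exactly \eqref{eq:stoch_dom_crit1}; taking the incomparable pair $\{\omega_{fg}^{+-},\omega_{fg}^{-+}\}$, whose join is $\omega_{fg}^{++}$ and whose meet is $\omega_{fg}^{--}$, reproduces \eqref{eq:stoch_dom_crit2}.

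For sufficiency I would prove \eqref{eq:HolleyTheorem} for every pair $\omega_1,\omega_2$ by induction on the disagreement set $D=\{e\in E:(\omega_1)_e\neq(\omega_2)_e\}$, the cases $|D|\le 2$ being precisely the hypotheses. It is useful to note first that \eqref{eq:stoch_dom_crit1} is equivalent to the monotonicity of the ratio $\omega\mapsto\discmu_2(\omega)/\discmu_1(\omega)$. If $\omega_1$ and $\omega_2$ are comparable, one flips the coordinates in $D$ one at a time along a monotone path and multiplies the corresponding instances of \eqref{eq:stoch_dom_crit1}; the single-edge inequalities telescope and yield \eqref{eq:HolleyTheorem} (with equality when $\omega_1\le\omega_2$). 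If $\omega_1$ and $\omega_2$ agree on some coordinate $e$, then conditioning on the value at $e$ produces strictly positive measures on $\{1,q\}^{E\setminus\{e\}}$ that inherit \eqref{eq:stoch_dom_crit1} and \eqref{eq:stoch_dom_crit2}; since the four relevant configurations all share that value at $e$, the normalising constants cancel and \eqref{eq:HolleyTheorem} for the pair follows from the induction hypothesis on the smaller edge set.

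The main obstacle is the remaining case of \emph{incomparable} pairs that in addition disagree on every coordinate, so that $\omega_1\vee\omega_2$ is the maximal and $\omega_1\wedge\omega_2$ the minimal configuration and no coordinate is available for conditioning. Here the naive strategy of chaining single-edge inequalities breaks down: neither $\discmu_1$ nor $\discmu_2$ need satisfy the FKG lattice condition \eqref{eq:lattice_FKG} individually — the assumptions constrain only the pair — so one cannot transport a conditional odds ratio monotonically through a background change inside a single measure. This is exactly where the genuine two-edge input \eqref{eq:stoch_dom_crit2}, as opposed to two separate applications of \eqref{eq:stoch_dom_crit1}, becomes indispensable: selecting a coordinate $f$ on which $\omega_1$ dominates and a coordinate $g$ on which $\omega_2$ dominates, one uses \eqref{eq:stoch_dom_crit2} to swap the values at $f$ and $g$, and the delicate point is to arrange the bookkeeping so that this step lands on a further instance of \eqref{eq:HolleyTheorem} with a strictly smaller disagreement set rather than on an (unavailable) single-measure lattice inequality. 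Once \eqref{eq:HolleyTheorem} holds for all pairs, Theorem \ref{th:stoch_dom} gives $\discmu_1\precsim\discmu_2$, which is the concluding assertion.
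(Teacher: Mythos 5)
Your necessity argument, the telescoping of \eqref{eq:stoch_dom_crit1} for comparable pairs, and the conditioning step that removes any coordinate on which $\omega_1$ and $\omega_2$ agree are all correct. But the proof is not complete: the one case in which \eqref{eq:stoch_dom_crit2} must genuinely be used --- an incomparable pair which, after the conditioning reduction, disagrees on every edge of a set of size at least three --- is exactly the case you leave open, and ``arranging the bookkeeping'' is not an argument. To see concretely why no bookkeeping of the kind you describe can succeed, take $D_1=\{f\}$, $D_2=\{g_1,g_2\}$ and write $R_i(\xi)=\discmu_i(\xi_f^+)/\discmu_i(\xi_f^-)$ for the odds of $f$ under $\discmu_i$ with background $\xi$ on $\{g_1,g_2\}$. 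The instance of \eqref{eq:HolleyTheorem} you need is $R_2(\xi^{++})\geq R_1(\xi^{--})$, whereas the hypotheses only give $R_2(\xi)\geq R_1(\xi)$ for a common background (this is \eqref{eq:stoch_dom_crit1}) and $R_2(\xi_g^{+})\geq R_1(\xi_g^{-})$ for backgrounds differing in a single coordinate (this is \eqref{eq:stoch_dom_crit2}). Bridging a background change of two coordinates forces a comparison of $R_1$ (or $R_2$) at two different backgrounds of the \emph{same} measure, i.e.\ an instance of the lattice condition \eqref{eq:lattice_FKG} for one of the two measures individually --- which, as you yourself note, is not among the hypotheses. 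Both natural inductive reductions (lowering $f$ in $\omega_1$, or raising a $g_i$ in $\omega_1$ / lowering it in $\omega_2$ and invoking the induction hypothesis on the smaller disagreement set) leave behind precisely such a single-measure factor, so the induction does not close.

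This flagged step is therefore not a presentational delicacy; it is the entire content of the sufficiency direction, and your proposal does not supply it. The paper itself gives no argument here either --- the proof is a citation of \cite[Theorem 2.3]{MR2243761} --- so there is no ``paper's route'' for you to have missed; but note that every place the result is actually applied in the paper (Corollary \ref{co:stoch_dom}, Corollary \ref{co:Holley1}, Corollary \ref{co:boundary_conditions}) is a situation where one of the two measures is known to be strongly positively associated, and in that setting the missing monotonicity of $R_1$ or $R_2$ in the background is available and your induction closes immediately. To complete the proof as stated you must either reproduce the argument of the cited reference for the general case or restrict the hypotheses to the setting in which the theorem is used.
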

\begin{proof}
See  \cite[Theorem 2.3]{MR2243761}.
\end{proof}
We state one simple corollary of the previous results.
\begin{corollary}\label{co:stoch_dom}
Let $\discmu_1$, $\discmu_2$ be strictly positive measures on $\Omega=\{1,q\}^E$ such that 
one of the measure $\discmu_1$, $\discmu_2$ is strongly positively associated.
% (i.e., satisfies \eqref{eq:lattice_FKG})
 Then
\begin{align}\label{eq:stoch_dom_crit1_repeated}
\discmu_2(\omega_f^+)\discmu_1(\omega_f^-)&\geq 
\discmu_1(\omega_f^+)\discmu_2(\omega_f^-),\quad \text{for $\omega\in \Omega$, $f\in E$}
\end{align}
implies $\discmu_1\precsim \discmu_2$.
\end{corollary}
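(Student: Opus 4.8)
The plan is to reduce the statement to the two-edge criterion of Theorem \ref{th:stoch_dom_crit}. Observe first that the hypothesis \eqref{eq:stoch_dom_crit1_repeated} is literally the single-edge inequality \eqref{eq:stoch_dom_crit1}. Hence, once I also establish the two-edge inequality \eqref{eq:stoch_dom_crit2}, Theorem \ref{th:stoch_dom_crit} guarantees that $\discmu_1$ and $\discmu_2$ satisfy the Holley inequality \eqref{eq:HolleyTheorem}, and Theorem \ref{th:stoch_dom} then yields $\discmu_1\precsim\discmu_2$. So the whole task is to deduce \eqref{eq:stoch_dom_crit2} from \eqref{eq:stoch_dom_crit1_repeated} together with the strong positive association of one of the two measures.

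To carry this out I fix edges $f,g\in E$ and a configuration $\omega$, and abbreviate the four relevant values by $a_i=\discmu_i(\omega_{fg}^{++})$, $b_i=\discmu_i(\omega_{fg}^{+-})$, $c_i=\discmu_i(\omega_{fg}^{-+})$, $d_i=\discmu_i(\omega_{fg}^{--})$ for $i\in\{1,2\}$; all are strictly positive by assumption, which is what will permit the divisions below. In this notation \eqref{eq:stoch_dom_crit1_repeated}, applied to the edge $f$ with $g$ held at its low value, reads $b_2 d_1\geq b_1 d_2$, and applied to $f$ with $g$ held high it reads $a_2 c_1\geq a_1 c_2$. The FKG lattice condition \eqref{eq:lattice_FKG}, specialised to the pair $\omega_1=\omega_{fg}^{+-}$, $\omega_2=\omega_{fg}^{-+}$ whose join and meet are $\omega_{fg}^{++}$ and $\omega_{fg}^{--}$, states for a strongly positively associated measure $\discmu_i$ that $a_i d_i\geq b_i c_i$. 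The goal \eqref{eq:stoch_dom_crit2} is precisely $a_2 d_1\geq b_1 c_2$.

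Suppose first that $\discmu_2$ is strongly positively associated, so $a_2 d_2\geq b_2 c_2$. Combining this with $b_2 d_1\geq b_1 d_2$ gives $a_2 d_1\geq \frac{b_2 c_2}{d_2}\,d_1=\frac{b_2 d_1}{d_2}\,c_2\geq b_1 c_2$, which is \eqref{eq:stoch_dom_crit2}. If instead $\discmu_1$ is strongly positively associated, so $a_1 d_1\geq b_1 c_1$, I use the other instance $a_2 c_1\geq a_1 c_2$ of the hypothesis to obtain $a_2 d_1\geq \frac{a_1 c_2}{c_1}\,d_1=\frac{a_1 d_1}{c_1}\,c_2\geq b_1 c_2$, again \eqref{eq:stoch_dom_crit2}. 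Since $f,g$ and $\omega$ were arbitrary, this establishes the two-edge criterion in both cases and completes the reduction.

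The argument is short, and the only point requiring care is the bookkeeping: one must pick the correct conditioning of the single-edge hypothesis (on $g$ low versus $g$ high) so that it meshes with the FKG lattice inequality of whichever measure is strongly positively associated. I do not anticipate any genuine analytic obstacle beyond this combinatorial matching; strict positivity justifies the divisions in the chains above, and the quantification over all $f,g$ and $\omega$ renders immaterial the apparent asymmetry of \eqref{eq:stoch_dom_crit2} in the roles of $f$ and $g$.
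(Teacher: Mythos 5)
Your proof is correct and follows essentially the same route as the paper: reduce to the two-edge criterion of Theorem \ref{th:stoch_dom_crit} and derive \eqref{eq:stoch_dom_crit2} by chaining one instance of the single-edge hypothesis with the FKG lattice inequality of whichever measure is strongly positively associated. The case where $\discmu_1$ is strongly positively associated is exactly the paper's displayed computation, and your second case makes explicit the symmetric argument the paper leaves as ``similar.''
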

\begin{proof}
Assuming that $\discmu_1$ is strongly positively associated we find using first the assumption
\eqref{eq:stoch_dom_crit1_repeated} and then \eqref{eq:lattice_FKG}
\begin{align}
\discmu_2(\omega_{fg}^{++})\discmu_1(\omega_{fg}^{--})
\geq \frac{\discmu_1(\omega_{fg}^{++})\discmu_2(\omega_{fg}^{-+})}{\discmu_1(\omega_{fg}^{-+})}\discmu_1(\omega_{fg}^{--})\geq \discmu_2(\omega_{fg}^{-+})\discmu_1(\omega_{fg}^{+-}).
\end{align}
Now Theorem \ref{th:stoch_dom_crit} implies the claim. The proof if $\discmu_2$ is strictly positively associated is similar.
\end{proof}

It is convenient to derive 
the following correlation results for the measures $\P^{G,p}$  from corresponding results for the weighted spanning tree measure. The weighted spanning tree measure on a connected weighted graph $(G,\kappa)$ is a measure
on $\ST(G)$ with distribution
\begin{align}\label{eq:weighted_spanning}
\mathbb{Q}_\kappa^G(\t)=\frac{w(\kappa,\t)}{\sum_{\t'\in \ST(G)}w(\kappa,\t')}.
\end{align} 
This model has been  studied extensively, see \cite{MR1825141} for a survey. An important special case is the uniform spanning tree corresponding to constant conductances $\kappa$ that assigns equal probability to every spanning tree.

 The following lemma provides the basic estimate to check the condition \eqref{eq:stoch_dom_crit2} for the measures $\P^{G,p}$.
 %We fix any two different edges $f,g\in E$. Let $\kappa\in (\R_+)^E$ be a set of conductances such
 %that $\kappa_f=\kappa_g=1$. Let $c_f^{+}\geq c_f^{-}>0$ and $c_g^+\geq c_g^->0$ be constants.
 %We consider the four sets of conductances $\kappa^{\circ \square}$ where $\circ,\square\in \{+,-\}$ 
 %which are given by
 %$\kappa^{\circ\square}_e=\kappa_e$ for $e\notin \{f,g\}$,
 %$\kappa^{\circ\square}_f= c_f^\circ$, and $\kappa^{\circ\square}_g=c_g^\square$.
Recall the notation $\kappa^{\pm\pm}_{fg}$ introduced before Theorem \ref{th:stoch_dom}
and also the shorthand $\kappa^{\pm\pm}$. 
 \begin{lemma}\label{le:lattice_2_edges}
For a finite graph $G$ and $\kappa\in \{1,q\}^E$ as above  
 \begin{align}\label{eq:lattice_2_edges}
 \det \Delta_{\kappa^{++}} \, \det \Delta_{\kappa^{--}}
 \leq\det \Delta_{\kappa^{+-}} \, \det \Delta_{\kappa^{-+}}.
 \end{align}
 \end{lemma}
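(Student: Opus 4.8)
The plan is to use Kirchhoff's formula \eqref{eq:Kirchhoff} to pass from determinants to weighted spanning-tree partition functions, and then to recognise the resulting inequality as the pairwise negative correlation of the two edges $f,g$ under the weighted spanning tree measure. Writing $W(\kappa)=\sum_{\t\in\ST(G)}w(\kappa,\t)$, formula \eqref{eq:Kirchhoff} gives $\det\Delta_\kappa=|G|\,W(\kappa)$; since each side of \eqref{eq:lattice_2_edges} is a product of two such determinants, the factors $|G|$ cancel and it suffices to prove
\[
W(\kappa^{++})\,W(\kappa^{--})\leq W(\kappa^{+-})\,W(\kappa^{-+}).
\]

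Next I would exploit multilinearity. Fix $\kappa$ on $E\setminus\{f,g\}$ and regard $x=\kappa_f$ and $y=\kappa_g$ as variables. Each spanning tree uses each of $f,g$ at most once, so $W$ is affine in $x$ and in $y$, and I can write $W=A+Bx+Cy+Dxy$, where $A,B,C,D\geq 0$ are the total weights (with the conductances of $f$ and/or $g$ divided out) of the spanning trees that contain, respectively, neither of $f,g$; the edge $f$ but not $g$; the edge $g$ but not $f$; and both $f$ and $g$. Substituting $x,y\in\{1,q\}$ yields the four quantities $W(\kappa^{\pm\pm})$, and a direct expansion shows that almost everything cancels:
\[
W(\kappa^{+-})\,W(\kappa^{-+})-W(\kappa^{++})\,W(\kappa^{--})=(q-1)^2\,(BC-AD).
\]
Since $q\geq 1$, the lemma is therefore equivalent to the single inequality $BC\geq AD$.

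Finally, $BC\geq AD$ is exactly the statement that $f$ and $g$ are negatively correlated under the weighted spanning tree measure $\Q_\kappa^G$ of \eqref{eq:weighted_spanning}: dividing through by the partition function, $AD\leq BC$ reads $\Q_\kappa^G(f,g\in\t)\leq \Q_\kappa^G(f\in\t)\,\Q_\kappa^G(g\in\t)$. This is the classical negative association of (weighted) spanning trees, which I would invoke from the theory of electrical networks; concretely it follows from the transfer-current theorem, since the probability that a given edge set lies in the tree is a principal minor of the symmetric positive semidefinite transfer-current matrix $Y$, whence $\Q_\kappa^G(f,g\in\t)=Y_{ff}Y_{gg}-Y_{fg}^2\leq Y_{ff}Y_{gg}=\Q_\kappa^G(f\in\t)\,\Q_\kappa^G(g\in\t)$ (see \cite{MR1825141}). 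I expect this negative-correlation input to be the only non-elementary ingredient, and hence the main obstacle: the reduction via Kirchhoff and the algebraic identity are routine, but avoiding a citation would require an independent proof of $BC\geq AD$, most naturally by rewriting $A,B,C,D$ as spanning-tree weights of the deletion/contraction minors $G\setminus\{f,g\}$, $(G/f)\setminus g$, $(G\setminus f)/g$, $G/\{f,g\}$ and running a coupling argument for the weighted spanning tree.
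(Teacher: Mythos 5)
Your proposal is correct and follows essentially the same route as the paper's proof: reduce via Kirchhoff's formula to the spanning-tree partition functions, expand using bilinearity in $\kappa_f,\kappa_g$ to see that the difference of the two sides equals $(q-1)^2$ times the negative-correlation quantity $A_fA_g-A_{fg}A$, and then invoke the negative edge correlations of the weighted spanning tree from \cite{MR1825141}. The only cosmetic difference is that the paper normalises by assuming $\kappa_f=\kappa_g=1$ rather than dividing the edge conductances out of the coefficients.
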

 \begin{remark}
 The proof in fact extends to any $\kappa \in \R_+^E$ and
 $(\kappa^{\pm\pm}_{fg})_f=c_f^{\pm}$, $(\kappa^{\pm\pm}_{fg})_g=c_g^{\pm}$
 with $c_f^-\leq c_f^+$ and $c_g^-\leq c_g^+$.
 \end{remark}
 \begin{proof}
 The lemma can be derived from the fact that the weighted  spanning tree has negative correlations. 
It is well known (see, e.g., \cite{MR1825141}) that for all positive weights $\kappa$ on a finite graph $G$ 
the measure $\mathbb{Q}_\kappa^G$ has negative edge correlations
 \begin{align}\label{eq:neg_correlations_ST}
 \mathbb{Q}^G_\kappa(e\in \t|f\in \t)\leq \mathbb{Q}^G_\kappa(e\in \t).
 \end{align}
Simple algebraic manipulations show that this is equivalent to
\begin{align}\label{eq:consequence_neg_correlation_UST}
\Q_\kappa^G(e \in \t, f\in \t) \Q^G_\kappa( e\notin \t, f\notin \t)
\leq \Q^G_\kappa(e\in \t, f\notin \t)\Q^G_\kappa(e\notin \t,f\in \t).
\end{align}
%We can assume that $\kappa=\kappa^{--}$, i.e., $\kappa_f=\kappa_g=1$ and 
We introduce the following sums
\begin{align}
\begin{alignedat}{2}
%\begin{split}\label{eq:Afg_Af_Ag_A}
A_{fg}&=\sum_{\t\in \ST(G), \, f,g\in \t} w(\kappa,\t),\qquad
&A_{f}&=\sum_{\t\in \ST(G), \, f\in \t, \, g\notin \t} w(\kappa,\t),\\
A_{g}&=\sum_{\t\in \ST(G), \, g\in \t,\, f\notin \t} w(\kappa,\t),\qquad
&A&=\sum_{\t\in \ST(G), \, f,g\notin \t} w(\kappa,\t).
%\end{split}
\end{alignedat}
\end{align}
With this notation multiplication 
by $(A_{fg}+A_f+A_g+A)^2$ shows that  \eqref{eq:consequence_neg_correlation_UST} is equivalent to
\begin{align}\label{eq:neg_corr_in_A}
A_{fg} A\leq A_fA_g.
\end{align}
It remains to show that the  statement in the lemma can be deduced from 
\eqref{eq:neg_corr_in_A} (actually the statements are  equivalent).
Clearly we can assume $\kappa=\kappa^{--}$, i.e., $\kappa_f=\kappa_g=1$. 
Using the Kirchhoff formula \eqref{eq:Kirchhoff}
we find the following expression 
\begin{align}\label{eq:Delta_A}
|G|^{-1}\det \Delta_{\kappa^{\pm\pm}}
=\!\sum_{\t\in \ST(G)} w(\kappa^{\pm\pm},\t)
=(\kappa^{\pm\pm})_f(\kappa^{\pm\pm})_g A_{fg}+(\kappa^{\pm\pm})_f A_f+
(\kappa^{\pm\pm})_g A_g+A.
%|G|^{-1}\det \Delta^G_{\kappa^{\circ \square}}
%=\sum_{t\in \ST(G)} w(\kappa^{\circ \square},t)
%=(\kappa^{\circ\square})_f(\kappa^{\pm})_g\square A_{fg}+(\kappa^{\circ\square})_f A_f+
%(\kappa^{\circ\square})_g A_g+A.
\end{align}
Hence we obtain
\begin{align}
\begin{split}\label{eq:Delta_A2}
%|G|^{-2}\det \Delta_{\kappa^{+-}} \, \det \Delta_{\kappa^{-+}} &=
%\left( \kappa_f^+\kappa_g^- A_{fg} + \kappa_f^+ A_f + \kappa_g^- A_g + A\right)
%\left( \kappa_f^-\kappa_g^+ A_{fg} + \kappa_f^- A_f + \kappa_g^+ A_g + A\right),
% \\
%|G|^{-2}\det \Delta_{\kappa^{++}} \, \det \Delta_{\kappa^{--}} &=\left( \kappa_f^+\kappa_g^+ A_{fg} + %\kappa_f^+ A_f + \kappa_g^+ A_g + A\right)
%\left( \kappa_f^-\kappa_g^- A_{fg} + \kappa_f^- A_f + \kappa_g^- A_g + A\right).
|G|^{-2}\det \Delta_{\kappa^{+-}} \, \det \Delta_{\kappa^{-+}} &=
\left( q A_{fg} + q A_f + A_g + A\right)
\left( q A_{fg} +  A_f + q A_g + A\right),
 \\
|G|^{-2}\det \Delta_{\kappa^{++}} \, \det \Delta_{\kappa^{--}} &=\left( q^2 A_{fg} + q A_f + q A_g + A\right)
\left(  A_{fg} + A_f +  A_g + A\right).
\end{split}
\end{align}
Subtracting those two identities we find that only the cross-terms between $A_f,A_g$ and between $A_{fg}, A$ do not cancel and we get
\begin{align}
\begin{split}\label{eq:Delta_A3}
|G|^{-2}\left(\det \Delta_{\kappa^{+-}} \, \det \Delta_{\kappa^{-+}}
-\det \Delta_{\kappa^{++}} \, \det \Delta_{\kappa^{--}}\right)
&=(q^2+1-2q)(A_fA_g-A_{fg}A)
\\
&=(q-1)^2(A_fA_g-A_{fg}A).
\end{split}\end{align}
%Since we assumed $c_f^+\geq c_f^-$ and $c_g^+\geq c_g^-$ we can conclude using  \eqref{eq:neg_corr_in_A}.
We can conclude using \eqref{eq:neg_corr_in_A}. 
 \end{proof}
 %Now we apply this to our special case where $\kappa\in \{1,q\}^E$.
 %Again, for fixed edges $f,g\in E$ and given $\kappa$ we denote
 %by $\kappa^{+-}$ the configuration given by $\kappa_e^{+-}=\kappa_e$ for $e\notin \{f,g\}$,
 %$\kappa_f=q$, $\kappa_g=1$ and similarly for $\kappa^{-+}$, $\kappa^{++}$, and $\kappa^{--}$.
The previous lemma directly implies that the measures $\P^{G,p}$ are strongly positively associated. 
 \begin{corollary}\label{co:Holley1}
%For any $\kappa\in \{1,q\}^E$ and $f,g\in E$ the measure $\P$ satisfies the lattice inequality
%\begin{align}\label{eq:Lattice_Condition_final_measure}
%\P(\kappa^{++})\P(\kappa^{--})\geq \P(\kappa^{+-})
%\P(\kappa^{-+}).
%\end{align}
The measure $\P^{G,p}$ satisfies the FKG lattice condition for any $\kappa_1,\kappa_2\in \{1,q\}^E$
\begin{align}\label{eq:HolleyCondition}
\P^{G,p}(\kappa_1 \wedge \kappa_2)\P^{G,p}(\kappa_1 \vee \kappa_2)\geq \P^{G,p}(\kappa_1)\P^{G,p}(\kappa_2)
\end{align}
and the FKG inequality
\begin{align}\label{eq:FKG_measure}
\mathbb{E}^{G,p}(XY)\geq \mathbb{E}^{G,p}(X)\, \mathbb{E}^{G,p}(Y)
\end{align}
for any increasing functions $X,Y:\{1,q\}^E\to \R$.
\end{corollary}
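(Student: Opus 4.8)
The plan is to deduce both assertions from the two-edge criterion of Theorem \ref{th:stoch_dom_crit} applied with $\discmu_1=\discmu_2=\P^{G,p}$. The starting observation is that the FKG lattice condition \eqref{eq:HolleyCondition} is exactly the Holley condition \eqref{eq:HolleyTheorem} in the degenerate case where the two measures coincide. Since $\P^{G,p}$ is strictly positive (each configuration carries a positive Bernoulli weight and $\det\Delta_\kappa>0$ by the maximum principle), Theorem \ref{th:stoch_dom_crit} reduces \eqref{eq:HolleyCondition} to checking the single-edge inequality \eqref{eq:stoch_dom_crit1} and the two-edge inequality \eqref{eq:stoch_dom_crit2}. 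With $\discmu_1=\discmu_2$ the inequality \eqref{eq:stoch_dom_crit1} degenerates to an equality and is automatic, so the whole matter reduces to verifying
\[
\P^{G,p}(\kappa^{++})\,\P^{G,p}(\kappa^{--})\geq \P^{G,p}(\kappa^{+-})\,\P^{G,p}(\kappa^{-+})
\]
for every $\kappa$ and every pair of edges $f,g\in E$.

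The key point in verifying this is that the Bernoulli prefactors cancel exactly. Writing $h_0$ and $s_0$ for the numbers of hard and soft edges of the common configuration on $E\setminus\{f,g\}$, the four configurations $\kappa^{++},\kappa^{--},\kappa^{+-},\kappa^{-+}$ carry respectively $(h_0+2,s_0)$, $(h_0,s_0+2)$, $(h_0+1,s_0+1)$, and $(h_0+1,s_0+1)$ hard/soft edges. Hence the product $p^{h}(1-p)^{s}$ taken over $\{\kappa^{++},\kappa^{--}\}$ equals $p^{2h_0+2}(1-p)^{2s_0+2}$, which is precisely the product taken over $\{\kappa^{+-},\kappa^{-+}\}$. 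Inserting the definition \eqref{eq:def_perc_model}, the desired inequality therefore collapses to
\[
\sqrt{\det\Delta_{\kappa^{+-}}\,\det\Delta_{\kappa^{-+}}}\;\geq\;\sqrt{\det\Delta_{\kappa^{++}}\,\det\Delta_{\kappa^{--}}},
\]
which is exactly Lemma \ref{le:lattice_2_edges}. This proves \eqref{eq:HolleyCondition}.

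Finally, since $\P^{G,p}$ is strictly positive and, by the above, satisfies the FKG lattice condition \eqref{eq:lattice_FKG}, it is strongly positively associated; Theorem \ref{th:FKG} then yields the FKG inequality \eqref{eq:FKG_measure} for all increasing $X,Y$, completing the corollary.

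I expect essentially no obstacle to remain: the analytic core of the argument is the determinant inequality of Lemma \ref{le:lattice_2_edges}, which is already in hand, and the reduction from arbitrary pairs $\kappa_1,\kappa_2$ to edge pairs is supplied by Theorem \ref{th:stoch_dom_crit}. The only thing that requires care is the combinatorial bookkeeping showing that the $p$-dependent weights cancel, so that the inequality to be proved appears in precisely the form in which Lemma \ref{le:lattice_2_edges} delivers it.
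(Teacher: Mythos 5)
Your proof is correct and follows essentially the same route as the paper's: the cancellation of the Bernoulli prefactors (the observation $h(\kappa^{++})+h(\kappa^{--})=h(\kappa^{+-})+h(\kappa^{-+})$) combined with Lemma \ref{le:lattice_2_edges} gives the two-edge lattice inequality, after which Theorem \ref{th:stoch_dom_crit} with $\discmu_1=\discmu_2=\P^{G,p}$ and Theorem \ref{th:FKG} finish the argument. Your version merely spells out the bookkeeping a little more explicitly than the paper does.
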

\begin{proof}
Lemma \ref{le:lattice_2_edges} 
and the trivial observation that $h(\kappa^{++})+h(\kappa^{--})=h(\kappa^{+-})+h(\kappa^{-+})$
imply for any $\kappa\in \{1,q\}^E$ and $f,g\in E$  the lattice inequality
\begin{align}\label{eq:Lattice_Condition_final_measure}
\P^{G,p}(\kappa^{++})\P^{G,p}(\kappa^{--})\geq \P^{G,p}(\kappa^{+-})
\P^{G,p}(\kappa^{-+}).
\end{align}
Then Theorem \ref{th:stoch_dom_crit} applied to $\discmu_1=\discmu_2=\P^{G,p}$ implies that the FKG lattice condition  \eqref{eq:HolleyCondition}
holds and therefore by Theorem \ref{th:FKG} also the FKG-inequality \eqref{eq:FKG_measure}.

%Elementary estimates or a coupling argument show that 
%\eqref{eq:Lattice_Condition_final_measure}
%implies \eqref{eq:HolleyCondition} (see, e.g., Theorem 2.3 in \cite{RandomClusterGrimmett}).
\end{proof}

Let us first state a trivial consequence of this corollary.
\begin{lemma}\label{le:mon_p}
The measures $\P^{G,p}$ and $\P^{G,p'}$ satisfy for $p\leq p'$ 
%and $\kappa_1,\kappa_2\in \{1,q\}^E$
%\begin{align}\label{eq:p_monotonicity_lattice}
%\P^{G,p'}(\kappa_1 \vee \kappa_2)\P^{G,p}(\kappa_1 \wedge \kappa_2)\geq \P^{G,p'}(\kappa_1)\P^{G,p}(\kappa_2)
%\end{align}
%and 
\begin{align}\label{eq:dom_p}
\P^{G,p'}\succsim \P^{G,p}.
\end{align}
\end{lemma}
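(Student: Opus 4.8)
The plan is to reduce \eqref{eq:dom_p} to the single-edge criterion supplied by Corollary \ref{co:stoch_dom}. By Corollary \ref{co:Holley1} the measure $\P^{G,p}$ is strongly positively associated, so to conclude $\P^{G,p}\precsim \P^{G,p'}$ (which is exactly \eqref{eq:dom_p}) it suffices to verify the hypothesis \eqref{eq:stoch_dom_crit1_repeated} with $\discmu_1=\P^{G,p}$ and $\discmu_2=\P^{G,p'}$. Thus the entire content of the lemma reduces to the one-edge inequality
\begin{align*}
\P^{G,p'}(\omega_f^+)\,\P^{G,p}(\omega_f^-)\geq \P^{G,p}(\omega_f^+)\,\P^{G,p'}(\omega_f^-)
\end{align*}
for every $\omega\in\{1,q\}^E$ and every edge $f\in E$.

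The key simplification I would exploit is that the determinant weight $\det\Delta_\kappa$ in the definition \eqref{eq:def_perc_model} does not depend on $p$. Fixing an edge $f$ together with the configuration off $f$, write $h_0$ and $s_0$ for the number of hard and soft edges among $E\setminus\{f\}$. Then $\omega_f^+$ carries the Bernoulli factor $p^{h_0+1}(1-p)^{s_0}$ and $\omega_f^-$ the factor $p^{h_0}(1-p)^{s_0+1}$, while $\det\Delta_{\omega_f^+}$ and $\det\Delta_{\omega_f^-}$ occur symmetrically on both sides of the displayed inequality. Hence the determinants, the common powers $p^{h_0}$, $(1-p)^{s_0}$ (and their $p'$-analogues), and the normalisation constants $Z^{G,p}$, $Z^{G,p'}$ all cancel.

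What remains after cancellation is precisely $p'(1-p)\geq p(1-p')$, which rearranges to $p'\geq p$ and is therefore true by assumption. This establishes \eqref{eq:stoch_dom_crit1_repeated} and, via Corollary \ref{co:stoch_dom}, the stochastic domination \eqref{eq:dom_p}.

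I do not expect a genuine obstacle here: the difficult step — showing that $\P^{G,p}$ satisfies the FKG lattice condition, which rests on Lemma \ref{le:lattice_2_edges} and the negative edge correlations of the weighted spanning tree — has already been carried out in Corollary \ref{co:Holley1}. The only point to notice is that the $p$-dependence enters the weight \eqref{eq:def_perc_model} as a product of independent Bernoulli factors over the edges, so flipping a single edge multiplies the weight by an explicit ratio that is monotone in $p$; strong positive association then upgrades this per-edge monotonicity to full stochastic domination, so that one never needs to verify the two-edge condition \eqref{eq:stoch_dom_crit2} directly.
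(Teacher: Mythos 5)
Your proposal is correct and follows exactly the paper's proof: reduce to the single-edge criterion \eqref{eq:stoch_dom_crit1_repeated} via Corollary \ref{co:Holley1} and Corollary \ref{co:stoch_dom}, then observe that the determinant weights cancel so the condition reduces to $p'(1-p)\geq p(1-p')$. The paper dismisses this last verification as clear; you have simply spelled it out.
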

\begin{proof}
Using Corollary \ref{co:Holley1} and Corollary \ref{co:stoch_dom} we only need to check
whether \eqref{eq:stoch_dom_crit1_repeated} holds for $\discmu_1=\P^{G,p}$ and $\discmu_2=\P^{G,p'}$. This is clearly the case if $p\leq p'$.
\end{proof}

The next step is to show correlation inequalities with respect to the size of the graph. More specifically we show statements for subgraphs and contracted graphs. This will later easily imply 
the existence of infinite volume limits.
% because the expectation of increasing functions
%is monotone in the size for free and wired boundary conditions introduced below.
 Moreover, we can bound infinite volume states by finite  volume measures in the sense of stochastic domination.
Let $F\subset E$ be a set of edges. We define the contracted graph
$G/F$ by identifying for every edge $f\in F$ the endpoints of $f$.
Similarly for a set $W\subset V$ of vertices we define the contracted graph $G/W$ by identifying all vertices in $W$.
%and then removing all loops of the graph.
The resulting graphs may have multi-edges.
We also consider connected subgraphs $G'=(V',E')$ of $G$.
Recall the notation $\kappa^{\pm}=\kappa_f^\pm$ for $f\in E$.
%Let $f\in E'$ be an edge and $\kappa\in \R_+^E$ a configuration with $\kappa_f=1$.
%Similarly as before we consider $c^+\geq c^-$
%and we define $\kappa^+$ and $\kappa^-$ by
%$\kappa_e^{\pm}=\kappa_e$ for $e\neq f$ and 
%$\kappa_f^\pm=c^\pm$.
We use the notation $\Delta_\kappa^{G'}$ for the graph Laplacian on $G'$ where we restrict the conductances $\kappa$ 
to $E'$ and we denote by $\Delta_\kappa^{G/F}$ the graph Laplacian on $G/F$. 
The following lemma relates the determinants of the different graph Laplacians.
\begin{lemma}\label{le:det_incl}
With the notation introduced above we have for $\kappa\in \{1,q\}^E$
\begin{align}\label{eq:det_inequality_diff_sizes}
\frac{\det \Delta^{G'}_{\kappa^+}}{\det \Delta^{G'}_{\kappa^-}}\geq 
\frac{\det \Delta^G_{\kappa^+}}{\det \Delta^G_{\kappa^-}}\geq
\frac{\det \Delta^{G/F}_{\kappa^+}}{\det \Delta^{G/F}_{\kappa^-}}.
\end{align}
\end{lemma}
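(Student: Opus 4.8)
The plan is to reduce all three quantities to edge–inclusion probabilities of the weighted spanning tree via the Kirchhoff formula \eqref{eq:Kirchhoff}, and then to invoke monotonicity of these probabilities under deletion and contraction. Write $f$ for the single edge on which $\kappa^+$ and $\kappa^-$ differ, so that $\kappa^-_f=1$ and $\kappa^+_f=q$ while $\kappa^+_e=\kappa^-_e$ for $e\neq f$. Splitting the sum in \eqref{eq:Kirchhoff} according to whether $f\in\t$ and factoring out the conductance at $f$, I would set
\begin{align}
B_f=\!\!\sum_{\t\in\ST(G),\,f\in\t}\ \prod_{e\in\t,\,e\neq f}\kappa_e,\qquad
B=\!\!\sum_{\t\in\ST(G),\,f\notin\t} w(\kappa,\t),
\end{align}
both independent of the value of $\kappa$ at $f$, so that $\det\Delta^G_{\kappa^\pm}=|G|\,(\kappa^\pm_f B_f+B)$. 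Consequently
\begin{align}
\frac{\det\Delta^G_{\kappa^+}}{\det\Delta^G_{\kappa^-}}
=\frac{qB_f+B}{B_f+B}
=1+(q-1)\,\Q^G_{\kappa^-}(f\in\t),
\end{align}
and since $q\geq 1$ this is a nondecreasing function of the weighted spanning tree probability $\Q^G_{\kappa^-}(f\in\t)$. The identical computation applies verbatim on $G'$ and on $G/F$ (using Remark \ref{rem:multiedges} to keep Kirchhoff valid in the presence of the multi-edges created by contraction). Hence it suffices to prove
\begin{align}\label{eq:ST_monotonicity_plan}
\Q^{G'}_{\kappa^-}(f\in\t)\ \geq\ \Q^{G}_{\kappa^-}(f\in\t)\ \geq\ \Q^{G/F}_{\kappa^-}(f\in\t).
\end{align}

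For the right-hand inequality in \eqref{eq:ST_monotonicity_plan} I would contract the edges of $F$ one at a time. Contracting a single edge $e\in F$ (necessarily $e\neq f$) identifies $\ST(G/e)$ with $\{\t\in\ST(G):e\in\t\}$, and because the weight of such a tree is $\kappa_e$ times the weight of its image the factor $\kappa_e$ cancels between numerator and denominator, giving $\Q^{G/e}_{\kappa^-}(f\in\t)=\Q^G_{\kappa^-}(f\in\t\mid e\in\t)$. The negative correlation property \eqref{eq:neg_correlations_ST} then yields $\Q^G_{\kappa^-}(f\in\t\mid e\in\t)\leq \Q^G_{\kappa^-}(f\in\t)$. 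Since contracting an edge of a connected graph leaves it connected, I can iterate this (applying \eqref{eq:neg_correlations_ST} on the successively contracted graphs) to obtain $\Q^{G/F}_{\kappa^-}(f\in\t)\leq\Q^{G}_{\kappa^-}(f\in\t)$.

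For the left-hand inequality in \eqref{eq:ST_monotonicity_plan} I would pass through the electrical interpretation. By the transfer current theorem (see \cite{MR1825141}) one has $\Q^G_{\kappa^-}(f\in\t)=\kappa^-_f\,R^G_{\mathrm{eff}}(x,y)$ for $f=\{x,y\}$, where $R^G_{\mathrm{eff}}$ is the effective resistance in the network with conductances $\kappa^-$. Passing from $G$ to the connected subgraph $G'$ only deletes edges and vertices, i.e. it removes some conductances while leaving the rest unchanged; by Rayleigh's monotonicity law the effective resistance between $x$ and $y$ can only increase, so $R^{G'}_{\mathrm{eff}}(x,y)\geq R^G_{\mathrm{eff}}(x,y)$ and therefore $\Q^{G'}_{\kappa^-}(f\in\t)\geq\Q^{G}_{\kappa^-}(f\in\t)$, as required. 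For edge-only deletions this is again a conditioning identity, $\Q^{G\setminus e}_{\kappa^-}(f\in\t)=\Q^G_{\kappa^-}(f\in\t\mid e\notin\t)\geq\Q^G_{\kappa^-}(f\in\t)$, straight from \eqref{eq:neg_correlations_ST}.

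The one genuine subtlety, and the step I expect to be the main obstacle, is precisely the subgraph inequality when $V'\subsetneq V$: deleting a vertex amounts to deleting all of its incident edges and then discarding an isolated point, so the clean ``condition on the deleted edge being absent'' argument breaks down, since at the last edge the intermediate graph is disconnected and carries no spanning tree. Routing this inequality through effective resistances and Rayleigh monotonicity, which are insensitive to such intermediate disconnections, is what makes the argument go through uniformly. The remaining bookkeeping — cancellation of $\kappa_e$ under contraction, harmlessness of the multi-edges and loops produced by contraction via Remark \ref{rem:multiedges}, and the trivial case $q=1$ where all ratios equal one — is routine.
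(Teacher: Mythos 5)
Your proposal is correct and follows essentially the same route as the paper: both reduce the ratio of determinants via the Kirchhoff formula to the identity $\det\Delta^G_{\kappa^+}/\det\Delta^G_{\kappa^-}=1+(q-1)\,\Q^G_{\kappa^-}(f\in\t)$ and then invoke the monotonicity $\Q^{G'}_{\kappa}(f\in\t)\geq\Q^{G}_{\kappa}(f\in\t)\geq\Q^{G/F}_{\kappa}(f\in\t)$. The only difference is that the paper simply cites this monotonicity (Corollary 4.3 in \cite{MR1825141}), whereas you re-derive it from negative correlations and Rayleigh monotonicity; that derivation is sound, including your handling of the vertex-deletion subtlety via effective resistances.
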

\begin{remark} The lemma again extends to $\kappa\in \R_+^E$ and $\kappa^\pm_f$
with $(\kappa_f^+)_f=c_+>c_-=(\kappa_f^-)_f$. 
\end{remark}
\begin{proof}
The proof is similar to the proof of Lemma \ref{le:lattice_2_edges}.
We  derive the statement from a property of the weighted spanning tree model.
For graphs as above and $e\in E'$ the estimate 
\begin{align}\label{eq:proba_inequality_diff_sizes}
\Q_\kappa^{G'}(e\in \t)\geq \Q_\kappa^{G}(e\in \t)\geq \Q_\kappa^{G/F}(e\in \t)
\end{align}
holds  (see Corollary 4.3 in \cite{MR1825141} for a proof).
We can rewrite (assuming again $\kappa_f=1$, i.e., $\kappa=\kappa^-$)
\begin{align}
\begin{split}\label{eq:quotient_det_restricted}
\frac{\det \Delta^{G}_{\kappa^+}}{\det \Delta^{G}_{\kappa^-}}
=\frac{\sum_{\t\in \ST(G), f\notin \t} w(\kappa,\t)+q
\sum_{\t\in \ST(G), f\in \t} w(\kappa,\t)}{
\sum_{\t\in \ST(G), f\notin \t} w(\kappa,\t)+
\sum_{\t\in \ST(G), f\in \t} w(\kappa,\t)}.
\end{split}
\end{align}
Note that
\begin{align}\label{eq:quotient_det_restricted2}
\frac{\sum_{\t\in \ST(G), f\in \t} w(\kappa,\t)}{\sum_{\t\in \ST(G), f\notin \t} w(\kappa,\t)}
=\frac{\Q_\kappa^{G}(f\in \t)}{\Q^{G}_\kappa(f\notin \t)}
\end{align}
and therefore (using $\kappa=\kappa^-$)
\begin{align}\label{eq:quotient_det}
\frac{\det \Delta^{G}_{\kappa^+}}{\det \Delta^{G}_{\kappa^-}}
=\frac{1+q \frac{\Q^{G}_{\kappa^{-}}(f\in \t)}{\Q^{G}_{\kappa^-}(f\notin \t)}}{1+ \frac{\Q_{\kappa^-}^{G}(f\in \t)}{\Q^{G}_{\kappa^-}(f\notin \t)}}
=1+(q-1)\Q^{G}_{\kappa^-}(f\in \t).
\end{align}
Similar statements hold for the graphs $G/F$ and $G'$.
Hence \eqref{eq:proba_inequality_diff_sizes} implies 
\eqref{eq:det_inequality_diff_sizes}.
\end{proof}
Let us remark that the probability $\Q^G_\kappa(f\in \t)$ can also be expressed as a 
current in a certain electrical network. In order to avoid unnecessary notation at this point we
kept the weighted spanning tree measure and we will only exploit this connection when necessary below.

Again, the previous estimates implies correlation inequalities for the measures $\P^{G,p}$.
In the following we consider a fixed value of $p$ but different graphs so that we drop only $p$ from the notation but we keep the graph $G$.
We  introduce the distribution under boundary conditions for a connected subgraph $G'=(V',E')$ of $G$.
For  $\lambda\in \{1,q\}^E$ we define the measure $\P^{G,E',\lambda}$ on $\{1,q\}^{E'}$ by 
\begin{align}\label{eq:def_boundary_condition}
\P^{G,E',\lambda}(\kappa)=\frac{1}{Z} \frac{p^{h(\kappa)}(1-p)^{s(\kappa)}}{\sqrt{\det \Delta^G_{(\lambda,\kappa)}}}
\end{align}
where $(\lambda,\kappa)\in \{1,q\}^E$ denotes the conductances given by $\kappa$ on $E'$
and by $\lambda$ on $E\setminus E'$. 
This definition implies that we have the following domain Markov property for $\omega\in \{1,q\}^{E'}$
\begin{align}\label{eq:DMP}
\P^{G}(\kappa{_{E'}} = \omega \mid \kappa{_{E\setminus E'}}=\lambda_{E\setminus E'})
=\P^{G,E',\lambda}(\omega).
\end{align}
%Similar to our previous definition we define for a fixed edge $f$ the configuration $\kappa^\pm$ by
%$\kappa_e^\pm=\kappa_e$ if $e\neq f$ and $\kappa^+_f=q$, $\kappa^-_f=1$.
Since the measure $\P^G$ is strongly positively associated,
 \eqref{eq:DMP}
and Theorem 2.24  in \cite{MR2243761}
implies that the measure
$\P^{G,E',\lambda}$ is strongly positively associated.
We now state the consequences of Lemma \ref{le:det_incl} on stochastic ordering.
\begin{corollary}\label{co:boundary_conditions}
For a finite graph $G=(V,E)$, a connected subgraph $G'=(V',E')$, an edge subset $F\subset E$, and
configurations $\lambda_1, \lambda_2\in \{1,q\}^E$ such that $\lambda_1\leq \lambda_2$ the following 
holds
\begin{align}\label{eq:stoch_dom_incl}
\P^{G'}\precsim \P^{G,E',\lambda_1},\qquad  \P^{G,E',\lambda_1}\precsim \P^{G,E',\lambda_2},\qquad
P^{G,E\setminus F, \lambda_2}\precsim  \P^{G/F}.
\end{align}
More generally, we have for $\lambda\in \{1,q\}^E$ and $E''\subset E'$ or $E''\cap F=\emptyset$ respectively
\begin{align}\label{eq:stoch_dom_incl2}
\P^{G',E'',\lambda_{E'}}\precsim \P^{G,E'',\lambda}, \qquad \P^{G,E'',\lambda}\precsim \P^{G/F,E'',\lambda_{E\setminus F}}.
\end{align}
\end{corollary}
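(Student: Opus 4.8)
The plan is to derive every one of the five dominations in \eqref{eq:stoch_dom_incl}--\eqref{eq:stoch_dom_incl2} from the single-edge criterion of Corollary \ref{co:stoch_dom}. This is available because all the measures in sight are strictly positive and strongly positively associated: the full measures $\P^{G'}$ and $\P^{G/F}$ by Corollary \ref{co:Holley1}, and the boundary-condition measures $\P^{G,E',\lambda}$, $\P^{G',E'',\lambda_{E'}}$, etc. by the remark preceding the statement (the domain Markov property \eqref{eq:DMP} together with Theorem 2.24 in \cite{MR2243761}). Since Corollary \ref{co:stoch_dom} only needs \emph{one} of the two measures to be strongly positively associated, for each claimed domination $\discmu_1\precsim\discmu_2$ it suffices to verify the single-edge inequality \eqref{eq:stoch_dom_crit1_repeated} for every edge $f$ and every configuration. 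I would also note at the outset that the first relation in \eqref{eq:stoch_dom_incl} is the case $E''=E'$, $\lambda=\lambda_1$ of the first relation in \eqref{eq:stoch_dom_incl2}, and the third relation in \eqref{eq:stoch_dom_incl} is the case $E''=E\setminus F$, $\lambda=\lambda_2$ of the second relation in \eqref{eq:stoch_dom_incl2}, so only the two statements in \eqref{eq:stoch_dom_incl2} and the boundary-monotonicity in \eqref{eq:stoch_dom_incl} require genuine work.

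The computational heart is a single formula. For a boundary-condition measure on a graph $H$ with edge set $\tilde E$ and boundary value $\lambda$, and for $f\in\tilde E$, the defining expression \eqref{eq:def_boundary_condition} gives
\[
\frac{\P^{H,\tilde E,\lambda}(\kappa_f^+)}{\P^{H,\tilde E,\lambda}(\kappa_f^-)}
=\frac{p}{1-p}\left(\frac{\det\Delta^H_{(\lambda,\kappa_f^-)}}{\det\Delta^H_{(\lambda,\kappa_f^+)}}\right)^{1/2},
\]
because switching $f$ from soft to hard changes the combinatorial weight $p^{h(\kappa)}(1-p)^{s(\kappa)}$ by exactly the factor $p/(1-p)$ while everything except the determinant cancels. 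In every comparison the factor $p/(1-p)$ is common to both measures, so \eqref{eq:stoch_dom_crit1_repeated} for $\discmu_1\precsim\discmu_2$ is equivalent to the statement that the dominated measure $\discmu_1$ has the \emph{larger} determinant ratio $\det\Delta_{(\lambda,\kappa_f^+)}/\det\Delta_{(\lambda,\kappa_f^-)}$.

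With this reduction the three families fall out of the lemmas already proved. For $\P^{G',E'',\lambda_{E'}}\precsim\P^{G,E'',\lambda}$ I would apply the first inequality of Lemma \ref{le:det_incl} to the full configuration $(\lambda,\kappa)$ on $E$ and the edge $f\in E''\subseteq E'$, using that $\det\Delta^{G'}$ depends only on the restriction of the conductances to $E'$; symmetrically $\P^{G,E'',\lambda}\precsim\P^{G/F,E'',\lambda_{E\setminus F}}$ follows from the second inequality of Lemma \ref{le:det_incl}, using that $\det\Delta^{G/F}$ ignores the contracted edges of $F$. For the boundary monotonicity $\P^{G,E',\lambda_1}\precsim\P^{G,E',\lambda_2}$ with $\lambda_1\leq\lambda_2$, the two boundary values differ only on edges of $E\setminus E'$, where $\lambda_1$ is soft and $\lambda_2$ is hard; flipping these one at a time from $1$ to $q$ and invoking Lemma \ref{le:lattice_2_edges} at each step (with the pair consisting of $f$ and the flipped boundary edge) shows the ratio $\det\Delta^G_{(\lambda,\kappa_f^+)}/\det\Delta^G_{(\lambda,\kappa_f^-)}$ only decreases as the boundary is raised, which is exactly what is needed. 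The genuine content is thus entirely contained in Lemmas \ref{le:det_incl} and \ref{le:lattice_2_edges}; the only mild obstacle is bookkeeping, namely keeping track of which single edge is being varied and confirming it lies in the interior edge set so that the hypotheses of those lemmas apply, together with the step-by-step flipping in the monotonicity case so that each modification is a genuine two-edge move of the form to which Lemma \ref{le:lattice_2_edges} applies.
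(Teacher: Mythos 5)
Your proof is correct and follows essentially the same route as the paper: both reduce each domination to the single-edge criterion of Corollary \ref{co:stoch_dom} (legitimate since all measures involved are strongly positively associated) and verify that criterion through the determinant-ratio inequalities of Lemma \ref{le:det_incl}, with the outer relations of \eqref{eq:stoch_dom_incl} recovered as special cases of \eqref{eq:stoch_dom_incl2}. The one point of divergence is the middle relation $\P^{G,E',\lambda_1}\precsim\P^{G,E',\lambda_2}$: the paper disposes of it by citing the general fact (Theorem 2.24 in \cite{MR2243761}, combined with \eqref{eq:DMP}) that conditional distributions of a strongly positively associated measure are monotone in the conditioning, whereas you re-derive this directly by flipping the boundary edges from $1$ to $q$ one at a time and applying the two-edge inequality of Lemma \ref{le:lattice_2_edges} to the pair consisting of $f$ and the flipped edge at each step. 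Your version is self-contained and equally valid; the paper's is shorter but outsources the monotonicity to the standard FKG-lattice machinery.
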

\begin{proof}
From Lemma \ref{le:det_incl} we obtain for $f\in E'$ and any $\kappa\in \{1,q\}^{E'}$
\begin{align}\label{eq:lattice_incl1}
\frac{\P^{G,E',\lambda}(\kappa^+)}{\P^{G,E',\lambda}(\kappa^-)}
&\geq 
\frac{\P^{G'}(\kappa^+)}{
\P^{G'}(\kappa^-)}.
\end{align}
Similarly, Lemma \ref{le:det_incl} implies for $f\in E\setminus F$ and $\kappa \in \{1,q\}^{E\setminus F}$
\begin{align}\label{eq:lattice_incl2}
\frac{\P^{G/F}(\kappa^+)}{ 
\P^{G/F}(\kappa^-)}
&\geq
\frac{\P^{G,E\setminus F,\lambda}(\kappa^+)}{\P^{G,E\setminus F,\lambda}(\kappa^-)}.
\end{align}
%The display \eqref{eq:HolleyCondition} in Corollary \ref{co:Holley1} 
%implies together with \eqref{eq:lattice_incl1} the inequality
%\begin{align}\label{eq:holley_for_subsets}
%\frac{\P^{G'}(\kappa^{+-})}{\P^{G'}(\kappa^{--})}
%\leq  
%\frac{\P^{G'}(\kappa^{++})}{\P^{G'}(\kappa^{-+})}
%\leq \frac{\P^{G,E',\lambda}(\kappa^{++})}{\P^{G,E',\lambda}(\kappa^{-+})}.
%\end{align}
%Then Theorem \ref{th:stoch_dom_crit} implies  the Holley lattice condition
%\begin{align}\label{eq:holley_incl1}
%\P^{G,E',\lambda}(\kappa_1 \vee \kappa_2)\P^{G'}(\kappa_1 \wedge \kappa_2)
%&\geq \P^{G,E',\lambda}(\kappa_1 )P^{G'}(\kappa_2) \quad\quad\quad \text{for $\kappa_1,\kappa_2\in \{1, q\}^{E'}$}.
%\end{align}
%Similarly we can show 
%\begin{align}
%\label{eq:holley_incl2}
%\P^{G/F}(\kappa_1 \vee \kappa_2)\P^{G,E\setminus F,\lambda}(\kappa_1 \wedge \kappa_2)
%&\geq P^{G/F}(\kappa_1)\P^{G,E\setminus F,\lambda}(\kappa_2 )
%%\quad \quad \text{for $\kappa_1,\kappa_2\in \{1, q\}^{E\setminus F}$}.
%\end{align}
Then the the strong positive association of $\P^G$  and Corollary \ref{co:stoch_dom} imply
the first and the last stochastic ordering claimed in \eqref{eq:stoch_dom_incl}.
The  stochastic domination result in the middle of \eqref{eq:stoch_dom_incl} follows from \eqref{eq:DMP} and a general result for 
strictly positive associated measures (see \cite[Theorem 2.24]{MR2243761}).
The proof of \eqref{eq:stoch_dom_incl2} is similar.
\end{proof}

\paragraph{Infinite volume measures.} The definition of the measure $\P$ shows that it is a finite volume Gibbs measure
for the energy $E(\kappa)=\ln(\det \Delta_\kappa)/2$ and a homogeneous Bernoulli a priori measure.
We would like to define infinite volume limits for the measures $\P^G$ and
define a notion of Gibbs measures in infinite volume.
This requires some additional definitions.
Recall the definition of the $\sigma$-algebras $\discsigma_E$ for $E\subset\Edge(\Z^d)$ and note that there is a similar definition for general graphs which will be used in the following.
An event $A\subset \discsigma$ is called local if it measurable with respect 
to $\discsigma_E$ for some finite set $E$, i.e., $A$ depends only on finitely many edges. 
Similarly we define a local function as a function that is measurable with respect to
$\discsigma_E$ for a finite set $E$.
We say that a sequence of measures $\mu_n$ on $\{1,q\}^{\Edge(\Z^d)}$ 
converges in the topology of local convergence to a measure $\mu$ if $\mu_n(A)\to \mu(A)$ for all local events $A$. For a background on the choice of topologies in the context of Gibbs measures we refer to \cite{MR2807681}.
 The construction of the infinite volume states proceeds similarly to 
the construction for the random cluster model by
defining a specification and introducing the notion of free and wired boundary conditions. For simplicity
we restrict the analysis to $\Z^d$ but the generalisation to more general graphs is straightforward. 
First, we define infinite volume limits of the finite volume distributions with wired and free boundary conditions.
Let us denote by $\Lambda_n=[-n,n]\cap \Z^d$ the
ball with radius $n$ in the maximum norm around the origin and we denote by $E_n=\Edge(\Lambda_n)$
the edges in $\Lambda_n$.
We introduce the shorthand   $\Lambda_n^w=\Lambda_n/\partial\Lambda_n$ for the box with wired boundary conditions.
%TODO check closure or no closure
We define 
\begin{align}\label{eq:def_Gibbs_Lambda_n}
\discmu^{0}_{n, p}=\P^{\Lambda_n,p},\quad \qquad  \discmu^{1}_{n, p}=\P^{\Lambda_n^w, p}
\end{align}
for the measure $\P$ on $\Lambda_n$ with free and wired  boundary conditions respectively.
%\begin{remark}
%Similar to the random cluster model one could consider arbitrary partitions but note that in contrast
%to the random cluster the conditional distribution given an outside configuration does depend on the entire configuration on the outside not just the partition generated by connections so that we have to 
%consider the slightly more involved definition of specifications given below.
%\end{remark}
From Corollary \ref{co:boundary_conditions} and equation \eqref{eq:DMP}  we conclude that
for any increasing event $A$ depending only on edges in $E_n$
\begin{align}\label{eq:compare_bc_n_n+1}
\discmu_{n+1}^0(A)=\P^{\Lambda_{n+1}}(A)=\P^{\Lambda_{n+1}}( \P^{\Lambda_{n+1},E_n, \kappa}(A))\geq  \P^{\Lambda_n}(A)=\discmu_n^0(A).
\end{align}
%This implies $\mu_{n,p}^0\precsim \mu_{n+1,p}^0$ and similarly we see $\mu_{n,p}^1\succsim \mu_{n+1,p}^1$.
We conclude that for any increasing event $A$ depending only on finitely many edges
the limits $\lim_{n\to \infty} \discmu_{n,p}^0(A)$ and similarly $\lim_{n\to \infty} \discmu_{n,p}^1(A)$ exist.
Using standard arguments we can write every local event $A$  as a union
and difference of increasing local events and we conclude that $\lim_{n\to\infty} \discmu_{n,p}^{0}(A)$ and $\lim_{n\to\infty} \discmu_{n,p}^{1}(A)$ exist. It is well known (see \cite{MR1700749}) that this implies convergence of $\discmu_{n,p}^{0}$ and $\discmu_{n,p}^1$ to a measure on $\{1,q\}^{\Edge(\Z^d)}$ in the topology of local convergence.
We denote the infinite volume measures by $\discmu_p^{0}$ and $\discmu_p^1$.
%Let us collect properties of $\mu_p^0$ and $\mu_p^1$.
\begin{lemma}\label{le:inf_volume_measures}
The measure $\discmu_p^0$ and $\discmu_p^1$ satisfy the FKG-inequality and for $0\leq p\leq p'\leq 1$ the relations
\begin{align}
\discmu_p^0\precsim \discmu_p^1, \quad \discmu_p^0\precsim \discmu_{p'}^0, \quad \discmu_p^1\precsim \discmu_{p'}^1.
\end{align}
Moreover they are invariant under symmetries of the lattice and  ergodic with respect to translations.
\end{lemma}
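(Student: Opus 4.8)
The plan is to derive the FKG inequality, the three stochastic orderings, and the invariance under lattice symmetries by passing to the local limit in the corresponding finite-volume statements, and to treat ergodicity by a separate, less elementary argument. I will use repeatedly that $\discmu_p^0(A)=\lim_{n\to\infty}\P^{\Lambda_n,p}(A)$ and $\discmu_p^1(A)=\lim_{n\to\infty}\P^{\Lambda_n^w,p}(A)$ for every local event $A$, that every bounded local function is a finite linear combination of indicators of local events, and that an inequality $\nu_1(A)\le\nu_2(A)$ valid for all increasing local events $A$ survives under local limits.

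For the FKG inequality I would fix increasing local functions $X,Y$ depending only on the edges $E_m$ and take $n\ge m$; Corollary~\ref{co:Holley1} gives $\P^{\Lambda_n,p}(XY)\ge\P^{\Lambda_n,p}(X)\,\P^{\Lambda_n,p}(Y)$, and the same with $\Lambda_n^w$, so letting $n\to\infty$ yields the FKG inequality for both $\discmu_p^0$ and $\discmu_p^1$. The monotonicity in $p$ is immediate from Lemma~\ref{le:mon_p}: for $p\le p'$ one has $\P^{\Lambda_n,p}\precsim\P^{\Lambda_n,p'}$ and $\P^{\Lambda_n^w,p}\precsim\P^{\Lambda_n^w,p'}$, and passing to the limit over increasing local events gives $\discmu_p^0\precsim\discmu_{p'}^0$ and $\discmu_p^1\precsim\discmu_{p'}^1$. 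The ordering $\discmu_p^0\precsim\discmu_p^1$ requires the finite-volume comparison $\P^{\Lambda_n,p}\precsim\P^{\Lambda_n^w,p}$ on the interior edges, for which I would realise $\Lambda_n^w$ as the edge contraction $\Lambda_n/F$, with $F$ the (connected) set of edges internal to $\partial\Lambda_n$; the loops so created are irrelevant by Remark~\ref{rem:multiedges}. Conditioning the free measure on its configuration on $F$ and using the middle inequality in \eqref{eq:stoch_dom_incl} (monotonicity in the boundary condition), together with the fact that a mixture of measures each dominated by $\nu$ is again dominated by $\nu$, bounds $\P^{\Lambda_n,p}$ restricted to $E_n\setminus F$ by $\P^{\Lambda_n,E_n\setminus F,\lambda}$ with $\lambda\equiv q$ on $F$; the last inequality in \eqref{eq:stoch_dom_incl} then bounds this by $\P^{\Lambda_n/F}=\P^{\Lambda_n^w,p}$. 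For an increasing local event $A$ on interior edges and $n$ large this gives $\P^{\Lambda_n,p}(A)\le\P^{\Lambda_n^w,p}(A)$, and the limit yields $\discmu_p^0\precsim\discmu_p^1$.

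For the invariance I note that each $\Lambda_n$ and each $\Lambda_n^w$ is invariant under the point symmetries of the box (coordinate permutations and reflections through the origin), and that both the exponents $h(\kappa),s(\kappa)$ and $\det\Delta_\kappa$ are preserved under the induced permutation of the edges; hence $\P^{\Lambda_n,p}$ and $\P^{\Lambda_n^w,p}$ are invariant under these symmetries, and since each box is fixed the limits $\discmu_p^0,\discmu_p^1$ inherit the invariance. Translation invariance fails at finite volume and I would recover it by a squeezing argument: for a local event $A$ and $a\in\Z^d$ one has $\P^{\Lambda_n,p}(\tau_a^{-1}A)=\P^{\Lambda_n+a,p}(A)$ by translation covariance of the weights, and $\{\Lambda_n+a\}_n$ is again a cofinal exhausting family, so for increasing $A$ the monotone value $\lim_n\P^{\Lambda_n+a,p}(A)$ coincides with $\discmu_p^0(A)$; thus $\discmu_p^0(\tau_a^{-1}A)=\discmu_p^0(A)$, and likewise for $\discmu_p^1$. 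Together with the point symmetries this gives invariance under the full automorphism group of $\Z^d$.

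The main obstacle is ergodicity, which does not follow by monotone limits. I would argue in parallel with the random-cluster model. Once the infinite-volume specification $\discgamma$ is in place (it is defined immediately after this lemma), $\discmu_p^0$ and $\discmu_p^1$ are the stochastically smallest and largest elements of $\mathcal{G}(\discgamma)$; stochastic minimality (resp.\ maximality) forces them to be extreme points of the simplex of Gibbs measures, extreme Gibbs measures are tail-trivial, and a translation-invariant, positively associated, tail-trivial measure is mixing and hence ergodic. The delicate points are the identification of $\discmu_p^0,\discmu_p^1$ as extremal Gibbs measures and the passage from extremality to tail triviality; as for the random-cluster model I expect these to follow by combining the FKG inequality with the extremal stochastic orderings already established, yielding that every tail event has $\discmu_p^0$- and $\discmu_p^1$-probability in $\{0,1\}$.
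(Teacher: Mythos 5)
Your treatment of the FKG inequality, the three stochastic orderings, and the lattice symmetries is correct and is essentially the paper's argument: the paper's proof is a one-line reduction to Corollary \ref{co:Holley1}, Corollary \ref{co:boundary_conditions} and a limiting argument (with the details delegated to \cite[Theorem 4.17, Corollary 4.23]{MR2243761}), and you have simply written those details out. In particular your derivation of $\discmu_p^0\precsim\discmu_p^1$ — realising $\Lambda_n^w$ as the edge contraction $\Lambda_n/F$ (legitimate since $\partial\Lambda_n$ is connected for $d\geq 2$, and self-loops are harmless by Remark \ref{rem:multiedges}), conditioning on $\kappa_F$, and chaining the middle and last inequalities of \eqref{eq:stoch_dom_incl} — is a valid instantiation of what the paper leaves implicit, and your interleaving argument for translation invariance via the cofinal family $\{\Lambda_n+a\}$ is the standard one.

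The one place you genuinely depart from the paper is ergodicity. The paper obtains it by a direct mixing computation for increasing local events (lower bound from FKG, upper bound from finite-volume comparison of boundary conditions, exactly as in \cite[Corollary 4.23]{MR2243761}); you instead go through extremality in $\mathcal{G}(\discgamma)$, tail-triviality, and then ergodicity. That route does work: stochastic minimality among Gibbs measures plus positive association forces extremality (if $\discmu^0=\alpha\nu_1+(1-\alpha)\nu_2$ then $\nu_i(A)\geq\discmu^0(A)$ for increasing $A$ while the convex combination forces equality, and increasing cylinder events generate $\discsigma$), extremal elements of $\mathcal{G}(\discgamma)$ are tail-trivial by \cite[Theorem 7.7]{MR2807681}, and tail-triviality together with translation invariance gives mixing. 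But be aware of two costs. First, your argument needs the infinite-volume specification $\discgamma$ and the facts that $\discmu_p^0,\discmu_p^1\in\mathcal{G}(\discgamma)$ and that they are the stochastic extremes of $\mathcal{G}(\discgamma)$ — all of which the paper only establishes in the lemma \emph{after} this one (Definition \ref{def:kappa_Gibbs} and the subsequent lemma), so you have a forward dependency (no circularity, since that later proof does not use ergodicity, but the statement would have to be reordered). Second, you have only named the delicate steps rather than carried them out; they are standard, but the direct mixing argument avoids both issues and uses nothing beyond the finite-volume comparisons already at hand, which is presumably why the paper takes that route.
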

\begin{proof}
This is a consequence of Corollary \ref{co:Holley1} and Corollary \ref{co:boundary_conditions}
and a limiting argument. See the proof of Theorem 4.17 and Corollary 4.23 in \cite{MR2243761} for a detailed proof for the random cluster model which also applies to the model considered here.
Ergodicity is proved by showing that the measures are even mixing.
%The first statement follows from a limiting argument from the corresponding properties of $\mu_{n,p}^{0,1}$. 
\end{proof}
%The limits are translation invariant, rotation invariant,  and ergodic by the same arguments as in \cite{DCLectureNotes}\cite{duminil2017lectures}.

\paragraph{Infinite volume specifications.}
We now introduce the concept of  infinite volume Gibbs measures for this model.
We first consider the case of a finite connected graph $G$.
For $E\subset \Edge(G)$ we consider the finite volume specifications $\discgamma^G_{E}:\discsigma\times \{1,q\}^{\Edge(G)}\to \R$ %which  basically agrees with the definition in Equation \eqref{eq:def_boundary_condition} for the distribution 
%under boundary conditions, i.e.,
\begin{align}\label{eq:def_discgamma}
\discgamma^G_{E}(A,\lambda)
=\frac{1}{Z_\lambda}\sum_{\kappa\in A} \mathbb{1}_{\kappa{_{E^{\crm}}}=\lambda{_{E^{\crm}}}}
\frac{p^{h(\kappa)}(1-p)^{s(\kappa)}}{\sqrt{\det \Delta_\kappa}}
\end{align} 
where the normalisation $Z_\lambda$ ensures that $\discgamma^G_{E}(\cdot,\lambda)$ is a probability measure. 
A simple calculation shows that $\discgamma^G$ is indeed a specification, i.e., $\discgamma^G_{E}$ are proper probability kernels that satisfy for $E\subset E'$
\begin{align}
\discgamma_{E'}^G\discgamma_{E}^G=\discgamma_{E'}^G.
\end{align}
Since $\discgamma_{E}(\cdot, \lambda)$ is concentrated on a finite set
it is helpful to use the notation $\discgamma_{E}(\kappa,\lambda)=\discgamma_{E}(\{\kappa\},\lambda)$.
The measure $\P^{G}$ is a finite volume Gibbs measure, i.e., it satisfies
\begin{align}\label{eq:Gibbs_finite_volume}
\P^G\discgamma^{G}_{E}=\P^G
\end{align}
or put differently for $\kappa,\lambda\in \{1,q\}^{E}$
\begin{align}\label{eq:discgamma_and_bc}
\discgamma_{E}^G(\kappa,\lambda)=\P^{G,E,\lambda}(\kappa_{E}) \mathbb{1}_{\kappa_{\Edge(G)\setminus E}=\lambda_{\Edge(G)\setminus E}}.
\end{align}

We would like to call $\mu$ a Gibbs measure on $\{1,q\}^{\Z^d}$ for the random conductance model if
\begin{align}\label{eq:defGibbsMorally}
\discmu\discgamma^{\Z^d}_E=\discmu
\end{align}
holds for all $E\subset \Edge(\Z^d)$ finite.
However, $\discgamma^{G}_E$ is a priori only well defined for finite graphs so that
we use  an approximation procedure for infinite graphs.
Let $G$ be an connected infinite graph.
We are a bit sloppy with the notation and do not distinguish between $\discgamma^H_{E}$ 
for a subgraph $H$ of $G$
and its proper extension to $\discsigma\times \{1,q\}^{\Edge(G)}$, i.e., we define for $\kappa,\lambda\in \{1,q\}^{\Edge(G)}$
\begin{align}
\discgamma^{H}_{E}(\kappa,\lambda)=\mathbb{1}_{\kappa_{E^{\crm}}=\lambda_{E^{\crm}}} \discgamma^H_{E}(\kappa_{\Edge(H)},\lambda_{\Edge(H)}).
\end{align}
We denote for $f\in \Edge(G)$ and $\kappa\in \{1,q\}^{\Edge(G)}$ by $\kappa^+$ and $\kappa^-$ as before the configurations 
such that $\kappa_e^+=\kappa^-_e$
for $e\neq f$ and $\kappa_f^-=1$, $\kappa^+_f=q$.

In the following we assume $p\in (0,1)$. For $p\in \{0,1\}$ the measures $\P^{G,p}$ agree
with the Dirac measure on the constant 1 or constant $q$ configuration.
Since we assume that $E$ is finite the specification $\discgamma^H_{E}$ is uniquely characterized
by the fact that it is proper and it satisfies for $\kappa,\lambda\in \{1,q\}^{\Edge(H)}$ such that
$\kappa_{E^{\crm}}=\lambda_{E^{\crm}}$
\begin{align}\label{eq:characterization_specification}
\frac{\discgamma^{H}_{E'}(\kappa^-,\lambda)}{
\discgamma^{H}_{E'}(\kappa^+,\lambda)}
=
\frac{1-p}{p} \sqrt{\frac{\det \Delta^{H}_{\kappa^+} }{ \det \Delta^{H}_{\kappa^-}}}
=\frac{1-p}{p} \sqrt{
1+(q-1)\mathbb{Q}^{H}_{\kappa^-}(f\in \t)}
\end{align}
where we used \eqref{eq:quotient_det} in the second step.
We show that we can give meaning to this expression in infinite volume. For this we sketch the definition of spanning trees in infinite volume but we refer to the literature for details (see \cite{MR1825141}).
A monotone exhaustion of an infinite graph $G$ is a sequence of subgraphs $G_n$ such that
$G_n\subset G_{n+1}$ and $G=\bigcup_{n\geq 1} G_n$.
It can be shown that for any finite sets $E_1\subset E_2\subset \Edge(G)$ the limit 
$\lim_{n\to \infty} \Q^{G_n}_\kappa(\t\cap E_2=E_1)$ exists. In fact this is a consequence
of \eqref{eq:proba_inequality_diff_sizes} and the arguments we used for $\discmu_n^0$ above.
Hence it is possible to define a measure $\Q_\kappa^{G,0}$ on $2^{\Edge(G)}$, the power set of $\Edge(G)$ which will be called the weighted free spanning forest on $G$ (as the name suggest the measure is supported on forests but not necessarily on trees, i.e., on connected subsets of edges).
Similarly, we can define the wired spanning forest $\Q_\kappa^{G,1}$ replacing the subgraphs $G_n$ by the contracted graphs $G_n/\partial G_n$.
By definition those measures satisfy 
\begin{align}
\label{eq:convergence_edge_UST}
\lim_{n\to \infty}\Q_\kappa^{G_n}(f\in \t)&=\Q^{G,0}_\kappa(f\in \t)\\
\label{eq:convergence_edge_UST2}
\lim_{n\to \infty}\Q_\kappa^{G_n/\partial G_n}(f\in \t)&=\Q^{G,1}_\kappa(f\in \t)
\end{align}
for any $f\in E$.
Then it is possible to define two families of proper probability kernels $\discgamma^{G,0}_{E}$ 
and $\discgamma^{G,1}_{E}$ for $E\subset(\Edge(G))$ finite by the property that for $f\in E$
and $\kappa,\lambda\in \{1,q\}^{\Edge(G)}$ such that $\kappa_{E^{\crm}}=\lambda_{E^{\crm}}$
\begin{align}
\frac{\discgamma^{G,0}_{E}(\kappa^-,\lambda)}{
\discgamma^{G,0}_{E}(\kappa^+,\lambda)}
&=\frac{1-p}{p} \sqrt{
1+(q-1)\mathbb{Q}^{G,0}_{\kappa^-}(f\in \t)}
\\
\frac{\discgamma^{G,1}_{E}(\kappa^-,\lambda)}{
\discgamma^{G,1}_{E}(\kappa^+,\lambda)}
&=\frac{1-p}{p} \sqrt{
1+(q-1)\mathbb{Q}^{G,1}_{\kappa^-}(f\in \t)}.
\end{align}
From and \eqref{eq:characterization_specification} and \eqref{eq:characterization_specification} we conclude that $\discgamma^{G,0}$ and $\discgamma^{G,1}$ are well defined.
Moreover we obtain that this family of  probability kernels satisfy
for $\lambda,\kappa\in \{1,q\}^{\Edge(G)}$
\begin{align}\label{eq:conv_spec_1}
\discgamma^{G,0}_{E}(\kappa,\lambda)&=
\lim_{n\to \infty}
\discgamma^{G_n}_{E}(\kappa,\lambda),
\\
\label{eq:conv_spec_2}
\discgamma^{G,1}_{E}(\kappa,\lambda)&=
\lim_{n\to \infty}
\discgamma^{G_n/\partial G_n}_{E}(\kappa,\lambda).
\end{align}
Note that the concatenation for $\discgamma^{G,0}$ for $E',E\subset\Edge(\Z^d)$  is given by
\begin{align}
\discgamma^{G,0}_{E}\discgamma^{G,0}_{E'}(\kappa,\lambda)
=\sum_{\sigma: \sigma_{E^\crm}=\lambda_{E^\crm}} \discgamma^{G,0}_{E}(\sigma,\lambda)\discgamma^{G,0}_{E'}(\kappa,\sigma),
\end{align}
in particular it only involves a finite sum in the case of a finite spin space.
We conclude using \eqref{eq:conv_spec_1} and \eqref{eq:conv_spec_2} 
that $\discgamma^{G,0}_E$ and $\discgamma^{G,1}_E$ define two specifications on $G$.

Suppose the wired and the free uniform spanning forest on $G$ agree. This implies that also the weighted 
wired and free spanning forest $\Q^{G,0}_\kappa$ and $\Q^{G,1}_\kappa$ on $G$ agree if the conductances $\kappa_e$ are contained in a compact subset of $(0,\infty)$ (see Theorem 7.3 and Theorem 7.7 in \cite{MR1825141}). 
Thus  $\discgamma^{G,1}_{E}=\discgamma^{G,0}_{E}$ in this case.
In particular we obtain that $\discgamma^{\Z^d,0}_{E}=\discgamma^{\Z^d,1}_{E}$ 
because the free and the wired uniform spanning forest on $\Z^d$ agree (Corollary 6.3 in \cite{MR1825141}).
In the following we will denote this specification by $\discgamma_{E}$.
To ensure consistency with the earlier definition of $\extgamma$ we define
for a connected subset $\Lambda\subset \Z^d$ that
$\discgamma_{\Lambda}=\discgamma_{\Edge(\Lambda)}$.
%From the previous lemma we conclude that $\pi^1=\pi^0$ and we just drop the boundary condition in the following.
We can now give a formal definition of Gibbs measures for the random conductance model.
\begin{define}\label{def:kappa_Gibbs}
A measure $\discmu\in \mathcal{P}(\{1,q\}^{\Edge(\Z^d)})$ is a Gibbs measure if it
is admitted to the specification $\discgamma_E$.
\end{define} 
As one would expect the infinite volume measures $\discmu_p^0$ and $\discmu_p^1$ are Gibbs measures.
\begin{lemma}
The measures $\discmu_p^0$ and $\discmu_p^1$ are Gibbs measures as defined in Definition \ref{def:kappa_Gibbs}. Moreover any Gibbs measure $\discmu$ satisfies 
$\discmu_p^0\precsim \discmu\precsim \discmu_p^1$.
\end{lemma}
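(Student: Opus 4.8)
The plan is to treat the two assertions separately: first that $\discmu_p^0,\discmu_p^1\in\mathcal{G}(\discgamma)$, and then the sandwiching of an arbitrary Gibbs measure. By Remark~\ref{remark:Gibbs_specification} it suffices in both cases to work with the cofinal family $E=E_n$, and by Strassen's theorem (recalled before Theorem~\ref{th:stoch_dom}) the domination statements need only be checked on increasing local events. I would prove the second assertion first, as it is the cleaner part: it uses only the DLR equation for the given measure together with the finite-volume comparison inequalities, whereas the Gibbs property of $\discmu_p^0,\discmu_p^1$ requires a genuine interchange of limits.

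For the first assertion I would pass to the limit in the finite-volume DLR equation. Fix $m$ and a local event $A\in\discsigma_{E_m}$. By \eqref{eq:Gibbs_finite_volume}--\eqref{eq:discgamma_and_bc} the measure $\discmu_n^0=\P^{\Lambda_n}$ satisfies $\discmu_n^0\,\discgamma^{\Lambda_n}_{E_m}=\discmu_n^0$ for $m\le n$, and $\discmu_n^0\to\discmu_p^0$ in the local topology. On the compact space $\{1,q\}^{\Edge(\Z^d)}$ local convergence is weak convergence, so it remains to show
\begin{align*}
\int \discgamma^{\Lambda_n}_{E_m}(A,\cdot)\,\d\discmu_n^0 \longrightarrow \int \discgamma_{E_m}(A,\cdot)\,\d\discmu_p^0 .
\end{align*}
Splitting this into $\int(\discgamma^{\Lambda_n}_{E_m}(A,\cdot)-\discgamma_{E_m}(A,\cdot))\,\d\discmu_n^0$ and $\int\discgamma_{E_m}(A,\cdot)\,\d(\discmu_n^0-\discmu_p^0)$, the second term vanishes once the limiting kernel $\lambda\mapsto\discgamma_{E_m}(A,\lambda)$ is quasilocal, and the first once $\discgamma^{\Lambda_n}_{E_m}\to\discgamma_{E_m}$ holds uniformly in the boundary condition. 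For $\discmu_p^1$ the argument is identical, using the wired exhaustion $\Lambda_n/\partial\Lambda_n$ and \eqref{eq:conv_spec_2}; both specification limits equal $\discgamma$ because the free and wired spanning forests coincide on $\Z^d$ (\cite{MR1825141}).

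The main obstacle is exactly this quasilocality and uniformity, and this is where the coincidence of the two forests enters quantitatively. Since $\discgamma_{E_m}(A,\cdot)$ is determined through \eqref{eq:quotient_det} by the inclusion probabilities $\Q^{\Z^d}_\kappa(f\in\t)$ for the finitely many $f\in E_m$, it suffices to control these. The functions $\kappa\mapsto\Q^{\Lambda_n}_\kappa(f\in\t)$ and $\kappa\mapsto\Q^{\Lambda_n/\partial\Lambda_n}_\kappa(f\in\t)$ each depend on finitely many coordinates, hence are continuous on $\{1,q\}^{\Edge(\Z^d)}$, and by \eqref{eq:proba_inequality_diff_sizes} they are respectively nonincreasing and nondecreasing in $n$ and sandwich $\Q^{\Z^d}_\kappa(f\in\t)$. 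Their difference is then a nonincreasing sequence of continuous functions converging pointwise to $0$ by \eqref{eq:convergence_edge_UST}--\eqref{eq:convergence_edge_UST2} and the equality of the forests; by Dini's theorem the convergence is uniform on the compact space. This simultaneously yields the quasilocality of $\discgamma_{E_m}(A,\cdot)$ and the uniform convergence $\discgamma^{\Lambda_n}_{E_m}\to\discgamma_{E_m}$, completing the first assertion.

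For the sandwiching, let $\discmu$ be any Gibbs measure and $A$ an increasing local event in $\discsigma_{E_m}$. For $n\ge m$ the DLR equation gives $\discmu(A)=\int\discgamma_{E_n}(A,\lambda)\,\discmu(\d\lambda)$, so it suffices to bound the kernel uniformly in $\lambda$. Writing $\discgamma_{E_n}(\cdot,\lambda)=\lim_k\P^{\Lambda_k,E_n,\lambda}$ via \eqref{eq:conv_spec_1} and \eqref{eq:discgamma_and_bc}, the first inequality of \eqref{eq:stoch_dom_incl} applied to the subgraph $\Lambda_n\subset\Lambda_k$ gives $\discmu_n^0=\P^{\Lambda_n}\precsim\P^{\Lambda_k,E_n,\lambda}$, hence $\discmu_n^0\precsim\discgamma_{E_n}(\cdot,\lambda)$ upon letting $k\to\infty$. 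Using instead the wired exhaustion $\discgamma_{E_n}(\cdot,\lambda)=\lim_k\P^{\Lambda_k/\partial\Lambda_k,E_n,\lambda}$ (equal to the free limit because the forests coincide) and the third inequality of \eqref{eq:stoch_dom_incl} with $F=E_k\setminus E_n$ — contracting $F$ collapses the exterior of $\Lambda_n$ to a point and produces $\Lambda_n^w$ — gives $\P^{\Lambda_k/\partial\Lambda_k,E_n,\lambda}\precsim\P^{\Lambda_n^w}=\discmu_n^1$, so $\discgamma_{E_n}(\cdot,\lambda)\precsim\discmu_n^1$. Since these bounds are uniform in $\lambda$, integrating against $\discmu$ yields $\discmu_n^0(A)\le\discmu(A)\le\discmu_n^1(A)$; letting $n\to\infty$ and using $\discmu_n^0\to\discmu_p^0$, $\discmu_n^1\to\discmu_p^1$ gives $\discmu_p^0(A)\le\discmu(A)\le\discmu_p^1(A)$ for every increasing local $A$, which by Strassen's theorem is the claim $\discmu_p^0\precsim\discmu\precsim\discmu_p^1$.
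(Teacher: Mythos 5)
Your proposal is correct, but the mechanism you use to justify the interchange of limits in $\discmu_n^0\discgamma^{\Lambda_n}_{E_m}=\discmu_n^0$ is genuinely different from the paper's. The paper restricts attention to increasing local events $A$ and sandwiches $\discmu_n^0\bigl(\discgamma^{\Lambda_n}_{E_m}(A,\cdot)\bigr)$ between monotone quantities, using only that $\discgamma^{\Lambda_n}_{E_m}(A,\cdot)$ is an increasing function of the boundary condition and monotone in $n$ (Corollary \ref{co:boundary_conditions}); no uniform control of the kernels enters that proof. You instead prove uniform convergence of the finite-volume specifications to $\discgamma_{E_m}$ along both exhaustions and combine it with quasilocality of the limit kernel, which handles arbitrary local events at once. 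Your route to the uniform convergence, namely that $\Q^{\Lambda_n}_\kappa(f\in\t)-\Q^{\Lambda_n^w}_\kappa(f\in\t)$ is a nonincreasing sequence of continuous functions on the compact space $\{1,q\}^{\Edge(\Z^d)}$ tending pointwise to $0$ because the free and wired spanning forests on $\Z^d$ coincide, so that Dini's theorem applies, is a nice soft alternative to the paper's Lemma \ref{le:uniform_specification}, which derives the same uniformity (for the wired exhaustion) from the transfer current theorem together with the discrete elliptic regularity estimates of Appendix \ref{app:1}; your argument avoids the regularity theory entirely, at the price of being tied to the monotone exhaustions, whereas the paper's monotone sandwich needs no uniform convergence at all for this particular lemma. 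For the second assertion the paper simply refers to \cite[Proposition 4.10]{MR2243761}; your explicit derivation of $\discmu_n^0\precsim\discgamma_{E_n}(\cdot,\lambda)\precsim\discmu_n^1$ uniformly in $\lambda$, obtained by approximating $\discgamma_{E_n}(\cdot,\lambda)$ through the free and wired exhaustions and invoking the first and third inequalities of \eqref{eq:stoch_dom_incl}, is exactly that standard argument written out, and it is correct.
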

\begin{proof}
By equation \eqref{eq:Gibbs_finite_volume} we have for $E\subset E_n$
\begin{align}\label{eq:discmu_Gibbs}
\discmu_{n}^0\discgamma^{\Lambda_n}_{E}=\discmu_{n}^0.
\end{align}
We show that  both sides converge in the topology of local convergence as $n\to \infty$.
Let $A$ be an increasing event depending on a finite number of edges. We have seen in \eqref{eq:compare_bc_n_n+1} that 
$\mu_n^0(A)$ is an increasing sequence and converges by definition to $\mu^0(A)$.
We derive the convergence of the left hand side of equation \eqref{eq:discmu_Gibbs}
from the following three observations.
First, we conclude from \eqref{eq:stoch_dom_incl} and \eqref{eq:discgamma_and_bc} 
that
$\discgamma^{\Lambda_n}_E(A,\cdot)$ is an increasing function.
Second, using  \eqref{eq:stoch_dom_incl2} and \eqref{eq:discgamma_and_bc} 
we obtain  $\discgamma^{\Lambda_{n+1}}_E(A,\kappa)\geq \discgamma^{\Lambda_n}_E(A,\kappa_{E_n})$
 for all $\kappa\in \{1,q\}^{E_{n+1}}$.
The third observation is that \eqref{eq:compare_bc_n_n+1} can  also be applied to an increasing function
instead of an increasing event. These three  facts imply
\begin{align}
\discmu_n^0(\discgamma^{\Lambda_n}_E(A,\cdot))\leq \discmu^0(\discgamma_E^{\Lambda_n}(A,\cdot))
\leq \discmu^0(\discgamma_E(A,\cdot)).
\end{align}
On the other hand, we obtain for any $m\in \mathbb{N}$
\begin{align}
\lim_{n\to \infty} \discmu_n^0(\discgamma^{\Lambda_n}_E(A,\cdot))
\geq \lim_{n\to \infty}\discmu_n^0(\discgamma^{\Lambda_m}_E(A,\cdot))=\discmu^0(\discgamma^{\Lambda_m}_E(A,\cdot)).
\end{align}
Sending $m\to \infty $ we get 
\begin{align}
\lim_{n\to \infty} (\discmu_n^0\discgamma^{\Lambda_n}_E)(A)\geq  (\discmu^0\discgamma_E)(A).
\end{align}
Hence, we have shown that 
\begin{align}\label{eq:limit_Gibbs_Spec}
\discmu^0\discgamma_E(A)=\lim_{n\to \infty} \discmu_{n}^0\discgamma^{\Lambda_n}_{E}(A)=\lim_{n\to \infty}\discmu_{n}^0(A)=\discmu^0(A)
\end{align}
holds for any increasing and local event $A$. Using standard arguments \eqref{eq:limit_Gibbs_Spec} holds for all local events.
Therefore  $\mu^0$ is  a Gibbs measure.
The proof for $\mu^1$ is similar based on the  identity 
\begin{align}
\discmu_n^1\discgamma_E^{\Lambda_n^w} =\discmu_n^1.
\end{align} 
% See Corollary \ref{co:uniform_specification} below for details on the convergence. 
Finally, a limiting argument and the comparison of boundary conditions show that $\discmu_p^0\precsim \discmu\precsim \discmu_p^1$ for any Gibbs measure $\mu$ (see \cite[Proposition 4.10]{MR2243761}).
\end{proof}

% Topology: In the discrete local convergence is the same as weak convergence so not much of an issue. Below we show convergence in the $\mathcal{L}$ topology of the specification and
%we can add above a remark that we have weak convergence of $\mu_n$, then $\mu_n \gamma_n f
%=(\mu_n (\gamma_n-\gamma_m)f)+\mu_n(\gamma_mf)$ can be used where the second term is small
%uniformly and the second converges to $\mu(\gamma_m f)$.
Let us briefly introduce the class of quasilocal specifications which is a natural and useful condition for a specification. For an extensive discussion we refer to the literature \cite{MR2807681}. A quasilocal function on a general state space is
a bounded function $X:F^S\to \R$ that can be approximated arbitrarily well by local functions, i.e., 
\begin{align}
\inf_{Y\text{ local}} \sup_{\omega\in F^S} |X(\omega)-Y(\omega)|=0. 
\end{align}
A specification $\gamma$ is called quasilocal if $\gamma_{\Lambda} X$ is a quasilocal function for every local function $X$.
We will show that the specification $\discgamma_{E}$ is quasilocal. This will be a direct consequence of the following result that shows uniform convergence of
$\discgamma^{\Lambda_n^w}_E$ to $\discgamma_E$. This convergence will be of independent use later.
\begin{lemma}\label{le:uniform_specification}
The specifications $\discgamma_{E_n}$ and $\discgamma_{E_n}^{\Lambda_N^w}$ satisfy 
\begin{align}\label{eq:uniform_specification}
\limsup_{N\to \infty} \sup_{\kappa, \lambda\in \{1,q\}^{\Edge(\Z^d)}} |\discgamma_{E_n}(\kappa,\lambda)-
%\mathbb{1}_{\kappa_{E_n^{\crm}}=\lambda_{E_n^{\crm}}}
\discgamma_{E_n}^{\Lambda_N^w}(\kappa,\lambda)|=0.
\end{align}
\end{lemma}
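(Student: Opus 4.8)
The statement to prove is Lemma \ref{le:uniform_specification}: uniform convergence of the wired finite-volume specification $\discgamma_{E_n}^{\Lambda_N^w}$ to the infinite-volume specification $\discgamma_{E_n}$, uniformly over all boundary configurations $\kappa,\lambda$. Let me sketch how I would attack this.

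The plan is to reduce everything to the convergence of spanning-tree edge probabilities, since the entire specification is determined by ratios of the form given in \eqref{eq:characterization_specification}. Concretely, both $\discgamma_{E_n}(\cdot,\lambda)$ and $\discgamma_{E_n}^{\Lambda_N^w}(\cdot,\lambda)$ are probability measures on the finite set $\{1,q\}^{E_n}$ (after conditioning, the configuration is fixed outside $E_n$), and each is proper and normalised. A measure on a finite set is completely pinned down by the collection of single-edge ratios $\discgamma(\kappa^-,\lambda)/\discgamma(\kappa^+,\lambda)$ as $f$ ranges over $E_n$ and $\kappa$ ranges over configurations agreeing with $\lambda$ off $E_n$. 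By the characterisation in \eqref{eq:characterization_specification} and the analogous wired identity, each such ratio is
\begin{align*}
\frac{1-p}{p}\sqrt{1+(q-1)\mathbb{Q}^{H}_{\kappa^-}(f\in\t)},
\end{align*}
with $H=\Z^d$ in one case and $H=\Lambda_N^w=\Lambda_N/\partial\Lambda_N$ in the other. So the first step is to observe that it suffices to control $|\mathbb{Q}^{\Z^d}_{\kappa^-}(f\in\t)-\mathbb{Q}^{\Lambda_N^w}_{\kappa^-}(f\in\t)|$ uniformly in $\kappa$ and in $f\in E_n$, since $q\geq 1$ is fixed, $p\in(0,1)$ is fixed, and the map $x\mapsto\sqrt{1+(q-1)x}$ is Lipschitz on $[0,1]$.

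The second step is to establish that this edge-probability difference tends to zero uniformly. Here I would use the fact that the free and wired uniform spanning forests on $\Z^d$ coincide (Corollary 6.3 in \cite{MR1825141}), together with the monotonicity in \eqref{eq:proba_inequality_diff_sizes}, which gives $\mathbb{Q}^{\Lambda_N}_{\kappa^-}(f\in\t)\geq\mathbb{Q}^{\Lambda_N^w}_{\kappa^-}(f\in\t)$ (the wired graph is a contraction) and that both sequences are monotone in $N$ sandwiching the common limit $\mathbb{Q}^{\Z^d,0}_{\kappa^-}(f\in\t)=\mathbb{Q}^{\Z^d,1}_{\kappa^-}(f\in\t)$ from \eqref{eq:convergence_edge_UST}--\eqref{eq:convergence_edge_UST2}. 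For a single $\kappa$ this is just convergence; the content of the lemma is \emph{uniformity over $\kappa$}. The key point is that the edge probability $\mathbb{Q}^H_{\kappa}(f\in\t)$ admits an electrical-network interpretation — it equals the effective conductance/current through $f$ — and the transfer-current (Kirchhoff) formalism expresses it via the Green's function of $\Delta_\kappa$. Since the conductances take values in the compact set $\{1,q\}\subset(0,\infty)$, the uniform ellipticity gives a priori bounds on the relevant Green's functions and on the rate at which the finite-volume currents approach their infinite-volume values, with constants depending only on $p$, $q$, $d$, and $n$ but \emph{not} on the particular $\kappa$. Because $E_n$ is a fixed finite edge set, one only needs convergence for finitely many edges $f$, which lets me take a finite maximum at the end.

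The main obstacle is precisely this uniformity in $\kappa$ and $\lambda$. For a fixed environment the convergence is classical, but one must rule out the possibility that boundary configurations conspire to slow the approach down. I expect this to be handled by the uniform ellipticity just mentioned: with conductances confined to $[1,q]$, standard comparison estimates for spanning-tree edge probabilities (e.g. Rayleigh monotonicity and the sandwiching between free and wired forests) yield a bound on $|\mathbb{Q}^{\Z^d}_{\kappa}(f\in\t)-\mathbb{Q}^{\Lambda_N^w}_{\kappa}(f\in\t)|$ that is uniform in the environment. The most delicate verification is that the free/wired agreement on $\Z^d$ upgrades to a \emph{uniform} rate, which is where the transfer-current matrix and the decay of the lattice Green's function (uniform over elliptic $\kappa$) do the real work. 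Once the edge-probability difference is shown to go to zero uniformly in $\kappa$ and $f\in E_n$, propagating this through the Lipschitz bound on $\sqrt{1+(q-1)\,\cdot\,}$ and then through the finitely many normalised ratios defining the two measures on $\{1,q\}^{E_n}$ gives \eqref{eq:uniform_specification}, completing the proof.
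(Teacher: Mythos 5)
Your proposal is correct and follows essentially the same route as the paper: reduce the claim to uniform convergence of the single-edge ratios via \eqref{eq:characterization_specification}, then control $|\mathbb{Q}^{\Z^d}_{\kappa}(f\in\t)-\mathbb{Q}^{\Lambda_N^w}_{\kappa}(f\in\t)|$ uniformly in $\kappa$ through the transfer-current identity $\mathbb{Q}^{G}_{\kappa}(f\in\t)=\kappa_f(\delta_x-\delta_y)(\Delta^G_\kappa)^{-1}(\delta_x-\delta_y)$ and the uniform-in-$\kappa$ convergence of the Green's function (Lemma \ref{le:uniform_Laplace_inverse}). The paper does not need the free/wired sandwiching you mention as an intermediate step, but you correctly identify that the real work is done by the transfer-current formula combined with uniform ellipticity.
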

\begin{proof}
First, we claim that it is sufficient to show that
\begin{align}\label{eq:uniform_specification2}
\limsup_{N\to \infty} \sup_{\kappa\in \{1,q\}^{\Edge(\Z^d)}}\sup_{f\in E_n}
|\mathbb{Q}_\kappa^{\Z^d}(f\in \t)-\mathbb{Q}_\kappa^{\Lambda_N^w}(f\in \t)|=0.
\end{align}
Indeed, using \eqref{eq:uniform_specification2} in \eqref{eq:characterization_specification} we obtain 
%this implies for $\lambda,\kappa\in \{1,q\}^{\Edge(\Z^d)}$ such that $\lambda_{E_n^{\crm}}=\kappa_{E_n^{\crm}}$ 
\begin{align}\label{eq:uniform_specification3}
\limsup_{N\to \infty} \sup_{\substack{\kappa,\lambda\in \{1,q\}^{\Edge(\Z^d)}\\
\kappa_{E_n^{\crm}}=\lambda_{E_n^{\crm}}}}\sup_{f\in E_n}
\frac{\discgamma_{E_n}(\kappa^-_f,\lambda)}{
\discgamma_{E_n}(\kappa^+_f,\lambda)}/
\frac{\discgamma^{\Lambda_N^w}_{E_n}(\kappa^-_f,\lambda)}{
\discgamma^{\Lambda_N^w}_{E_n}(\kappa^+_f,\lambda)}=1.
\end{align}
Since $E_n$ is finite this implies the claim.

It remains to prove \eqref{eq:uniform_specification2}. This is a consequence of the transfer current theorem (see Theorem 4.1 in \cite{MR1825141}) that states
in the special case of the occupation property that for $f=\{x,y\}\in \Edge(G)$
\begin{align}\label{eq:uniform_specification4}
\mathbb{Q}_\kappa^{G}(f\in \t)=I_f(f)=\kappa_f(\delta_x-\delta_y)(\Delta^G_\kappa)^{-1}(\delta_x-\delta_y)
\end{align}
where the expression $I_f(f)$ denotes the current through the edge $f$ when 1 unit of current is induced respectively removed at the two ends of $f$. In the last step we used
that $I_f(f)$  can be calculated by applying the inverse Laplacian to the sources to obtain the potential which can be used to calculate the current through $f$. 
Now \eqref{eq:uniform_specification2} follows from the  display \eqref{eq:uniform_specification4} and Lemma \ref{le:uniform_Laplace_inverse}.
\end{proof}

\begin{corollary}\label{co:quasilocal}
The specification $\discgamma_E$ is quasilocal.
\end{corollary}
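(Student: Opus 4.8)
The plan is to deduce quasilocality of $\discgamma_E$ directly from the uniform convergence established in Lemma \ref{le:uniform_specification}. Recall that the finite-volume specifications $\discgamma^{\Lambda_N^w}_{E}$ are by construction local: for fixed finite $E$, the quantity $\discgamma^{\Lambda_N^w}_{E}(A,\lambda)$ depends only on the coordinates $\lambda_e$ for edges $e$ inside $\Lambda_N$, since the underlying graph $\Lambda_N^w$ is finite. Thus for every local function $X$ (measurable with respect to $\discsigma_{E'}$ for some finite $E'$), the function $\lambda\mapsto \discgamma^{\Lambda_N^w}_{E}X(\lambda)$ is again local, being measurable with respect to the finitely many edges of $\Lambda_N$.

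First I would reduce to the case where $X=\mathbb{1}_A$ is the indicator of a local event, since local functions are finite linear combinations of such indicators and quasilocality is preserved under finite linear combinations and uniform limits. For such an $X$ and a finite set $E$, I would write
\begin{align}
\sup_{\lambda}|\discgamma_{E}X(\lambda)-\discgamma^{\Lambda_N^w}_{E}X(\lambda)|
\leq \sum_{\kappa\,:\,\kappa_{E^\crm}=\lambda_{E^\crm}} |X(\kappa)|\,
\sup_{\kappa,\lambda}|\discgamma_{E}(\kappa,\lambda)-\discgamma^{\Lambda_N^w}_{E}(\kappa,\lambda)|,
\end{align}
where the sum runs over the finitely many configurations $\kappa$ on $E$ that agree with $\lambda$ off $E$. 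Because $E$ is finite this sum has at most $2^{|E|}$ terms and $|X|$ is bounded, so the right-hand side is bounded by a constant times the supremum appearing in \eqref{eq:uniform_specification}, which tends to $0$ as $N\to\infty$ by Lemma \ref{le:uniform_specification}. Hence $\discgamma^{\Lambda_N^w}_{E}X$ converges to $\discgamma_{E}X$ uniformly.

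I would then conclude by invoking the standard fact that a uniform limit of local (hence quasilocal) functions is quasilocal: since each $\discgamma^{\Lambda_N^w}_{E}X$ is local and they converge to $\discgamma_{E}X$ in the supremum norm, the infimum over local functions $Y$ of $\sup_\omega|\discgamma_{E}X-Y|$ is zero, which is precisely the definition of quasilocality. The only mild subtlety, and the step I would watch most carefully, is the reduction from general local functions to indicators together with checking that the bound above is genuinely uniform in $\lambda$ (not merely pointwise); but this is immediate because the error estimate from Lemma \ref{le:uniform_specification} is already stated uniformly over all $\kappa,\lambda$. There is no serious obstacle here: the entire content has been packaged into the uniform convergence lemma, and the corollary is essentially a formal consequence of it.
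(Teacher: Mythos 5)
Your argument is correct and coincides with the paper's proof: both deduce quasilocality by noting that $\discgamma^{\Lambda_N^w}_{E}X$ is local for each $N$ and converges uniformly to $\discgamma_{E}X$, via the bound by the finite sum over configurations on $E$ together with the uniform estimate of Lemma \ref{le:uniform_specification}. The reduction to indicators is a harmless extra step the paper omits.
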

\begin{proof}
Let $X$ be a local functions. We need to show that $\discgamma_EX$ is quasilocal.
Lemma~\ref{le:uniform_specification} implies that the local functions $\discgamma^{\Lambda_N^w}_EX$ satisfy
\begin{align}
\lim_{N\to \infty}\sup_{\kappa\in \{1,q\}^{\Edge(\Z^d)}} |\discgamma_EX(\kappa)-\discgamma^{\Lambda_N^w}_EX(\kappa)|
\leq\lim_{N\to\infty} \sup_{\kappa\in \{1,q\}^{\Edge(\Z^d)}} \sum_{
\substack{\lambda\in \{1,q\}^{\Edge(\Z^d)} \\
\lambda_{E^\crm}=\kappa_{E^\crm}}} \left|\left(\discgamma_E(\lambda,\kappa)-\discgamma_E^{\Lambda_N^w}\right) X(\lambda)\right|=0.
\end{align}
\end{proof}
\paragraph{Relation to extended gradient Gibbs measures}
In this paragraph we state the results that relate the random conductance model to 
extended gradient Gibbs measure. This is finally the justification to consider this model. The proofs of the results in this paragraph are deferred to Section \ref{sec:proofs_of_propositions}.
The first Proposition establishes that the $\kappa$-marginal of extended gradient Gibbs measures are Gibbs states for the random conductance model.

% \section{Relating extended gradient Gibbs measures to discrete Gibbs measures}\label{sec:ext_discrete}
%In this section we want to investigate the properties of extended gradient Gibbs measures introduced in Section \ref{sec:model} and their relation to the discrete Gibbs measures introduced in the previous section.

%We start by relating the definition of Gibbs measures for the discrete model in the previous section to the $\kappa$ marginal
%of extended gradient Gibbs measures.
% that the $\kappa$ marginal $\nu$ of an extended gradient Gibbs measure $\tilde{\mu}$ 
%is a Gibbs measure in the sense of definition \ref{def:kappa_Gibbs} from the previous section.  

%We now relate Gibbs measures for the discrete model in the previous section to the $\kappa$ marginal
%of extended gradient Gibbs measures.
\begin{proposition}\label{prop:discmu_Gibbs}
Let $\tilde{\mu}$ be an  extended gradient Gibbs measure associated to a translation invariant and ergodic gradient Gibbs measure $\mu$ with zero tilt. Then the
$\kappa$-marginal $\bar{\mu}$ of $\tilde{\mu}$ is a Gibbs measure in the sense of definition \ref{def:kappa_Gibbs}.
\end{proposition}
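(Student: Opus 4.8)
The plan is to verify the DLR equations for $\discmu$ directly, using the Gibbs property of $\extmu$ with respect to the extended specification $\extgamma$ together with the convergence of the finite volume specifications established above. By Remark~\ref{remark:Gibbs_specification} and the standard approximation of local events, it suffices to fix $n$, put $E=E_n$, and show for every $\lambda\in\{1,q\}^{E_n}$ and $A=\{\kappa_{E_n}=\lambda\}$ that $\discmu(A)=\discmu(\discgamma_{E_n}(A,\cdot))$. First I would introduce, for $N\geq n$, the $\sigma$-algebra $\mc{G}_N=\sigma(\eta_{E_N^{\crm}},\kappa_{E_n^{\crm}})$ on the extended space, recording all gradients outside $\Lambda_N$ and all conductances outside $E_n$. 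Since $\extmu$ is Gibbs for $\extgamma$, conditioning on $\mc{G}_N$ amounts to first resampling $(\bar\eta,\bar\kappa)_{E_N}$ according to $\extgamma_{\Lambda_N}$ and then conditioning on the fixed values of $\kappa$ on $E_N\setminus E_n$. Writing $P_N:=\extmu(A\mid\mc{G}_N)$, the tower property gives $\extmu(P_N)=\extmu(A)=\discmu(A)$ for every $N$.

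Next I would integrate out the Gaussian field exactly as in the computation leading to \eqref{eq:motivation_rc_model}, but now with the general boundary field $\eta_{E_N^{\crm}}$ in place of $\bar 0$: the $\kappa$-marginal of $\extgamma_{\Lambda_N}$ evaluated at $\kappa_{E_N}=\lambda$ is proportional to $\prod_{e\in E_N}\rho(\{\lambda_e\})\,(\det\tilde\Delta^{\Lambda_N^w}_{\lambda})^{-1/2}\,e^{-\frac12 H_N(\lambda)}$, where $H_N(\lambda)$ is the Dirichlet energy of the discrete $\lambda$-harmonic extension of the boundary field. Thus $P_N$ is the finite volume conductance measure on $\Lambda_N^w$, conditioned on $\kappa_{E_N\setminus E_n}$ and reweighted by the boundary factor $e^{-\frac12 H_N}$. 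For $f\in E_n$ and configurations agreeing off $f$ this gives the single edge ratio $P_N(\lambda^+)/P_N(\lambda^-)=\frac{p}{1-p}(\det\tilde\Delta^{\Lambda_N^w}_{\lambda^-}/\det\tilde\Delta^{\Lambda_N^w}_{\lambda^+})^{1/2}e^{-\frac12(H_N(\lambda^+)-H_N(\lambda^-))}$.

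By \eqref{eq:relation_Laplacians} and \eqref{eq:quotient_det} the determinant quotient equals $(1+(q-1)\Q^{\Lambda_N^w}_{\lambda^-}(f\in\t))^{-1/2}$, and by Lemma~\ref{le:uniform_specification} (through \eqref{eq:convergence_edge_UST2} and the coincidence of the free and wired spanning forests on $\Z^d$) it converges, uniformly in the conditioned configuration, to $(1+(q-1)\Q^{\Z^d}_{\lambda^-}(f\in\t))^{-1/2}$, which is precisely the corresponding ratio of $\discgamma_{E_n}$ coming from \eqref{eq:characterization_specification}. Since every strictly positive probability measure on the finite set $\{1,q\}^{E_n}$ is a continuous function of its single edge ratios, it remains to show that the boundary factors are asymptotically trivial, i.e. that $H_N(\lambda^+)-H_N(\lambda^-)\to 0$ in $\extmu$-probability for each fixed $f\in E_n$. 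Granting this, $P_N\to\discgamma_{E_n}(A,\cdot)$ in probability, and bounded convergence together with $\extmu(P_N)=\discmu(A)$ yields $\discmu(A)=\discmu(\discgamma_{E_n}(A,\cdot))$, which is the assertion.

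I expect the vanishing of the boundary factors to be the main obstacle, and the only place where the hypotheses on $\mu$ are used. By the envelope formula one has $H_N(\lambda^+)-H_N(\lambda^-)=\frac12\int_1^q (\nabla u_{\lambda,t})_f^2\,\d t$, where $u_{\lambda,t}$ is the harmonic extension (with conductance $t$ on $f$) of the random boundary field; hence it suffices that the gradient of the harmonic extension at the fixed interior edge $f$ tends to zero as $\Lambda_N\uparrow\Z^d$. This is exactly where zero tilt, translation invariance and ergodicity enter: conditionally on $\kappa$ the height field is Gaussian with sublinear growth, so its harmonic extension flattens in the bulk. I would make this quantitative by bounding $\E_{\extmu}[(\nabla u_{\lambda,t})_f^2]$ via the interior gradient estimates for discrete elliptic operators in divergence form from the appendix, combined with the sublinearity of the height field furnished by the zero tilt assumption. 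As the difference $H_N(\lambda^+)-H_N(\lambda^-)$ is nonnegative, controlling its expectation gives the required convergence in probability and completes the argument.
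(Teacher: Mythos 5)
Your proposal follows essentially the same route as the paper's proof: condition on the gradient field outside $\Lambda_N$ and the conductances outside $E_n$, integrate out the Gaussian field to identify the conditional law as the wired finite-volume conductance measure reweighted by the Dirichlet energy of the $\kappa$-harmonic extension of the boundary data, show that the reweighting is asymptotically negligible using interior elliptic regularity together with the conditional Gaussianity of the field, and pass to the limit via the uniform convergence of the finite-volume specifications (Lemma \ref{le:uniform_specification}). One precision point: ``sublinear growth'' of the height field is not by itself sufficient, since the interior Nash--Moser estimate only yields a decay $N^{-\alpha}$ with a possibly small H\"older exponent $\alpha$; what your appeal to conditional Gaussianity must actually deliver is the paper's polylogarithmic control of the boundary oscillation (at most $(\ln N)^3$ with probability $1-C/\ln N$, via Brascamp--Lieb and Gaussian maxima as in Lemma \ref{le:boundaryGaussian}).
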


The second main result in this paragraph is a 
reverse of Proposition \ref{prop:discmu_Gibbs}, namely that it is possible to obtain an extended Gibbs measure with zero tilt for the potential $V_{p,q}$, given a Gibbs measure  $\discmu$
for the random conductance model with parameters $p,q$.
\begin{proposition}\label{prop:discmu_to_extmu}
Let $\discmu$ be a Gibbs measure  in the sense of Definition \ref{def:kappa_Gibbs} for
parameters $p$ and $q$ and $\kappa\sim \discmu$.
Let  $\p^\kappa$ be the random field that for given $\kappa$ is a Gaussian field with 
zero average, $\p^\kappa(0)=0$, and covariance $(\Delta_\kappa)^{-1}$, i.e.,
$\p^\kappa$ satisfies for $f:\Z^d\to \R$ with finite support and $\sum_{x} f(x)=0$ 
\begin{align}\label{eq:Var_extmu}
\mathrm{Var} \left( (f,\p^\kappa)_{\Z^d}\right)=(f,(\Delta_\kappa)^{-1}f).
\end{align}
Let $\extmu$ be the joint law of $(\kappa,\nabla\p^\kappa)$.
Then $\extmu$ is
an extended Gibbs measure for the potential $V_{p,q}$ 
with zero tilt, in particular its $\eta$-marginal is a gradient Gibbs measure with zero tilt.
\end{proposition}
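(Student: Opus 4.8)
The goal is to verify that $\extmu$ is admitted to the extended specification $\extgamma$ of \eqref{eq:specification_gamma_tilde}; the zero-tilt statement and the claim about the $\eta$-marginal are then essentially free. Zero tilt holds because, conditionally on $\kappa$, the field $\p^\kappa$ is centred, so $\E_{\extmu}(\eta_e\mid\kappa)=0$ and hence $\E_{\extmu}(\eta_e)=0$; and once $\extmu$ is an extended gradient Gibbs measure, its $\eta$-marginal is a gradient Gibbs measure for $V_{p,q}$ by the consistency of the construction around \eqref{eq:def_extended_Gibbs}. By Remark \ref{remark:Gibbs_specification} it suffices to check the DLR equation $\extmu\extgamma_\Lambda=\extmu$ for $\Lambda$ running over the cofinal family of finite simply connected boxes, and, testing against local functions, to identify the $\extmu$-conditional law of $(\eta_{\Edge(\Lambda)},\kappa_{\Edge(\Lambda)})$ given $\extsigma_{\Edge(\Lambda)^{\crm}}$ with $\extgamma_\Lambda(\cdot,(\eta,\kappa))$.

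The plan is to exploit the disintegration $\extmu(\d\eta,\d\kappa)=\discmu(\d\kappa)\,P^\kappa(\d\eta)$, where $P^\kappa$ denotes the Gaussian gradient law with $\eta=\nabla\p^\kappa$, and to treat the $\eta$- and $\kappa$-directions by two different inputs. For the $\eta$-direction I would use that $P^\kappa$ is a gradient Gibbs measure for the quadratic bond potential $\tfrac12\kappa_e s^2$: conditionally on the full configuration $\kappa$ and on $\eta_{\Edge(\Lambda)^{\crm}}$, the law of $\eta_{\Edge(\Lambda)}$ under $\extmu$ is the finite-volume Gaussian proportional to $\exp(-\tfrac12\sum_{e\in\Edge(\Lambda)}\kappa_e\eta_e^2)\,\nu_\Lambda^{\eta_{\Edge(\Lambda)^{\crm}}}(\d\eta)$, which is exactly the $\bar\eta$-part of $\extgamma_\Lambda$ with $\bar\kappa_\Lambda=\kappa_\Lambda$. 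Establishing this Gaussian Markov property of the infinite-volume field (together with the very existence of $P^\kappa$ as a gradient measure, where some care in low dimensions is needed) is the first technical task. For the $\kappa$-direction I would compute, by Bayes' formula, the $\extmu$-conditional of $\kappa_{\Edge(\Lambda)}$ given $(\kappa_{\Edge(\Lambda)^{\crm}},\eta_{\Edge(\Lambda)^{\crm}})$: it is proportional to the Gibbs conditional $\discgamma_\Lambda(\cdot,\kappa_{\Edge(\Lambda)^{\crm}})$ of $\discmu$ reweighted by the marginal density of the exterior gradients $\eta_{\Edge(\Lambda)^{\crm}}$ under $P^{(\bar\kappa_\Lambda,\kappa_{\Edge(\Lambda)^{\crm}})}$. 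Matching this with the $\bar\kappa_\Lambda$-marginal of $\extgamma_\Lambda$, which by Gaussian integration equals $\prod_{e\in\Edge(\Lambda)}\rho(\bar\kappa_e)$ times the Dirichlet partition function $\int\exp(-\tfrac12\sum_{e\in\Edge(\Lambda)}\bar\kappa_e\bar\eta_e^2)\,\nu_\Lambda^{\eta_{\Edge(\Lambda)^{\crm}}}(\d\bar\eta)$, is the heart of the matter.

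Reducing the last matching to a ratio over a single interior edge $f$, and using the characterisation \eqref{eq:characterization_specification} together with \eqref{eq:quotient_det}, the claim becomes an identity expressing that the infinite-volume spanning-tree weight $\sqrt{1+(q-1)\Q^{\Z^d}_{\kappa^-}(f\in\t)}$ appearing in $\discgamma_\Lambda$, multiplied by the Gaussian likelihood ratio of $\eta_{\Edge(\Lambda)^{\crm}}$ as $\kappa_f$ toggles, coincides with the finite-volume wired ratio $\sqrt{1+(q-1)\Q^{\Lambda^w}_{\kappa^-}(f\in\t)}$ coming from the Dirichlet partition function. I would prove this identity first in finite volume $\Lambda_N$, where $\extmu$, $\discmu$, $P^\kappa$ and all determinants are genuinely finite-dimensional and the identity is the reverse of the computation \eqref{eq:motivation_rc_model}; there it follows from Kirchhoff's formula \eqref{eq:Kirchhoff} together with a Schur-complement (Gaussian conditioning) computation tracking how integrating out the interior heights reweights the exterior law. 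The passage $N\to\infty$ would then use the uniform convergence of specifications in Lemma \ref{le:uniform_specification}, the transfer-current representation \eqref{eq:uniform_specification4}, the agreement of the free and wired spanning forests on $\Z^d$, and convergence of the finite-volume Gaussian fields.

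The step I expect to be the main obstacle is precisely this reconciliation of scales in the $\kappa$-direction: the infinite-volume specification $\discgamma_\Lambda$ depends, through $\Q^{\Z^d}_\kappa(f\in\t)$, on the \emph{entire} exterior conductance configuration (the difficulty flagged in Remark \ref{rem:sim_random-cluster}), whereas the naive $\bar\kappa_\Lambda$-marginal of $\extgamma_\Lambda$ only sees the wired box $\Lambda^w$. The two are reconciled only after one conditions the infinite-volume Gaussian field on the exterior gradients $\eta_{\Edge(\Lambda)^{\crm}}$, so the argument must carry the Gaussian likelihood factor through the limit uniformly; controlling this factor, together with the existence and Markov property of the massless infinite-volume field $P^\kappa$, is where the real work lies.
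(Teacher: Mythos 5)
Your proposal reduces, as the paper does, to the DLR identity $\extmu\extgamma_{\Lambda_n}=\extmu$ via Remark \ref{remark:Gibbs_specification}, but then takes a genuinely different route: you disintegrate $\extmu$ over $\kappa$ and try to identify the conditional law of $(\eta_{\Edge(\Lambda)},\kappa_{\Edge(\Lambda)})$ given the exterior directly in infinite volume, splitting the check into a Gaussian Markov property in the $\eta$-direction and an infinite-dimensional Bayes computation in the $\kappa$-direction. The paper instead approximates the \emph{measure}: it defines $\extmu_N$ with $\kappa$-marginal $\discmu\discgamma^{\Lambda_N^w}_{E_n}$ and conditional $\p$-law the Dirichlet Gaussian on $\Lambda_N$, verifies $\extmu_N\extgamma_{\Lambda_n}=\extmu_N$ by an exact finite-dimensional density computation (the reverse of \eqref{eq:motivation_rc_model}, as you anticipate), and then passes to the limit in the DLR equation using local total-variation convergence $\extmu_N\to\extmu$ (Lemma \ref{le:uniform_specification} for the $\kappa$-marginals, and Lemma \ref{le:uniform_Laplace_inverse} together with a Gaussian total-variation bound for the conditional laws). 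What the paper's organisation buys is that one never has to give rigorous meaning to ``the marginal density of the exterior gradients'' of the infinite-volume field, nor to the renormalised determinant ratio behind $\sqrt{1+(q-1)\Q^{\Z^d}_{\kappa^-}(f\in\t)}$: the DLR check is purely algebraic and all the analysis is concentrated in a comparatively soft limiting step. Your route appears viable --- the single-edge identity you reduce to is consistent, being essentially a tautology once one has the Markov property of $P^\kappa$, the mutual absolute continuity of $P^{\kappa^+}$ and $P^{\kappa^-}$ with correctly normalised Radon--Nikodym derivative, and the coincidence of free and wired forests on $\Z^d$ --- but the step you yourself flag as the main obstacle carries essentially all of the work, and your plan for it (prove the identity in finite volume, then pass to the limit) is in effect a reorganisation of the paper's approximation argument around an identity rather than around the measure. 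If you execute it, the existence and spatial Markov property of the massless gradient field $P^\kappa$, and the identification of the normalising constant in the Radon--Nikodym derivative as the limit of finite-volume determinant ratios via \eqref{eq:characterization_specification}, must be proved rather than asserted; these are where your sketch is thinnest.
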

As a last result in this direction we state a very useful result from  \cite{MR2778801} that characterizes the law of $\p$ given $\kappa$ for extended gradient Gibbs measures if $\p$ is distributed according to a gradient Gibbs measure. 
\begin{proposition}\label{prop:phi_given_kappa}
Let $\mu$ be a translation invariant, ergodic gradient Gibbs measure with zero tilt 
and $\extmu$ the corresponding extended gradient Gibbs measure.
Then the conditional law of $\p$
given $\kappa$ is $\extmu$-almost surely Gaussian.
%Recall the definition of the field $\p:\Z^d\to \R$ with $\p(0)=0$ given a gradient configuration $\eta$.
It is determined by its expectation
\begin{align}\label{eq:Ephi_given_kappa}
\E\big(\p_x\mid\discsigma\big)(\kappa)=0
\end{align} 
and the covariance given by $(\Delta_\kappa)^{-1}$, i.e., for $f:\Z^d\to \R$ with finite support and $\sum_{x} f(x)=0$ 
\begin{align}\label{eq:Varphi_given_kappa}
\mathrm{Var}_{\extmu} \left( (f,\p)_{\Z^d}\mid\discsigma\right)(\kappa)=(f,(\Delta_\kappa)^{-1}f).
\end{align}
\end{proposition}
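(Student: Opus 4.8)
The plan is to reduce the assertion to a \emph{quenched} uniqueness statement and then identify the limiting measure explicitly. First I would record what the extended specification \eqref{eq:specification_gamma_tilde} says once the conductances are frozen: conditioning on $\discsigma$ eliminates the factors $\rho(\d\bar\kappa_e)$ and $\delta_{\kappa_e}(\d\bar\kappa_e)$, so that for every finite simply connected $\Lambda$ the conditional law of $\eta_{\Edge(\Lambda)}$ given $\discsigma$ and given $\eta_{\Edge(\Lambda)^\crm}$ is the Gaussian gradient specification with bond weights $\kappa_e$, i.e. proportional to $\exp(-\tfrac12\sum_{e\in\Edge(\Lambda)}\kappa_e\eta_e^2)\,\nu_\Lambda^{\eta_{\Edge(\Lambda)^\crm}}(\d\eta)$. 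Applying the DLR equations for $\extmu$ together with the tower property, this shows that for $\extmu$-a.e. $\kappa$ the regular conditional distribution $\extmu(\cdot\mid\discsigma)(\kappa)$ is a gradient Gibbs measure for the quadratic (quenched) potential with conductances $\kappa$.

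Next I would use the ergodicity and zero tilt of $\mu$ to pin down this quenched measure. By the explicit finite-volume computation underlying \eqref{eq:motivation_rc_model}, given $\kappa$ and the boundary gradients on $\partial\Lambda_n$, the field $\p$ on $\overcirc\Lambda_n$ is Gaussian with mean equal to the $\kappa$-harmonic extension of the boundary data and covariance $(\tilde\Delta_\kappa^{\Lambda_n})^{-1}$; hence $\extmu(\cdot\mid\discsigma)$ is a mixture over boundary conditions of such Gaussians. The decisive point is that this mixture collapses to a single Gaussian with vanishing mean. I would argue this by considering the quenched mean gradient $m_e(\kappa)=\E(\eta_e\mid\discsigma)(\kappa)$: it is a stationary (shift-covariant) gradient field that is $\kappa$-harmonic, and whose annealed average is the tilt of $\mu$, namely $0$. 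Uniform ellipticity (because $\kappa_e\in\{1,q\}$) together with the ergodicity of the conductance field $\discmu$, inherited from that of $\mu$, then forces $m_e\equiv 0$ via the sublinearity of the corrector for the random walk in the conductances $\kappa$; this yields \eqref{eq:Ephi_given_kappa}.

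With the mean handled, the covariance identity \eqref{eq:Varphi_given_kappa} follows by taking $\Lambda_n\uparrow\Z^d$ in the finite-volume relation $\mathrm{Var}((f,\p)\mid\discsigma,\eta_{\partial})=(f,(\tilde\Delta_\kappa^{\Lambda_n})^{-1}f)$ for $f$ of finite support with $\sum_x f(x)=0$: the boundary-harmonic contribution to the variance is governed by $m_e$ and vanishes in the limit, while the finite-volume gradient Green's functions $(f,(\tilde\Delta_\kappa^{\Lambda_n})^{-1}f)$ converge to $(f,(\Delta_\kappa)^{-1}f)$ by standard convergence of Green's functions for uniformly elliptic conductances (this is precisely why mean-zero $f$ is used, so that the quantity stays finite and convergent even for $d=2$). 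Gaussianity of the limit is inherited from the Gaussianity at each finite scale.

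I expect the main obstacle to be the vanishing of the quenched mean gradient $m_e$, equivalently the uniqueness of the ergodic zero-tilt gradient Gibbs measure in the fixed ergodic conductance environment. This is a homogenization statement: one must rule out a nontrivial stationary $\kappa$-harmonic gradient field of zero annealed mean, which rests on the sublinear growth of the corrector for random walks among ergodic uniformly elliptic conductances. The remaining steps, namely the quenched Gibbs property and the Green's-function convergence, are comparatively routine.
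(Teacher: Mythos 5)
The paper offers no internal proof of this proposition: it is quoted verbatim from Lemma~3.4 of Biskup--Spohn \cite{MR2778801}. Your skeleton --- quenched DLR property of $\extmu(\cdot\mid\discsigma)$, representation of the quenched law as a mixture of Gaussians with $\kappa$-harmonic means and covariance $(\tilde\Delta_\kappa^{\Lambda_n^w})^{-1}$, and a stationarity/zero-tilt/ergodicity argument to kill the harmonic part --- is indeed the architecture of that proof, and the convergence $(f,(\tilde\Delta_\kappa^{\Lambda_n^w})^{-1}f)\to(f,(\Delta_\kappa)^{-1}f)$ is exactly the paper's Lemma~\ref{le:uniform_Laplace_inverse}.

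There is, however, a genuine gap at the decisive step. You argue that the mixture collapses by showing that the conditional \emph{mean} $m_e(\kappa)=\E(\eta_e\mid\discsigma)(\kappa)$ vanishes, and then assert that ``the boundary-harmonic contribution to the variance is governed by $m_e$ and vanishes in the limit.'' That does not follow. Writing the quenched conditional law as a mixture $\int N(\nabla\psi,(\Delta_\kappa)^{-1})\,\nu_\kappa(\d\psi)$ over $\kappa$-harmonic $\psi$, the identity $m\equiv 0$ only says that $\nabla\psi$ has zero conditional expectation under $\nu_\kappa$, not that $\nu_\kappa$ is degenerate. By the law of total variance, $\mathrm{Var}\bigl((f,\p)\mid\discsigma\bigr)=(f,(\tilde\Delta_\kappa^{\Lambda_n^w})^{-1}f)+\mathrm{Var}\bigl((f,\psi_n)\mid\discsigma\bigr)$ where $\psi_n$ is the $\kappa$-harmonic extension of the boundary data, and the second term is controlled by the \emph{second} moment of the harmonic part, which your argument never touches; a nondegenerate $\nu_\kappa$ would produce a non-Gaussian conditional law with strictly larger variance, consistent with $m\equiv0$. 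The repair uses exactly the tool you already name, but applied to the right object: view $(\nabla\psi)_e$ as a stationary, mean-zero (by zero tilt), square-integrable gradient field that is $\kappa$-harmonic, on the enlarged space carrying $\discmu(\d\kappa)\nu_\kappa(\d\psi)$; the Weyl/orthogonality identity $\E[\nabla\psi\cdot\kappa\nabla\psi]=0$ for mean-zero stationary gradient fields against $\kappa$-divergence-free fields then forces $\nabla\psi=0$ almost surely, i.e.\ $\nu_\kappa=\delta_0$, which yields \eqref{eq:Ephi_given_kappa} and \eqref{eq:Varphi_given_kappa} simultaneously. A related caveat: the soft $L^2$ orthogonality argument is the right route here, whereas deducing a Liouville-type conclusion from ``sublinearity of the corrector'' plus deterministic regularity does not work, since the Nash--Moser oscillation bound with exponent $\alpha<1$ does not exclude nonconstant sublinear $\kappa$-harmonic functions.
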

\begin{proof}
This is Lemma 3.4 in \cite{MR2778801}.
\end{proof}

In particular those results establish the following. Assume that $\mu$ is an ergodic zero tilt gradient Gibbs measure. Let $\discmu$ be the 
$\kappa$-marginal of the corresponding extended gradient Gibbs measure $\extmu$ 
(which by Proposition \ref{prop:discmu_Gibbs} is Gibbs for the random conductance model). 
We can  use Proposition \ref{prop:discmu_to_extmu} to construct an extended gradient Gibbs measure $\extmu'$. Using the definition of $\extmu'$ 
in Proposition~\ref{prop:discmu_to_extmu} and Proposition~\ref{prop:phi_given_kappa} 
we conclude that we get back the extended gradient Gibbs measure we started from, i.e., $\extmu=\extmu'$.

\section{Further properties of the random conductance model}\label{sec:further_prop}
In this section we state and prove more results about the random conductance model 
considered in this work and use the results from the previous section to derive corresponding results for the
associated gradient interface model. We end this section with some  conjectures and open questions.
We start by proving  $\discmu_p^0=\discmu_p^1$ for $d\geq 2$ and almost all values of $p$ which
will in particular implies uniqueness of the Gibbs measure for those $p$.
\begin{theorem}\label{th:countable_non_uniqueness}
For every $q\geq 1$ there are at most countably many  $p\in [0,1]$ such that $\discmu_p^1\neq \discmu_p^0$.
\end{theorem}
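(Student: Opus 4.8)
The plan is to exploit the monotonicity structure already established for the family of measures $\{\discmu_p^0,\discmu_p^1\}_{p\in[0,1]}$ together with a standard argument that a monotone family of measures can differ only on a countable set of parameters. Concretely, I would fix a countable collection of increasing local events that is rich enough to determine a measure on $\{1,q\}^{\Edge(\Z^d)}$, and for each such event study the two functions $p\mapsto \discmu_p^0(A)$ and $p\mapsto \discmu_p^1(A)$. By Lemma \ref{le:inf_volume_measures} these are both increasing in $p$, hence each is continuous except at countably many points; moreover the stochastic ordering $\discmu_p^0\precsim \discmu_p^1$ gives $\discmu_p^0(A)\leq \discmu_p^1(A)$ for every increasing $A$. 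The key point to extract is that at any $p$ where $\discmu_p^0\neq\discmu_p^1$ there must be a genuine gap $\discmu_p^0(A)<\discmu_p^1(A)$ for at least one increasing local event $A$, and that such gaps can occur only on a countable set.

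The cleanest route is to introduce the free energy (pressure) of the model and use convexity in a parameter. I would write the finite-volume partition function $Z^{\Lambda_n,p}$ (or its wired analogue) and define a limiting pressure $\Psi(p)=\lim_{n\to\infty}|E_n|^{-1}\log Z^{\Lambda_n,p}$. The a priori Bernoulli weight $p^{h(\kappa)}(1-p)^{s(\kappa)}$ means that, after the substitution $p/(1-p)=e^{\beta}$, the quantity $h(\kappa)$ plays the role of an observable conjugate to $\beta$, so the pressure is a convex function of $\beta$. A convex function is differentiable off a countable set, and at each point of differentiability the left and right derivatives coincide; these one-sided derivatives are exactly the densities of hard edges under $\discmu_p^1$ and $\discmu_p^0$ respectively (wired and free boundary conditions select the maximal and minimal tangent lines, as in the random cluster model). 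Hence the edge density $\discmu_p^1(\kappa_e=q)$ equals $\discmu_p^0(\kappa_e=q)$ for all but countably many $p$. The final step is to upgrade equality of the single-edge marginal to full equality $\discmu_p^0=\discmu_p^1$: since $\discmu_p^0\precsim\discmu_p^1$ and both are translation invariant, coincidence of the expectation of the increasing local function $\mathbb{1}_{\kappa_e=q}$ forces the coupling from Strassen's theorem to be diagonal on that edge, and by translation invariance on every edge, so the two measures agree.

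I would carry out the steps in this order: first establish existence and convexity of the pressure $\Psi$ as a function of $\beta=\log\big(p/(1-p)\big)$, using the finite-volume expressions \eqref{eq:def_perc_model} and \eqref{eq:def_Gibbs_Lambda_n} together with the Kirchhoff formula \eqref{eq:Kirchhoff} to control $Z$; second, identify the one-sided derivatives of $\Psi$ with the edge densities under the wired and free infinite-volume measures, invoking Lemma \ref{le:inf_volume_measures} for the monotone convergence that justifies differentiating through the limit; third, apply the elementary fact that a convex function on an interval is differentiable outside a countable set to conclude that the edge densities agree off a countable set $N$; fourth, use Strassen's theorem and translation invariance to promote agreement of the edge marginal into $\discmu_p^0=\discmu_p^1$ for $p\notin N$.

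The main obstacle I expect is the second step, namely rigorously identifying the one-sided derivatives of the pressure with the edge densities under the two extremal boundary conditions. This requires interchanging a thermodynamic limit with a derivative, which is exactly where boundary effects and the long-range interaction noted in Remark \ref{rem:sim_random-cluster} could bite: unlike the random cluster model, the conditional distribution here depends on the entire exterior configuration through $\det\Delta_\kappa$. The correlation and comparison-of-boundary-conditions inequalities (Corollary \ref{co:Holley1}, Corollary \ref{co:boundary_conditions}, Lemma \ref{le:inf_volume_measures}) are precisely the tools that make this interchange legitimate, since they sandwich the finite-volume edge densities between the two monotone limits; still, care is needed to ensure the convergence of derivatives rather than merely of the functions themselves, which is where the convexity argument (guaranteeing that pointwise convergence of convex functions implies convergence of one-sided derivatives at continuity points) does the decisive work.
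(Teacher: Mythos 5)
Your proposal is correct in outline and lands on the same two pillars as the paper's proof, but packages them differently. The paper does not introduce the pressure at all: it proves the sandwich $\discmu_p^0(\kappa_e=q)\leq\discmu_p^1(\kappa_e=q)\leq\discmu_{p'}^0(\kappa_e=q)$ for $p<p'$ directly (Lemma \ref{le:monotonicity_in_p}), via the tilting identity $\discmu_{n,p'}^0(X)=\discmu_{n,p}^0(\alpha^{h(\kappa)}X)/\discmu_{n,p}^0(\alpha^{h(\kappa)})$ combined with a uniform-in-$\kappa$ surface-order comparison of the free and wired determinants, and then simply observes that the increasing map $p\mapsto\discmu_p^0(\kappa_e=q)$ has countably many discontinuities. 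Your route through convexity of $\Psi$ in $\beta=\log(p/(1-p))$ is the classical random-cluster argument (Grimmett, Theorem~4.63) and is morally the same mechanism: the tilting identity \emph{is} the exponential-family structure that makes the pressure convex, and the sandwich inequality is exactly the statement that both edge densities lie between the one-sided derivatives of $\Psi$. Your final step (equality of the single-edge marginal plus $\discmu_p^0\precsim\discmu_p^1$ and lattice symmetry forces $\discmu_p^0=\discmu_p^1$) is precisely what the paper cites from \cite[Proposition 4.6]{MR2243761}. What each approach buys: yours is more recognizable and reuses textbook machinery; the paper's avoids having to prove existence of the thermodynamic limit of the pressure and the derivative-interchange lemma for convex functions, at the cost of a slightly more bespoke large-deviation-flavoured computation.

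The one step you should not gloss over is the assertion that the free and wired pressures coincide (equivalently, that both boundary conditions select tangent lines of the \emph{same} convex function). In the random cluster model this is immediate because the partition functions under different boundary conditions differ by at most $q^{|\partial\Lambda_n|}$; here, because of the $\det\Delta_\kappa$ weight and the genuinely long-range interaction you yourself flag, it requires the two-sided estimate
\begin{align*}
\left(2^{2d}q\right)^{-|\partial\Lambda_n|/2}\discmu_{n,p}^0(\kappa)\leq\discmu_{n,p}^1(\kappa)\leq\left(2^{2d}q\right)^{|\partial\Lambda_n|/2}\discmu_{n,p}^0(\kappa),
\end{align*}
which the paper proves by constructing explicit injections between free and wired spanning trees and invoking the Kirchhoff formula. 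Your phrase ``using the Kirchhoff formula to control $Z$'' is where this work hides; without it the convexity argument does not close. With that lemma supplied, your proof goes through.
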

\begin{proof}
It is a standard consequence of the invariance under lattice symmetries and $\discmu_{p}^0\precsim \discmu_{p}^1$ that $\discmu_p^1=\discmu_p^0$ is equivalent to $\discmu_p^1(\kappa_e=q)=\discmu_p^0(\kappa_e=q)$ for one and therefore any $e\in \Edge(\Z^d)$ (see, e.g, Proposition 4.6 in  \cite{MR2243761}).
%Indeed, for any increasing local event $A$ depending only on the edges $E_n$ and using $\P^{\Lambda_N,\Lambda_N^w}$ for the increasing coupling $\kappa\leq \kappa^w$ of $\kappa\sim \P^{\Lambda_N}$ and
%$\kappa^w\sim\P^{\Lambda_N^w}$ we obtain
%\begin{align}
%\P^{\Lambda_N^w}(A)-\P^{\Lambda_N}(A)\leq \P^{\Lambda_N,\Lambda_N^w}(\kappa<\kappa^w)
%\leq \sum_{e\in E_n} \P^{\Lambda_N^w}(\kappa_e=q)-\P^{\Lambda_N}(\kappa_e=q).
%\end{align}
Lemma \ref{le:monotonicity_in_p} below implies for $e\in \Edge(\Z^d)$ 
\begin{align}\label{eq:discmu_monotone}
\discmu_p^0(\kappa_e=q)\leq\discmu_p^1(\kappa_e=q)\leq \discmu_{p'}^0(\kappa_e=q)
\end{align}
for any $p'>p$. In particular, we can conclude that $\discmu_p^0=\discmu_p^1$ holds for all points of continuity of the map $p\mapsto \discmu^0_p(\kappa_e=q)$.
Since this map is increasing by Lemma \ref{le:mon_p} it has only countably many points of discontinuity.
\end{proof}

We are now in the position to prove Theorem \ref{th:main_unique}. 
\begin{proof}[Proof of Theorem \ref{th:main_unique}]
We note that a translation invariant zero tilt Gibbs measure exists for any $p$ and $q$, e.g., 
as a limit of torus Gibbs states (see the proof of Theorem 2.2 in \cite{MR2322690}).
%, tightness follow by applying Brascamp-Lieb to the gradient field  conditioned on $\kappa$ and the concentration on zero tilt configuration follows from Lemma~4.8 in \cite{MR2322690}
It remains to show uniqueness.
Consider $p$ such that $\bar{\mu}^1_p=\bar{\mu}^0_p$ which is true for all but a countable number of $p\in [0,1]$ by Theorem \ref{th:countable_non_uniqueness} above.
Let $\mu_1$ and $\mu_2$ be ergodic  zero tilt gradient Gibbs measures for $V=V_{p,q}$. 
By Proposition \ref{prop:discmu_Gibbs} the corresponding $\kappa$-marginals $\discmu_1$ and $\discmu_2$ of the extended Gibbs measures
$\extmu_1$ and $\extmu_2$ are Gibbs measures in the sense of Definition \ref{def:kappa_Gibbs} and therefore equal. 
Using Proposition~\ref{prop:phi_given_kappa} we conclude  that since $\mu_1$ and $\mu_2$ are ergodic zero tilt gradient Gibbs measures  their laws are determined by $\discmu_1$ and $\discmu_2$, hence $\mu_1=\mu_2$. 
\end{proof}
\begin{remark}\label{rem:conv_pressure}
Similar arguments for this model appeared already in the proof of Theorem 2.4 in \cite{MR2322690} where they use the convexity of the pressure to show that the number of $q$-bonds on the torus is concentrated around its expectation in the thermodynamic limit. However, this is not sufficient to conclude uniqueness.
\end{remark}
The key ingredient in the  proof of Theorem \ref{th:countable_non_uniqueness}
is the following lemma that 
compares $\discmu_p^1(\kappa_e=q)$ with $\discmu_{p'}^0(\kappa_e=q)$ for $p<p'$. Intuitively the reason for this result is that 
a change of $p$ is a bulk effect of order $|\Lambda|$  while the effect of the boundary conditions 
is of order $|\partial\Lambda|$.
%This will imply that $\mu_p^0=\mu_p^1$ for almost every $p$.
\begin{lemma}\label{le:monotonicity_in_p}
For any $p<p'$ we have
\begin{align}\label{eq:monotonicity_in_p}
 \mu_{p'}^0(\kappa_e=q)\geq \mu_{p}^1(\kappa_e=q).
\end{align}
\end{lemma}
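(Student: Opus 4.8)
The plan is to run the free-energy (pressure) argument familiar from the random cluster model, which turns the heuristic ``bulk versus boundary'' into convexity. Reparametrise by $\beta=\log\frac{p}{1-p}\in\R$ and introduce, for the free and the wired box, the finite-volume pressures
\begin{align*}
\Phi_n^0(\beta)=\frac{1}{|E_n|}\log\sum_{\kappa\in\{1,q\}^{E_n}}\frac{e^{\beta\, h(\kappa)}}{\sqrt{\det\Delta_\kappa^{\Lambda_n}}},\qquad
\Phi_n^1(\beta)=\frac{1}{|E_n|}\log\sum_{\kappa\in\{1,q\}^{E_n}}\frac{e^{\beta\, h(\kappa)}}{\sqrt{\det\Delta_\kappa^{\Lambda_n^w}}}.
\end{align*}
Each is a positive combination of the exponentials $e^{\beta h(\kappa)}$, so $\Phi_n^0,\Phi_n^1$ are smooth and \emph{convex} in $\beta$. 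Since $e^{\beta h(\kappa)}(\det\Delta_\kappa^{\Lambda_n})^{-1/2}$ is (up to normalisation) the weight of $\P^{\Lambda_n,p}$, differentiation gives
\begin{align*}
(\Phi_n^0)'(\beta)=\frac{1}{|E_n|}\,\E^{\Lambda_n,p}\big(h(\kappa)\big)=\frac{1}{|E_n|}\sum_{e\in E_n}\discmu_{n,p}^0(\kappa_e=q),
\end{align*}
and likewise for the wired box. As $\discmu_{n,p}^0\to\discmu_p^0$ and $\discmu_{n,p}^1\to\discmu_p^1$ in the local topology with translation-invariant limits, a Cesàro argument (the edges near $\partial\Lambda_n$ being a vanishing fraction) yields $(\Phi_n^0)'(\beta)\to\theta^0(p):=\discmu_p^0(\kappa_e=q)$ and $(\Phi_n^1)'(\beta)\to\theta^1(p):=\discmu_p^1(\kappa_e=q)$.

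The crux is that the two pressures share one limit, $\lim_n\Phi_n^0(\beta)=\lim_n\Phi_n^1(\beta)=:\Phi(\beta)$, because passing from free to wired only contracts $\partial\Lambda_n$. Concretely it suffices to show
\begin{align*}
\frac{1}{|E_n|}\sup_{\kappa\in\{1,q\}^{E_n}}\Big|\log\det\Delta_\kappa^{\Lambda_n^w}-\log\det\Delta_\kappa^{\Lambda_n}\Big|\longrightarrow 0,
\end{align*}
whence $\Phi_n^1-\Phi_n^0\to0$ locally uniformly in $\beta$ and $\Phi$ is convex as a pointwise limit of convex functions. I expect this uniform negligibility of the boundary correction to be the main obstacle. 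Using the Kirchhoff formula \eqref{eq:Kirchhoff} together with the telescoping identity \eqref{eq:quotient_det}, one writes each log-determinant, relative to the all-ones configuration $\kappa\equiv 1$, as a sum over the $q$-edges $f$ of terms $\log\!\big(1+(q-1)\Q_\kappa(f\in\t)\big)$ evaluated in the respective graph; the key point is that for edges $f$ deep inside $\Lambda_n$ the free and wired spanning-tree currents $\Q^{\Lambda_n}_\kappa(f\in\t)$ and $\Q^{\Lambda_n^w}_\kappa(f\in\t)$ are uniformly close, since both approach the common $\Z^d$ value, so only the $O(|\partial\Lambda_n|)=o(|E_n|)$ edges near the boundary contribute. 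This is precisely the type of uniform current estimate underlying Lemma~\ref{le:uniform_specification} and the Green's-function bounds deferred to the appendix.

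Finally I would extract the inequality from convexity alone. For convex $\Phi_n\to\Phi$ with $\Phi_n'(\beta)\to L$ one has $L\in[\Phi'_-(\beta),\Phi'_+(\beta)]$: pass to the limit in $\Phi_n(\beta')\ge\Phi_n(\beta)+\Phi_n'(\beta)(\beta'-\beta)$ and let $\beta'\to\beta^\pm$. Applying this to $\Phi_n^0$ and $\Phi_n^1$, which share the limit $\Phi$, gives $\theta^1(p)\le\Phi'_+(\beta)$ and $\theta^0(p')\ge\Phi'_-(\beta')$, where $\beta<\beta'$ correspond to $p<p'$. Since one-sided derivatives of a convex function are non-decreasing, $\Phi'_+(\beta)\le\Phi'_-(\beta')$, and chaining the three inequalities yields
\begin{align*}
\discmu_p^1(\kappa_e=q)=\theta^1(p)\le\theta^0(p')=\discmu_{p'}^0(\kappa_e=q),
\end{align*}
which is \eqref{eq:monotonicity_in_p}. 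As a by-product, at every $\beta$ where $\Phi$ is differentiable one reads off $\theta^0(p)=\theta^1(p)$, exactly the ingredient exploited in the proof of Theorem~\ref{th:countable_non_uniqueness}.
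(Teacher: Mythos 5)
Your strategy is viable and genuinely different in form from the paper's: the paper does not pass through the pressure at all, but runs the tilting argument of Duminil-Copin directly --- a Markov-type inequality gives $\discmu^0_{n,p'}\big(h(\kappa)\le(a+\varepsilon)|E_n|\big)\ge\varepsilon$ and $\discmu^1_{n,p}\big(h(\kappa)\ge(b-\varepsilon)|E_n|\big)\ge\varepsilon$, the change of measure $\discmu^0_{\Lambda_n,p'}(X)=\discmu^0_{\Lambda_n,p}(\alpha^{h(\kappa)}X)/\discmu^0_{\Lambda_n,p}(\alpha^{h(\kappa)})$ with $\alpha=p'(1-p)/(p(1-p'))>1$ converts these into $\varepsilon^2\le(2^{2d}q)^{|\partial\Lambda_n|/2}\alpha^{(a-b+2\varepsilon)|E_n|}$, and $a\ge b$ follows on letting $n\to\infty$ and then $\varepsilon\to0$. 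Your convexity endgame is correct, modulo one small point: you assert that the pointwise limit $\Phi$ exists, which is not free; either prove convergence of the pressure or pass to a locally uniformly convergent subsequence of the convex functions $\Phi_n^0$, which costs nothing since the final inequality is subsequence-independent. The Cesàro step is justified by the volume monotonicity of Corollary \ref{co:boundary_conditions}. What the pressure formulation buys is conceptual transparency and the by-product you note at the end; what the paper's route buys is that it needs only a crude multiplicative determinant comparison and nothing about convergence of spanning-tree currents.

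The gap is in your proof of the one model-specific input, $\sup_\kappa\big|\log\det\Delta^{\Lambda_n^w}_\kappa-\log\det\Delta^{\Lambda_n}_\kappa\big|=o(|E_n|)$. The telescoping via \eqref{eq:quotient_det} expresses $\log\det\Delta_\kappa-\log\det\Delta_{\bar 1}$ as a sum of terms $\log\big(1+(q-1)\Q(f\in\t)\big)$ over the $q$-edges, so the free/wired difference splits into (i) the base term $\log\det\Delta^{\Lambda_n}_{\bar 1}-\log\det\Delta^{\Lambda_n^w}_{\bar 1}$ and (ii) the telescoped current differences. You do not address (i) at all: it is the log-ratio of the spanning-tree counts of the free and wired box, and showing it is $o(|E_n|)$ is precisely the kind of statement you are trying to prove; within this paper's toolkit it is obtained by the explicit tree maps of \eqref{eq:compare_Det}--\eqref{eq:compare_Det2}, not by currents. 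For (ii) you need, uniformly in $\kappa$ and uniformly over edges $f$ at distance at least $k$ from $\partial\Lambda_n$ (not just $f$ in a fixed ball around the origin), that $|\Q^{\Lambda_n}_\kappa(f\in\t)-\Q^{\Lambda_n^w}_\kappa(f\in\t)|\le\varepsilon_k$ with $\varepsilon_k\to0$; Lemma \ref{le:uniform_specification} and Lemma \ref{le:uniform_Laplace_inverse} give only the wired-versus-$\Z^d$ half, and the free half is not in the paper (it can be recovered from the monotonicity \eqref{eq:proba_inequality_diff_sizes} together with Dini's theorem on the compact space $\{1,q\}^{\Edge(\Z^d)}$, but that is an extra argument you would have to supply). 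The clean fix is to replace your telescoping by the paper's own bound: the spanning-tree surgery in \eqref{eq:compare_Det} and \eqref{eq:compare_Det2} gives directly $0\le\log\big(|\Lambda_n|^{-1}\det\Delta^{\Lambda_n}_\kappa\big)-\log\big(|\Lambda_n^w|^{-1}\det\Delta^{\Lambda_n^w}_\kappa\big)\le C(d,q)\,|\partial\Lambda_n|$ uniformly in $\kappa$, which is all your pressure argument needs.
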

\begin{proof}
The proof follows the proof of Theorem 1.12 in %\cite{DCLectureNotes}
\cite{duminil2017lectures} where a similar result for the random cluster model is shown. The only difference is that the comparison between free and wired boundary conditions is slightly less direct.
%We denote $h_n(\kappa)=|\{e\in E_n|\, \kappa_e=q\}|$ and similarly for $s_n$.
%First we observe that 
%\begin{align}
%\mu_{p}^1(\kappa_e=\beta)=\lim_{n\to \infty}\mu^1_{\Lambda_n,p}(N_{\beta,%\Lambda_n}/|E_n|).
%\end{align}
%In fact one inequality follows by the monotonicity of the boundary condition %and the other one by using the monotonicity of the boundary condition for the largest box $\Lambda_k$ such that $\Lambda_k+e\subset \Lambda_n$ and convergence of the measures as $k\to \infty$.
We define $a= \discmu_{p'}^0(\kappa_e=q)$ and $b= \discmu_{p}^1(\kappa_e=q)$.
Comparison between boundary condition implies $\discmu_{n,p'}^0(\kappa_e=q)\leq \discmu_{p'}^0(\kappa_e=q)=a$ for any
$e\in E_n$. Recall that $h(\kappa)=| \{e\in \Edge(G):\kappa_e=q\} |$ denotes the number of 
$q$-bonds and $s(\kappa)$ similarly the number of 1-bonds. The definition of $a$ and $b$ implies   for $0<\varepsilon<1-a$
\begin{align}\label{eq:monoton_p_1}
\discmu_{n, p'}^0\Big(h(\kappa)\Big)\leq a|E_n| \;\Rightarrow
\discmu_{n, p'}^0\Big(h(\kappa)\leq (a+\varepsilon)|E_n|\Big)\geq \varepsilon.
\end{align}
Similarly for $0<\varepsilon<b$
\begin{align}\label{eq:monoton_p_2}
\discmu_{n, p}^1\Big(h(\kappa)\Big)\geq b|E_n| \;\Rightarrow
\discmu_{n, p}^1\Big(h(\kappa)\geq (b-\varepsilon)|E_n|\Big)\geq \varepsilon.
\end{align}
Our goal is to show that $b-\varepsilon\leq a+\varepsilon$.
We denote by $\Delta^0$ and $\Delta^1$ the graph Laplacian on $\Lambda_n$ with free and wired boundary conditions respectively. 
To compare the boundary conditions we denote by $T_1=\ST(\Lambda_n^w)$ the set of wired spanning trees on $\Lambda_n$ and
by $T_0=\ST(\Lambda_n)$ the set of spanning trees on 
$ \Lambda_n$ with free boundary conditions.
There is a map  $\Phi:T_0 \to T_1$ such that $\Phi(\t){\restriction_{\Lambda_{n-1}}}=\t{\restriction_{\Lambda_{n-1}}}$.
Indeed, removing all edges in $E_n\subset E_{n-1}$ from $\t$ we obtain an acyclic 
subtree of $\Lambda_n^w$, hence we can find a tree $\Phi(\t)$ such that $\t{\restriction_{\Lambda_{n-1}}}\subset \Phi(\t)\subset \t$. 
The observation $|\t\setminus \Phi(\t)|=|\partial \Lambda_n|-1$ implies that $w(\kappa,\t)\leq
w(\kappa,\Phi(\t))q^{|\partial\Lambda_n|-1}$.
Since $\Phi$ does not change the edges in $E_{n-1}$ each tree
$\t\in T_1$ has at most $2^{|E_n\setminus E_{n-1}|}$ preimages.
We obtain that
\begin{align}\label{eq:compare_Det}
\begin{split}
|\Lambda_n|^{-1}\det \Delta_\kappa^0=\!\sum_{\t\in T_0}w(\kappa,\t)
\leq \sum_{\t\in T_0} w({\kappa},\Phi(\t)) q^{|\partial\Lambda_n|-1}
&\leq 2^{|E_n\setminus E_{n-1}|}q^{|\partial\Lambda_n|}\sum_{\t\in T_1} w({\kappa},\t)
\\
&=2^{|E_n\setminus E_{n-1}|}q^{|\partial\Lambda_n|}|\Lambda_n^w|^{-1} \det \Delta_\kappa^1.
\end{split}
\end{align}
Similarly, there is an injective mapping $\Psi:T_1\to T_0$ such that
$\t\subset \Psi(\t)$.
Indeed, we fix a tree $\t_b$ in the graph $(\Lambda_n\setminus \Lambda_{n-1}, \Edge(\Lambda_n\setminus \Lambda_{n-1})$ and define $\Psi(\t)=\t\cup \t_b\in T_0$.
We get
\begin{align}\begin{split}\label{eq:compare_Det2}
%|\Lambda_n^w|^{-1}\det \Delta_\kappa^1=\sum_{t\in T_1}w(\kappa,t)
%\leq \sum_{t\in T_1} w({\kappa},\Psi(t))
%&\leq  2^{|E_n\setminus E_{n-1}|}\sum_{t\in T_0} w({\kappa},t)
%\\
%&=2^{|E_n\setminus E_{n-1}|} |\Lambda_n|^{-1}\det \Delta_\kappa^0.
|\Lambda_n^w|^{-1}\det \Delta_\kappa^1=\sum_{\t\in T_1}w(\kappa,\t)
\leq \sum_{\t\in T_1} w({\kappa},\Psi(\t))
&\leq  \sum_{\t\in T_0} w({\kappa},\t)
%\\&
= |\Lambda_n|^{-1}\det \Delta_\kappa^0.
\end{split}
\end{align}
Inserting the bound $|E_n\setminus E_{n-1}|\leq 2d|\partial \Lambda_n|$ 
 we infer from the definition
\eqref{eq:def_perc_model} for
any $\kappa\in \{1,q\}^{E_n}$
\begin{align}\label{eq:monoton_p_3}
\left({2^{2d} q}\right)^{-|\partial\Lambda_n|/2}\discmu_{n,p}^0(\kappa)\leq \discmu_{n,p}^1(\kappa)
\leq \left({2^{2d} q}\right)^{|\partial\Lambda_n|/2} \discmu_{n,p}^0(\kappa).
\end{align}
We define the constant $\alpha=p'(1-p)/(p(1-p'))>1$. Simple manipulation show  that for any function $X:\{1,q\}^{E_n}\to \R$ 
\begin{align}\label{eq:monoton_p_4}
\discmu_{\Lambda_n,p'}^0(X)=\frac{\discmu_{\Lambda_n,p}^0(\alpha^{h(\kappa)}X)}{\discmu_{\Lambda_n,p}^0(\alpha^{h(\kappa)})}.
\end{align}
Therefore we obtain
\begin{align}
\begin{split}\label{eq:monoton_p_5}
\discmu_{\Lambda_n, p'}^0\Big(h(\kappa)\leq (a+\varepsilon)|E_n|\Big)
&=\frac{\discmu_{\Lambda_n,p}^0\Big(\alpha^{h(\kappa)}
\mathds{1}_{h(\kappa)\leq (a+\varepsilon)|E_n|}\Big)}{\mu_{\Lambda_n,p}^0\Big(\alpha^{h(\kappa)}\Big)}
\\
&\leq 
\frac{\discmu_{\Lambda_n,p}^0\Big(\alpha^{h(\kappa)}
\mathds{1}_{h(\kappa)\leq (a+\varepsilon)|E_n|}\Big)}{\discmu_{\Lambda_n,p}^0\Big(\alpha^{h(\kappa)}\mathds{1}_{h(\kappa)\geq (b-\varepsilon)|E_n|}\Big)}
\\
&\leq 
\frac{\alpha^{(a+\varepsilon)|E_n|}}{
\left(2^{2d} q\right)^{-|\partial\Lambda_n|/2}
\alpha^{(b-\varepsilon)|E_n|} \discmu_{\Lambda_n, p}^1\Big(h(\kappa)\geq (b-\varepsilon)|E_n|\Big)}.
\end{split}
\end{align}
From \eqref{eq:monoton_p_1} and \eqref{eq:monoton_p_2} we conclude
\begin{align}\label{eq:monotone_p_6}
\varepsilon^2\leq \left({2^{2d} q}\right)^{|\partial\Lambda_n|/2}\alpha^{(a-b+2\varepsilon)|E_n|}
\end{align}
which implies $a-b+2\varepsilon\geq 0$ as $n\to \infty$ since $\alpha>1$ and $|E_n|/|\partial \Lambda_n|\to \infty$. The lemma follows as $\varepsilon\to 0$. 
\end{proof}
The next result is a non-uniqueness result for the random conductance model.
\begin{theorem}\label{th:randomconductance_non_unique}
In dimension $d=2$ and for $q>1$ sufficiently large there are two distinct Gibbs measures $\discmu_{p_{\sd}}^1\neq \discmu_{p_{\sd}}^0$ at the self-dual point defined by equation %\eqref{eq:self_dual_point}.
\eqref{eq:self_dual_equation}.
\end{theorem}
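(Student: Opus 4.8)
The plan is to compare the densities of hard bonds under the free and wired measures at the self-dual point and to show that these densities differ once $q$ is large. Write $a = \discmu_{p_{\sd}}^0(\kappa_e = q)$ and $b = \discmu_{p_{\sd}}^1(\kappa_e = q)$ for a fixed edge $e\in\Edge(\Z^2)$; by invariance under lattice symmetries these are independent of $e$. Since $\discmu_{p_{\sd}}^0 \precsim \discmu_{p_{\sd}}^1$ and $\{\kappa_e = q\}$ is an increasing event, we have $a \le b$, and by the sandwiching of Lemma \ref{le:monotonicity_in_p} together with the argument used in Theorem \ref{th:countable_non_uniqueness} the equality $\discmu_{p_{\sd}}^0 = \discmu_{p_{\sd}}^1$ is equivalent to $a = b$. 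Thus it suffices to produce a strict inequality $a < b$.

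First I would invoke the duality of the random conductance model on planar graphs established in Section \ref{sec:phase_transition}. On a finite box the free measure on $\Lambda_n$ is carried by planar duality to the wired measure on the dual box with dual parameter $p^\ast$ satisfying $\frac{p}{1-p}\,\frac{p^\ast}{1-p^\ast} = \sqrt q$, the duality interchanging hard and soft bonds as well as free and wired boundary conditions. Since $\Z^2$ is self-dual as a graph and $p_{\sd}$ solves \eqref{eq:self_dual_equation}, so that $p^\ast = p_{\sd}$, passing to the infinite-volume limit yields the relation $a + b = 1$. Consequently $a \le \tfrac12 \le b$, and uniqueness at $p_{\sd}$ would force $a = b = \tfrac12$. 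Hence the whole theorem reduces to showing that the hard-bond density is bounded away from $\tfrac12$ for $q$ large.

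The core of the argument is then a Peierls-type estimate. Interpreting a configuration through the interfaces separating regions dominated by hard bonds from regions dominated by soft bonds, I would show that flipping all bonds enclosed by a contour $\gamma$ changes the statistical weight in \eqref{eq:def_perc_model} by at most a factor $C(q)^{|\gamma|}$ with $C(q) \to 0$ as $q \to \infty$, uniformly in the configuration outside $\gamma$. The weight ratio is controlled by the Kirchhoff formula \eqref{eq:Kirchhoff} and the spanning-tree comparison inequalities of Lemma \ref{le:lattice_2_edges} and Lemma \ref{le:det_incl}, which bound the determinant ratios $\det\Delta_{\kappa}/\det\Delta_{\kappa'}$ in terms of edge-occupation probabilities of the weighted spanning tree. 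A standard Peierls summation over contours surrounding $e$ would then show that, under the wired measure, $e$ is hard with probability exceeding $\tfrac12$ uniformly in $n$, so $b > \tfrac12$; combined with $a+b=1$ this gives $a < \tfrac12 < b$ and hence two distinct Gibbs measures $\discmu_{p_{\sd}}^0 \neq \discmu_{p_{\sd}}^1$.

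The hard part will be precisely the infinite-range nature of the interaction noted in Remark \ref{rem:sim_random-cluster}: the weight depends on the entire configuration through $\det\Delta_\kappa$, so there is no finite-energy argument conditioned merely on a boundary partition as in the random cluster model. The Peierls bound must therefore control the determinant ratio across a contour uniformly in the exterior conductances, and this is where the negative-correlation and monotonicity properties of the weighted spanning tree, together with the transfer-current representation of the edge-occupation probabilities, are expected to do the essential work.
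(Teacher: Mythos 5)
Your overall strategy coincides with the paper's: planar self-duality gives $\discmu^1_{p_{\sd}}(\kappa_e=q)=\discmu^0_{p_{\sd}}(\kappa_e=1)$, i.e.\ $a+b=1$, so non-uniqueness reduces to pushing one of the densities strictly off $\tfrac12$, and this is done by a Peierls contour estimate whose weight comparison rests on the Kirchhoff formula. Showing $b>\tfrac12$ under the wired measure is, by the same duality, literally equivalent to the paper's bound $a<\tfrac12$ under the free measure, so up to that relabelling the architecture is identical.

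However, the Peierls step as you describe it has a gap. You claim that ``flipping all bonds enclosed by $\gamma$'' changes the weight by at most $C(q)^{|\gamma|}$, and propose to control the determinant ratio via Lemma \ref{le:lattice_2_edges}, Lemma \ref{le:det_incl} and negative correlations of the weighted spanning tree. Those lemmas compare configurations differing in one or two edges; iterating them over every edge inside the contour produces a factor per \emph{interior} edge, i.e.\ a bound of order $C^{|\Edge(\Int(\gamma))|}$ (area), not $C^{|\gamma|}$ (perimeter), which is useless for the contour sum. The missing idea is that the flip must be the \emph{planar duality map} applied to the interior: replace $\kappa$ on $\Edge(\Int(\gamma))$ by the (shifted) dual configuration $\kappa^\ast$, and compare the spanning-tree sums in \eqref{eq:Kirchhoff} through the bijection $\t\mapsto\t^{\d}$ between trees of the interior graph and of its dual, corrected near $\partial\Int(\gamma)$. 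At the self-dual point \eqref{eq:self_dual_point} the bulk contributions of the a priori weight $p^{h}(1-p)^{s}$ and of the determinant then cancel exactly via \eqref{eq:duality_h_s_s} and the Euler-type identity $4|\Vertex(\Int(\gamma))|-2|\Edge(\Int(\gamma))|=|\gamma|$, leaving only boundary terms of order $|\gamma|$ (this is Steps 1--5 of Theorem \ref{th:contour} and the estimate \eqref{eq:probability_pattern3}). Without this dual-flip mechanism the infinite-range determinant weight cannot be reduced to a perimeter-order cost, so you should make this construction explicit rather than relying on the edge-by-edge comparison lemmas.
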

The proof uses duality of the random conductance model and can be found in Section \ref{sec:phase_transition}.
This result easily implies Theorem \ref{th:main_non_unique}.
\begin{proof}[Proof of Theorem \ref{th:main_non_unique}]
Using Proposition \ref{prop:discmu_to_extmu} we infer from Theorem \ref{th:randomconductance_non_unique} the existence of two translation invariant extended 
gradient Gibbs measures $\tilde{\mu}_0$ and $\tilde{\mu}_1$ constructed from $\discmu_{p_{\sd}}^0\neq\discmu_{p_{\sd}}^1$.
Their $\eta$-marginals $\mu_0$ and $\mu_1$ are not equal since then
the $\kappa$-marginals $\discmu_1$ and $\discmu_2$ would agree. They both have zero tilt by Proposition
\ref{prop:discmu_to_extmu} and the definition of $\extmu$ shows that 
$\extmu$ is translation invariant if $\discmu$ is translation invariant.
\end{proof}
\begin{remark}
A proof similar to Lemma 3.2 in \cite{MR2778801} shows that ergodicity of $\discmu_1$ and $\discmu_2$ implies that $\mu_0$ and $\mu_1$ are itself ergodic.
The only difference is that $\eta$ given $\kappa$ is not independent (which $\kappa$ given $\eta$ is).
Instead one has to rely on the decay of correlations for Gaussian fields stated in Appendix \ref{app:1}.
\end{remark}
\begin{theorem}\label{th:Dobrushin_discrete}
For $d\geq 4$ there is $q_0>1$ such that for $p\in [0,1]$ and $q\in [1,q_0)$
the Gibbs measure for the random conductance model is unique.
Similarly, for $d\geq 4$ and $q\geq 1$ there is  a $p_0=p_0(q,d)>0$ such that the Gibbs measure is unique 
for $p\in [0,p_0)\cup (1-p_0,1]$. 
\end{theorem}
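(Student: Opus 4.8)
The plan is to verify Dobrushin's uniqueness condition for the specification $\discgamma_E$, viewed as a single-edge specification on the two-point state space $\{1,q\}$ indexed by $\Edge(\Z^d)$. Since each $\kappa_f$ takes only two values, the single-edge kernel is Bernoulli and is described completely by \eqref{eq:characterization_specification}: writing $\theta_f(\lambda)=\mathbb{Q}^{\Z^d}_{\kappa^-}(f\in\t)$ for the free $=$ wired spanning-forest edge probability with $\kappa$ given by $\lambda$ off $f$, one has
\begin{align}
\discgamma_{\{f\}}(\kappa_f=q,\lambda)=\psi(\theta_f(\lambda)),\qquad
\psi(\theta)=\frac{p}{p+(1-p)\sqrt{1+(q-1)\theta}}.
\end{align}
The Dobrushin interdependence coefficient is $C_{fg}=\sup_\lambda\big|\psi(\theta_f(\lambda^{g\mapsto q}))-\psi(\theta_f(\lambda^{g\mapsto 1}))\big|$, the largest change in the conditional law at $f$ produced by flipping the conductance at one other edge $g$. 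The aim is to show $\sup_f\sum_{g\neq f}C_{fg}<1$ in the two stated regimes; Dobrushin's uniqueness theorem, together with the quasilocality from Corollary \ref{co:quasilocal}, then yields a unique Gibbs measure (see \cite{MR2807681}).

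First I would reduce $C_{fg}$ to a sensitivity estimate for $\theta_f$. A direct computation gives $\|\psi'\|_\infty\lesssim p(1-p)(q-1)$, the prefactor that is small precisely when $q\to 1$ or when $p\to 0,1$, so $C_{fg}\le \|\psi'\|_\infty\sup_\lambda|\theta_f(\lambda^{g\mapsto q})-\theta_f(\lambda^{g\mapsto 1})|$. Using the transfer-current representation \eqref{eq:uniform_specification4}, namely $\theta_f=\kappa_f(\delta_x-\delta_y)(\Delta_\kappa)^{-1}(\delta_x-\delta_y)$ for $f=\{x,y\}$, and a rank-one (Sherman--Morrison) update of $(\Delta_\kappa)^{-1}$ under the change $\kappa_g:1\mapsto q$ at $g=\{u,v\}$, the variation of $\theta_f$ is
\begin{align}
\theta_f(\lambda^{g\mapsto q})-\theta_f(\lambda^{g\mapsto 1})
=-\kappa_f\,\frac{(q-1)\,(\nabla_f\nabla_g G_\kappa)^2}{1+(q-1)\,\nabla_g\nabla_g G_\kappa},
\end{align}
where $\nabla_f\nabla_g G_\kappa=(\delta_x-\delta_y)(\Delta_\kappa)^{-1}(\delta_u-\delta_v)$ is the mixed discrete gradient of the Green's function (in the environment with $\kappa_g=1$). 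Hence $C_{fg}\lesssim p(1-p)\,(q-1)^2\,\sup_\lambda(\nabla_f\nabla_g G_\kappa)^2$, and the problem is reduced to summing the squared mixed gradient over $g$.

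The main obstacle is the bound $\sum_{g\neq f}\sup_\lambda(\nabla_f\nabla_g G_\kappa)^2<\infty$, uniformly over all uniformly elliptic environments $\kappa\in\{1,q\}^{\Edge(\Z^d)}$. For a \emph{fixed} environment the energy identity $\sum_g\kappa_g(\nabla_f\nabla_g G_\kappa)^2=\nabla_f\nabla_f G_\kappa\le 1$ makes this sum trivially finite in every dimension; but in the Dobrushin coefficient the supremum over the background $\lambda$ must be taken separately for each $g$, so the single-environment identity does not apply and one needs genuine pointwise decay of $\nabla_f\nabla_g G_\kappa$ in $|f-g|$ that is uniform over environments. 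Here I would invoke the discrete elliptic-regularity estimates of the appendix (Lemma \ref{le:uniform_Laplace_inverse}): for merely uniformly elliptic coefficients the Green's function and its gradients decay more slowly than in the homogeneous case, and the resulting uniform pointwise bound on $\nabla_f\nabla_g G_\kappa$ becomes square-summable in $g$ only for $d\ge 4$. This is exactly the source of the dimensional restriction, and proving the required uniform bound is the technically hardest step.

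Granting this, write $\sum_{g\neq f}\sup_\lambda(\nabla_f\nabla_g G_\kappa)^2\le K=K(d,q)<\infty$ for $d\ge 4$, so that $\sup_f\sum_{g\neq f}C_{fg}\lesssim q\,(q-1)^2\,p(1-p)\,K$. For the first claim, fix any $p$ and let $q\downarrow 1$: since $K(d,q)\le K(d,q_0)$ stays bounded while $(q-1)^2\to 0$, the Dobrushin sum drops below $1$ for all $q\in[1,q_0)$ with a suitable $q_0=q_0(d)>1$. For the second claim, fix $q$ and let $p\to 0$ or $p\to 1$: the factor $p(1-p)\to 0$ forces the Dobrushin sum below $1$ for $p\in[0,p_0)\cup(1-p_0,1]$ with $p_0=p_0(q,d)>0$ small. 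In either regime Dobrushin's uniqueness theorem applies, giving a unique Gibbs measure for the random conductance model.
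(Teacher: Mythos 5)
Your proposal is correct and follows essentially the same route as the paper: Dobrushin's criterion for the single-edge kernel, the bound $\|\psi'\|_\infty\leq p(1-p)$, reduction of the sensitivity of $\theta_f$ to the mixed second gradient $\nabla_f\nabla_g G_\kappa$ (your Sherman--Morrison update is equivalent to the paper's transfer-current covariance identity $\Q_\lambda(f\in\t,g\in\t)-\Q_\lambda(f\in\t)\Q_\lambda(g\in\t)=-I_f^\lambda(g)I_g^\lambda(f)$), and the uniform Nash--Moser decay $|\nabla_x\nabla_y G_\kappa|\lesssim|x-y|^{2-d-2\alpha}$ from Lemma \ref{le:GreensBound} (rather than Lemma \ref{le:uniform_Laplace_inverse}), whose square-summability is exactly the source of the restriction $d\geq 4$. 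You also correctly identify the uniform-in-environment pointwise decay as the technically decisive step and handle the two asymptotic regimes exactly as the paper does.
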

\begin{proof}
We are going to apply Dobrushin's criterion (see, e.g., \cite[Theorem 8.7]{MR2807681}. The necessary estimate is basically a refined version of the proof of Lemma \ref{le:lattice_2_edges}. 
Fix two edges $f,g\in \Edge(\Z^d)$. Recall the notation 
$\lambda^{\pm\pm}=\lambda^{\pm\pm}_{fg}$ and $\lambda^\pm=\lambda^\pm_f$ introduced above Theorem \ref{th:stoch_dom}. We will write $\discgamma_f=\discgamma_{\{f\}}$ in the following.
Note that \eqref{eq:characterization_specification} and
$\discgamma_{f}(\lambda^{+},\lambda)+\discgamma_{f}(\lambda^{-},\lambda)=1$ 
imply that
\begin{align}
\discgamma_{f}(\lambda^{+},\lambda)=
\frac{\discgamma_{f}(\lambda^{+},\lambda)}{\discgamma_{f}(\lambda^{+},\lambda)+\discgamma_{f}(\lambda^{-},\lambda)}=\frac{p}{p+(1-p)\sqrt{1+(q-1)\mathbb{Q}_{\lambda^-}(f\in \t)}}
\end{align}
where $\Q_{\lambda^-}$ denotes the weighted spanning forest measure on $\Z^d$ with conductances $\lambda^-$.
 We need to bound the entries of the Dobrushin interdependence matrix given by
\begin{align}
\begin{split}
&C_{fg}=\sup_{\lambda\in \{1,q\}^{\Edge(\Z^d)}}|\discgamma_{f}(\lambda^{++}_{fg},\lambda^{++}_{fg})-\discgamma_{f}(\lambda^{+-}_{fg}, \lambda^{+-}_{fg})|
\\
&=\sup_{\lambda\in \{1,q\}^{\Edge(\Z^d)}}
\left|\frac{p}{p+(1-p)\sqrt{1+(q-1)\mathbb{Q}_{\lambda^{-+}}(f\in \t)}}-
\frac{p}{p+(1-p)\sqrt{1+(q-1)\mathbb{Q}_{\lambda^{--}}(f\in \t)}}\right|.
\end{split}\end{align}
Since the derivative of the map $x\mapsto p/(p+(1-p)\sqrt{x})$ is bounded by $p(1-p)$ for $x\geq 1$ we conclude that
\begin{align}
\sup_{\lambda\in \{1,q\}^{\Edge(\Z^d)}}|\discgamma_{f}(\lambda^{++},\lambda^{++})-\discgamma_{f}(\lambda^{+-}, \lambda^{+-})|
\leq p(1-p)(q-1)\left|\mathbb{Q}_{\lambda^{-+}}(f\in \t)-\mathbb{Q}_{\lambda^{--}}(f\in \t)\right|.
\end{align}
To simplify the notation we assume $\lambda=\lambda^{--}$.
We can express $\mathbb{Q}_{\lambda^{-+}}(f\in \t)$ through the measure $\mathbb{Q}_{\lambda^{--}}=\mathbb{Q}_\lambda$
as follows
\begin{align}
\mathbb{Q}_{\lambda^{-+}}(f\in \t)
=\frac{\mathbb{Q}_{\lambda}(f\in \t, g\notin \t)+q\mathbb{Q}_{\lambda}(f\in \t, g\in \t)}{
q\mathbb{Q}_{\lambda}(g\in \t)+\mathbb{Q}_{\lambda}(g\notin \t)}.
\end{align}
A sequence of manipulations then shows that
\begin{align}
\mathbb{Q}_{\lambda^{-+}}(f\in \t)-\mathbb{Q}_{\lambda^{--}}(f\in \t)
=\frac{(q-1)\big(\mathbb{Q}_{\lambda}(f\in \t,g\in \t)-\mathbb{Q}_{\lambda}(f\in \t)\mathbb{Q}_{\lambda}(g\in \t)\big)}{q\mathbb{Q}_{\lambda}(g\in \t)+\mathbb{Q}_{\lambda}(g\notin \t)}.
\end{align}
The numerator can be rewritten using the transfer-current Theorem for two edges (see \cite[Page 10]{MR1825141} and equation below 4.3 in \cite{MR3616205}) 
\begin{align}
\Q_{\lambda}(f\in \t, g\in \t)-\Q_{\lambda}(f\in \t)\Q_{\lambda}(g\in \t)=-I^{\lambda}_f(g)I^{\lambda}_g(f).
\end{align}
where $I_f^\kappa(g)$ denotes the current through $g$ in a resistor network with conductances $\kappa$ when 1 unit of current is inserted (respectively removed) at the ends of $f$ (using a fixed orientation of the edges here, e.g., lexicographic).
All together we have shown that
\begin{align}\label{eq:dob1}
C_{fg}\leq \sup_{\kappa\in \{1,q\}^{\Edge(\Z^d)}} p(1-p)(q-1)^2 I^\kappa_f(g)I^\kappa_{g}(f).
\end{align}
Using electrical network theory we can express for $f=(x,x+e_i)$ and $g=(y,y+e_j)$
\begin{align}\label{eq:dob2}
I^\kappa_f(g)=\kappa_g\big(\delta_{y+e_i}-\delta_y,(\Delta_\kappa)^{-1}(\delta_{x+e_i}-\delta_x)\big)_{\Z^d}=\kappa_g\nabla_{x,i}\nabla_{y,j} G_\kappa(y,x)
\end{align} 
where $G_\kappa$ denotes the inverse of the operator $\Delta_\kappa$ which exists in 
dimension $d\geq 3$ and whose derivative exists in dimension $d\geq 2$.
Combining the bound  \eqref{eq:Greens_Hoelder2} in Lemma \ref{le:GreensBound}, \eqref{eq:dob1}, and \eqref{eq:dob2} we conclude for $f\in \Edge(\Z^d)$ that
\begin{align}
\sum_{g\in \Edge(\Z^d)} C_{fg}\leq C(q,d)p(1-p)(q-1)^2
\sum_{x\in \Z^d} (1+|x|)^{2(2-d-2\alpha)}.
\end{align}
In dimension $d\geq 4$ the sum is finite.
Now, for fixed $q$, the sum becomes smaller 1 for $p$ sufficiently close to 0 or 1. Therefore there is
$p_0=p_0(q,d)$ such that  the Gibbs measure is unique for $p\in [0,p_0)\cup(1-p_0,1]$.
On the other hand, the constant $C(q,d)$ from Lemma \ref{le:elliptic_Nash_Moser} is decreasing in $q$.
Therefore we can estimate uniformly for $p\in [0,1]$  and for $q\leq 2$
\begin{align}
\sum_{g\in \Edge(\Z^d)} C_{fg}\leq \frac{C(2,d)}{4}(q-1)^2
\sum_{x\in \Z^d} (1+|x|)^{2(2-d-2\alpha)}.
\end{align}
 Hence the Dobrushin criterion is satisfied for $q$ sufficiently close to 1 and all $p\in [0,1]$.
\end{proof}
\begin{remark}
\begin{enumerate}
\item
Note that the gradient-gradient correlations in gradient models at best only decay critically with $|x|^{-d}$ (which is the decay rate for the discrete Gaussian free field). 
In particular, the sum of the covariance $\sum_{g\in \Edge(\Z^d)} \mathop{Cov}(\eta_f,\eta_g)$ diverges in this type of model.
We use crucially in the previous theorem that the decay of correlations is better for the discrete model: They decay with the square of the gradient-gradient correlations.
\item
The averaged (annealed) second order derivative of the Greens functions decays
with the optimal decay rate $|x|^{-d}$ as shown in  \cite{MR2198017}. For the application of the Dobrushin criterion we, however need deterministic bounds which are weaker.
\item To extend the uniqueness result for $q$ close to 1 to dimensions $d=3$ and $d=2$ one would need estimates for the optimal Hölder exponent $\alpha$ depending on the ellipticity contrast of discrete elliptic operators.
 Here the ellipticity contrast can be bounded by  $q$. There do not seem to be any results in this direction in the discrete setting. In the continuum setting the problem is open for $d\geq 3$, but has been solved for $d=2$ in \cite{MR0361422}. In this case  $\alpha\to 1$ 
as the ellipticity contrast converges to 1. A similar result in the discrete setting would imply uniqueness of the Gibbs measure for small $q$ in dimension 2.
\end{enumerate}
%Roughly speaking this is the case because the normalisation factor $\sqrt{\det 2\pi C}$ of the density of a correlated two dimensional Gaussian variable with covariance $C$ involves the square of the covariance as a correction to the product of 
%the variances that appears for two independent variables.
\end{remark}
Note that we can again lift the uniqueness result for the Gibbs measure of the random conductance model to a uniqueness result for the ergodic gradient Gibbs measures with zero tilt.
\begin{proof}[Proof of Theorem \ref{th:Dobrushin}]
The proof follows from the uniqueness of the discrete Gibbs measure proven in Theorem \ref{th:Dobrushin_discrete} in the same way 
as the proof of Theorem \ref{th:main_unique} which can be found above Remark 
\ref{rem:conv_pressure}.
\end{proof} 

\paragraph{Open questions} Let us end this section by stating one further result and two conjectures regarding the phase transitions of this model.
They are most easily expressed in terms of percolation properties of the model even though 
the interpretation as open and closed bonds is somehow misleading in this context.
We write $x\leftrightarrow y$ for $x,y\in \Z^d$ and $\kappa$ if there is a path of $q$-bonds in $\kappa$
connecting $x$ and $y$ and similarly for sets.
Observe that the results of \cite{MR3898174} can be applied to the model introduced here and we obtain the existence of a  sharp phase transition.
\begin{theorem}\label{th:sharpness}
For every $q$ the model undergoes a sharp phase transition in $p$, i.e., there is $p_c(q,d)$ such that
the following two properties hold. On the one hand there is a constant $c_1>0$ such that for $p>p_c$ sufficiently close to $p_c$  
\begin{align}\label{eq:sharpness_lower}
\discmu^1(0\leftrightarrow \infty)\geq c_1(p-p_c).
\end{align}
On the other hand, for $p<p_c$  there is a constant $c_p$ such that
\begin{align}\label{eq:sharpness_upper}
\discmu^1_n(0\leftrightarrow \partial\Lambda_n)\leq e^{-c_pn}.
\end{align}
\end{theorem}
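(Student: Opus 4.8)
The plan is to verify that the abstract sharpness machinery of Duminil-Copin, Raoufi and Tassion \cite{MR3898174}, which is built on the OSSS inequality for monotonic measures, applies to the wired measures $\discmu^1_p$ and their finite-volume approximations $\discmu^1_{n,p}=\P^{\Lambda_n^w,p}$. Throughout I interpret the $q$-bonds as the ``open'' edges, so that the connection events $\{0\leftrightarrow\partial\Lambda_n\}$ and $\{0\leftrightarrow\infty\}$ are increasing for the partial order on $\{1,q\}^{\Edge(\Z^d)}$. First I would record the structural inputs that their framework requires and that we have already established: the measures satisfy the FKG lattice condition, equivalently they are monotonic (Corollary \ref{co:Holley1} in finite volume, Lemma \ref{le:inf_volume_measures} in the limit); they are stochastically ordered in $p$ (Lemma \ref{le:mon_p} and Lemma \ref{le:inf_volume_measures}); and $\discmu^1_p$ is translation invariant and ergodic (Lemma \ref{le:inf_volume_measures}). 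Consequently $p\mapsto\discmu^1_p(0\leftrightarrow\infty)$ is non-decreasing and the critical point $p_c=p_c(q,d)=\inf\{p:\discmu^1_p(0\leftrightarrow\infty)>0\}$ is well defined.

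The second ingredient is a Russo-type differential formula. Because the $p$-dependence of $\P^{G,p}$ in \eqref{eq:def_perc_model} enters only through the Bernoulli factor $p^{h(\kappa)}(1-p)^{s(\kappa)}$ --- the determinant weight $(\det\Delta_\kappa)^{-1/2}$ being independent of $p$ --- the computation is identical to the one for the random cluster model. Differentiating $\log\P^{\Lambda_n^w,p}$ in $p$ and using $s(\kappa)=|E_n|-h(\kappa)$ gives, for every increasing local event $A$,
\begin{equation}
\frac{\d}{\d p}\,\discmu^1_{n,p}(A)=\frac{1}{p(1-p)}\sum_{e\in E_n}\mathrm{Cov}_{p}\big(\mathbb{1}_A,\mathbb{1}_{\kappa_e=q}\big).
\end{equation}
Each covariance is non-negative by the FKG inequality and plays the role of the influence $\mathrm{Inf}_e(A)$ in the OSSS framework.

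Next I would run the randomized-algorithm argument of \cite{MR3898174}. Set $\theta_k(p)=\discmu^1_{k,p}(0\leftrightarrow\partial\Lambda_k)$ and $\Sigma_n(p)=\sum_{k=0}^{n}\theta_k(p)$, and consider a decision tree that determines $\mathbb{1}_{\{0\leftrightarrow\partial\Lambda_n\}}$ by exploring the $q$-cluster of the origin outward up to a uniformly random radius $r\in\{0,\dots,n\}$; since a given edge is queried only when the exploration reaches its vicinity, averaging over $r$ bounds the revealment of every edge by $C\Sigma_n(p)/n$. The OSSS inequality for monotonic measures then bounds the variance $\theta_n(1-\theta_n)$ of the crossing indicator by the maximal revealment times $\sum_e\mathrm{Inf}_e$, and combining this with the Russo formula above yields the differential inequality
\begin{equation}
\theta_n'(p)\geq\frac{c\,n}{\Sigma_n(p)}\,\theta_n(p)\big(1-\theta_n(p)\big).
\end{equation}

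Finally I would integrate this differential inequality exactly as in \cite{MR3898174}: the dichotomy it produces is that for $p<p_c$ the sum $\Sigma_n(p)$ stays bounded, forcing $\theta_n(p)\to 0$ exponentially fast and giving \eqref{eq:sharpness_upper}, whereas for $p>p_c$ one obtains the mean-field lower bound $\discmu^1_p(0\leftrightarrow\infty)\geq c_1(p-p_c)$, which is \eqref{eq:sharpness_lower}. The hard part is not any single estimate but the verification that the OSSS/decision-tree machinery survives the infinite-range dependence of the model emphasised in Remark \ref{rem:sim_random-cluster}: the conditional law in a box depends on the entire exterior configuration, not merely on a boundary partition. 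The point that makes this work is that the OSSS inequality of \cite{MR3898174} requires only monotonicity (the FKG lattice condition of Corollary \ref{co:Holley1}) together with the covariance form of Russo's formula, and no spatial Markov or finite-range property; a secondary technical care is that all differential inequalities must be established in finite volume for $\discmu^1_{n,p}$ and then transferred to the limit using the monotone convergence $\theta_n\downarrow\theta$ and the stochastic orderings of Lemma \ref{le:inf_volume_measures}.
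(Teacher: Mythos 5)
Your proposal is correct and follows essentially the same route as the paper: the proof of Theorem~\ref{th:sharpness} consists precisely in observing that the argument of Theorem 1.2 in \cite{MR3898174} only uses strong positive association of $\discmu_{n,p}^1$ (Corollary~\ref{co:Holley1}) together with the covariance form of Russo's formula, which holds here because the $p$-dependence of \eqref{eq:def_perc_model} enters only through the Bernoulli prefactor. Your additional remark that the OSSS/decision-tree machinery needs no spatial Markov property is exactly the point that makes the transfer work despite the infinite-range interaction noted in Remark~\ref{rem:sim_random-cluster}.
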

\begin{proof}
The proof of Theorem 1.2 in \cite{MR3898174} for the random cluster model applies to this model.
Indeed, it only relies on $\mu_{n,p}^1$ being strongly positively associated and a certain relation 
for the $p$ derivative of events stated in Theorem 3.12 in \cite{MR2243761} 
which is still true since the $p$-dependence is the same as for the random cluster model.
\end{proof}
\begin{remark}
For $d=2$ the self dual point defined in \eqref{eq:self_dual_equation} and the critical point agree: $p_c=p_{\sd}$.
This can be seen based on Theorem 1.5 and the arguments used in the proof of Theorem 1.4 
in \cite{MR3898174}  for the random cluster model .
\end{remark}
In the random cluster model the most interesting phenomena happen for $p=p_c$ and the subcritical and supercritical phase are much simpler to understand (in particular in $d=2$).
Due to the differences explained in Remark  \ref{rem:sim_random-cluster} those
questions seem to be harder for our random conductance model.
Nevertheless we conjecture the 
following stronger version of Theorem \ref{th:countable_non_uniqueness}
and Theorem \ref{th:Dobrushin_discrete}
 \begin{conjecture}\label{con:uniqueness}
 For $p\neq p_c$ there is a unique Gibbs measure.
\end{conjecture}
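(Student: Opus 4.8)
The plan is to reduce the conjecture to a single-edge density statement and then to continuity of that density away from $p_c$. Since every Gibbs measure $\discmu$ in the sense of Definition \ref{def:kappa_Gibbs} satisfies $\discmu_p^0\precsim\discmu\precsim\discmu_p^1$, uniqueness is equivalent to $\discmu_p^0=\discmu_p^1$, which by the lattice-symmetry argument used in the proof of Theorem \ref{th:countable_non_uniqueness} is in turn equivalent to the identity $\theta^0(p):=\discmu_p^0(\kappa_e=q)=\discmu_p^1(\kappa_e=q)=:\theta^1(p)$ for a single edge $e$. By Lemma \ref{le:mon_p} the map $p\mapsto\theta^0(p)$ is increasing, and the interleaving \eqref{eq:discmu_monotone} furnished by Lemma \ref{le:monotonicity_in_p} shows that $\theta^0(p)=\theta^1(p)$ at every point of continuity of $\theta^0$. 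Thus the whole conjecture amounts to showing that the increasing function $\theta^0$ is continuous on $[0,1]\setminus\{p_c\}$, i.e.\ that the edge density exhibits no jump away from the critical point and the only admissible first-order transition sits at $p_c$.

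For the \emph{subcritical} regime $p<p_c$ I would try to exploit the sharp exponential decay of $q$-connectivities given by \eqref{eq:sharpness_upper} in Theorem \ref{th:sharpness}. The heuristic, imported from the random cluster model, is that when all $q$-clusters are almost surely finite the boundary condition cannot influence a local observable, so that $\discmu_p^0=\discmu_p^1$. Concretely one would bound $\theta^1_{n}(p)-\theta^0_{n}(p)$ in the finite box $\Lambda_n$ by the $\discmu^1_{n,p}$-probability that $e$ lies in a $q$-cluster reaching $\partial\Lambda_n$, and then send $n\to\infty$. The comparison of the two finite-volume densities would be routed, as in Lemma \ref{le:det_incl} and equation \eqref{eq:quotient_det}, through the discrepancy $\Q^{\Z^d,1}_\kappa(e\in\t)-\Q^{\Z^d,0}_\kappa(e\in\t)$ of spanning-forest occupation probabilities, estimated via the electrical representation \eqref{eq:uniform_specification4} and the $q$-connectivity of $e$ to the far field. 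For the \emph{supercritical} regime $p>p_c$ I would instead transfer the subcritical conclusion through the planar duality of Section \ref{sec:phase_transition}: in $d=2$, where $p_c=p_{\sd}$ solves \eqref{eq:self_dual_equation}, the dual of a supercritical measure is a subcritical measure on the dual lattice, so uniqueness of the dual Gibbs measure should force uniqueness of the primal one. In dimensions $d\geq3$, where duality is unavailable, I would attempt a uniqueness-of-the-infinite-$q$-cluster argument combined with an insertion-tolerance estimate to equate the cluster density under free and wired boundary conditions.

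The hard part, and the reason this remains a conjecture rather than a theorem, is exactly the difficulty isolated in Remark \ref{rem:sim_random-cluster}: the interaction is of infinite range, so the decoupling that powers the random cluster proofs is simply not available here. In the random cluster model one conditions on a surface of \emph{closed} edges to separate an inner region from the boundary and thereby recovers the free specification inside; in our setting the analogous separating surface is made of $1$-bonds, which \emph{still conduct}, so the spanning-forest occupation probability $\Q_\kappa(f\in\t)$ — and hence the conditional kernel $\discgamma_E$ — continues to depend on the configuration arbitrarily far away no matter how we fix a surrounding layer. Consequently the clean screening step in both the subcritical coupling and the supercritical duality argument breaks down. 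Making the plan work would require replacing exact screening by a quantitative stability estimate, namely that $\Q_\kappa(f\in\t)$ is insensitive, at a summable rate governed by the $q$-connectivity, to changes of $\kappa$ outside a large box; the Green's-function bounds already developed for Theorem \ref{th:Dobrushin_discrete} (cf.\ Lemma \ref{le:GreensBound}) seem the most promising source for such an estimate, but closing the gap between those deterministic decay bounds and the required decoupling is precisely the step I do not currently know how to carry out.
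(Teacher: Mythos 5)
The statement you are asked to prove is labelled a \emph{conjecture} in the paper, and the paper offers no proof of it; indeed, immediately after stating it the author remarks that it is ``not clear'' how the sharpness result can be used to obtain uniqueness in this setting. Your proposal is therefore not being measured against an existing argument, and, as you yourself concede in the final paragraph, it is not a proof: it is a programme whose decisive step is missing. The reductions you do carry out are sound and match the machinery of the paper --- uniqueness is indeed equivalent to $\discmu_p^0=\discmu_p^1$ (via the sandwich $\discmu_p^0\precsim\discmu\precsim\discmu_p^1$ and the single-edge criterion from the proof of Theorem \ref{th:countable_non_uniqueness}), and the interleaving \eqref{eq:discmu_monotone} does reduce everything to continuity of $p\mapsto\discmu_p^0(\kappa_e=q)$ away from $p_c$. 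But that reduction is already contained in Theorem \ref{th:main_unique}/Theorem \ref{th:countable_non_uniqueness}; the conjecture is precisely the assertion that the at most countably many discontinuity points collapse to the single point $p_c$, and nothing in your sketch establishes that.

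The gap you name is the genuine one, and it is the same obstruction the paper isolates in Remark \ref{rem:sim_random-cluster}: a separating surface of $1$-bonds does not decouple, because $1$-bonds still carry conductance, so $\Q_\kappa(f\in\t)$ and hence the specification $\discgamma_E$ retain infinite-range dependence on the exterior configuration. Consequently the standard subcritical argument (exponential decay of connectivities implies insensitivity to boundary conditions) and the standard supercritical arguments (duality in $d=2$; uniqueness of the infinite cluster plus finite energy in $d\geq 3$) all break at the same point: they require replacing exact screening by a quantitative estimate showing that the influence of a far-away change in $\kappa$ on $\Q_\kappa(f\in\t)$ is controlled by the ($q$-)connectivity structure at a summable rate. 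The deterministic Green's function bounds of Lemma \ref{le:GreensBound} give decay $|x|^{-(d-2+2\alpha)}$ squared, which is what powers the Dobrushin argument in $d\geq 4$ for extreme $p$, but they carry no information about the percolative structure and cannot by themselves localise the transition to $p_c$. Until that decoupling estimate is supplied, the statement remains open, exactly as the paper asserts.
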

Note that the sharpness result Theorem~\ref{th:sharpness}
shows that the probability of subcritical $q$-clusters to be large is exponentially small. Nevertheless it is not clear how this can be used to show uniqueness of the Gibbs measure in our setting.

The behaviour at $p_c$ is also very interesting.
A phase transition is called continuous 
if  $\mu_{p_c}^1(0\leftrightarrow \infty)=0$ and otherwise it is discontinuous.
For the random cluster model in dimensions $d=2$ the phase transition is continuous for $q\leq 4$ and otherwise discontinuous.
Moreover, the uniqueness of the Gibbs measure
 at $p_c$ is equivalent to a continuous phase transition. 
We do not know whether the same is true for the random conductance model considered here. But we expect the general picture to be true also for the random conductance and we make this precise in a  second conjecture.
% about the nature of the  phase transition in terms of Gibbs measures.
 \begin{conjecture}\label{con:cont_discont}
 There is a $q_0=q_0(d)$ such that for $q> q_0$ there is non-uniqueness of Gibbs measures
 $\discmu_{p_c,q}^1\neq \discmu_{p_c,q}^0$ at the critical point while for $q<q_0$
 the Gibbs measures agree, i.e., $\discmu_{p_c,q}^1=\discmu_{p_c,q}^0$.
 \end{conjecture}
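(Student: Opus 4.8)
The plan is to prove the two directions of the dichotomy by entirely different mechanisms and to \emph{define} the threshold $q_0(d)$ as the value that separates them, in close analogy with the planar random cluster model. A useful preliminary reduction is to establish, for this model, the equivalence
\[
\discmu^1_{p_c}(0\leftrightarrow\infty)>0\iff\discmu^1_{p_c}\neq\discmu^0_{p_c},
\]
which is known for the random cluster model but, as noted above, is itself open here; once it is in place the conjecture becomes a statement about whether the wired order parameter vanishes at criticality. Throughout I work in $d=2$ and use that there $p_c=p_{\sd}$ (the remark following Theorem \ref{th:sharpness}), so that the model is self-dual exactly at the critical point.

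For the discontinuous regime I would start from Theorem \ref{th:randomconductance_non_unique}, which already yields $\discmu^0_{p_{\sd}}\neq\discmu^1_{p_{\sd}}$ for all $q$ above some (non-explicit) value; since $p_{\sd}=p_c$ this is non-uniqueness at criticality. Upgrading this to a single sharp threshold is more delicate. Setting
\[
q_0(d)=\inf\bigl\{q\ge 1:\ \discmu^1_{p_c,q}\neq\discmu^0_{p_c,q}\bigr\},
\]
one must show that the coexistence region is the half-line $(q_0,\infty)$, i.e. that once the two states separate they stay separated as $q$ grows. In contrast to the monotonicity in $p$ of Lemma \ref{le:mon_p}, monotonicity in $q$ is not immediate here --- both the measure and the critical point $p_c=p_{\sd}(q)$ move with $q$ --- so this step would require a dedicated comparison inequality along the self-dual curve.

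For the continuous regime $q<q_0$ the target is $\discmu^1_{p_c}(0\leftrightarrow\infty)=0$, which by the equivalence above gives $\discmu^1_{p_c}=\discmu^0_{p_c}$. The route I would follow is a box-crossing (Russo--Seymour--Welsh) program at the self-dual point: combining the self-duality of $\Z^2$ with the duality of the model ties the probabilities of primal and dual square crossings to one another, pinning them near $1/2$, and the aim is to bootstrap this into crossing probabilities that are bounded away from $0$ and $1$ uniformly in the scale. A uniform box-crossing property then forbids an infinite $q$-cluster at $p_c$ by the standard annulus argument and, as in the random cluster theory, yields uniqueness of the Gibbs measure.

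The hardest part --- and the reason this stays a conjecture --- is precisely the obstruction flagged in Remark \ref{rem:sim_random-cluster}: the model has no strong domain Markov property, since conditioning on the conductances outside a box changes the determinant weight (equivalently the weighted spanning forest) \emph{globally}, not merely through a boundary partition. Classical RSW arguments rest on exactly this strong Markov property and on comparing a region to its free or wired counterpart, so they do not carry over verbatim. I would instead attempt to extract crossing estimates directly from the determinantal structure --- through the transfer-current representation of $\Q^{\Z^2}_\kappa(f\in\t)$ used in Lemma \ref{le:uniform_specification} together with the negative association of the spanning forest --- while controlling the error from replacing the true conditional law by a wired one via the quantitative boundary comparison \eqref{eq:monoton_p_3}. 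Making these crossing estimates uniform in the scale, without a strong Markov property to lean on, is the central difficulty.
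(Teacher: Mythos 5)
The statement you are trying to prove is Conjecture~\ref{con:cont_discont}; the paper gives no proof of it and explicitly leaves it open, offering only Theorem~\ref{th:randomconductance_non_unique} (non-uniqueness at $p_{\sd}=p_c$ for $q$ large in $d=2$) and Theorem~\ref{th:Dobrushin_discrete} (uniqueness for $q$ close to $1$ in $d\geq 4$) as partial evidence. So there is no argument in the paper to compare yours against, and, to your credit, you do not claim to close the gap: what you have written is a research program whose unproved steps you name yourself. As a proof of the statement it is incomplete, and the missing pieces are exactly the ones the paper identifies as the obstruction: (i) the equivalence $\discmu^1_{p_c}(0\leftrightarrow\infty)>0\iff\discmu^1_{p_c}\neq\discmu^0_{p_c}$, which the paper states is known for the random cluster model but not for this one; (ii) monotonicity of the coexistence/uniqueness dichotomy in $q$ along the self-dual curve, without which your $q_0$ defined as an infimum need not separate the two regimes into half-lines; and (iii) a box-crossing theory in the absence of a domain Markov property, which is the difficulty flagged in Remark~\ref{rem:sim_random-cluster}. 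None of these is routine, and each would be a substantive result on its own.

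Two further points worth flagging. First, the conjecture is stated for general $d$ (with $q_0=q_0(d)$), whereas your entire program is two-dimensional: it leans on $p_c=p_{\sd}$, planar duality, and RSW, none of which is available for $d\geq 3$, so even a successful execution would only settle the case $d=2$. Second, your proposed error control for the conditional law via the boundary comparison \eqref{eq:monoton_p_3} is too weak for crossing estimates: that bound degrades like $(2^{2d}q)^{|\partial\Lambda_n|/2}$, which is exponentially large in the boundary and is only useful in the paper because it is beaten by a bulk term of order $\alpha^{|E_n|}$; in an RSW argument there is no such bulk gain to absorb it. Your instinct to work instead with the transfer-current representation and negative association of the weighted spanning forest is the more promising direction, but as it stands the proposal establishes neither direction of the dichotomy.
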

% For the differences explained in the second point of Remark %\ref{rem:diff_rc_model}
% \ref{rem:sim_random-cluster} this equivalence is not immediate here.
 A partial result in the direction of this conjecture is Theorem \ref{th:randomconductance_non_unique}
 that states non-uniqueness for large $q$ in dimension $d=2$  and Theorem \ref{th:Dobrushin_discrete} that shows
 uniqueness for $q$ close to 1 and $d\geq 4$.
% Taking into account the connection to gradient interface models this 
% conjecture roughly states that when considering more non convex potentials
% the gradient Gibbs measure is no longer unique. 

\section{%Another proof for a discontinous phase transition
Duality and coexistence of Gibbs measures}\label{sec:phase_transition}

In this section we are going to prove that 
$\mu^0_{p_{\sd}}\neq\mu^1_{p_{\sd}}$ for large $q$ which implies the non-uniqueness of gradient Gibbs measures  stated in Theorem \ref{th:main_non_unique}. This is a new proof for the result in  \cite{MR2322690}. 
They consider 
conductances $q_1$, $q_2$ with $q_1q_2=1$ which makes the presentation slightly more symmetric.

In contrast to their work we do not rely on reflection positivity but instead we exploit the
planar duality that is already used in \cite{MR2322690} to find the location of the phase transition. Therefore it is not possible to extend the argument given here to $d\geq 3$ while the proof using reflection positivity is in principle independent of the dimension (note that the spin wave calculations in \cite{MR2322690} can be simplified substantially and generalised to $d\geq 3$ using the Kirchhoff formula cf.\ \cite[Section 5.7]{myphd}). 
In addition to planar duality we rely on the properties proved in Section \ref{sec:percolation}, 
in particular on the Kirchhoff formula. 
Similar arguments were developed in the context of the random cluster model and we refer to
\cite[Section 6 and 7]{MR2243761}.

We proceed now by stating the duality property in our setting.
%generalize this duality to general graphs.
For a planar graph $G=(V,E)$ we denote its dual graph by
$G^\ast=(V^\ast,E^\ast)$. The dual graph has the faces of $G$ as vertices and the vertices of $G$ as 
faces and each edge has a corresponding dual edge.
For a formal definition of the dual of a graph and the necessary background we refer
to the literature, e.g., \cite{MR1813436}.

For any configuration $\kappa:E\to \{1,q\}$
we define its dual configuration $\kappa^\ast\in \{1, q\}^{E^\ast}$ by 
$\kappa^\ast_{e^\ast}=1+q-\kappa_e$ where $e^\ast\in E^\ast$ denotes the dual edge of an edge $e\in E$.
More generally we denote for $E_1\subset E$ by $E_1^\ast=\{e^\ast\,:e\in E_1\}$ the dual edges of the
edges $E_1$.
We also introduce the notation
 $E_1^{\d}=\{e^\ast \in E^\ast: e\notin E_1\}=(E_1^\crm)^\ast$ for $E_1\subset E$  for the dual set of an edge subset.
Note that  $E_1$ is acyclic if and only if $E_1^{\d}$ is spanning, i.e., every two points $x^\ast, y^\ast\in V^\ast$ are connected by a path in $E_1^{\d}$. 
In particular,  $\t\subset E$ is a spanning tree in $G$ if and only if $\t^{\d}$ is a spanning tree in $G^\ast$ and the map $\t\mapsto \t^d$ is an involution and in particular bijective from $\ST(G)$ to $\ST(G^\ast)$.

Recall that $h(\kappa,\t)=| \{e\in \t: \kappa_e=q\} |$ denotes the number of $q$-bonds in the set $t\subset \Edge(G)$ of $\kappa$
and the similar definition of $s(\kappa,\t)$ for the number of soft $1$-bonds in $\t$.
The definitions imply that
\begin{alignat}{2}\label{eq:duality_h_s}
h(\kappa) &= s(\kappa^\ast),
%\label{eq:duality_s_h}
&s(\kappa)&=h (\k^\ast),
\\ \label{eq:duality_h_s_s}
h(\k, \t)&= s(\k^\ast)-s(\k^\ast,\t^{\d}),\qquad
 %\label{eq:duality_s_h_h}
&\qquad s(\k,\t)&= h(\k^\ast)-h(\k^\ast,\t^{\d}).
\end{alignat}
The last two identities follow from the observation that
$s(\k^\ast,\t^{\d})=h(\kappa, E\setminus \t)$ and similarly for $s$ and $h$ interchanged.
We calculate the distribution of $\kappa^\ast$ if $\kappa$ is distributed according to $\P^{G,p}$
\begin{align}\label{eq:proba_duality}
\begin{split}
\P(\k^\ast)=\P(\k)
\propto \!\frac{p^{h(\k)}(1-p)^{s(\k)}}{
\sqrt{\sum_{\t \in \ST(G)} q^{h(\k,\t)}}}
&=\!
\frac{p^{s(\k^\ast)}(1-p)^{h(\k^\ast)}}{
\sqrt{\sum_{\t^{\d}\in \ST(G^\ast)} q^{s(\k^\ast)-s(\k^\ast,\t^{\d})}}}
=\! \frac{ \left(\frac{p}{\sqrt{q}}\right)^{s(\k^\ast)}
(1-p)^{h(\k^\ast)}}
{\sqrt{\sum_{\t^{\d} \in \ST(G^\ast)} q^{h(\k^\ast,\t^{\d})-|\t^{\d}|}}}.
\end{split}
\end{align} 
This implies that if
$\kappa$ is distributed according to $\P^{G,p}$ 
the dual configuration $\kappa^\ast$ 
is distributed according to $\P^{G^\ast, p^\ast}$ where $q^\ast=q$ and 
\begin{align}\label{eq:p_ast}
\frac{p^\ast}{1-p^\ast}=\frac{(1-p)}{{p}/{\sqrt{q}}}.
\end{align}
Note that the self dual point $p_{\sd}$ defined by $p_{\sd}^\ast=p_{\sd}$ is given by the solution of
\begin{align}\label{eq:self_dual_point}
 \frac{p^4}{(1-p)^4}=q.
\end{align}
%We note the solution of this equaction by $p_c$.
%Let us make the notion of duality more concrete for subgraphs of $\Z^2$.
%\begin{lemma}
%
%\end{lemma}
We will now restrict our attention to $\Z^2$. 
%Let us add the warning that the analysis involves topological arguments in $\Z^2$ that are difficult to formalize and that we do not treat 
%in full detail. 
%We refer to \cite{MR692943} for proofs of several similar claims.
Let us mention that detailed proofs of the topological statements we use can be found in  \cite{MR692943}.
 
We can identify the dual of the graph 
$(\Z^2,\Edge(\Z^2))$, which will be denoted by $((\Z^2)^\ast,\Edge(\Z^2)^\ast)$, with $\Z^2$ shifted by the vector $w=(\tfrac12,\tfrac12)$. 
We also consider the set of directed bonds $\vec{\Edge}(\Z^2)$ and $\vec{\Edge}(\Z^2)^\ast$.
For a directed bond $\vec{e}=(x,y)\in\Edge(\Z^2)$ we define its dual bond as the directed bond $\vec{e}^\ast=(\tfrac12(x+y+(x-y)^\perp),\tfrac12 (x+y+(y-x)^\perp)$
where $\perp$ denotes counter-clockwise rotation by $90^\circ$, i.e., the linear map that
satisfies $e_1^\perp=e_2$, $e_2^\perp=-e_1$. In other words, the dual of a directed bond $\vec{e}$ is the bond whose orientation is rotated by $90^\circ$ counter-clockwise and crosses $\vec{e}$.

Every point $x\in \Z^2$ determines a plaquette with corners $z_1,z_2,z_3,z_4\in (\Z^2)^\ast$
where $z_i$ are the four nearest neighbours of $x$ in $(\Z^2)^\ast$ and
the plaquette has faces
$e_1^\ast,e_2^\ast, e_3^\ast, e_4^\ast\in \Edge(\Z^2)^\ast$ where $e_i^\ast$ are the dual bonds of the
four bonds $e_i$ that are incident to $x$. Vice versa every point $z\in (\Z^2)^\ast$ determines a plaquette in $\Z^2$.
We write $\Plaq(\Z^2)$ for the set of plaquettes of $\Z^2$.

 For a bond $e=\{x,y\}$ we define the shifted dual bond $e+w=\{x+w,y+w\}$. Similarly,  we define
$E+w=\{e+w\in \Edge(\Z^2)^\ast\,: \,e\in E\}$ for a set $E\subset \Edge(\Z^2)$.  
For a subgraph $G\subset \Z^2$ we denote
by $\Plaq(G)=\{P\in \Plaq(\Z^2): \text{all faces of $P$ are in $\Edge(G)$}\}$ the plaquettes of $G$. 
A subgraph $G\subset \Z^2$ is called simply connected if the union of all vertices $v\in \Vertex(G)$,
all edges $\{x,y\}\in \Edge(G)$ which are identified with the line segment from $x$ to $y$ in $\R^2$ and
all plaquettes $\Plaq(G)$ is a simply connected subset of $\R^2$.

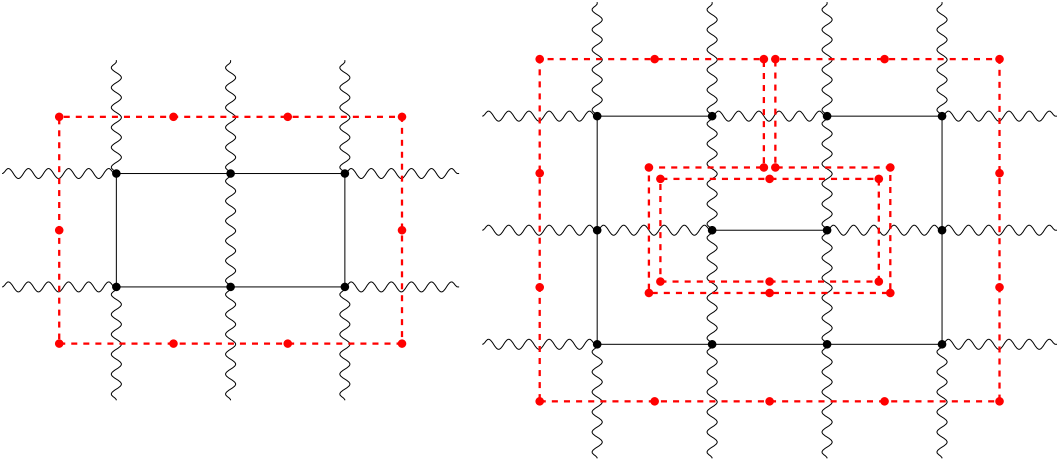
\begin{figure}
\begin{minipage}{0.4\textwidth}
\resizebox{\textwidth}{!}{
\begin{tikzpicture}
\foreach \x in {1,2,3}
{
\draw[wiggly] (2*\x,0) -- (2*\x,2);
\draw[wiggly] (2*\x,4) -- (2*\x,6);
}
\foreach \x in {1,2}
{
\draw[wiggly] (0,2*\x) --(2,2*\x);
\draw[wiggly] (6,2*\x) --(8,2*\x);
}
\draw[wiggly] (4,2) --(4,4);
\foreach \x in {1,2}
{
\draw (2*\x,2) --(2*\x+2,2);
\draw (2*\x,4) --(2*\x+2,4);
}
\draw (2,2) --(2,4);
\draw (6,2) --(6,4);
\foreach \x in {1,2,3}
\foreach \y in {1,2}
{
\draw (2*\x,2*\y) node[circle,fill,inner sep=1.5pt]{};
}
\draw[red, dashed, line width=0.04cm] (1,1)--(3,1)--(5,1)--(7,1)--(7,5)--(1,5)--cycle;
\foreach \x in {1,2,3,4}
{
\draw (2*\x-1,1) node[red,circle,fill,inner sep=1.5pt]{};
\draw (2*\x-1,5) node[red,circle,fill,inner sep=1.5pt]{};
}

\draw (1,3) node[red,circle,fill,inner sep=1.5pt]{};
\draw (7,3) node[red,circle,fill,inner sep=1.5pt]{};
\end{tikzpicture}
}
\end{minipage}
\begin{minipage}{0.5\textwidth}
\resizebox{\textwidth}{!}{
\begin{tikzpicture}
\foreach \x in {2,4,6,8}
\foreach \y in {0,6}
{
\draw[wiggly] (\x,\y)--(\x,\y+2);
}
\foreach \x in {4,6}
\foreach \y in {2,4}
{
\draw[wiggly] (\x,\y)--(\x,\y+2);
}
\draw[wiggly] (2,4)--(4,4);
\draw[wiggly] (6,4)--(8,4);
\draw[wiggly] (4,6)--(6,6);

\foreach \x in {0,8}
\foreach \y in {2,4,6}
{
\draw[wiggly] (\x,\y)--(\x+2,\y);
}
\draw (4,6)--(2,6)--(2,2)--(8,2)--(8,6)--(6,6);
\foreach \x in {2,4,6,8}
\foreach \y in {2,4,6}
{
\draw (\x,\y) node[circle,fill,inner sep=1.5pt]{};
}
\draw (4,4)--(6,4);

\draw[red, dashed, line width=0.04cm] (1,1)--(9,1)--(9,7)--(5.1,7)--(5.1,5.1)--(7.1,5.1)--(7.1,2.9)--(2.9,2.9)--(2.9,5.1)--(4.9,5.1)--(4.9,7)--(1,7)--cycle;
\draw[red, dashed, line width=0.04cm] (3.1,3.1)--(6.9,3.1)--(6.9,4.9)--(3.1,4.9)--cycle;
\draw  (1,1) node[red, circle,fill,inner sep=1.5pt]{}(9,1)node[red, circle,fill,inner sep=1.5pt]{}(9,7)node[red, circle,fill,inner sep=1.5pt]{}(5.1,7)node[red, circle,fill,inner sep=1.5pt]{}(5.1,5.1)node[red, circle,fill,inner sep=1.5pt]{}(7.1,5.1)node[red, circle,fill,inner sep=1.5pt]{}(7.1,2.9)node[red, circle,fill,inner sep=1.5pt]{}(2.9,2.9)node[red, circle,fill,inner sep=1.5pt]{}(2.9,5.1)node[red, circle,fill,inner sep=1.5pt]{}(4.9,5.1)node[red, circle,fill,inner sep=1.5pt]{}(4.9,7)node[red, circle,fill,inner sep=1.5pt]{}(1,7) node[red, circle,fill,inner sep=1.5pt]{};
\draw (3.1,3.1)node[red, circle,fill,inner sep=1.5pt]{}(6.9,3.1)node[red, circle,fill,inner sep=1.5pt]{}(6.9,4.9)node[red, circle,fill,inner sep=1.5pt]{}(3.1,4.9)node[red, circle,fill,inner sep=1.5pt]{}
(5,2.9)node[red, circle,fill,inner sep=1.5pt]{}(5,3.1)node[red, circle,fill,inner sep=1.5pt]{}(5,4.9)node[red, circle,fill,inner sep=1.5pt]{};
\foreach \x in {1,9}
\foreach \y in {3,5}
{
\draw (\x,\y) node[red,circle,fill,inner sep=1.5pt]{};
}
\foreach \x in {3,7}
\foreach \y in {1,7}
{
\draw (\x,\y) node[red,circle,fill,inner sep=1.5pt]{};
}
\draw (5,1) node[red,circle,fill,inner sep=1.5pt]{};
\end{tikzpicture}
}
\end{minipage}
\caption{Examples of $q$-contours. In the second example there are two nested $q$-contours.
Curly bonds indicate soft bonds with $\kappa_e=1$ and straight bonds indicate hard bonds with $\kappa_e=q$. In red the dual bonds of the contours are shown. The horizontal curly bond in the top middle connects
two point in $\Int \,\gamma$.}\label{fig1}
\end{figure}
An important tool in the analysis of planar models from statistical mechanics is the use of contours. Let us provide a notion of contours that is useful for our purposes. Our definition is slightly
more complicated than the definition of contours for the random cluster model.
We consider closed paths $\gamma=(x_1^\ast,\ldots,x_n^\ast,x_1^\ast)$ with $x_i^\ast\in (\Z^2)^\ast$  (not necessarily all distinct)
along pairwise distinct directed dual bonds $\vec{b}_1^\ast=(x_1^\ast,x_2^\ast),\ldots, \vec{b}_n^\ast=(x_n^\ast,x_1^\ast)$.
We denote the vertices in the contour by $\Vertex(\gamma)^\ast=\{ x_i^\ast\, :\, 1\leq i\leq n\}$
and the bonds by $\vec{\Edge}(\gamma)^\ast=\{\vec{b}_i^\ast\,:\, 1\leq i\leq n\}$. Similarly 
we write 
$\vec{\Edge}(\gamma)=\{\vec{b}_i\,:\, 1\leq i\leq n\}$ for the corresponding primal bonds.
We also consider the underlying sets of undirected bonds $\Edge(\gamma)$ and $\Edge(\gamma)^\ast$.
Finally, we denote the heads and tails of $\vec{b}_i$ by $y_i$ and $z_i$, i.e., $\vec{b}_i=(z_i,y_i)$.
\begin{define}\label{def:contour}
A contour $\gamma$ is a closed path in the dual lattice without self-crossings in
the sense that
there is a bounded connected component $\Int(\gamma)$ of the graph $(\Z^2,\Edge(\Z^2)\setminus \Edge(\gamma))$ 
such that $\partial (\Int(\gamma))=\{z_i\,:\, 1\leq i\leq n\}$.
We denote the union of the remaining connected components by $\Ext(\gamma)$ and 
we define the length $|\gamma|$ of the contour as the number of (directed) bonds it contains, i.e.,  $|\gamma|=|\vec{\Edge}(\gamma)|=n$.
%A contour $\gamma$ is a closed path $x_1^\ast,\ldots,x_n^\ast,x_1^\ast\in (\Z^2)^\ast$  (not necessarily all distinct)
%along directed dual bonds $\vec{b}_1^\ast=(x_1^\ast,x_2^\ast),\ldots, \vec{b}_n^\ast=(x_n^\ast,x_1^\ast)$ 
%with the following two properties. 
%The directed bonds $\vec{b}_i$ are pairwise distinct.
%We denote the 
%There is a  bounded simply connected graph 
%$G\subset{\Z^2}$ such that $b_i\notin \Edge(G)$  for all $1\leq i\leq n$ where $b_i$ denote the underlying undirected bond of $\vec{b}_i$ and  $\partial G=\{y_1,\ldots,y_n\}$ where $y_i$ denotes the endpoints of the corresponding primal bonds $\vec{b}_i=(y_i,z_i)$
%of $\vec{b}_i^\ast$.
%We denote by $\Int(\gamma)$ the maximal such subgraph $G$. 
% See Figure \ref{fig1} for an illustration. 
%We denote $\Ext(\gamma)=(\Vertex(\Int(\gamma))^{\crm}, \Edge(\Vertex(\Int(\gamma))^{\crm})$ and 
%we define  $|\gamma|=n$ the length of the contour as the number of (directed) bonds it contains. 
\end{define}
Note that $\Ext(\gamma)$ is not necessarily connected and that $\{x,y\}\in \Edge(\gamma)$ if $x\in \Int(\gamma)$ and $y\in \Ext(\gamma)$ (see Figure \ref{fig1}). 

Contours are a suitable notion to define interfaces between hard and soft bonds.
\begin{define}\label{def:q_contour}
A contour $\gamma$ is a $q$-contour for $\kappa$ if  the following two conditions hold.
First, the  primal bonds $b\in \Edge(\gamma)$ are soft, i.e., $\kappa_{b}=1$.
Moreover, for every plaquette  with center 
$x^\ast \in \Vertex(\gamma)^\ast$  all its faces $b$ such that $b\in \Edge(\Int(\gamma))$  are hard, i.e., satisfy $\kappa_b=q$.
%are vertical to an $e_i$ endpoints $v_1, \dots$ of $e_1,\ldots, e_n$ 
%in the interior of 
%$\gamma$ are   only endpoints of hard $q$-bonds for $\kappa$ except for $e_1,\ldots, e_n$.
%Similarly we define a $1$-contour by interchanging the roles of $q$ and $1$.
\end{define}
Our goal is to show that $q$-contours are unlikely for large values of $q$ and $p\leq p_{\sd}$.
We now fix a contour $\gamma$ and introduce some useful notation and  helpful observations for the proof of the following theorem. We use the shorthand $G_i=\Int(\gamma)$ and  $E_i=\Edge(\Int(\gamma))$.
We observe that $G_i$ is simply connected because $\gamma$ is connected and without self-crossings.
Therefore the faces of $G_i$ consist of plaquettes in $\Z^2$  and one infinite face. 
We also consider the graph $G$ with edges $E=E_i\cup \Edge(\gamma)$ and endpoints of edges as vertices. Let $\bar{1}\in \{1,q\}^{E}$ denote the configuration given by $\bar{1}_e=1$ for all $e\in E$. 
 We write $G^w=G/\partial G=G/ \Ext(\gamma)$ for the graph $G$ with wired boundary conditions.
Moreover we introduce the graph $H^\ast$ with edges $E_i^\ast$ and their endpoints as vertices.
We claim that $H^\ast/\partial H^\ast$ agrees with the graph theoretic dual of $G_i$.
To show this we need to prove that we identify all vertices that lie in the same face of $G_i$.
First we note that every point in $\overcirc{H}^\ast=H^\ast\setminus \partial H^\ast$ 
determines a plaquette in $\mathbf{P}(G_i)$ and this is a bijection.
Then it remains to show that all vertices in $\partial H^\ast$ lie in the infinite face of $G_i$.
This follows from the observation
\begin{align}\label{eq:obs1}
\partial H^\ast = \Vertex(\gamma)^\ast\cap \Vertex(H^\ast).
\end{align}
To show the observation we note that if $x^\ast\in \partial H^\ast$ then 
there are edges $e^\ast_1 \notin \Edge(H^\ast)$ and $e^\ast_2\in \Edge(H^\ast)$ incident to $x^\ast$.
This implies that there is a face $e'=\{z_1,z_2\}$
of the plaquette with center $x^\ast$ such that $z_1\in \Vertex(G_i)$ but $e'\notin \Edge(G_i)$.
Then $e\in \Edge(\gamma)$ and therefore $x^\ast\in \Vertex(\gamma)^\ast$ because $x^\ast$ is an endpoint of $e^\ast\in \Edge(\gamma)^\ast$. This ends the proof of the inclusion '$\subset$'.
Now we note that if $x^\ast\in  \Vertex (H^\ast)\cap \Vertex(\gamma)^\ast$ 
there is an edge $e^\ast\in \Edge(\gamma)^\ast$ incident to $x^\ast$ which
is not contained in $\Edge(H^\ast)$ and therefore $x^\ast\in \partial H^\ast$.
 
 Finally we remark that if $\gamma$ is a $q$-contour for $\kappa$  then
\begin{align}\label{eq:observation_partialH}
\kappa^\ast_{e^\ast}=1+q-\kappa_e=1 \qquad \text{if $e^\ast\in \Edge(H^\ast)$ is incident to $ \partial H^\ast.$} 
\end{align}  
  Indeed, we argued above that if $e^\ast\in \Edge(H^\ast)$ is incident to $ \partial H^\ast$ then 
  $x^\ast\in \Vertex(\gamma)^\ast$. Thus $e\in E_i$ is a face of the plaquette
  with center $x^\ast$ so that the definition of $q$-contours implies that $\kappa_{e}=q$.  
 
% Finally we remark that if $e\in \Edge(\gamma)$ and $(e-w)^\ast\in \Edge(G_i)$ and $\gamma$ is a $q$-contour for $\kappa$ then 
% \begin{align}\label{eq:observation_contour}
% \kappa_{(e-w)^\ast}=q.
% \end{align}
%  This can be seen as follows.
% $e^\ast$ and $e-w$ have a common  endpoint, say $x^\ast$. By assumption $(e-w)^\ast \in \Edge(G_i)$ thus $(e-w)^\ast\notin \Edge(\gamma)$. 
% Then $x^\ast\in \Vertex(\gamma)^\ast$ and the definition of $q$-contours imply that $\kappa_{(e-w)^\ast}=q$. 
\begin{theorem}\label{th:contour}
Let $\gamma$ be a contour. The probability that $\gamma$ is
a $q$-contour under the measure $\P^{G^w, E_i,\bar{1}}$
for $p=p_{\sd}$ is bounded by
\begin{align}\label{eq:contour_probability}
\P^{G^w,E_i,\bar{1}}(\text{$\gamma$ is a $q$-contour})\leq \left(\frac{4}{q^{\frac18}}\right)^{|\gamma|} q^{\frac{1}{2}}.
\end{align} 
\end{theorem}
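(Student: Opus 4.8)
The plan is to write the probability in \eqref{eq:contour_probability} as a ratio of two partition functions and to estimate it by a Peierls-type flipping argument, in which the self-dual relation \eqref{eq:self_dual_point} supplies the energetic gain per contour edge while the combinatorics of the flip produce the factor $4^{|\gamma|}$. First I would unwind the definitions. Since the bonds of $\Edge(\gamma)$ are frozen to the soft value by the boundary condition $\bar1$, condition (1) of Definition~\ref{def:q_contour} holds automatically, and the event ``$\gamma$ is a $q$-contour'' reduces to $\{\k_b=q\text{ for all }b\in B\}$, where $B\subset E_i$ is the set of those faces of the plaquettes centred at the vertices $\Vertex(\gamma)^\ast$ that lie in $\Int(\gamma)$; inspecting the contour geometry, each $x^\ast\in\Vertex(\gamma)^\ast$ contributes a bounded number of such faces, so $|B|$ is of order $|\gamma|$. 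Writing $W(\k)=p^{h(\k)}(1-p)^{s(\k)}/\sqrt{\det\Delta^{G^w}_{(\bar1,\k)}}$ for the unnormalised weight coming from \eqref{eq:def_boundary_condition}, the quantity to be bounded is $N/Z$ with $N=\sum_{\k:\,\k_B=q}W(\k)$ and $Z=\sum_{\k}W(\k)$.

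Next I would pass to the dual. Using Kirchhoff's formula \eqref{eq:Kirchhoff} in the form $\det\Delta_\k=|G|\sum_{\t\in\ST(G)}q^{h(\k,\t)}$ together with the tree involution $\t\mapsto\t^{\d}$ and the bookkeeping identities \eqref{eq:duality_h_s} and \eqref{eq:duality_h_s_s}, the weight $W(\k)$ can be re-expressed through the dual configuration $\k^\ast$ on $E_i^\ast=\Edge(H^\ast)$; this is exactly the self-dual computation \eqref{eq:proba_duality}, which at $p=p_{\sd}$, where $p_{\sd}/(1-p_{\sd})=q^{1/4}$ by \eqref{eq:self_dual_point}, maps the measure to one of the same form. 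Under this transformation the constraint $\k_B=q$ becomes the requirement, recorded in \eqref{eq:observation_partialH}, that every bond of $\Edge(H^\ast)$ incident to $\partial H^\ast$ be soft, and $H^\ast/\partial H^\ast$ is the graph-theoretic dual of $G_i$ by the discussion around \eqref{eq:obs1}. This is the step that turns the coexistence of a soft ring $\Edge(\gamma)$ and an adjacent hard ring $B$ into a clean boundary constraint on the dual side.

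Then comes the main estimate. I would define a map that ``erases'' the contour, flipping the constrained ring of bonds and contracting $\Int(\gamma)$ onto its boundary, and compare weights configuration by configuration. Each flipped bond contributes a Bernoulli factor $(1-p)/p=q^{-1/4}$ and, through the square root in $W$, a factor $\sqrt{\det\Delta_{\k^+}/\det\Delta_{\k^-}}=\sqrt{1+(q-1)\Q_{\k^-}(f\in\t)}$ coming from \eqref{eq:quotient_det}; the self-dual balance is precisely what makes the product of these two contributions along the ring collapse to $q^{-1/8}$ per contour edge. The comparison inequalities of Lemma~\ref{le:det_incl}, i.e. the monotonicity of the determinant ratios under passing to subgraphs and under contraction, are what allow me to replace the global determinant by a product of such controlled local ratios so that the $\sqrt{\det}$ factor telescopes. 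Since the erasure map is not injective, each image is reached by at most $4^{|\gamma|}$ configurations, the analogue of the multiplicity $2^{|E_n\setminus E_{n-1}|}$ in the proof of Lemma~\ref{le:monotonicity_in_p}, and this produces the entropic factor $4^{|\gamma|}$; a single discrepancy from the wiring of the boundary yields the residual $q^{1/2}$.

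The main obstacle is exactly the control of the determinant ratios: $\det\Delta_\k$ is an infinite-range functional of $\k$, so flipping the ring of bonds changes it non-locally, and it is not a priori clear that the resulting $\sqrt{\det}$ factors combine with the Bernoulli weights to give the clean $q^{-1/8}$ rather than a weaker power. The resolution is to perform the flip on the dual side, where by \eqref{eq:proba_duality} the determinant factor and the Bernoulli factor are governed by the same self-dual relation, and then to invoke Lemma~\ref{le:det_incl} to guarantee that localising the determinant only helps the inequality. I expect the careful bookkeeping of which bonds of $B$ and $\Edge(\gamma)$ are flipped, together with the verification that the multiplicity of the erasure map is at most $4^{|\gamma|}$, to be the most delicate part of the write-up.
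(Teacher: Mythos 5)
Your overall strategy (a Peierls estimate at the self-dual point, implemented by dualising the interior of the contour and comparing weights via Kirchhoff's formula) is the right one, and several ingredients you name — the reduction of the event to hardness of the interior faces, the identification of $H^\ast/\partial H^\ast$ with the dual of $G_i$, the identities \eqref{eq:duality_h_s_s} — are exactly what the paper uses. But the central quantitative step is proposed in a way that does not work. You want to extract the gain $q^{-|\gamma|/8}$ by flipping bonds one at a time and ``telescoping'' the local determinant ratios $\sqrt{\det\Delta_{\kappa^+}/\det\Delta_{\kappa^-}}=\sqrt{1+(q-1)\Q_{\kappa^-}(f\in\t)}$ from \eqref{eq:quotient_det}, controlled by the monotonicity of Lemma \ref{le:det_incl}. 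At $p=p_{\sd}$ a single flip from $q$ to $1$ multiplies the unnormalised weight by a factor lying anywhere in $[q^{-1/4},q^{1/4}]$ (the Bernoulli factor contributes $q^{-1/4}$, the determinant ratio anything between $1$ and $q^{1/2}$ under the square root), so there is no per-bond gain to telescope; Lemma \ref{le:det_incl} only orders these ratios across graphs, it does not pin them down. The actual proof replaces this by a \emph{global} computation: a map on spanning trees of $G^w$ built from planar tree duality on the interior, which yields the exact identity $w(\kappa^\#,\t^\#)/w(\kappa,\Psi(\t^\#))=q^{\,s(\kappa,E_i)-|\Vertex(G_i)|+1}$ (see \eqref{eq:h_and_s_for_dual_trees}--\eqref{eq:weight_compare_prov}), after which the planar Euler-type relation $4|\Vertex(G_i)|-2|E_i|=|\gamma|$ converts the resulting exponent $\tfrac14(|E_i|-2|\Vertex(G_i)|+2)$ into $-\tfrac18|\gamma|+\tfrac12$. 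This geometric counting step is absent from your plan and is precisely where the exponent $1/8$ comes from.

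A second, related misattribution: you locate the entropic factor $4^{|\gamma|}$ in the non-injectivity of the configuration ``erasure'' map. In the paper the configuration map $\Phi$ (interior dualisation) is \emph{injective} — that is what makes the final ratio bound \eqref{eq:probability_pattern4} legitimate — and the factor $4^{|\gamma|}=2^{2|\gamma|}$ arises as the square root of the preimage bound $2^{4|\gamma|}$ for the spanning-tree map $\Psi$ in \eqref{eq:number_preimages_tree}, because that multiplicity enters inside $\sqrt{\det\Delta}$. If your configuration map really had $4^{|\gamma|}$ preimages you could not conclude by summing probabilities over $\Omega_\gamma$ as in the final step, since the images would no longer be distinct. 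So the two places where powers of $2$ and of $q$ per contour edge must be harvested are both assigned to the wrong objects in your outline; repairing this essentially forces you into the paper's two-level construction (an injective map on configurations together with a controlled-multiplicity map on spanning trees).
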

\begin{remark}
The general idea of the proof is the same as when proving similar estimates for the Ising model.
One tries to find a map from configurations where the contour is present to configurations where this is not the case and then estimates the corresponding probabilities.
The more similar argument for the random cluster model can be found, e.g., in Theorem 6.35 in \cite{MR2243761}. For an  illustrated version see \cite{duminil2017lectures}.
\end{remark}
\begin{proof}
 
\begin{figure}
\begin{minipage}{0.48\textwidth}
\centering
\resizebox{0.6\textwidth}{!}{

\begin{tikzpicture}

\draw[dashed] (1,1)--(7,1)--(7,7)--(3,7)--(3,5)--(1,5)--cycle;

\draw[wiggly] (2,0)--(2,2);

\draw[wiggly] (4,0)--(4,2);
\draw[wiggly] (6,0)--(6,2);
\draw[wiggly] (0,2)--(2,2);
\draw[wiggly] (6,2)--(8,2);
\draw[wiggly] (0,4)--(2,4);
\draw[wiggly] (2,6)--(4,6);
\draw[wiggly] (6,4)--(8,4);
\draw[wiggly] (6,6)--(8,6);
\draw[wiggly] (4,6)--(4,8);
\draw[wiggly] (6,6)--(6,8);
\draw[wiggly] (2,4)--(2,6);

\draw[wiggly] (4,2)--(4,4);
\draw[wiggly] (4,4)--(6,4);

\draw (2,2 )--(6,2);
\draw (2,2 )--(2,4);
\draw (2,4 )--(4,4)--(4,6)--(6,6)--(6,2);

\draw[red] (3,3)--(5,3)--(5,5);
\draw[red,wiggly] (3,3)--(3,5)--(5,5)--(5,7) (5,5)--(7,5) (1,3)--(3,3) (3,1)--(3,3);
\draw[red,wiggly] (5,3)--(7,3);
\draw[red,wiggly] (5,1)--(5,3);
\draw  (5,5) node[red,circle,fill,inner sep=2.5pt]{};
\draw  (3,5) node[red,circle,fill,inner sep=2.5pt]{};
\draw  (3,3) node[red,circle,fill,inner sep=2.5pt]{};
\draw  (5,3) node[red,circle,fill,inner sep=2.5pt]{};

\draw  (2,2) node[circle,fill,inner sep=2.5pt]{};
\draw  (2,4) node[circle,fill,inner sep=2.5pt]{};
\draw  (6,2) node[circle,fill,inner sep=2.5pt]{};
\draw  (4,2) node[circle,fill,inner sep=2.5pt]{};
\draw  (4,4) node[circle,fill,inner sep=2.5pt]{};
\draw  (4,6) node[circle,fill,inner sep=2.5pt]{};
\draw  (6,4) node[circle,fill,inner sep=2.5pt]{};
\draw  (6,6) node[circle,fill,inner sep=2.5pt]{};

\end{tikzpicture}

}
\end{minipage}
\begin{minipage}{0.48\textwidth}
\centering
\resizebox{0.6\textwidth}{!}{

\begin{tikzpicture}

\draw[wiggly,blue] (2,4)--(4,4)--(4,2) (6,2)--(6,4)--(8,4) (4,4)--(4,6)--(6,6) (6,8)--(6,6)--(8,6);
\draw[blue]  (6,6)--(6,4)--(4,4);

\draw[wiggly] (4,0)--(4,2);
\draw[wiggly] (6,0)--(6,2);
\draw[wiggly] (0,2)--(2,2);
\draw[wiggly] (6,2)--(8,2);
\draw[wiggly] (0,4)--(2,4);
\draw[wiggly] (2,6)--(4,6);
\draw[wiggly] (4,6)--(4,8);
\draw[wiggly] (2,4)--(2,6);
\draw[wiggly] (2,0)--(2,2);

\draw[wiggly] (6,2)--(2,2 )--(2,4);

\draw  (2,2) node[circle,fill,inner sep=2.5pt]{};
\draw  (2,4) node[circle,fill,inner sep=2.5pt]{};
\draw  (6,2) node[circle,fill,inner sep=2.5pt]{};
\draw  (4,2) node[circle,fill,inner sep=2.5pt]{};
\draw  (4,4) node[blue,circle,fill,inner sep=2.5pt]{};
\draw  (4,6) node[circle,fill,inner sep=2.5pt]{};
\draw  (6,4) node[blue,circle,fill,inner sep=2.5pt]{};
\draw  (6,6) node[blue,circle,fill,inner sep=2.5pt]{};

\end{tikzpicture}

}
\end{minipage}
\caption{(left) The dashed line indicates a $q$-contour for the depicted configuration. In red the dual configuration on $E^\ast$  for the edges $E$ is shown.
%\newline
(right) The configuration $\kappa^\#$ for $\kappa$ depicted on the left. The blue edges are the 
shifted dual edges forming the set $\tilde{E}$. }
\label{fig2}
\end{figure}
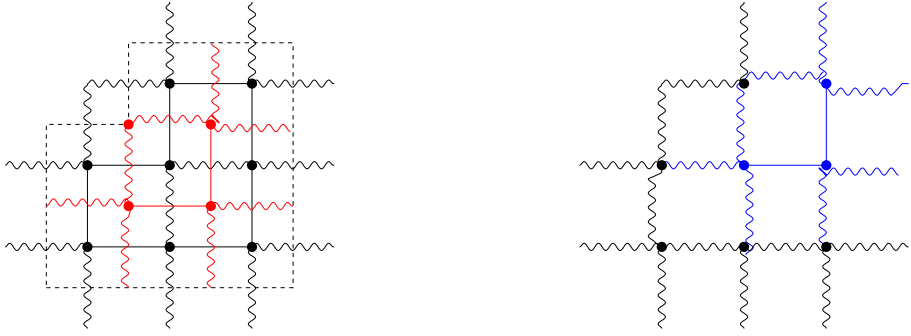
 We denote the set of all $\kappa\in \{1,q\}^{E}$ such that $\gamma$ is a $q$-contour for $\kappa$
by $\Omega_\gamma$.

\textbf{Step 1.} We define a map $\Phi:\Omega_\gamma\to \{1,q\}^{E}$ with $\Phi(\kappa)=\kappa^\#$ as follows.
Recall the definition of the dual configuration $\kappa^\ast$ on $E^\ast\subset \Edge(\Z^2)^\ast$ 
%(here we denote by $E^\ast$ the dual bonds of $E$ in $(\Z^d)^\ast$)
 and define for $e\in E$
\begin{align}
\kappa^{\#}_e=\begin{cases}
\kappa^\ast_{e-w} \quad&\text{if $e-w\in {E_i}^\ast$,}
\\
1\quad &\text{otherwise}.
\end{cases} 
\end{align}
We claim that 
\begin{align}\label{eq:kappa_sharp_bc1}
\kappa^{\#}_e=1 \qquad \text{if $e\in E\setminus E_i$}.
\end{align}
By definition of $\kappa^{\#}$, we only need to consider the case $e-w\in {E_i}^\ast=\Edge(H^\ast)$. 
We will show a slightly more general statement.
Let us introduce the set 
$\tilde{E}=\Edge(H^\ast)+w=E_i^\ast+w\subset E$ and the graph $\tilde G $ consisting of the edges $\tilde E$ and their endpoints as vertices.   See Figure \ref{fig2} for an illustration of this construction.
We remark that $\tilde{G}$ agrees with $H^\ast$ shifted by $w$, which we denote by $\tilde{G}=H^\ast+w$.
Equation  \eqref{eq:observation_partialH} implies
that 
\begin{align}\label{eq:kappa_sharp_bc2}
\kappa^{\#}_e=\kappa^\ast_{e-w}=1\qquad \text{for $e\in \tilde{G}$ incident to $\partial \tilde{G}$}
\end{align}
because then $e-w\in \Edge(H^\ast)$ is incident to $\partial H^\ast$.
It remains to show that all edges  $e\in E\cap \tilde{E}\setminus E_i$
are incident to $\partial\tilde{G}$. 
From $e\in E\setminus E_i$ we conclude that $e\in \Edge(\gamma)$.
The edge $e-w$ has a common endpoint with $e^\ast\in \Edge(\gamma)^\ast$ and is therefore incident to $\Vertex(\gamma)^\ast$ in this case.
Using the observation \eqref{eq:obs1} this implies that $e-w\in \Edge(H^\ast)$ is incident to $\partial H^\ast$.

 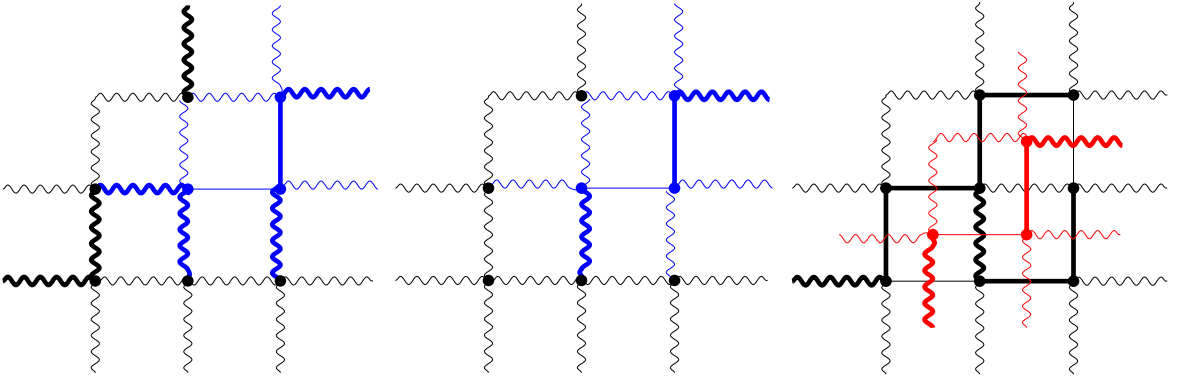
\begin{figure}
\begin{minipage}{0.33\textwidth}
\resizebox{\textwidth}{!}{

\begin{tikzpicture}

\draw[wiggly,blue] (4,6)--(6,6)  (6,4)--(8,4) (4,4)--(4,6)  (6,8)--(6,6) ;

\draw[wiggly,blue, line width=\thick](2,4)--(4,4) (6,2)--(6,4) (6,6)--(8,6) (4,4)--(4,2) ;

\draw[blue]  (6,4)--(4,4);
\draw[blue, line width=\thick]  (6,6)--(6,4);

\draw[wiggly] (6,0)--(6,2);
\draw[wiggly] (6,2)--(8,2);
\draw[wiggly] (0,4)--(2,4);
\draw[wiggly] (2,6)--(4,6);
\draw[wiggly] (2,4)--(2,6);
\draw[wiggly] (2,0)--(2,2);
\draw[wiggly] (4,0)--(4,2);

\draw[wiggly, line width=\thick] (0,2)--(2,2);
\draw[wiggly, line width=\thick] (4,6)--(4,8);

\draw[wiggly] (6,2)--(2,2 );

\draw[wiggly, line width=\thick] (2,2)--(2,4);

\draw  (2,2) node[circle,fill,inner sep=2.5pt]{};
\draw  (2,4) node[circle,fill,inner sep=2.5pt]{};
\draw  (6,2) node[circle,fill,inner sep=2.5pt]{};
\draw  (4,2) node[circle,fill,inner sep=2.5pt]{};
\draw  (4,4) node[blue,circle,fill,inner sep=2.5pt]{};
\draw  (4,6) node[circle,fill,inner sep=2.5pt]{};
\draw  (6,4) node[blue,circle,fill,inner sep=2.5pt]{};
\draw  (6,6) node[blue,circle,fill,inner sep=2.5pt]{};

\end{tikzpicture}

}
\end{minipage}
\begin{minipage}{0.33\textwidth}
\resizebox{\textwidth}{!}{

\begin{tikzpicture}

\draw[wiggly,blue] (4,6)--(6,6)  (6,4)--(8,4) (6,2)--(6,4) (4,4)--(4,6)  (6,8)--(6,6) (2,4)--(4,4) ;

\draw[wiggly,blue, line width=\thick] (6,6)--(8,6) (4,4)--(4,2) ;

\draw[blue]  (6,4)--(4,4);
\draw[blue, line width=\thick]  (6,6)--(6,4);

\draw[wiggly] (6,0)--(6,2);
\draw[wiggly] (6,2)--(8,2);
\draw[wiggly] (0,4)--(2,4);
\draw[wiggly] (2,6)--(4,6);
\draw[wiggly] (2,4)--(2,6);
\draw[wiggly] (2,0)--(2,2);
\draw[wiggly] (4,0)--(4,2);

\draw[wiggly] (0,2)--(2,2);
\draw[wiggly] (4,6)--(4,8);

\draw[wiggly] (6,2)--(2,2 );

\draw[wiggly] (2,2)--(2,4);

\draw  (2,2) node[circle,fill,inner sep=2.5pt]{};
\draw  (2,4) node[circle,fill,inner sep=2.5pt]{};
\draw  (6,2) node[circle,fill,inner sep=2.5pt]{};
\draw  (4,2) node[circle,fill,inner sep=2.5pt]{};
\draw  (4,4) node[blue,circle,fill,inner sep=2.5pt]{};
\draw  (4,6) node[circle,fill,inner sep=2.5pt]{};
\draw  (6,4) node[blue,circle,fill,inner sep=2.5pt]{};
\draw  (6,6) node[blue,circle,fill,inner sep=2.5pt]{};

\end{tikzpicture}

}
\end{minipage}
\begin{minipage}{0.33\textwidth}
\resizebox{\textwidth}{!}{
\begin{tikzpicture}

\draw[wiggly] (2,0)--(2,2);

\draw[wiggly] (4,0)--(4,2);
\draw[wiggly] (6,0)--(6,2);
\draw[wiggly] (6,2)--(8,2);
\draw[wiggly] (0,4)--(2,4);
\draw[wiggly] (2,6)--(4,6);
\draw[wiggly] (6,4)--(8,4);
\draw[wiggly] (6,6)--(8,6);
\draw[wiggly] (4,6)--(4,8);
\draw[wiggly] (6,6)--(6,8);
\draw[wiggly] (2,4)--(2,6);

\draw[wiggly, line width=\thick] (0,2)--(2,2);

\draw[wiggly, line width=\thick] (4,2)--(4,4);
\draw[wiggly] (4,4)--(6,4);

\draw[line width=\thick] (4,2 )--(6,2) (2,2)--(2,4);
\draw (2,2 )--(4,2);

\draw (6,6)--(6,4);
\draw[line width=\thick] (2,4 )--(4,4)--(4,6)--(6,6) (6,4)--(6,2);

\draw[red] (3,3)--(5,3);
\draw[red, line width=\thick] (5,3)--(5,5);

\draw[red,wiggly] (3,3)--(3,5) (3,5)--(5,5) (5,5)--(5,7) (1,3)--(3,3);
\draw[red,wiggly] (5,3)--(7,3);
\draw[red,wiggly] (5,1)--(5,3);

\draw[red, wiggly, line width=\thick]		(5,5)--(7,5)  (3,1)--(3,3);

\draw  (5,5) node[red,circle,fill,inner sep=2.5pt]{};
%\draw  (3,5) node[red,circle,fill,inner sep=2.5pt]{};
\draw  (3,3) node[red,circle,fill,inner sep=2.5pt]{};
\draw  (5,3) node[red,circle,fill,inner sep=2.5pt]{};

\draw  (2,2) node[circle,fill,inner sep=2.5pt]{};
\draw  (2,4) node[circle,fill,inner sep=2.5pt]{};
\draw  (6,2) node[circle,fill,inner sep=2.5pt]{};
\draw  (4,2) node[circle,fill,inner sep=2.5pt]{};
\draw  (4,4) node[circle,fill,inner sep=2.5pt]{};
\draw  (4,6) node[circle,fill,inner sep=2.5pt]{};
\draw  (6,4) node[circle,fill,inner sep=2.5pt]{};
\draw  (6,6) node[circle,fill,inner sep=2.5pt]{};

\end{tikzpicture}

}
\end{minipage}
\caption{(left) An example of a wired tree $t^\#$ for $\kappa^\#$.
%\newline
(center) The subtree $\tilde t$.
%\newline
(right) The shifted tree $\tilde t-w$ and the dual tree 
$\Psi(t^\#)$.
%\newline
%(right) The configuration $\kappa^\#$ for $\kappa$ depicted on the left. The blue edges are the 
%shifted dual edges forming the set $\tilde{E}$. 
}\label{fig3}
\end{figure}
 
 Our  goal is to compare the probabilities of $\P^{G^w,E_i,\bar{1}}(\kappa_{E_i})$  and 
$\P^{G^w,E_i,\bar{1}}(\kappa^{\#}_{E_i})$.
To achieve this we use a strategy similar to the proof of Lemma \ref{le:monotonicity_in_p}.

\textbf{Step 2.}
We define a map $\Psi:\ST(G^w)\to \ST(G^w)$ with $\Psi(\t^{\#})= \t$ in the following steps
\begin{enumerate}
\item We choose deterministically a subset $\tilde \t\subset \t^{\#}{\restriction}_{\tilde{E}}$
such that $\tilde \t$ is a spanning tree on $\tilde{G}/\partial\tilde G$
and  all edges in $\t^\#{\restriction}_{\tilde{E}}\setminus \tilde \t$ are incident to $\partial\tilde{G}$.
\item We set $\Psi(\t^\#){\restriction}_{E_i}=\{e\in E_i\, :\, e^\ast\notin \tilde \t-w\}= (\tilde{\t}-w)^d$
(as a subset of $E_i^\ast$).
\item We consider a fixed $b\in \Edge(\gamma)$ that is incident
to $\Int(\gamma)$ and $\Ext(\gamma)$ and we define  $\t=\Psi(\t^\#)=\Psi(\t^\#){\restriction}_{E_i}\cup b$
\end{enumerate}
 See Figure \ref{fig3} for an illustration of the construction.
 
We have to show that this construction is possible, in particular that $t\in \ST(G^w)$.
We start with the first step. 
%In the first step the boundary $\partial\tilde G$ is defined as the boundary of a subgraph of $\Z^2$.
The relation $\tilde{G}\subset G$ implies
\begin{align}\label{eq:boundary_incl}
\partial G\cap \tilde{G}\subset \partial\tilde{G}.
\end{align}
Hence $\tilde{G}/\partial\tilde{G}$ agrees with $(G/\partial G)/({\tilde{G}}^{\crm}\cup \partial\tilde{G})$ up to self loops. This implies that  $\t^{\#}{\restriction}_{\tilde{E}}$ is spanning in $\tilde G/\partial \tilde G$ if $\t^{\#}\in \ST(G^w)$.
We consider the subset $\t'\subset \t^{\#}{\restriction}_{\tilde{E}}$
consisting of all edges $e\in  \t^{\#}{\restriction}_{\tilde{E}}$ that are not incident to $\partial \tilde G$.
 The set $\t'$ contains no cycles because  $\t^{\#}\in \ST(G^w)$ and
no edge in $\t'$ is incident to $\partial G$ by \eqref{eq:boundary_incl}.
Therefore we can select a spanning tree $\tilde \t$ in $\tilde{G}/\partial{\tilde{G}}$ with $\t'\subset \tilde \t\subset
\t^{\#}{\restriction}_{\tilde{E}}$ deterministically, e.g., using Kruskal's algorithm.

We now argue that the second and third step yield a spanning tree in $G^w$.
Clearly it is sufficient to show that $\Psi(\t^\#){\restriction}_{E_i}\in \ST(G_i)$.
 We note that the relation between $\tilde{G}$ and $H^\ast$ implies
 that $\tilde{\t}-w$ is a spanning tree on $H^\ast/\partial H^\ast$. 
 As shown before the theorem $H^\ast/\partial H^\ast$ agrees with the dual 
 of $G_i$ and thus $(\tilde{\t}-w)^d \in \ST(G_i)$.
%By definition of a contour $G'=(V',E')$ is simply connected as a subset of $\Z^2$ 
%meaning that the complement is connected. We consider the
%graph $H=(V'^\ast,E'^\ast)\subset ((\Z^2)^\ast,\Edge(\Z^2)^\ast)$ consisting of the dual edges of $E'$
%and their endpoints.
%We claim that $H/\partial H$ (where $\partial H$ denotes the boundary as a subgraph of $(\Z^2)^\ast$) is the planar dual of $G'$.
%Indeed, since $G'$ is simply connected each point of $\partial H$ corresponds to the infinite face of $G'$.
%The graphs $H$ and $\tilde{G}$ are isomorphic and mapped to each other by a shift of $w$.
%Therefore $\tilde t-w$ defines a spanning tree
%in $H/\partial H$ and its dual set is a spanning tree on $G'$, i.e., $\Psi(t^\#){\restriction}_{E'}\in \ST(G')$.

 \textbf{Step 3.}
The next step is to consider $\kappa^\#=\Phi(\kappa)$ and $\t=\Psi(\t^\#)$ and compare the weights 
 $w(\kappa^\#,\t^\#)$ and $w(\kappa,\t)$.
First we argue that 
\begin{align}
w(\kappa^\#,\t^\#)=w(\kappa^\#,\tilde{\t}).
\end{align}
Since $\tilde{\t}\subset \t^\#$ it is sufficient to show that $\t^\#\setminus \tilde \t$ contains
only edges $e$ such that $\kappa^\#_e=1$.  
 Indeed, let $e$ be an edge in $\t^\#\setminus \tilde \t$.
 For $e\notin \tilde E$ we have $\kappa^\#_e=1$ by definition.
 Let us now consider 
 \begin{align}
 e\in \tilde E\cap (\t^\#\setminus \tilde \t).
 \end{align}
 By construction of $\tilde \t$ the edge  $e$ is incident to a vertex $v\in \partial \tilde{G}$.
 This implies that $e-w\in \Edge(H^\ast)$ is incident to $v-w\in \partial H^\ast\subset \Vertex(\gamma)^\ast$. Using \eqref{eq:observation_partialH} we conclude that
 \begin{align}
 \kappa^{\#}_e=\kappa^\ast_{e-w}=1.
 \end{align}
% By definition of $q$-contours we conclude that $\kappa_e=1$.
 
% By definition of $\tilde{G}$ there is an edge $f\in E'$ such that $e-w=f^\ast$ 
% and $f^\ast$ is incident to the boundary point $v-w$ of $E'^\ast\subset (\Z^2)^\ast$. 
% Since $v-w$ is a boundary point of $E'^\ast$ we conclude that
% it is contained in the contour $\gamma$. Now $\kappa\in \Omega_\gamma$ 
% implies that $\kappa_f=q$ because $f\in E'$ and $f$ is a face of the plaquette determined by $v-w$.
% Therefore $\kappa^\#_e=\kappa^\ast_{f}=1$.

For the trees $\tilde{\t}$ and $\Psi( \t^\#){\restriction}_E$ we can apply 
the usual duality relations stated before.
Using \eqref{eq:duality_h_s_s} and as before $\kappa^\#=\Phi(\kappa)$ and $\t=\Psi(\t^\#)$ we obtain
\begin{align}\label{eq:h_and_s_for_dual_trees}
h(\kappa^\#, \t^\#)=h(\kappa^\#, \tilde \t)
=h(\kappa^\ast, \tilde \t-w)=s(\kappa, E_i)-s(\kappa,E_i\cap \t).
\end{align}
We compute
\begin{align}\label{eq:weight_compare_prov}
\frac{w(\kappa^\#, \t^\#)}{w(\kappa,\t)}
=\frac{q^{h(\kappa^\#,\t^\#)}}{q^{h(\kappa,\t)}}
=q^{s(\kappa,E_i)-s(\kappa,\t\cap{E_i})-h(\kappa,\t\cap E_i)}
=q^{s(\kappa,E_i)-|\t\cap E_i|}
=q^{s(\kappa,E_i)-|\Vertex(G_i)|+1}.
\end{align}
In the last step we used that $\t\cap E_i$ is a free spanning tree on $G_i$
and therefore has $|\Vertex(G_i)|-1$ edges.
%We conclude that
%\begin{align}
%\sum_{t^\#\in \ST(G_w')}w(\kappa^\#,t^\#)
%\leq 
%\sum_{t^\#\in \ST(G'_w)} w(\kappa, \Phi(t^\#)) q^{ s(\kappa,E)-|V(G)|+1}.
%\end{align}

\textbf{Step 4.} We bound the number of preimages of a tree $\t$ under $\Psi$.
Note that $\Psi$ factorizes into two maps $\t^\#\to \tilde \t\to \t$. The second map is injective since we only pass to the dual tree which is an injective map and we add one additional edge. 
For the first map we observe that we only delete edges $e$ incident to $\partial \tilde G$.
However, for $x\in \partial\tilde G$ the point $x-w\in \partial H^\ast$ is contained in the contour
by  \eqref{eq:obs1}.
%(see also Figure \ref{fig2}).
Therefore there are at most $4|\gamma|$ such edges.
% (one face of each plaquette is by definition part of the contour and not in $E$).
We conclude that
\begin{align}\label{eq:number_preimages_tree}
|\{\t^\#\in \ST(G^w) : \Psi(\t^\#)=\t\}| \leq 2^{4|\gamma|}
\end{align} 
for every $\t\in \ST(G^w)$.
The displays \eqref{eq:weight_compare_prov} and \eqref{eq:number_preimages_tree}
imply
\begin{align}\label{eq:comparison_sum_spanning}
\sum_{\t^\#\in \ST(G^w)}\hspace{-0.2cm}w(\kappa^\#,\t^\#)
\leq \!\!\!
\sum_{\t^\#\in \ST(G^w)}\hspace{-0.2cm} w(\kappa, \Psi(\t^\#)) q^{ s(\kappa,E_i)-|\Vertex(G_i)|+1}
\leq q^{ s(\kappa,E_i)-|\Vertex(G_i)|+1} 2^{4|\gamma|}\!\!
\!\sum_{\t\in \ST(G^w)}\hspace{-0.2cm} w(\kappa, \t).
\end{align}

\textbf{Step 5.} We can now estimate the probabilities of the patterns $\kappa$ and $\kappa^\#=\Psi(\kappa)$ under $\P^{G^w,E_i,\bar{1}}$ using \eqref{eq:comparison_sum_spanning} 
and $\kappa^\#_e=1=\bar{1}_e$ for $e\in E\setminus E_i$
\begin{align}
\begin{split}\label{eq:probability_pattern}
\frac{\P^{G^w,E_i,\bar{1}}_{p_{\sd}}(\kappa_{E_i})}{\P^{G^w,E_i,\bar{1}}_{p_{\sd}}(\kappa^\#_{E_i})}
&=
\frac{\P^{G^w}_{p_{\sd}}(\kappa)}{\P^{G^w}_{p_{\sd}}(\kappa^\#)}
=\frac{p_{\sd}^{h(\kappa,E_i)}(1-p_{\sd})^{s(\kappa,E_i)}}
{p_{\sd}^{h(\kappa^\#,E_i)}(1-p_{\sd})^{s(\kappa^\#,E_i)}}
\sqrt{\frac{\sum_{\t^\#\in \ST(G^w)}w(\kappa^\#,\t^\#)}{\sum_{\t\in \ST(G^w)}w(\kappa,\t)}}
\\
&\leq 
\frac{\left(\frac{p_{\sd}}{1-p_{\sd}}\right)^{h(\kappa,E_i)}}
{\left(\frac{p_{\sd}}{1-p_{\sd}}\right)^{h(\kappa^\#,E_i)}}
q^{\frac{s(\kappa,E_i)-|\Vertex(G_i)|+1}{2}}2^{2|\gamma|}
%\\&
=2^{2|\gamma|}q^{\frac{1}{4}(h(\kappa,E_i)-h(\kappa^\#,E_i))}q^{\frac{s(\kappa,E_i)-|\Vertex(G)|+1}{2}}
\end{split}
\end{align}
where we used equation \eqref{eq:self_dual_point} of $p_{\sd}$ in the last step.
The definition of $\kappa^\#$ implies that
$h(\kappa^\#,E_i)=h(\kappa^\#,\tilde{E})=s(\kappa,E_i)$ and we get 
\begin{align}\label{eq:probability_pattern2}
\frac{\P^{G^w,E_i,\bar{1}}(\kappa_{E_i})}{\P^{G^w,E_i,\bar{1}}(\kappa^\#_{E_i})}
\leq 2^{2|\gamma|}q^{\frac{1}{4}(h(\kappa,E_i)+s(\kappa,E_i)-2|\Vertex(G_i)|+2)}
=2^{2|\gamma|}q^{\frac{1}{4}(|E_i|-2|\Vertex(G_i)|+2)}
\end{align}
Now we observe that $4|\Vertex(G_i)|-2|\Edge(G_i)|=|\vec{\Edge}(\gamma)\}| =|\gamma|$. We end up with the estimate
\begin{align}\label{eq:probability_pattern3}
\frac{\P^{G^w,E_i,\bar{1}}(\kappa_{E_i})}{\P^{G^w,E_i,\bar{1}}(\kappa^\#_{E_i})}
\leq 2^{2|\gamma|}q^{-\frac{1}{8}|\gamma|} q^\frac12
= \left(4q^{-\frac18}\right)^{|\gamma|} q^{\frac12}.
\end{align}

\textbf{Conclusion.}
Note that the map $\Phi$ is injective, hence
\begin{align}\label{eq:probability_pattern4}
\P^{G^w,E_i,\bar{1}}(\text{$\gamma$ is a $q$-contour})
\leq 
\frac{\sum_{\kappa \in \Omega_\gamma} \P^{G^w ,E_i,\bar{1}}(\kappa_{E_i})}
{\sum_{\kappa \in \Omega_\gamma} \P^{G^w,E_i,\bar{1}}(\Phi(\kappa)_{E_i})}
\leq \left(4q^{-\frac18}\right)^{|\gamma|} q^{\frac12}.
\end{align}
\end{proof}
Using correlation inequalities we can derive the following stronger version of the previous theorem.
For a simply connected subgraph $H\subset \Z^2$ we say that $\gamma$ is contained in $H$ if
all faces of plaquettes with center $x^\ast$ for $x^\ast\in \Vertex(\gamma)^\ast$ are contained in $\Edge(H)$.
\begin{corollary}\label{co:contours}
For any $p\leq p_{\sd}$, any simply connected subgraph $H\subset \Z^2$, and a contour $\gamma$ that
is contained in $H$ the probability that $\gamma$ is a $q$-contour can be estimated by
\begin{align}\label{eq:contour_general}
\P^H_p\Big(\text{$\gamma$ is a $q$-contour}\Big)\leq  \left(4q^{-\frac18}\right)^{|\gamma|} q^{\frac12}.
\end{align}  
\end{corollary}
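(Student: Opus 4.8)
The plan is to deduce the corollary from Theorem \ref{th:contour} using only the correlation and comparison inequalities of Section \ref{sec:percolation}. Write $E_i=\Edge(\Int(\gamma))$ and let $B\subset E_i$ be the set of interior faces of the plaquettes centred on $\Vertex(\gamma)^\ast$, so that the event $A_\gamma=\{\gamma\text{ is a }q\text{-contour}\}$ factors as $A_\gamma=D\cap I$, where $D=\{\kappa_b=1\text{ for all }b\in\Edge(\gamma)\}$ is a decreasing event depending only on the contour edges and $I=\{\kappa_b=q\text{ for all }b\in B\}$ is an increasing event depending only on $E_i$. The obstacle is that $A_\gamma$ is neither increasing nor decreasing, so neither monotonicity in $p$ (Lemma \ref{le:mon_p}) nor comparison of boundary conditions applies to it directly; this mixed character is the main difficulty, and the key idea to circumvent it is to discard the decreasing factor at the cost of a probability bounded by $1$.

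Concretely, I would condition on the configuration $\kappa_{\Edge(H)\setminus E_i}$ outside the interior. By the domain Markov property \eqref{eq:DMP},
\begin{align}
\P^H_p(A_\gamma)=\E^H_p\big(\mathbb{1}_D\,\P^{H,E_i,\kappa}_p(I)\big).
\end{align}
On $D$ the boundary configuration is soft on $\Edge(\gamma)$ and hence bounded above by the configuration $\lambda^\ast\in\{1,q\}^{\Edge(H)}$ that is soft on $\Edge(\gamma)$ and hard on all exterior edges. Since $I$ is increasing and $\P^{H,E_i,\cdot}_p$ is monotone in its boundary condition (Corollary \ref{co:boundary_conditions}), this gives $\P^{H,E_i,\kappa}_p(I)\le\P^{H,E_i,\lambda^\ast}_p(I)$ for every admissible $\kappa$, whence
\begin{align}
\P^H_p(A_\gamma)\le\P^{H,E_i,\lambda^\ast}_p(I)\,\P^H_p(D)\le\P^{H,E_i,\lambda^\ast}_p(I).
\end{align}

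It then remains to compare $\P^{H,E_i,\lambda^\ast}_p(I)$ with the quantity bounded in Theorem \ref{th:contour}. First, $I$ is increasing and the boundary-conditioned measures are strongly positively associated, so the monotonicity-in-$p$ argument of Lemma \ref{le:mon_p} (via Corollary \ref{co:stoch_dom} and \eqref{eq:stoch_dom_crit1_repeated}, which apply verbatim with a fixed boundary condition) yields $\P^{H,E_i,\lambda^\ast}_p(I)\le\P^{H,E_i,\lambda^\ast}_{p_{\sd}}(I)$ for $p\le p_{\sd}$. Second, contracting the exterior edges $F=\Edge(H)\setminus(E_i\cup\Edge(\gamma))$ turns them into loops (negligible by Remark \ref{rem:multiedges}) and collapses the exterior; by the contraction domination \eqref{eq:stoch_dom_incl2}, applied once to reach $H/F$ and then again to merge any remaining exterior components into a single vertex, one gets $\P^{H,E_i,\lambda^\ast}_{p_{\sd}}\precsim\P^{G^w,E_i,\bar1}_{p_{\sd}}$, where $G^w$ is exactly the wired interior graph of Theorem \ref{th:contour}; using once more that $I$ is increasing gives $\P^{H,E_i,\lambda^\ast}_{p_{\sd}}(I)\le\P^{G^w,E_i,\bar1}_{p_{\sd}}(I)$. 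Finally, under $\P^{G^w,E_i,\bar1}$ the contour edges are fixed soft, so ``$\gamma$ is a $q$-contour'' coincides with $I$, and Theorem \ref{th:contour} bounds the right-hand side by $(4q^{-1/8})^{|\gamma|}q^{1/2}$. Chaining these inequalities proves \eqref{eq:contour_general}. The one step requiring genuine care is the geometric identification of $H/F$ with a contraction of $G^w$, which I expect to follow from the fact that $\gamma$ is contained in the simply connected graph $H$ together with the identification of $H^\ast/\partial H^\ast$ with the dual of $G_i$ established before the theorem.
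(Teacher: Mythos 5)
Your proposal is correct and follows essentially the same route as the paper: both reduce to the situation where the contour edges are soft (so that the remaining defining condition of a $q$-contour is an increasing event supported on $\Edge(\Int(\gamma))$), then chain monotonicity in $p$ with the boundary-condition and contraction comparisons of Corollary \ref{co:boundary_conditions} to dominate by $\P^{G^w,E_i,\bar{1}}_{p_{\sd}}$ and invoke Theorem \ref{th:contour}. The only cosmetic difference is that the paper conditions directly on the event $\{\kappa_b=1 \text{ for } b\in \Edge(\gamma)\}$ instead of passing through your worst-case exterior configuration $\lambda^\ast$, and it is equally brief about the geometric identification of the contracted graph with $G^w$.
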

\begin{proof}
We estimate
\begin{align}\label{eq:contour_general2}
\begin{split}
\P^H_p\Big(\text{$\gamma$ is a $q$-contour}\Big)
&
=\P^H_p\Big(\text{$\gamma$ is a $q$-contour}, \text{$\kappa_{b}=1$ for $b\in \Edge(\gamma)$ }\Big)
\\
&
\leq 
\P^H_p\Big(\text{$\gamma$ is a $q$-contour}\mid \text{$\kappa_{b}=1$ for $b\in \Edge(\gamma)$ }\Big)
\\
&
=\P^{H,\Edge(H)\setminus \Edge(\gamma), \bar{1}}_p\Big(\text{$\gamma$ is $q$-contour}\Big).
\end{split}
\end{align}

For the measure $\P^{H,\Edge(H)\setminus  \Vertex(\gamma),\bar{1}}$
 the bonds crossing the contour  are fixed to the correct value. Hence the event that $\gamma$ is a $q$-contour for $\kappa$ is  increasing under this event, such that the stochastic domination results
proved in Lemma \ref{le:mon_p} and Corollary \ref{co:boundary_conditions}
imply that 
\begin{align}\label{eq:contour_general3}
\P^{H,\Edge(H)\setminus \Vertex(\gamma),\bar{1}}_p(\text{$\gamma$ is a $q$-contour})
\leq 
\P^{G^w,\Edge(G),\bar{1}}_{p_{\sd}}(\text{$\gamma$ is $q$-contour})
\end{align}
where $G$ denotes the graph corresponding to $\gamma$ as introduced above Theorem \ref{th:contour}.
 Theorem \ref{th:contour} implies the claim.
\end{proof}

We can now give a new proof for the coexistence result
stated in Theorem \ref{th:main_non_unique}.

\begin{proof}[Proof of Theorem \ref{th:randomconductance_non_unique}]
First we note that the duality between free and wired boundary conditions in finite volume implies that
$\mu^0_{p_{\sd}}$ and $\mu^1_{p_{\sd}}$ are dual to each other in the sense
that if $\kappa\sim \mu^0_{p_{\sd}}$ then $\kappa^\ast\sim \mu^1_{p_{\sd}}$ (on $(\Z^2)^\ast)$).
The proof is the same as for the random cluster model, see, e.g., \cite[Chapter 6]{MR2243761}.
Hence, it is sufficient to show that $\mu^0_{p_{\sd}}(\kappa_e=q)<1/2$ because then we can conclude that
\begin{align}
\discmu_{p_{\sd}}^1(\kappa_e=q)= \discmu_{p_{\sd}}^0(\kappa_e=1) > 1/2
\end{align}
whence $\discmu_{p_{\sd}}^1\neq \discmu_{p_{\sd}}^0$.

Note that if $\kappa_e=q$ and there is any contour $\gamma$ such that $e\in \Edge(\Int(\gamma))$ and
$\kappa_b=1$ for $b\in \Edge(\gamma)$ then there is a $q$-contour surrounding $e$.
We can thus estimate for $e\in E_n$
\begin{align}\label{eq:Peierls_estimate_prep}
\P^{\Lambda_{n+1},E_n,\bar{1}}(\kappa_e=q)\leq
\P^{\Lambda_{n+1},E_n,\bar{1}}(\text{there is a $q$-contour around $e$})
\end{align}
where as before $\bar{1}_e=1$ for all $e$.
The shortest contour $\gamma$ that surrounds the edge $e$
has length 6 so the bound in Corollary \ref{co:contours}
implies that 
$\P^{\Lambda_{n+1},E_n,\bar{1}}(\text{$\gamma$ is a $q$-contour})\leq C/q^{\frac14}$
for any $\gamma$ surrounding $e$.
Using Corollary \ref{co:boundary_conditions} 
we can compare boundary conditions to obtain the relation $\discmu_n^0\precsim \P^{\Lambda_{n+1},E_n,\bar{1}}$.
This and a standard Peierls argument imply for $q$ sufficiently large 
\begin{align}\label{eq:Peierls_estimate}
\discmu_{n,p_{\sd}}^0(\kappa_e=q)\leq \P^{\Lambda_{n+1},E_n,\bar{1}}(\kappa_e=q)
\leq \frac{C}{q^\frac14}\leq\frac14.
\end{align}
Taking the limit $n\to \infty$ we obtain
$\discmu_{p_{\sd}}^0(\kappa_e=q)\leq \tfrac{1}{4}$.
%Using the duality of $\discmu_{p_c}^0$ and $\discmu_{p_c}^1$ we
%obtain
%%\begin{align}\label{eq:proba_limit}
%\discmu_{p_c}^1(\kappa_e=q)= \discmu_{p_c}^0(\kappa_e=1)
%\geq \frac34.
%\end{align}
%Therefore $\discmu_1\neq \discmu_0$. 
\end{proof}

\appendix
\section{Proofs of Proposition \ref{prop:discmu_Gibbs} and Proposition \ref{prop:discmu_to_extmu}}
\label{sec:proofs_of_propositions}
In this section we pay the last remaining debt of proving two propositions from Section \ref{sec:model}.
\begin{proof}[Proof of Proposition \ref{prop:discmu_Gibbs}]
For $\lambda\in \{1,q\}^{\Edge(\Z^d)}$ 
and $E\subset \Edge(\Z^d)$ finite we define the cylinder event
\begin{align}
\mathbf{A}(\lambda_E)=\{\kappa\in \{1,q\}^{\Edge(\Z^d)}: \kappa_E=\lambda_E\}\in \discsigma_E.
\end{align}
With a slight abuse of notation we drop the pullback from the notation when we consider the set $\pi_2^{-1}(\mathbf{A}(\lambda_E))\subset
\R^{\Edge(\Z^d)}\times \{1,q\}^{\Edge(\Z^d)}$.
Since all local cylinder events in $\discsigma$ can be written as a union of events of the form  
$\mathbf{A}(\lambda_{E_L})$ it is by Remark~\ref{remark:Gibbs_specification}
sufficient to show 
\begin{align}\label{eq:discmu_Gibbs1}
\discmu(\mathbf{A}(\lambda_{E_L}))=\discmu \discgamma_{E_n}(\mathbf{A}(\lambda_{E_L}))
\end{align}
for all $L,n\geq 0$ and all $\lambda\in \{1,q\}^{\Edge(\Z^d)}$. 
Using the quasilocality of $\discgamma$ stated in Corollary~\ref{co:quasilocal}
and Remark 4.21 in \cite{MR2807681}   it is  sufficient to consider $L=n$ and we will do this in the following.
We are going to show the claim in a series of steps.

\textbf{Step 1.}
We investigate the distribution of the $\kappa$-marginal conditioned on 
$\omega{_{E_N^{\crm}}}$.

Since $\extmu$ is a gradient Gibbs measure we know by \eqref{eq:specification_gamma_tilde} that for $\omega\in\R^{\Edge(\Z^d)}_g$
and $\kappa \in \{1,q\}^{\Edge(\Lambda)}$
\begin{align}\label{eq:discmu_Gibbs2}
\extmu(\mathbf{A}(\kappa_{\Edge(\Lambda)}) \mid \gradsigma_{\Edge(\Lambda)^{\crm}})(\omega) 
%= \frac{1}{Z}   \int e^{-\sum_{e\in \Edge(\Lambda)} V(\eta(e))} \nu^{\omega}_{} (\d\eta)
=\frac{1}{Z} \int 
%\sum_{\kappa \in \{1,q\}^\Edge(\Lambda)} 
p^{h(\kappa)} (1-p)^{s(\kappa)} \prod_{e\in \Edge(\Lambda)} e^{-\kappa_e \eta_e^2}\;\nu_\Lambda^{\omega_{\Edge(\Lambda)^{\crm}}} (\d\eta)
=\frac{Z(\kappa,\omega)}{Z}
\end{align}
where $Z$ is the normalisation and
\begin{align}\label{eq:discmu_Gibbs3}
Z(\kappa,\omega)=\int p^{h(\kappa)} (1-p)^{s(\kappa)} \prod_{e\in \Edge(\Lambda)} e^{-\frac12\kappa_e \eta_e^2}\;\nu_\Lambda^{\omega_{\Edge(\Lambda)^{\crm}}} (\d\eta)
\end{align}
 denotes the partition function corresponding to the configuration $\kappa$. 
 Let $\p\in \R^{\Z^d}$ be the configuration such that $\nabla \p=\omega$ and $\p(0)=0$.
We denote by  $\chi_\kappa$  the corrector of $\kappa$,
i.e., the solution of $\nabla^\ast\kappa\nabla \chi_\kappa=0$ with boundary values $\p_{\Lambda^{\crm}}$.
A shift of the integration variables and  Gaussian calculus implies (see also \eqref{eq:motivation_rc_model}) 
 \begin{align}\label{eq:discmu_Gibbs4}
 Z(\kappa,\omega)=Z(\kappa, \bar{0})e^{-\frac12(\nabla\chi_\kappa,\kappa\nabla \chi_\kappa)_{\Edge(\Lambda)}}
 = e^{-\frac12(\nabla\chi_\kappa,\kappa\nabla \chi_\kappa)_{\Edge(\Lambda)}}\frac{p^{h(\kappa)}(1-p)^{s(\kappa)}}{\sqrt{\det 2\pi (\tilde{\Delta}_\kappa^{\Lambda^w})^{-1}}}
\end{align}  
where $\bar{0}$ is the configuration with vanishing gradients, i.e., $\bar{0}_e=0$ for $e\in \Edge(\Z^d)$.
The necessary calculation to obtain \eqref{eq:discmu_Gibbs4} basically agrees with the calculation
that shows that the discrete Gaussian free field can be decomposed in a zero boundary discrete Gaussian free field and a harmonic extension.
We now restrict our attention to $\Lambda=\Lambda_N=[-N,N]^d\cap \Z^d$ for $N\in \mathbb{N}$.
We introduce the law of the $\kappa$-marginal for wired non-constant boundary conditions for
$\kappa\in \{1,q\}^{E_N}$ by
\begin{align}\label{eq:discmu_Gibbs5}
\discmu^{1,\omega}_N(\kappa)=\frac{Z(\kappa,\omega)}{Z}.
\end{align}
Note that $\discmu^{1,\bar{0}}_N=\discmu^{1}_N$ where $\discmu^{1}_N$ was defined in \eqref{eq:def_Gibbs_Lambda_n}.
% where $\bar{0}$ denotes the gradient configuration where all gradient vanish. 

\textbf{Step 2.} In this step we are going to show that there is $N_0\in \mathbb{N}$ depending on $n$ such that for $N\geq N_0$ and uniformly in $\lambda\in \{1,q\}^{\Edge(\Z^d)}$
\begin{align}\label{eq:result_step2}
\bigg|\discmu\Big(\mathbf{A}(\lambda_{E_n})\mid\mathbf{A}(\lambda_{E_N \setminus E_n})\Big)-\discmu^1_N
\Big(\mathbf{A}(\lambda_{E_n})\mid\mathbf{A}(\lambda_{E_N \setminus E_n})\Big)\bigg|\leq 4\varepsilon. ,
\end{align}
i.e., the boundary effect is negligible.
We start by showing that typically the difference between the corrector energies 
for configurations  $\kappa$ and $\tilde\kappa$ that only differ in $E_n$ will be small. This will allow us 
to estimate the difference between $\discmu^1_N$ and $\discmu^{1,\omega}_N$ conditioned to agree close to the boundary.

Recall that we consider the case that $\Lambda=\Lambda_N$ is a box.
The Nash-Moser estimate stated in Lemma \ref{le:elliptic_Nash_Moser} combined with the maximum principle
for the equation $\nabla^\ast\kappa\nabla \chi_\kappa=0$ 
imply  for $b\in E_n$ and some $\alpha=\alpha(q)>0$
\begin{align}\label{eq:use_of_NashMoser}
|\nabla\chi_\kappa(b)|\leq \frac{C\left( \max_{x\in \partial \Lambda_N}\p(x)-\min_{y\in \partial \Lambda_N}\p(y)\right)}{|N-n|^\alpha}.
\end{align}
We introduce the event $\bs{M}(N)=\{\omega:  \max_{x\in \partial \Lambda_N}\p(x)-\min_{y\in \partial \Lambda_N}\p(y)\leq (\ln N)^3\}$.
Consider 
configurations $\kappa, \tilde\kappa\in \{1,q\}^{\Edge(\Z^d)}$ such
that $\kappa_e=\tilde\kappa_e$ for $e\notin E_n$. Using the fact that 
the corrector is the minimizer of the quadratic form $(\nabla \chi_\kappa,\kappa\nabla \chi_\kappa)_{E_N}$
with given boundary condition we can estimate
\begin{align}\label{eq:discmu_Gibbs6}
(\nabla\chi_\kappa,\kappa\nabla\chi_\kappa)_{E_N}\leq 
(\nabla\chi_{\tilde\kappa},\kappa\nabla\chi_{\tilde\kappa})_{E_N}\leq
(\nabla\chi_{\tilde\kappa},\tilde\kappa\nabla\chi_{\tilde\kappa})_{E_N}
+|E_n|q\sup_{b\in E_n} |\nabla\chi_{\tilde\kappa}|^2.
\end{align}
From \eqref{eq:use_of_NashMoser} we infer that for $N\geq 2n$ and  $\p\in \mathbf{M}(N)$ 
\begin{align}\label{eq:discmu_Gibbs7}
|(\nabla \chi_\kappa,\kappa\nabla\chi_\kappa)_{E_N}
-(\nabla \chi_{\tilde\kappa},\tilde\kappa\nabla\chi_{\tilde\kappa})_{E_N}|
\leq C|E_n|q \frac{(\ln N)^3}{N^\alpha}. 
\end{align}
By choosing $N_1\geq 2n$ sufficiently large  we can
ensure that for $N\geq N_1$, $\p\in \mathbf{M}(N)$,  and uniformly in   $\kappa, \tilde\kappa$ as before
\begin{align}\label{eq:discmu_Gibbs8}
1-\varepsilon\leq e^{\frac12(\nabla \chi_\kappa,\kappa\nabla\chi_\kappa)_{E_N}
-\frac12(\nabla \chi_{\tilde\kappa},\tilde\kappa\nabla\chi_{\tilde\kappa})_{E_N}}\leq 1+\varepsilon.
\end{align}
Using this in \eqref{eq:discmu_Gibbs4}  we conclude that for $N\geq N_1\vee 2n$,  $\omega\in \mathbf{M}(N)$,
 $\varepsilon<1/3$, and $\lambda\in \{1,q\}^{\Edge(\Z^d)}$
\begin{align}\label{eq:bar_mu_omega_vs_bar_mu}
\bigg|\discmu^{1,\omega}_N\Big(\mathbf{A}(\lambda_{E_n})\mid \mathbf{A}(\lambda_{E_N \setminus E_n})\Big)
-\bar{\mu}^1_N\Big(\mathbf{A}(\lambda_{E_n})\mid\mathbf{A}(\lambda_{E_N \setminus E_n})\Big)\bigg|\leq
\frac{2\varepsilon}{1-\varepsilon}\leq  3\varepsilon.
\end{align}
This implies 
\begin{align}\label{eq:bar_mu_omega_vs_bar_mu2}
\left|\extmu\Big(\mathbf{A}(\lambda_{E_n})\mid \mathbf{A}(\lambda_{E_N-E_n})\cap \mathbf{M}(N)\Big)
-\bar{\mu}^1_N\Big(\mathbf{A}(\lambda_{E_n})\mid\mathbf{A}(\lambda_{E_N \setminus E_n})\Big)\right|\leq  3\varepsilon.
\end{align}
%% Indeed: \extmu(A_1\cap A_2\cap M(N))-\extmu(A_2\cap M(N))\discmu^1(A_1\mid A_2)
% = \int \extmu(\d \omega d\kappa) 1_{M(N)}(\omega) \extmu^\omega(A_1\cap A_2)-\extmu^\omega(A_2)\discmu^1(A_1\mid A_2)
%\leq \int  3\varepsilon \extmu(\d \omega d\kappa) 1_{M(N)}(\omega)\extmu^\omega(A_2)
%\leq 3\varepsilon \extmu(A_2\cap M(N)).
From Lemma \ref{le:boundaryGaussian} below and Proposition \ref{prop:phi_given_kappa}
  we infer that for an extended gradient Gibbs measure $\tilde\mu$ associated to an ergodic zero tilt Gibbs measure $\mu$ and any $\lambda\in \{1,q\}^{\Edge(\Z^d)}$
  \begin{align}
  \tilde\mu\Big(M(N)^{\crm} \mid\mathbf{A}(\lambda_{E_N \setminus E_n})\Big)\leq \frac{C}{\ln(N)}\leq \varepsilon
  \end{align}
  for all $N\geq N_2$ and $N_2$ sufficiently large. 
We conclude that for $N\geq N_0\coloneqq N_1\vee N_2\vee 2n$
\begin{align}
\begin{split}\label{eq:bar_mu_omega_vs_bar_mu3}
&\bigg|\discmu\Big(\mathbf{A}(\lambda_{E_n})\mid\mathbf{A}(\lambda_{E_N \setminus E_n})\Big)-\discmu^1_N
\Big(\mathbf{A}(\lambda_{E_n})\mid\mathbf{A}(\lambda_{E_N \setminus E_n})\Big)\bigg|\\
&\quad
\leq 
\tilde{\mu}\Big(\mathbf{M}(N)\mid\mathbf{A}(\lambda_{E_N \setminus E_n})\Big)\;
\bigg|\tilde{\mu}\Big(\mathbf{A}(\lambda_{E_n})\mid\mathbf{A}(\lambda_{E_N\setminus E_n}), \mathbf{M}(N)\Big)-\discmu^1_N
\Big(\mathbf{A}(\lambda_{E_n})\mid\mathbf{A}(\lambda_{E_N \setminus E_n}\Big) \bigg|
\\
&\quad\;+
\tilde{\mu}\Big(\mathbf{M}(N)^{\crm}\mid\mathbf{A}(\lambda_{E_N \setminus E_n})\Big)\;
\bigg|\tilde{\mu}\Big(\mathbf{A}(\lambda_{E_n})\mid\mathbf{A}(\lambda_{E_N\setminus E_n}), \mathbf{M}(N)^{\crm}\Big)-\discmu^1_N
\Big(\mathbf{A}(\lambda_{E_n})\mid\mathbf{A}(\lambda_{E_N \setminus E_n}\Big)\bigg|
\\
&\quad\leq 3\varepsilon+\varepsilon=4\varepsilon.
\end{split}
\end{align}

\textbf{Step 3.} Using the previous results we can now finish the proof.
We rewrite 
\begin{align}
\begin{split}
\discmu\Big(\mathbf{A}(\lambda_{E_n})\Big)
&=\sum_{{\sigma'\in \{1,q\}^{E_N\setminus{E_n}}}}
\discmu\Big(\mathbf{A}(\sigma'_{E_N\setminus E_n})\Big)\discmu\Big(\mathbf{A}(\lambda_{E_n})\mid
\mathbf{A}(\sigma'_{E_N\setminus E_n})\Big)
\\
&=
\sum_{\sigma'\in \{1,q\}^{E_N\setminus E_n}}
\sum_{\sigma\in \{1,q\}^{E_N}}\discmu\Big(\mathbf{A}(\sigma'_{E_N\setminus E_n})\cap  \mathbf{A}(\sigma_{E_N})\Big)\discmu\Big(\mathbf{A}(\lambda_{E_n})\mid
\mathbf{A}(\sigma'_{E_N\setminus E_n})\Big)
\\
&=
\sum_{\sigma\in \{1,q\}^{E_N}}
\discmu\Big(\mathbf{A}(\sigma_{E_N})\Big)\discmu\Big(\mathbf{A}(\lambda_{E_n})\mid
\mathbf{A}(\sigma_{E_N\setminus E_n})\Big).
\end{split}
\end{align}
The  identity above and the fact that $\discgamma_{E_n}$ is proper imply  adding and subtracting the same term 
\begin{align}
\begin{split}\label{eq:discmu_Gibbs12}
&\bigg|\discmu\Big(\mathbf{A}(\lambda_{E_n})\Big)- \discmu\discgamma_{E_n}\Big(\mathbf{A}(\lambda_{E_n})\Big)
\bigg|
\\
&\; =
\bigg|\hspace{-0.cm}
\sum_{\sigma\in \{1,q\}^{E_N}}
\hspace{-0.2cm}
\discmu\Big(\mathbf{A}(\sigma_{E_N})\Big)\discmu\Big(\mathbf{A}(\lambda_{E_n})\mid
\mathbf{A}(\sigma_{E_N\setminus E_n})\Big)
-
\int \discmu(\d\kappa) \mathbb{1}_{\mathbf{A}(\sigma_{E_N})}(\kappa)\; \discgamma_{E_n}\Big(\mathbf{A}(\lambda_{E_n}),\kappa\Big)
\bigg|
\\
&\;\leq 
\bigg|\hspace{-0.cm}
\sum_{\sigma\in \{1,q\}^{E_N}}
\discmu\Big(\mathbf{A}(\sigma_{E_N})\Big)\discmu\Big(\mathbf{A}(\lambda_{E_n})\mid
\mathbf{A}(\sigma_{E_N\setminus E_n})\Big)
-\discmu\Big(\mathbf{A}(\sigma_{E_N})\Big)
\discgamma^{\Lambda_N^w}_{E_n}\Big(\mathbf{A}(\lambda_{E_L}), \sigma_{E_N}\Big)\bigg|
\\ 
&\;\quad +\bigg|
\sum_{\sigma\in \{1,q\}^{E_N}}
\discmu\Big(\mathbf{A}(\sigma_{E_N})\Big)
\discgamma^{\Lambda_N^w}_{E_n}\Big(\mathbf{A}(\lambda_{E_n}), \sigma_{E_N}\Big)-
\int \discmu(\d\kappa) \mathbb{1}_{\mathbf{A}(\sigma_{E_N})}(\kappa)\; \discgamma_{E_n}\Big(\mathbf{A}(\lambda_{E_n}),\kappa\Big)\bigg|.
\end{split}
\end{align}
We continue to estimate the right hand side of this expression.
We start with the first term. Since $\discmu^1_{\Lambda_N}$ is a finite volume Gibbs measure (see \eqref{eq:discmu_Gibbs}) we have for $\mathbf{A}\in \discsigma_{E_N}$
\begin{align}\label{eq:discmu_Gibbs11}
\discmu^1_{\Lambda_N}\Big(\mathbf{A}\mid\mathbf{A}(\sigma_{E_N\setminus E_n})\Big)=
\discmu_{\Lambda_N}^1\Big(\mathbf{A}\mid \discsigma_{E_n^\crm}\Big)(\sigma_{E_N})=
\discgamma^{\Lambda_N^w}_{E_n}(\mathbf{A},\sigma_{E_N}).
\end{align}
Using this and the bound \eqref{eq:result_step2} we obtain for $N\geq N_0$
\begin{align}
\begin{split}\label{eq:discmu_Gibbs41}
\bigg|\hspace{-0.cm}
&\sum_{\sigma\in \{1,q\}^{E_N}}
\discmu\Big(\mathbf{A}(\sigma_{E_N})\Big)\discmu\Big(\mathbf{A}(\lambda_{E_n})\mid
\mathbf{A}(\sigma_{E_N\setminus E_n})\Big)
-\discmu\Big(\mathbf{A}(\sigma_{E_N})\Big)
\discgamma^{\Lambda_N^w}_{E_n}\Big(\mathbf{A}(\lambda_{E_n}), \sigma_{E_N}\Big)\bigg|
\\
&\qquad\leq 
\sum_{\sigma\in \{1,q\}^{E_N}}
\discmu\Big(\mathbf{A}(\sigma_{E_N})\Big)
\bigg|\discmu\Big(\mathbf{A}(\lambda_{E_L})\mid
\mathbf{A}(\sigma_{E_N\setminus E_n})\Big)
-
\discmu^1_{\Lambda_N}\Big(\mathbf{A}(\lambda_{E_N})\mid\mathbf{A}(\sigma_{E_N\setminus E_n})\Big)\bigg|
\\
&\qquad\leq
4\varepsilon \sum_{\sigma\in \{1,q\}^{E_N}}
\discmu\Big(\mathbf{A}(\sigma_{E_N})\Big)\leq 4\varepsilon.
\end{split}
\end{align}

We now address the second term on the right hand side of \eqref{eq:discmu_Gibbs12}.
By Lemma \ref{le:uniform_specification} there is $N_3$ such that for $N\geq N_3$ and any $\lambda,\sigma\in \{1,q\}^{\Edge(\Z^d)}$
\begin{align}\label{eq:discmu_Gibbs10}
%|\discgamma_{E_n}(\sigma,\lambda)-\mathbb{1}_{\sigma_{E_n^{\crm}}=\lambda_{E_n^{\crm}}}\discgamma^{\Lambda_N^w}_{E_n}(\sigma_{E_N},\lambda_{E_N})|<\varepsilon.
|\discgamma_{E_n}(\sigma,\lambda)-\discgamma^{\Lambda_N^w}_{E_n}(\sigma,\lambda)|<\varepsilon.
\end{align}
%We continue to estimate the right hand side of this expression. For the first term we use 
%\eqref{eq:discmu_Gibbs10} and for the second term we use \eqref{eq:result_step2} and \eqref{eq:discmu_Gibbs11} and the trivial fact that $\mu(A\mid B)=\mu(A\cap B\mid B)$. This implies for $N\geq \max(N_0,N_3)$
This implies for $N\geq N_3$
\begin{align}
\begin{split}\label{eq:discmu_Gibbs42}
&\bigg|
\sum_{\sigma\in \{1,q\}^{E_N}}
\discmu\Big(\mathbf{A}(\sigma_{E_N})\Big)
\discgamma^{\Lambda_N^w}_{E_n}\Big(\mathbf{A}(\lambda_{E_n}), \kappa\Big)-
\int \discmu(\d\kappa) \mathbb{1}_{\mathbf{A}(\sigma_{E_N})}(\kappa)\; \discgamma_{E_n}\Big(\mathbf{A}(\lambda_{E_n}),\kappa\Big)\bigg|
\\
&\qquad
\leq
\sum_{\sigma\in \{1,q\}^{E_N}}
\int \discmu(\d\kappa) \mathbb{1}_{\mathbf{A}(\sigma_{E_N})}(\kappa)\;
\left|\discgamma^{\Lambda_N^w}_{E_n}\Big(\mathbf{A}(\lambda_{E_n}), \sigma_{E_N}\Big)-
 \discgamma_{E_n}\Big(\mathbf{A}(\lambda_{E_n}),\kappa\Big)\right|
 \\
&\qquad \leq
  \varepsilon
  \sum_{\sigma\in \{1,q\}^{E_N}} \discmu\Big(\mathbf{A}(\sigma_{E_N})\Big)
\leq \varepsilon.
\end{split}
\end{align}
Using \eqref{eq:discmu_Gibbs12}, \eqref{eq:discmu_Gibbs41}, and \eqref{eq:discmu_Gibbs42}
we conclude that for any $\varepsilon>0$
\begin{align}
\bigg|\discmu\Big(\mathbf{A}(\lambda_{E_n})\Big)- \discmu\discgamma_{E_n}\Big(\mathbf{A}(\lambda_{E_n})\Big)
\bigg|\leq 5\varepsilon.
\end{align}
This ends the proof.
\end{proof}

The following simple Lemma was used in the proof of Proposition \ref{prop:discmu_Gibbs}.
\begin{lemma}\label{le:boundaryGaussian}
Let $\lambda\in \{1,q\}^{\Edge(\Z^d)}$ and denote by $\p^\lambda$ the centred Gaussian field
on $\Z^d$ with $\p(0)=0$ and covariance $\Delta_\lambda^{-1}$. Then $\p^\lambda$ satisfies
\begin{align}
\P\Big(\max_{x\in \partial\Lambda_N}\p^\lambda(x)-\min_{y\in \partial\Lambda_N} \p^\lambda(y)\geq (\ln N)^3\Big)\leq C(\ln N)^{-1}.
\end{align}
\end{lemma}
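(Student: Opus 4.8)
The plan is to reduce the statement to a standard Gaussian maximal inequality once the pointwise variances of the field are under control. Write $D_N=\max_{x\in\partial\Lambda_N}\p^\lambda(x)-\min_{y\in\partial\Lambda_N}\p^\lambda(y)$ for the boundary oscillation. Since $\p^\lambda$ is centred and $\p^\lambda(0)=0$, for any $x$ the variable $\p^\lambda(x)=(\delta_x-\delta_0,\p^\lambda)$ is a mean-zero Gaussian, and $f=\delta_x-\delta_0$ has vanishing sum, so by the defining property \eqref{eq:Var_extmu} of the field,
\begin{align*}
\mathrm{Var}\big(\p^\lambda(x)\big)=\big(\delta_x-\delta_0,(\Delta_\lambda)^{-1}(\delta_x-\delta_0)\big)=R_{\mathrm{eff}}^\lambda(0,x),
\end{align*}
the effective resistance between $0$ and $x$ in the network with conductances $\lambda$. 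Because $\lambda_e\geq 1$ for every edge, Rayleigh monotonicity gives $R_{\mathrm{eff}}^\lambda(0,x)\leq R_{\mathrm{eff}}^{\mathbf{1}}(0,x)$, the effective resistance on the homogeneous lattice with unit conductances, and this bound is uniform in $\lambda$. The classical growth of the lattice potential kernel yields $R_{\mathrm{eff}}^{\mathbf{1}}(0,x)\leq C(d)\ln(|x|+2)$, which is bounded for $d\geq 3$ and grows logarithmically for $d=2$; in either case, since $|x|\leq\sqrt d\,N$ on $\partial\Lambda_N$, we obtain the uniform bound $\mathrm{Var}(\p^\lambda(x))\leq \sigma^2$ with $\sigma^2\leq C(d)\ln N$.

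With this uniform variance bound in hand I would invoke the Gaussian maximal inequality: for centred Gaussians $X_1,\dots,X_m$ with variances at most $\sigma^2$ one has $\E[\max_i X_i]\leq\sigma\sqrt{2\ln m}$ (a one-line consequence of Jensen applied to $e^{t\max_i X_i}$ and optimisation in $t$). Applying this to $\{\p^\lambda(x)\}_{x\in\partial\Lambda_N}$, whose cardinality is $m=|\partial\Lambda_N|\leq CN^{d-1}$, gives
\begin{align*}
\E\Big[\max_{x\in\partial\Lambda_N}\p^\lambda(x)\Big]\leq\sqrt{C(d)\ln N}\;\sqrt{2\ln(CN^{d-1})}\leq C'\ln N.
\end{align*}
By symmetry of the centred field the same bound holds for $\E[\max_y(-\p^\lambda(y))]=-\E[\min_y\p^\lambda(y)]$, so that $\E[D_N]\leq 2C'\ln N$.

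Finally, Markov's inequality closes the argument:
\begin{align*}
\P\big(D_N\geq(\ln N)^3\big)\leq\frac{\E[D_N]}{(\ln N)^3}\leq\frac{2C'\ln N}{(\ln N)^3}=\frac{2C'}{(\ln N)^2}\leq\frac{C}{\ln N}
\end{align*}
for $N$ large, which is the claim (in fact with room to spare). The only genuinely quantitative input is the variance bound of the first paragraph, so the main obstacle is the two-dimensional case: there $\mathrm{Var}(\p^\lambda(x))$ is unbounded and one must verify the logarithmic growth of the effective resistance (equivalently the potential kernel) on $\Z^2$ together with the fact that Rayleigh monotonicity legitimately replaces the inhomogeneous conductances $\lambda\geq 1$ by the homogeneous ones. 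Everything downstream is a soft consequence of the Gaussian maximal inequality and Markov's inequality, and in particular the final bound is uniform in $\lambda\in\{1,q\}^{\Edge(\Z^d)}$.
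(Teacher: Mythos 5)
Your proof is correct and follows essentially the same route as the paper: a uniform variance bound for $\p^\lambda(x)$, the Gaussian maximal inequality $\E[\max_i X_i]\leq\sigma\sqrt{2\ln m}$, and Markov's inequality. The only cosmetic difference is that you obtain the variance bound via effective resistance and Rayleigh monotonicity, whereas the paper invokes the Brascamp--Lieb inequality to compare with the unit-conductance field -- for Gaussian fields these are the same operator comparison $\Delta_\lambda\geq\Delta_{\bar 1}$, so the arguments coincide in substance.
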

\begin{proof}
%We use the structure of extended gradient Gibbs measure stated in Proposition \ref{prop:phi_given_kappa}. 
%Recall that this proposition states for almost every $\lambda\in \{1,q\}^{\Edge(\Z^d)}$ the field $\p$ is centred Gaussian with 
%with covariance $(\Delta_\lambda)^{-1}$.
%We denote this Gaussian field by $\p_\lambda$. 
We use the notation $\bar{1}\in \{1,q\}^{\Edge(\Z^d)}$ for the configuration given by $\bar{1}_e=1$ for $e\in \Edge(\Z^d)$.
The Brascamp-Lieb inequality (see \cite[Theorem 5.1]{MR0450480}) implies for the centred
Gaussian fields $\p^\lambda$ and $\p^{\bar{1}}$ that 
\begin{align}
\E\left( (\p^\lambda(x)-\p^\lambda(0))^2\right)\leq \E\left( (\p^{\bar{1}}(x)-\p^{\bar{1}}(0))^2\right)\leq 
\begin{cases}
C\ln(|x|) \quad &\text{for $d=2$}\\
C \quad &\text{for $d\geq 3$}.
\end{cases}
\end{align}
It is well known that for a centred Gaussian random vector $X\in \Pcal(\R^m)$
with $\mathbb{E}(X_i^2)\leq \sigma^2$ 
the expectation of the maximum is bounded by
\begin{align}
\mathbb{E}(\max_i X_i)\leq \sigma\sqrt{2\ln m}.
\end{align}
We use this for the Gaussian field  $\p^\lambda$ and conclude that
\begin{align}
\E\left(\max_{x\in \partial \Lambda_N} \p^\lambda(x)-\p^\lambda(0)\right)\leq 
\begin{cases}
C\ln(N)^2 \quad \text{for $d=2$}\\
C \ln(N^{d-1})\quad \text{for $d\geq 3$}.
\end{cases}
\end{align}
A simple Markov bound implies that there is $C=C(d)>0$ such that
\begin{align}
\P\left(\max_{x\in \partial\Lambda_N}\p^\lambda(x)-\min_{y\in \partial\Lambda_N} \p^\lambda(y)\geq (\ln N)^3\right)\leq C(\ln N)^{-1}.
\end{align}
\end{proof}

It remains to provide a proof of Proposition \ref{prop:discmu_to_extmu}.
We will only sketch the argument.
\begin{proof}[Proof of Proposition \ref{prop:discmu_to_extmu}]
First we remark that the law of $(\kappa, \nabla\p^\kappa)$ is a Borel-measure on $\{1,q\}^{\Edge(\Z^d)}\times \R_g^{\Edge(\Z^d)}$.
This follows  from Carathéodory's extension theorem and the observation that for a local
event $\mathbf{A}\in \gradsigma_E$ with $E\subset \Edge(\Z^d)$ finite
the function $\kappa\mapsto \mu_{\p^\kappa}(\mathbf{A})$ is continuous (this can be shown using Lemma \ref{le:uniform_Laplace_inverse}). 
By Remark~\ref{remark:Gibbs_specification} it is sufficient to prove that $\extmu\extgamma_{\Lambda_n}=\extmu$ for all $n$.
To prove this we use an approximation procedure. We fix $n$ and define for $N>n$ a measure $\extmu_N$ on $\R^{\Edge(\Z^d)}_g\times \{1,q\}^{\Edge(\Z^d)}$
as follows. The $\kappa$-marginal of $\extmu_N$ is given by 
 $\discmu_N=\discmu\discgamma_{E_n}^{\Lambda_N^w}$
 where as before we
 extended $\discgamma_{E_n}^{\Lambda_N^w}$ to a proper probability kernel
 on $\{1,q\}^{\Edge(\Z^d)}$.
 % via $\discgamma_{E_n}^{\Lambda_N^w}(\sigma, \lambda)
 %=\discgamma_{E_n}^{\Lambda_N^w}(\sigma_{E_N},\lambda_{E_N})\mathbb{1}_{\sigma_{E_n^{\crm}}=\lambda_{E_n^{\crm}}}$.
 For given $\kappa$, let $\p^\kappa$ be the centred Gaussian field with zero boundary data outside of $\accentset{\circ}\Lambda_N$
and covariance $(\tilde{\Delta}_{\kappa_{E_N}}^{\Lambda_N^w})^{-1}$
where $\tilde{\Delta}_{\kappa_{E_N}}^{\Lambda_N^w}$ was defined in Section~\ref{sec:model_intro}.
The measure $\extmu_N$ is the joint law of $(\kappa,\p^\kappa)$ where $\kappa$ has law $\discmu_N$.
%Our first claim is that for $\mathbf{B}\in \mathcal{B}(\R^{E_n})$, $\lambda\in \{1,q\}^{\Edge(\Z^d)}$,
%$\sigma\in \{1,q\}^{E_n}$, and $\omega\in \R^{\Edge(\Z^d)}$ such that the $\mathrm{supp}\; \omega\subset E_N$
%\begin{align}\label{eq:extmu_claim}
%\extmu_N(\eta\in \mathbf{A},\, \kappa_{E_n}=\sigma_{E_n}|\extsigma_{\Lambda_N}^{\crm})(\lambda_{E_N}, \omega)
%=\extgamma_{\Lambda_n}(\mathbf{A}, \kappa_{E_n}=\sigma_{E_n}|\omega).
%\end{align}
We claim that for $N>n$
\begin{align}\label{eq:Gibbs_mu_N}
\extmu_N \extgamma_{\Lambda_n}=\extmu_N.
\end{align}
We prove this by showing the statement for the measures $\extmu_N\big(\cdot|\mathbf{A}(\lambda_{E_N\setminus E_n})\big)$ for every configuration $\lambda\in \{1,q\}^{\Edge(\Z^d)}$.
To shorten the notation we write $\extmu_N^{\lambda}=\extmu_N\big(\cdot|\mathbf{A}(\lambda_{E_N\setminus E_n})\big)$.
By definition of $\extmu_N$ 
the $\p$-field conditioned on $\kappa$ has density  $\exp(-\frac12 (\p,\tilde{\Delta}_{\kappa}^{\Lambda_N^w}\p))/\sqrt{\det 2\pi (\tilde{\Delta}_\kappa^{\Lambda_N^w}})^{-1}\,\d\p_{\accentset{\circ}\Lambda_N}$ where $\d\p_\Lambda=\prod_{x\in \Lambda}\d \p_x$ denotes the Lebesgue measure.
This implies  for $\mathbf{B}\in \mc{B}(\R^{\Lambda_N})$ and $\sigma\in \{1,q\}^{\Edge(\Z^d)}$ such that
$\sigma_{E_N\setminus E_n}=\lambda_{E_N\setminus E_n}$
\begin{align}\label{eq:extmu_claim2}
\begin{split}
%&
\extmu_N^\lambda\Big(\p\in \mathbf{B}, \kappa\in \mathbf{A}(\sigma_{E_N})\Big)
=
\extmu_N^\lambda\Big(\mathbf{A}(\sigma_{E_N})\Big)
\int_{\mathbf{B}}\frac{\exp(-\frac12 (\p,\tilde\Delta_{\sigma}^{\Lambda_N^w}\p))}{\sqrt{\det 2\pi (\tilde\Delta_\sigma^{\Lambda_N^w})^{-1}}}\,\d\p_{\accentset{\circ}\Lambda_N}
\end{split}
\end{align}
We use the definition of  $\extmu_N$ and the fact that specifications are proper to rewrite
\begin{align}\label{eq:extmu_claim3}
\begin{split}
\extmu_N^\lambda\Big(\mathbf{A}(\sigma_{E_N})\Big)
&=
\frac{\discmu\discgamma_{E_n}^{\Lambda_N^w}\Big(\mathbf{A}(\sigma_{E_N})\cap \mathbf{A}(\lambda_{E_N\setminus E_n})\Big)}{\discmu\discgamma_{E_n}^{\Lambda_N^w}\Big(\mathbf{A}(\lambda_{E_N\setminus E_n})\Big)}
=\frac{\discmu\Big(\mathbb{1}_{\mathbf{A}(\lambda_{E_N\setminus E_n})}(\kappa)\discgamma_{E_n}^{\Lambda_N^w}(\mathbf{A}(\sigma_{E_N},\kappa)\Big)}{\discmu\Big(\mathbf{A}(\lambda_{E_N\setminus E_n})\Big)}
\\
&=\discgamma_{E_n}^{\Lambda_N^w}(\sigma_{E_N},\lambda_{E_N})
=\mathbb{1}_{\sigma_{E_N\setminus E_n}=\lambda_{E_N\setminus E_n}} \frac{1}{Z_\lambda} \frac{p^{h(\sigma,E_N)}(1-p)^{s(\sigma,E_N)}}{\sqrt{\det \Delta_\sigma^{\Lambda_N^w}}}.
\end{split}
\end{align}
Note that 
\begin{align}
p^{h(\sigma,\{e\})}(1-p)^{s(\sigma,\{e\})}e^{-\frac{\sigma_e\eta_e^2}{2}}=\int_{\{\sigma_e\}}
\rho(\d\kappa_e)\, e^{-\frac{\kappa_e\eta_e^2}{2}}.
\end{align}
The last three displays, a summation by parts, and \eqref{eq:relation_Laplacians} lead us to
\begin{align}
\begin{split}\label{eq:extmu_claim4}
\extmu_N^\lambda(\p\in \mathbf{B}, \kappa\in \mathbf{A}(\sigma_{E_N}))
&=\frac{1}{(2\pi)^{|\Lambda_N|}|\Lambda_N^w|Z_\lambda} 
\int_{\mathbf{B}}
p^{h(\sigma,E_N)}(1-p)^{s(\sigma,E_N)}\prod_{e\in E_N} e^{-\frac{\sigma_e\eta_e^2}{2}}\,\d\p_{\overcirc\Lambda_N}
\\
&=\frac{1}{Z'_\lambda} 
\int_{\mathbf{B}}\d\p_{\overcirc\Lambda_N}
\int_{\mathbf{A}(\sigma_{E_N})} \prod_{e\in E_N} \rho(\d\kappa_e) 
\,e^{-\frac{\kappa_e\eta_e^2}{2}}.
\end{split}
\end{align}
Combining this with the definition \eqref{eq:specification_gamma_tilde} we conclude that for $\sigma'\in \{1,q\}^{\Edge(\Z^d)}$ such that $\sigma'_{E_N\setminus E_n}=\lambda_{E_N\setminus E_n}$
and $\omega\in \R_g^{\Edge(\Z^d)}$ such that $\omega_{E_N^\crm}=0$
\begin{align}
\begin{split}
\extmu^\lambda_N\Big(\mathbf{B}\times \mathbf{A}(\sigma_{E_N}) \mid &\extsigma_{E_n^c}\Big)
((\omega,\sigma'))
\\
&=
\frac{1}{Z_{\lambda,\omega}} \int_{\mathbf{B}}\,\nu_{\Lambda_n}^{\omega_{E_n^{\crm}}}(\d \eta) \int_{\mathbf{A}(\sigma_{E_N})}
\prod_{e\in E_n} \rho(\d\kappa_e) \prod_{e\notin E_n} \delta_{\sigma'_e}(\d \kappa_e) 
 \prod_{e\in E_N} e^{-\frac{\kappa_e\eta_e^2}{2}}
 \\
&=\extgamma_{\Lambda_n}\Big(\mathbf{B}\times \mathbf{A}(\sigma_{E_N}),(\omega,\sigma')\Big).
\end{split}
\end{align}
This implies $\extmu_N^\lambda\extgamma_{\Lambda_n}=\extmu_N^\lambda$ and \eqref{eq:Gibbs_mu_N} follows directly.

It remains to pass to the limit in equation \eqref{eq:Gibbs_mu_N}, i.e., we show that the right hand side converges in the topology of local convergence to $\extmu$ and the left hand side to $\extmu\extgamma_{\Lambda_n}$ thus finishing the proof.
We only sketch the argument. 
Since $\extgamma(A,\cdot)$ is a measurable, local, and bounded function if $A$ is a local event  it is sufficient
to show that $\extmu_N$ converges to $\extmu$ locally in total variation, that is for every $\Lambda\subset\subset \Z^d$
\begin{align}
\lim_{N\to \infty}\sup_{A\in \extsigma_{\Edge(\Lambda)}} |\extmu_N(A)-\extmu(A)|=0.
\end{align}
Where we used the $\sigma$-algebra $\extsigma_E$ on $\R^{\Edge(\Z^d)}_g\times \{1,q\}^{\Edge(\Z^d)}$
defined in Section \ref{sec:model_intro} as the product of the pullbacks of $\gradsigma_E$ and $\discsigma_E$.
We first consider the $\kappa$-marginals of 
$\tilde\mu_N$ and $\tilde\mu$. They  are given by 
$\discmu_N=\discmu\discgamma_{E_n}^{\Lambda_N^w}$ and $\discmu=\discmu\discgamma_{E_n}$ where we use that 
$\discmu$ is a Gibbs measure.
We can estimate the total
variation of those two measures by
\begin{align}
\begin{split}
\lVert \discmu\discgamma_{E_n}^{\Lambda_N^w}-\discmu\discgamma_{E_n}\rVert_{\mathrm{TV}}
&\leq \sup_{\kappa\in \{1,q\}^{\Edge(\Z^d)}}\sup_{A\subset \{1,q\}^{\Edge(\Z^d)}} 
|\discgamma_{E_n}^{\Lambda_N^w}(A,\kappa)-\discgamma_{E_n}(A,\kappa)|
\\
&\leq 2^{|E_n|} \sup_{\sigma,\kappa\in \{1,q\}^{\Edge(\Z^d)}}|\discgamma_{E_n}^{\Lambda_N^w}(\sigma,\kappa)-\discgamma_{E_n}(\sigma,\kappa)|.
\end{split}
\end{align}
 In the second step we used that the specifications are proper 
thus we can assume  $A\subset \mathbf{A}(\kappa_{E_n^{\crm}})$
and use that $|\mathbf{A}(\kappa_{E_n^{\crm}})|\leq 2^{|E_n|}$.
Using Lemma \ref{le:uniform_specification} we conclude
\begin{align}
\begin{split}\label{eq:TV1}
\lim_{N\to\infty}\lVert \discmu_N-\discmu\rVert_{\mathrm{TV}}
=
\lim_{N\to\infty}\lVert \discmu\discgamma_{E_n}^{\Lambda_N^w}-\discmu\discgamma_{E_n}\rVert_{\mathrm{TV}}
=0.
\end{split}
\end{align}
We address the $\eta$-marginals of the measures $\extmu$ and $\extmu_N$. 
We write $\extmu(\cdot\mid \kappa), \extmu_N(\cdot\mid \kappa)\in \mc{P}(\R_g^{\Edge(\Z^d)})$ 
for the conditional distribution of the $\eta$-field for a given $\kappa\in \{1,q\}^{\Edge(\Z^d)}$.
From the construction this is well defined for every $\kappa$.
We define the centred Gaussian field
$\p^\kappa$ by  $\p^\kappa(0)=0$ and its covariance $(\Delta_\kappa)^{-1}$ and the centred fields
$\p_N^\kappa$ pinned to 0 outside of $\accentset{\circ}\Lambda_N$ with covariance $(\tilde{\Delta}_\kappa^{\Lambda_N^w})^{-1}$ and we denote their gradients by $\eta^\kappa=\nabla\p^\kappa$ and $\eta_N^\kappa=\nabla\p_N^\kappa$.
Note that by definition of $\extmu$ and $\extmu_N$ the law of $\eta^\kappa$ and $\eta_N^\kappa$ coincides with 
$\extmu(\cdot\mid\kappa)$ and $\extmu_N(\cdot \mid \kappa)$. 
%Next we bound the local  total variation of the $\eta$-field conditioned on $\kappa$ of $\extmu$ and $\extmu_N$.
Fix an integer $L$. 
We introduce the Gaussian vectors 
$X^\kappa=(\p^\kappa(x)-\p^\kappa(0))_{x\in \Lambda_L}$ and $X_N^\kappa=(\p^\kappa_N(x)-\p_N^\kappa(0))_{x\in \Lambda_L}$.
Note that given $X^\kappa$, $X_N^\kappa$ the gradient field $\eta^\kappa{\restriction_{\Edge(\Lambda_{L})}}$ respectively
$\eta_N^\kappa{\restriction_{\Edge(\Lambda_{L})}}$ can be expressed as a function of $X^\kappa$ and $X^\kappa_N$ respectively.
This implies that 
\begin{align}
\sup_{B\in \gradsigma_{\Edge(\Lambda_{L})}} \big|\extmu_N( B\mid\kappa)-\extmu(B \mid\kappa)\big|\leq \lVert X^\kappa-X_N^\kappa\rVert_{\mathrm{TV}}.
\end{align}
Theorem 1.1 in \cite{devroye2018total} states that the total variation 
distance between two centred Gaussian vectors $Z_1$, $Z_2$ with covariance matrices
$\Sigma_1$ and $\Sigma_2$  can be bounded by $\tfrac{3}{2}|\Sigma_1^{-1}\Sigma_2-\mathbb{1}|_F$
where $|\cdot|_F$ denotes the Frobenius norm.
Using this  theorem and the uniform convergence of the covariance of $\eta_N^\kappa$ to the covariance of $\eta^\kappa$ stated in Lemma \ref{le:uniform_Laplace_inverse} we conclude that 
\begin{align}\label{eq:TV2}
\lim_{N\to\infty}\sup_{B\in \gradsigma_{\Edge(\Lambda_{L})}} \big| \extmu_N(B\mid\kappa)-\extmu(B\mid\kappa)\big|\leq \lim_{N\to \infty}\lVert X^\kappa-X_N^\kappa\rVert_{\mathrm{TV}}=0.
\end{align}
We denote for a set 
$A\in \extsigma$
%$A\subset \R^{\Edge(\Z^d)}\times \{1,q\}^{\Edge(\Z^d)}$
 and $\kappa\in \{1,q\}^{\Edge(\Z^d)}$
by $A_\kappa$ the intersection of $A$ and the line through $\kappa$, i.e., $A_\kappa=\{\eta\in \R^{\Edge(\Z^d)}\,:\, (\eta,\kappa)\in A\}$.
Using disintegration, \eqref{eq:TV1}, \eqref{eq:TV2}, and the dominated convergence theorem  we estimate
\begin{align}
\begin{split}
&\lim_{N\to \infty}\sup_{A\in \extsigma_{\Edge(\Lambda_{L})}} |\extmu_N(A)-\extmu(A)|
\\
&\quad=
\lim_{N\to \infty}\sup_{A\in \extsigma_{\Edge(\Lambda_{L})}} 
\left|\int\discmu(\d\kappa)\, \extmu( A_\kappa\mid \kappa)
-\int \discmu_N(\d\kappa)\, \extmu_N(A_\kappa\mid \kappa)
\right|
\\
&\quad\leq
\lim_{N\to \infty}\sup_{A\in \extsigma_{\Edge(\Lambda_{L})}} 
\int\discmu(\d\kappa)\, \Big|\extmu( A_\kappa\mid \kappa)-\extmu_N( A_\kappa\mid \kappa)\Big|
\\
&\qquad+\lim_{N\to \infty}\sup_{A\in \extsigma_{\Edge(\Lambda_{L})}} \left|\int \discmu(\d\kappa)\, \extmu_N( A_\kappa\mid \kappa)
-\int \discmu_N(\d\kappa)\, \extmu_N( A_\kappa \mid \kappa)\right|
\\
&\quad \leq\lim_{N\to \infty}
\int\discmu(\d\kappa)\, \lVert X^\kappa-X^\kappa_N\rVert_{\mathrm{TV}}
+\lim_{N\to \infty}\lVert \discmu-\discmu_N\rVert_{\mathrm{TV}}=0.
%\\
%&\qquad\leq\lim_{N\to \infty} \lVert\discmu\discgamma_{E_n}^{\Lambda_N^w}-\discmu\discgamma_{E_n}%\rVert_{\mathrm{TV}}
%+\lim_{N\to \infty}\int \extmu(\d \kappa) \sup_{A\in\mathcal{E}_{\Edge(\Lambda_{L-1})}} |\extmu_N(\eta_N\in A \mid \kappa)-\extmu(\eta\in A\mid \kappa)|=0.
%&\lim_{N\to \infty}\sup_{A\in \extsigma_{\Edge(\Lambda_{L-1})}} |\extmu_N(A)-\extmu(A)|
%\\
%&\qquad\leq\lim_{N\to \infty} \lVert\discmu\discgamma_{E_n}^{\Lambda_N^w}-\discmu\discgamma_{E_n}%\rVert_{\mathrm{TV}}
%+\lim_{N\to \infty}\int \extmu(\d \kappa) \sup_{A\in\mathcal{E}_{\Edge(\Lambda_{L-1})}} |\extmu_N(\eta_N\in A \mid \kappa)-\extmu(\eta\in A\mid \kappa)|=0.
\end{split}
\end{align}
We conclude that for any local event $A$
\begin{align}
\extmu(A)=\lim_{N\to\infty} \extmu_N(A)=\lim_{N\to \infty} \extmu_N\extgamma_{\Lambda_n}(A)
=\extmu\extgamma_{\Lambda_n}(A).
\end{align}
\end{proof}
\section{Estimates for discrete elliptic equations}\label{app:1}
In this appendix we collect some regularity estimates for discrete elliptic equations.
We consider as before uniformly elliptic $\kappa:\Edge(\Z^d)\to \R_+$ with $0<c_-\leq \kappa_e\leq c_+<\infty$ for
all $e\in \Edge(\Z^d)$. We denote corresponding set of conductances by $M(c_-,c_+)=[c_-,c_+]^{\Edge(\Z^d)}$.

Next we state a discrete version of the well known Nash-Moser estimates for scalar elliptic 
partial differential equations with $L^\infty$ coefficients.
\begin{lemma}\label{le:elliptic_Nash_Moser}
Let $0<c_-<c_+<\infty$, $\Lambda\subset \Z^d$, and $\kappa\in M(c_-,c_+)$. 
Let $u:\Lambda\to \R$ be a solution of
\begin{align}
-\nabla^\ast \kappa\nabla u=0\quad \text{in $\overcirc{\Lambda}$}
\end{align} 
Then there are constants $\alpha=\alpha(c_-,c_+,d)$ and 
$C=C(c_-,c_+,d)$ such that the following estimate holds for $x,y\in \Lambda$  
\begin{align}
|u(x)-u(y)|\leq C\lVert u\rVert_{L^\infty(\Lambda)} \left(\frac{|x-y|}{d(x,\partial\Lambda) \wedge d(y,\partial\Lambda)} \right)^\alpha.
\end{align}
\end{lemma}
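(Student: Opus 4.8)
The plan is to reduce the pointwise Hölder bound to a scale-invariant oscillation-decay estimate for solutions and then to establish the latter by a discrete De Giorgi iteration. First I would dispose of the trivial regimes. Write $R=d(x,\partial\Lambda)\wedge d(y,\partial\Lambda)$. If $|x-y|\geq R/2$ then $|u(x)-u(y)|\leq 2\lVert u\rVert_{L^\infty(\Lambda)}$ while $(|x-y|/R)^\alpha\geq 2^{-\alpha}$, so the estimate holds after enlarging $C$; and if $R\leq C_0$ for a fixed lattice-scale constant $C_0$ the bound is again trivial, since for $x\neq y$ one has $|x-y|\geq 1\geq R/C_0$, giving a lower bound on the right-hand side. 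Hence I may assume $|x-y|\leq R/2$ with $R$ large, so that $y$ lies in the concentric ball $B_{R/2}(x)$ on which $u$ solves $-\nabla^\ast\kappa\nabla u=0$, and the statement becomes a genuine interior regularity estimate.

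The core is the following oscillation decay: there is $\theta=\theta(c_-,c_+,d)\in(0,1)$ such that, for every discrete ball with $B_{4r}(z)$ contained in the region where $u$ solves the equation,
\[
\mathrm{osc}_{B_{r}(z)}\,u \;\leq\; \theta\,\mathrm{osc}_{B_{4r}(z)}\,u .
\]
Iterating from the macroscopic scale $R$ down in factors of $4$ to the lattice scale and summing yields $\mathrm{osc}_{B_\rho(x)}\,u\leq C\,(\rho/R)^\alpha\,\mathrm{osc}_{B_R(x)}\,u$ with $\alpha=\log_4(1/\theta)$, which applied with $\rho\simeq|x-y|$ gives the claim since $\mathrm{osc}_{B_R}\,u\leq 2\lVert u\rVert_{L^\infty}$.

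To prove the oscillation decay I would run the classical De Giorgi scheme in discrete form, using three ingredients: (i) a discrete Caccioppoli inequality $\sum_{e\in\Edge(B_r)}\kappa_e\lvert\nabla(u-k)_+\rvert^2\lesssim (R-r)^{-2}\sum_{x\in B_R}(u-k)_+^2$ for every level $k$, obtained by testing the equation against $\zeta^2(u-k)_+$ with a discrete cutoff $\zeta$ and invoking uniform ellipticity $c_-\leq\kappa_e\leq c_+$; (ii) the discrete Sobolev inequality on $\Z^d$, to turn this energy control into a gain of integrability; and (iii) the De Giorgi isoperimetric (measure-shrinking) lemma for the super-level sets. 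Feeding (i)--(iii) into a De Giorgi iteration on truncations produces the key measure-to-pointwise statement: if $0\leq u\leq 1$ on $B_{2r}$ and $\lvert\{u\geq 1/2\}\cap B_r\rvert\geq\tfrac12\lvert B_r\rvert$, then $u\geq\delta$ on $B_{r/2}$ for some $\delta=\delta(c_-,c_+,d)>0$. Applying this to whichever of $(u-m)/(M-m)$ or $(M-u)/(M-m)$ (with $m,M$ the min and max of $u$ over $B_{2r}$) exceeds $1/2$ on the majority of $B_r$ yields the oscillation decay with $\theta=1-\delta$.

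The main obstacle is the discrete-to-continuum bookkeeping rather than any new idea: one must check that the functional inequalities (discrete Sobolev, isoperimetry) hold uniformly on $\Z^d$ with constants independent of $r$ once $r\gg 1$, and that the lattice-scale truncation introduced above does not accumulate error when the dyadic iteration is summed. A clean way to organise this is to observe that $u$ is harmonic for the reversible nearest-neighbour random walk with conductances $\kappa$, so the required bound is precisely the elliptic Hölder regularity underlying the discrete De Giorgi--Nash--Moser theory; I would therefore either cite the available discrete estimates or reproduce the short iteration above, taking care to verify that every constant, and in particular $\alpha$, depends on $\kappa$ only through the ellipticity contrast $c_+/c_-$, in line with the Remark following the statement.
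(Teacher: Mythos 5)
Your proposal is correct, but it does substantially more than the paper, which disposes of this lemma in one line by citing Proposition~6.2 of Delmotte \cite{MR1425544}. Delmotte's result is obtained via a discrete elliptic/parabolic Harnack inequality proved by Moser iteration on graphs satisfying volume doubling and a Poincar\'e inequality, whereas you sketch the parallel De~Giorgi route: reduce to an interior oscillation-decay estimate $\mathrm{osc}_{B_r}u\leq\theta\,\mathrm{osc}_{B_{4r}}u$, iterate dyadically down to the lattice scale to extract the H\"older exponent $\alpha=\log_4(1/\theta)$, and prove the decay from a discrete Caccioppoli inequality, discrete Sobolev embedding, and the De~Giorgi measure-shrinking lemma. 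Your handling of the trivial regimes ($|x-y|\gtrsim R$ and $R=O(1)$) is exactly what is needed to pass from the scale-invariant interior estimate to the stated boundary-distance form, and your observation that $\alpha$ depends on $\kappa$ only through the contrast $c_+/c_-$ (by scaling invariance of the equation) matches the remark following Lemma~\ref{le:GreensBound}. The only caveat is that the discrete De~Giorgi isoperimetric lemma and the uniformity of the Sobolev/Poincar\'e constants for $r\gg1$ are the genuinely technical points, and your sketch defers them; since these are precisely what the cited graph-theoretic literature supplies, either completing that iteration carefully or simply citing Delmotte (as the paper does, and as you offer as an alternative) closes the argument.
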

\begin{proof}
This is Proposition~6.2 in \cite{MR1425544}.
%The statement follows from the parabolic estimate in Lemma \ref{le:parabolic_Nash_Moser}
%and similar arguments as in Part 6 in \cite{MR0100158} where the continuous case was considered.
\end{proof}

Moreover, we state some consequences for the Green's function of uniformly elliptic 
operators in divergence form.
 We define the Green's function $G_\kappa:\Z^d\times \Z^d\to \R$ as the inverse
of $\Delta_\kappa$, i.e., $G_\kappa$ satisfies for $d\geq 3$
\begin{align}
 \Delta_\kappa G_\kappa(\cdot, y)=\delta_y,\quad \lim_{x\to \infty}G_\kappa(x,y)= 0. 
\end{align}
It is well known that such a Green's function does not exist in dimension 2, however the
derivative $\nabla_{x,i}G_\kappa$ does exist in dimension 2, in particular one can make sense of  expressions as $G_\kappa(x_1,y)-G_\kappa(x_2,y)$.  Formally one can define $\nabla G_\kappa$  by adding a mass $m^2$ to the Laplace operator, i.e., consider the Green's function of $\Delta_\kappa+m^2$ and then send $m^2\to 0$. 
The following estimates hold 
for the Green's function.
\begin{lemma}\label{le:GreensBound}
For any $d\geq 3$ and $\kappa\in M(c_-,c_+)$ the estimate 
\begin{align}\label{eq:Greens_direct}
0\leq G_\kappa(x,y)\leq \frac{C}{|x-y|^{d-2}} 
\end{align}
holds where the constant $C$ depends only on $c_-$, $c_+$, and $d$.
Moreover there exist $\alpha>0$ depending on $c_+/c_-$,and $d$ and $C$ depending on $c_-$, $c_+$, and $d$
such that for $d\geq 2$
\begin{align}\label{eq:Greens_Hoelder}
|\nabla_x G_\kappa(x,y)|&\leq \frac{C}{|x-y|^{d-2+\alpha}},
\\
\label{eq:Greens_Hoelder2}
|\nabla_x\nabla_y G_\kappa(x,y)|&\leq \frac{C}{|x-y|^{d-2+2\alpha}}.
\end{align}
\end{lemma}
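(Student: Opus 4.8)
The plan is to reduce the two derivative estimates \eqref{eq:Greens_Hoelder} and \eqref{eq:Greens_Hoelder2} to the pointwise bound \eqref{eq:Greens_direct}, exploiting that away from the diagonal $G_\kappa$ is a $\Delta_\kappa$-harmonic function to which the interior Hölder estimate of Lemma \ref{le:elliptic_Nash_Moser} applies. First I would record two structural facts: $\Delta_\kappa$ is self-adjoint for the counting measure, so $G_\kappa(x,y)=G_\kappa(y,x)$, and by the maximum principle the Green's function of the nonnegative operator $\Delta_\kappa$ is nonnegative, which is the lower bound in \eqref{eq:Greens_direct}. For the upper bound in \eqref{eq:Greens_direct} (where $d\geq 3$) I would use the time-integral representation $G_\kappa(x,y)=\int_0^\infty p_t^\kappa(x,y)\,\d t$, where $p_t^\kappa$ is the heat kernel of the continuous-time walk generated by $-\Delta_\kappa$, together with the Gaussian upper bound $p_t^\kappa(x,y)\leq C t^{-d/2}\exp(-c|x-y|^2/t)$ valid on weighted graphs with uniformly elliptic conductances (Delmotte). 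The substitution $t=|x-y|^2/u$ gives $\int_0^\infty t^{-d/2}e^{-c|x-y|^2/t}\,\d t = C|x-y|^{2-d}$, the integral converging precisely because $d\geq 3$.

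Given \eqref{eq:Greens_direct}, the gradient bound \eqref{eq:Greens_Hoelder} follows by an annulus argument. Fix $y$ and set $r=|x-y|/2$. The function $u=G_\kappa(\cdot,y)$ solves $-\nabla^\ast\kappa\nabla u=0$ on the ball $B(x,2r)$ (in the maximum norm), which does not contain $y$. Since harmonicity is unaffected by adding constants, I may replace $\lVert u\rVert_{L^\infty}$ in Lemma \ref{le:elliptic_Nash_Moser} by $\tfrac12\,\mathrm{osc}_{B(x,2r)}u$; evaluating at the nearest-neighbour pair $x,x+e_i$ then yields $|\nabla_{x,i} G_\kappa(x,y)|\leq C r^{-\alpha}\,\mathrm{osc}_{B(x,2r)}u$, and bounding the oscillation by $C\sup_{z\in B(x,2r)}G_\kappa(z,y)\leq C|x-y|^{2-d}$ through \eqref{eq:Greens_direct} produces the exponent $2-d-\alpha$.

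The mixed bound \eqref{eq:Greens_Hoelder2} is then obtained by applying the same estimate once more in the second variable. For fixed $i$ the map $y\mapsto\nabla_{x,i}G_\kappa(x,y)=G_\kappa(x+e_i,y)-G_\kappa(x,y)$ is, by the symmetry $G_\kappa(x,\cdot)=G_\kappa(\cdot,x)$, a difference of functions that are $\Delta_\kappa$-harmonic in $y$ away from $x$ and $x+e_i$ respectively, hence harmonic on a ball $B(y,r)$ with $r\sim|x-y|$ avoiding $\{x,x+e_i\}$. Lemma \ref{le:elliptic_Nash_Moser} in the $y$-variable gives $|\nabla_{x,i}\nabla_{y,j}G_\kappa(x,y)|\leq C r^{-\alpha}\sup_{z\in B(y,r)}|\nabla_{x,i}G_\kappa(x,z)|$, and inserting the just-proved \eqref{eq:Greens_Hoelder} together with $|x-z|\gtrsim r$ on $B(y,r)$ gives the exponent $2-d-2\alpha$.

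The remaining, and principal, obstacle is the case $d=2$, where $G_\kappa$ itself is undefined and the reduction to \eqref{eq:Greens_direct} breaks down. Only differences of $G_\kappa$ enter \eqref{eq:Greens_Hoelder} and \eqref{eq:Greens_Hoelder2}, so the annulus argument still makes sense if one controls $\mathrm{osc}_{B(x,r)}G_\kappa(\cdot,y)$ uniformly when $|x-y|\sim r$. I would resolve this directly from the parabolic De Giorgi–Nash–Moser theory in the form of the quenched gradient heat-kernel bound $|\nabla_x p_t^\kappa(x,y)|\leq C t^{-(d+\alpha)/2}\exp(-c|x-y|^2/t)$, whose time integral $\int_0^\infty t^{-(d+\alpha)/2}e^{-c|x-y|^2/t}\,\d t=C|x-y|^{2-d-\alpha}$ converges for every $d\geq 2$ thanks to the extra factor $t^{-\alpha/2}$; the analogous bound with $t^{-(d+2\alpha)/2}$ yields \eqref{eq:Greens_Hoelder2}. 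The delicate point throughout is keeping the Hölder exponent $\alpha$ and all constants dependent only on the ellipticity ratio $c_+/c_-$ and on $d$, for which it is essential that both Lemma \ref{le:elliptic_Nash_Moser} and the underlying heat-kernel estimates are quantitative in the ellipticity contrast.
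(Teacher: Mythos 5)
Your proof is correct, and for $d\geq 3$ it is essentially the paper's argument: the bound \eqref{eq:Greens_direct} by integrating the Nash--Aronson heat-kernel estimate in time, and then the annulus argument combining \eqref{eq:Greens_direct} with the elliptic H\"older estimate of Lemma~\ref{le:elliptic_Nash_Moser} to get \eqref{eq:Greens_Hoelder}, iterated in the second variable for \eqref{eq:Greens_Hoelder2}. Where you diverge is the case $d=2$. The paper stays on the elliptic side: it quotes the uniform oscillation bound $\sup_{x,y\in B_{2r}(0)\setminus B_r(0)}|G_\kappa(x,0)-G_\kappa(y,0)|\leq C$ (equation \eqref{eq:bound_osc}, from \cite{MR3932093}), which replaces the missing pointwise bound on $G_\kappa$ as the input to Lemma~\ref{le:elliptic_Nash_Moser}; the rest of the annulus argument then goes through verbatim in every $d\geq 2$. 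You instead integrate a quenched gradient heat-kernel bound $|\nabla_x p_t^\kappa|\leq Ct^{-(d+\alpha)/2}e^{-c|x-y|^2/t}$ in time. Both routes rest on the same parabolic De Giorgi--Nash--Moser input, and your time-integral version has the small advantage of treating all $d\geq 2$ in one formula; its small cost is that you must justify interchanging $\nabla_x$ with the (only conditionally convergent) time integral defining $G_\kappa$ in $d=2$, e.g.\ via the massive regularisation the paper alludes to, with your integrable majorant supplying dominated convergence. Note also that once \eqref{eq:Greens_Hoelder} is in hand for $d=2$, your elliptic iteration in the $y$-variable already yields \eqref{eq:Greens_Hoelder2} there, so the second parabolic bound with $t^{-(d+2\alpha)/2}$ is not actually needed. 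One cosmetic point: with $r=|x-y|/2$ the ball $B(x,2r)$ touches $y$, so you should run the H\"older estimate on $B(x,r)$ (or take $r$ slightly smaller); this only changes constants.
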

\begin{proof} 
These  estimates are well known. 
Estimates for the corresponding  parabolic Green's function are
called Nash-Aronson estimates and they can be found, e.g., in Proposition B.3 in 
\cite{MR1872740}. Integrating the bound for the parabolic Green's function implies \eqref{eq:Greens_direct}.
The estimates
\eqref{eq:Greens_Hoelder} and \eqref{eq:Greens_Hoelder2} follow for $d> 2$ from 
\eqref{eq:Greens_direct}  and Lemma~\ref{le:elliptic_Nash_Moser}.
For $d=2$ one can bound the oscillation of the Green's function using Nash-Aronson estimates and the parabolic Nash-Moser estimate. 
In particular as shown, e.g., in \cite[Chapter 8]{MR3932093} there is a constant $C=C(c_-,c_+)$ such that for all $r>0$ 
\begin{align}\label{eq:bound_osc}
\sup_{x,y\in B_{2r}(0)\setminus B_r(0)}|G_\kappa(x,0)-G_{\kappa}(y,0)|\leq C.
\end{align}   
Lemma~\ref{le:elliptic_Nash_Moser} then implies \eqref{eq:Greens_Hoelder} and \eqref{eq:Greens_Hoelder2}.
\end{proof}

The previous results allow us to bound the difference between the Green's function in a set with Dirichlet boundary conditions and the Green's function on whole space.
We define the Green's function $G^{\Lambda^w}_\kappa:\Lambda\times \Lambda\to \R$
with Dirichlet boundary values in finite volume by
\begin{align}
\begin{split}
\Delta_\kappa G^{\Lambda^w}_\kappa(\cdot, y)&=\delta_y \quad \text{in $\overcirc{\Lambda}$},\\
  G^{\Lambda^w}_\kappa(x, y)&=0\quad \text{for $x\in \partial\Lambda$}.
  \end{split}
\end{align}
For clarity we write $G^{\Z^d}_\kappa=G_\kappa$ in the following.
\begin{lemma}\label{le:uniform_Laplace_inverse}
Let $0<c_-<c_+<\infty$ and $R>0$, then 
\begin{align}\label{eq:uniformGreens}
\lim_{n\to \infty} \sup_{\substack{x,y \in B_R(0)\\ 1\leq i,j\leq d}}\sup_{\kappa\in M(c_-,c_+)} 
\left|\left(\delta_{x+e_i}-\delta_x,G^{\Lambda_n^w}_\kappa(\delta_{y+e_j}-\delta_y)\right)
-\left(\delta_{x+e_i}-\delta_x,G^{\Z^d}_\kappa(\delta_{y+e_j}-\delta_y)\right)
\right|=0.
\end{align}  
\end{lemma}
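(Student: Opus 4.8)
The plan is to reduce the statement to an interior regularity estimate for $\kappa$-harmonic functions. Write $w_n := G^{\Lambda_n^w}_\kappa(\delta_{y+e_j}-\delta_y)$ for the finite-volume solution of $\Delta_\kappa w_n = \delta_{y+e_j}-\delta_y$ in $\overcirc{\Lambda}_n$ with $w_n=0$ on $\partial\Lambda_n$, and $w := G_\kappa(\delta_{y+e_j}-\delta_y)$ for the corresponding whole-space dipole field (which is well defined for $d\geq 2$ as a difference, respectively gradient, of Green's functions, as recalled before Lemma \ref{le:GreensBound}). Then both inner products appearing in \eqref{eq:uniformGreens} are discrete gradients, namely $\nabla_{x,i}w_n$ and $\nabla_{x,i}w$, so the quantity to control is $\nabla_{x,i}(w_n-w)$. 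Setting $v:=w_n-w$, the two dipole sources cancel, so $v$ solves $\Delta_\kappa v=0$ in all of $\overcirc{\Lambda}_n$ (in particular also at $y$ and $y+e_j$), while $v=-w$ on $\partial\Lambda_n$ since $w_n$ vanishes there.

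First I would bound $\lVert v\rVert_{L^\infty(\Lambda_n)}$. As $v$ is $\kappa$-harmonic in $\overcirc{\Lambda}_n$, the maximum principle gives $\lVert v\rVert_{L^\infty(\Lambda_n)}\leq \max_{z\in\partial\Lambda_n}|w(z)|$. To estimate the right-hand side I would use the symmetry $G_\kappa(a,b)=G_\kappa(b,a)$ of the self-adjoint divergence-form operator to rewrite $w(z)=G_\kappa(z,y+e_j)-G_\kappa(z,y)=\nabla_{y,j}G_\kappa(y,z)$, a discrete gradient in the first argument, and then apply the gradient bound \eqref{eq:Greens_Hoelder} of Lemma \ref{le:GreensBound}. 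For $z\in\partial\Lambda_n$ and $y\in B_R(0)$ one has $|y-z|\geq n-R$, hence
\begin{align}
\max_{z\in\partial\Lambda_n}|w(z)|\leq \frac{C}{(n-R)^{d-2+\alpha}},
\end{align}
with $C$ and $\alpha$ depending only on $c_-,c_+,d$, so this bound is uniform in $\kappa\in M(c_-,c_+)$ and in $y\in B_R(0)$. For $d=2$ this reads $C(n-R)^{-\alpha}$, which still tends to $0$; this is exactly the place where only the gradient of the Green's function, and not the Green's function itself, is available.

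Finally I would apply the interior Nash-Moser estimate of Lemma \ref{le:elliptic_Nash_Moser} to the $\kappa$-harmonic function $v$ on $\Lambda_n$. With $x,x+e_i\in B_{R+1}(0)$ we have $|x-(x+e_i)|=1$ and $d(x,\partial\Lambda_n)\wedge d(x+e_i,\partial\Lambda_n)\geq n-R-1$, so
\begin{align}
|\nabla_{x,i}v|=|v(x+e_i)-v(x)|\leq C\,\lVert v\rVert_{L^\infty(\Lambda_n)}\left(\frac{1}{n-R-1}\right)^{\alpha}\leq \frac{C}{(n-R-1)^{\,d-2+2\alpha}}.
\end{align}
Since the right-hand side is independent of $x,y,i,j$ and of $\kappa$ and tends to $0$ as $n\to\infty$, taking the supremum over $x,y\in B_R(0)$, $1\leq i,j\leq d$ and $\kappa\in M(c_-,c_+)$ yields \eqref{eq:uniformGreens}. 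The only genuine obstacle is the bookkeeping in dimension $d=2$, where $w$ must be read as a dipole field and all objects entering the statement are well-defined gradients; once \eqref{eq:Greens_Hoelder} is invoked in that regularized sense, the maximum-principle and Nash-Moser steps go through verbatim, and the surviving exponent $d-2+2\alpha=2\alpha>0$ still furnishes the required decay.
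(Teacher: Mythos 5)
Your proof is correct, and it takes a mildly but genuinely different route from the paper's. The paper introduces a corrector for each pole separately, $G^{\Lambda_n^w}_\kappa(\cdot,y)=G^{\Z^d}_\kappa(\cdot,y)-\p_{\kappa,n,y}$, and then splits into cases: for $d>2$ the sup bound $|\p_{\kappa,n,y}|\leq C\,\dist(y,\partial\Lambda_n)^{2-d}$ from \eqref{eq:Greens_direct} plus the maximum principle already suffices (no Nash--Moser is needed for the final bound, since the quantity of interest is a sum of four small corrector values), whereas for $d=2$ the single-pole corrector is only bounded after subtracting $G^{\Z^d}_\kappa(y,y)$, and the paper must control its boundary \emph{oscillation} via \eqref{eq:bound_osc} and then invoke Lemma \ref{le:elliptic_Nash_Moser}. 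You instead form the dipole $\delta_{y+e_j}-\delta_y$ before subtracting, so that the resulting $\kappa$-harmonic difference $v=w_n-w$ has boundary data equal to a whole-space dipole field; combined with the symmetry $G_\kappa(a,b)=G_\kappa(b,a)$ this lets you bound the boundary values directly by the gradient estimate \eqref{eq:Greens_Hoelder}, which decays in every dimension $d\geq 2$. One application of the maximum principle and of Lemma \ref{le:elliptic_Nash_Moser} then treats all dimensions uniformly, avoids the oscillation/normalization bookkeeping of the $d=2$ case, and even yields the stronger rate $(n-R)^{-(d-2+2\alpha)}$. The only point to handle with care — which you flag — is that in $d=2$ the dipole field $w$ and the identity $\Delta_\kappa w=\delta_{y+e_j}-\delta_y$ must be understood through the massive regularization described before Lemma \ref{le:GreensBound}; with that reading your argument is complete.
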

\begin{remark}
Note that the two scalar products can be equivalently written as $\nabla_{x,i}\nabla_{y,j}G_\kappa^{\Lambda_n^w }(x,y)$ and 
$\nabla_{x,i}\nabla_{y,j}G_\kappa^{\Z^d }(x,y)$.
This expression agrees with the gradient correlations of a Gaussian field:
\begin{align}
\mathbb{E}_{(\Delta_\kappa^{\Lambda_n^w })^{-1}}\left(\eta_{x,x+e_i}\eta_{y,y+e_j}\right)
=\nabla_{x,i}\nabla_{y,j}G_\kappa^{\Lambda_n^w }(x,y).
\end{align}
A similar equation holds when $\Lambda_n^w$ is replaced by $\Z^d$. Thus the lemma implies local uniform convergence of the covariance matrix of those two gradient Gaussian fields.
\end{remark}
\begin{proof}
In $d>2$ the difference of the Green's functions can be expressed through the corrector function $\p_{\kappa,n,y}:\Lambda_n\to \R$ that is defined by
\begin{align}
G^{\Lambda_n}_\kappa(\cdot,y)=G^{\Z^d}_\kappa(\cdot,y)-\p_{\kappa,n,y}(\cdot).
\end{align}
Using the definition of the Green's function we obtain that the corrector satisfies
\begin{align}
\begin{split}
\Delta_\kappa\p_{\kappa,n,y}&=0 \quad \text{ in $\overcirc{\Lambda}_n$}\\
\p_{\kappa,n,y}(x)&=G^{\Z^d}_\kappa(x,y)\quad \text{for  $x\in\partial\Lambda_n$}.
\end{split}
\end{align}
The estimate \eqref{eq:Greens_direct} in Lemma \ref{le:GreensBound} 
now implies
\begin{align}\label{eq:sufficient}
|\p_{k,n,y}(z)|\leq \frac{C}{|\mathrm{dist}(y,\partial\Lambda_n)|^{d-2}}
\end{align}
for $z\in \partial \Lambda_n$. By the maximum principle for $\Delta_\kappa$ the bound extends to all $z\in \Lambda_n$. 
% Then Lemma
%\ref{le:elliptic_Nash_Moser} implies that
%\begin{align}\label{eq:not_necessary}
%|\nabla \p_{k,n,y}(z)|\leq \frac{C}{|\mathrm{dist}(y,\partial\Lambda_n)|^{d-2}|\mathrm{dist}(z,\partial \Lambda_n)|^\alpha}
%\end{align}
 The claim  then follows from
\begin{align}
\begin{split}\label{eq:rewrite_corrector}
&\left(\delta_{x+e_i}-\delta_x,G^{\Z^d}_\kappa(\delta_{y+e_j}-\delta_y)\right)-\left(\delta_{x+e_i}-\delta_x,G^{\Lambda_n^w}_\kappa(\delta_{y+e_j}-\delta_y)\right)\\
&\quad =\p_{\kappa,n,y}(x)+\p_{\kappa,n,y+e_j}(x+e_i)-\p_{\kappa,n,y}(x+e_i)-\p_{\kappa,n,y+e_j}(x)
\\
&\quad =\nabla_i\p_{\kappa,n,y+e_j}(x)-\nabla_i\p_{\kappa,n,y}(x).
\end{split}
\end{align} 
   The extension to dimension $d=2$ is again slightly technical. 
   We can define 
\begin{align}
\p_{\kappa,n,y}(\cdot)=\Big(G^{\Z^d}_\kappa(\cdot,y)-G^{\Z^d}_\kappa(y,y)\Big)-G^{\Lambda_n}_\kappa(\cdot,y).
\end{align} 
 The corrector satisfies
\begin{align}
\begin{split}
\Delta_\kappa\p_{\kappa,n,y}&=0 \quad \text{ in $\overcirc{\Lambda}_n$}\\
\p_{\kappa,n,y}(x)&=G^{\Z^d}_\kappa(x,y)-G^{\Z^d}_\kappa(y,y)\quad \text{for  $x\in\partial\Lambda_n$}.
\end{split}
\end{align}  
Using \eqref{eq:bound_osc} we can bound  for $y\in B_{n/2}(0)$  
   \begin{align}
   \max_{x\in \partial \Lambda_n} \p_{k,n,y}(x)-
   \min_{x\in \partial\Lambda_n} \p_{k,n,y}(x)\leq C.
   \end{align}
 Lemma~\ref{le:elliptic_Nash_Moser}  implies $\nabla\p_{\kappa,n,y}(x)\leq Cn^{-\alpha}$ for $x\in \Lambda_{n/2}$ and we can conclude using  \eqref{eq:rewrite_corrector}.
   \end{proof}

\paragraph{Acknowledgements}
	This work was supported by the  CRC 1060
	{\it The mathematics of emergent effects} and by the Hausdorff Center for Mathematics (GZ
2047/1, Projekt-ID 390685813) through the Bonn International Graduate School of
Mathematics.
	The author would like to thank the Isaac Newton Institute for Mathematical Sciences, Cambridge, for support and hospitality during the programme {\it Scaling limits, rough paths, quantum field theory} where work on this paper was undertaken. This work was supported by EPSRC grant no EP/K032208/1.
%\bibliographystyle{amsplain}
%\bibliography{./../PHDLiteratur/Literatur/New_PHD.bib}
\providecommand{\bysame}{\leavevmode\hbox to3em{\hrulefill}\thinspace}
\providecommand{\MR}{\relax\ifhmode\unskip\space\fi MR }
% \MRhref is called by the amsart/book/proc definition of \MR.
\providecommand{\MRhref}[2]{%
  \href{http://www.ams.org/mathscinet-getitem?mr=#1}{#2}
}
\providecommand{\href}[2]{#2}

\end{document}